\newcommand{\fg}{f.g.}
\newtheorem{thm}{Theorem}[subsection]
\newtheorem{prop}[thm]{Proposition}
\newtheorem{lem}[thm]{Lemma}
\newtheorem{cor}[thm]{Corollary}
\newtheorem*{lem2.0.3}{Lemma~2.0.3}
\newtheorem*{lem2.0.4}{Lemma~2.0.4}
\newtheorem*{lem2.0.5}{Lemma~2.0.5}
\newtheorem*{lem2.0.6}{Lemma~2.0.6}
\theoremstyle{definition}
\newtheorem{dfn}[thm]{Definition}
\newtheorem{rem}[thm]{Remark}
\newtheorem{notation}[thm]{Notation}
\newtheorem{assumption}[thm]{Assumption}
\newtheorem{example}[thm]{Example}
\newtheorem{construction}[thm]{Construction}
\newtheorem*{dfn2.0.1}{Definition~2.0.1}
\newtheorem*{dfn2.0.2}{Definition~2.0.2}
\newtheorem*{notation3.0.1}{Notation~3.0.1}
\newcommand{\N}{\mathbb{N}}
\newcommand{\Q}{\mathbb{Q}}
\newcommand{\Z}{\mathbb{Z}}
\newcommand{\C}{\mathbb{C}}
\newcommand{\R}{\mathbb{R}}
\newcommand{\A}{\mathbb{A}}
\newcommand{\B}{\mathbb{B}}
\newcommand{\D}{\mathbb{D}}
\newcommand{\E}{\mathbb{E}}
\newcommand{\V}{\mathbb{V}}
\newcommand{\zp}{\mathbb{Z}_p}
\newcommand{\oo}{\mathcal{O}}
\newcommand{\gk}{G_K}
\newcommand{\nrig}{\wtil{\mathbb{N}}_{\mathrm{rig}}^{\nabla+}}
\newcommand{\ndr}{\mathbb{N}_{\mathrm{dR}}}
\newcommand{\brig}{\widetilde{\mathbb{B}}_{\mathrm{rig}}^{\nabla+}}
\newcommand{\brigdagk}{\mathbb{B}_{\mathrm{rig},K}^{\dagger}}
\newcommand{\brigdag}{\mathbb{B}_{\mathrm{rig}}^{\dagger}}
\newcommand{\btrigdag}{\widetilde{\mathbb{B}}_{\mathrm{rig}}^{\dagger}}
\newcommand{\btdag}{\widetilde{\mathbb{B}}^{\dagger}}
\newcommand{\bdagk}{\mathbb{B}^{\dagger}_K}
\newcommand{\bdag}{\mathbb{B}^{\dagger}}
\newcommand{\und}[1]{\underline{#1}}
\newcommand{\wtil}[1]{\widetilde{#1}}
\newcommand{\rig}{\mathrm{rig}}
\newcommand{\dR}{\mathrm{dR}}
\newcommand{\geom}{\mathrm{geom}}
\newcommand{\arith}{\mathrm{arith}}
\newcommand{\alg}{\mathrm{alg}}
\newcommand{\sep}{\mathrm{sep}}
\newcommand{\ur}{\mathrm{ur}}
\newcommand{\art}{\mathrm{Art}}
\newcommand{\sw}{\mathrm{Swan}}
\newcommand{\LT}{\mathrm{LT}}
\newcommand{\Hom}{\mathrm{Hom}}
\newcommand{\Der}{\mathrm{Der}}
\newcommand{\cont}{\mathrm{cont}}
\newcommand{\inc}{\mathrm{inc}}
\newcommand{\AS}{\mathrm{AS}}
\newcommand{\cris}{\mathrm{cris}}
\newcommand{\con}{\mathrm{con}}
\newcommand{\can}{\mathrm{can}}
\newcommand{\an}{\mathrm{an}}
\newcommand{\Mod}{\mathrm{Mod}}
\newcommand{\rep}{\mathrm{Rep}}
\newcommand{\et}{\mathrm{et}}
\newcommand{\st}{\mathrm{st}}
\newcommand{\pst}{\mathrm{pst}}
\newcommand{\fil}{\mathrm{Fil}}
\newcommand{\gr}{\mathrm{gr}}
\newcommand{\id}{\mathrm{id}}
\newcommand{\pr}{\mathrm{pr}}
\newcommand{\spe}{\mathrm{sp}}
\newcommand{\qt}{\mathrm{qt}}
\newcommand{\Zar}{\mathrm{Zar}}
\newcommand{\lex}{\mathrm{lex}}
\newcommand{\Spec}{\mathrm{Spec}}
\newcommand{\Idem}{\mathrm{Idem}}
\begin{document}

\title{On differential modules associated to de Rham representations\\ in the imperfect residue field case}
\author{Shun Ohkubo
\footnote{
Graduate School of Mathematics, Nagoya University, Furocho, Chikusaku, Nagoya 4648602, Japan, E-mail address: shun.ohkubo@gmail.com}
}
\date{\today}

\maketitle

\begin{abstract}
Let $K$ be a complete discrete valuation field of mixed characteristic $(0,p)$, whose residue fields may not be perfect, and $G_K$ the absolute Galois group of $K$. In the first part of this paper, we prove that Scholl's generalization of fields of norms over $K$ is compatible with Abbes-Saito's ramification theory. In the second part, we construct a functor $\N_{\dR}$ associating a de Rham representation $V$ with a $(\varphi,\nabla)$-module in the sense of Kedlaya. Finally, we prove a compatibility between Kedlaya's differential Swan conductor of $\N_{\dR}(V)$ and Swan conductor of $V$, which generalizes Marmora's formula.

Mathematics Subject Classification (2010):11S15
\end{abstract}

\tableofcontents


\section*{Introduction}

Hodge theory relates the singular cohomology of complex projective manifolds $X$ to the spaces of harmonic forms on $X$. Its $p$-adic analogue, $p$-adic Hodge theory enables us to compare the $p$-adic \'etale cohomology $H_{\text{\'et}}^m(X_{\overline{\Q}_p},\mathbb{Q}_p)$ of proper smooth varieties $X$ over the $p$-adic field $\mathbb{Q}_p$ with de Rham cohomology of $X$. Precisely speaking, the natural action of the absolute Galois group $G_{\mathbb{Q}_p}$ of $\mathbb{Q}_p$ on the $p$-adic \'etale cohomology can be recovered after tensoring $\B_{\dR}$, which is the ring of $p$-adic periods introduced by Jean-Marc Fontaine. If $X$ has a semi-stable reduction, then one can obtain more precise comparison theorem of the $p$-adic \'etale cohomology of $X$ to the log-cristalline cohomology of the special fiber of $X$. Thus, we have a satisfactory theory on $p$-adic \'etale cohomology of proper smooth varieties over $\mathbb{Q}_p$.

A $p$-adic representation $V$ of $G_{\mathbb{Q}_p}$ is a finite dimensional $\mathbb{Q}_p$-vector space with a continuous linear $G_{\mathbb{Q}_p}$-action. Fontaine (\cite{Fon}) defined the notions of de Rham, crystalline, and semi-stable representations, which form important subcategories of the category of $p$-adic representations of $G_{\mathbb{Q}_p}$. Then, he associated linear algebraic objects such as filtered vector spaces with extra structures to objects in each category. Fontaine's classification is compatible with geometry in the following sense: For a proper smooth variety $X$ over $\mathbb{Q}_p$, the $p$-adic representation $H_{\text{\'et}}^m(X_{\overline{\mathbb{Q}}_p},\mathbb{Q}_p)$ of $G_{\mathbb{Q}_p}$ is only de Rham in general. However, if $X$ has a semi-stable reduction (resp. good reduction), then $H_{\text{\'et}}^m(X_{\overline{\mathbb{Q}}_p},\mathbb{Q}_p)$ is semi-stable (resp. crystalline). 


There also exists a more analytic description of general $p$-adic representations. Let $\B_{\Q_p}$ be the fraction field of the $p$-adic completion of $\Z_p[[t]][t^{-1}]$. We define the action of $\Gamma_{\Q_p}:=G_{\mathbb{Q}_p(\mu_{p^{\infty}})/\Q_p}$ on $\B_{\Q_p}$ by $\gamma(t)=(1+t)^{\chi(g)}-1$, where $\chi:\Gamma_{\Q_p}\to\Z_p^{\times}$ is the cyclotomic character. We also define a Frobenius lift $\varphi$ on $\B_{\Q_p}$ by $\varphi(t)=(1+t)^p-1$. An \'etale $(\varphi,\Gamma_{\Q_p})$-module over $\B_{\Q_p}$ is a finite dimensional $\B_{\Q_p}$-vector space $M$ endowed with compatible actions of $\varphi$ and $\Gamma_{\Q_p}$such that the Frobenius slopes of $M$ are all zero. By using Fontaine-Wintenberger's isomorphism of Galois groups
\[
G_{\mathbb{Q}_p(\mu_{p^{\infty}})}\cong G_{\mathbb{F}_p((t))},
\]
Fontaine (\cite{Fes}) proved an equivalence between the category of $p$-adic representations and the category of \'etale $(\varphi,\Gamma_{\mathbb{Q}_p})$-modules over $\B_{\Q_p}$. We consider the overconvergent subring
\[
\B_{\Q_p}^{\dagger}:=\{f=\sum_{n\in\Z}a_nt^n\in\B_{\Q_p};a_n\in\Q_p,|a_n|\rho^n\to 0\ (n\to-\infty)\  \text{for some }\rho\in (0,1]\}
\]
of $\B_{\Q_p}$. Fr\'ed\'eric Cherbonnier and Pierre Colmez (\cite{CC1}) proved that the category of \'etale $(\varphi,\Gamma_{\mathbb{Q}_p})$-modules over $\B_{\Q_p}$ is equivalent to the category of \'etale $(\varphi,\Gamma_{\mathbb{Q}_p})$-modules over $\B^{\dagger}_{\Q_p}$. As a consequence of Cherbonnier-Colmez' theorem, $p$-adic analysis over the Robba ring
\[
\mathcal{R}_{\Q_p}:=\cup_{\rho'\in (0,1)}\{f=\sum_{n\in\Z}a_nt^n;a_n\in\Q_p,|a_n|\rho^n\to 0\ (n\to\pm\infty)\text{ for all }\rho\in (\rho',1]\}
\]
comes into play. Actually, Laurent Berger (\cite{Inv}) associated a $p$-adic differential equation $\N_{\dR}(V)$ over $\mathcal{R}_{\Q_p}$ to a de Rham representation $V$ via the above equivalences. By using this functor $\N_{\dR}$ and the quasi-unipotence of $p$-adic differential equations due to Yves Andr\'e, Zoghman Mebkhout, and Kiran Kedlaya, Berger proved Fontaine's $p$-adic local monodromy conjecture, which is a $p$-adic analogue of Grothendieck's $l$-adic monodromy theorem. We note that in the above theory, $G_{\mathbb{Q}_p}$ is usually replaced by $G_K$, where $K$ is a complete valuation field of mixed characteristic $(0,p)$ with a perfect residue field.

Recently, based on earlier works of Gerd Faltings and Osamu Hyodo, Fabrizio Andreatta and Olivier Brinon (\cite{AB}) started to generalize Fontaine's theory in the relative situation: Instead of complete discrete valuation rings with perfect residue fields, they work over higher dimensional ground rings $R$ such as the generic fiber of Tate algebra $\Z_p\{T_1,T_1^{-1},\dots,T_d,T_d^{-1}\}$. In this paper, we work in the most basic case of Andreatta-Brinon's setup. That is, our ground ring $K$ is still a complete valuation field, however, with a non-perfect residue field $k_K$ such that $p^d=[k_K:k_K^p]<\infty$. Such a complete discrete valuation field arises as the completion of ground rings along the special fiber in Andreatta-Brinon's setup. 

Even in our situation, a generalization of Fontaine's theory could be useful as in the proof of Kato's divisibilty result in Iwasawa Main conjecture for $GL_2$ (\cite{Kat}). Using compatible systems of $K_2$ of affine modular curves $Y(p^nN)$ varying $n$, Kato defines ($p$-adic) Euler systems in Galois cohomology groups over $\mathbb{Q}_p$ whose coefficients are related to cusp forms. A key ingredient in \cite{Kat} is that Kato's Euler systems are related with some products of Eisenstein series via Bloch-Kato dual exponential map $\exp^*$. In the proof of this fact, $p$-adic Hodge theory for ``the field of $q$-expansions'' $\mathcal{K}$ plays an important role, where $\mathcal{K}$ is the fraction field of the $p$-adic completion of $\mathbb{Z}_p[\zeta_{p^N}][[q^{1/N}]][q^{-1}]$. Roughly speaking, Tate's universal elliptic curve together with torsion points induces a morphism $\mathrm{Spec}(\mathcal{K}(\zeta_{p^n},q^{p^{-n}}))\to Y(p^nN)$. Using a generalization of Fontaine's ring $\mathbb{B}_{\mathrm{dR}}$ over $\mathcal{K}$, Kato defines a dual exponential map for Galois cohomology groups over $\mathcal{K}(\zeta_{p^n},q^{p^{-n}})$, and proves its compatibility with $\exp^*$. Then, the image of Kato's Euler system under $\exp^*$ is calculated by using Kato's generalized explicit reciprocity law for $p$-divisible groups over $\mathcal{K}(\zeta_{p^n},q^{p^{-n}})$.

To explain our results, we recall Anthony Scholl's theory of field of norms (\cite{Sch}), which is a generalization of Fontaine-Wintenberger's theorem when $k_K$ is non-perfect. In the rest of the introduction, for simplicity, we restrict ourselves to ``Kummer tower case'': That is, we choose a lift $\{t_j\}_{1\le j\le d}$ of a $p$-basis of $k_K$ and define a tower $\mathfrak{K}:=\{K_n\}$ of fields by $K_n:=K(\mu_{p^n},t_1^{p^{-n}},\dots,t_d^{p^{-n}})$ for $n>0$, and set $K_{\infty}:=\cup_n{K_n}$. Then, the Frobenius on $\mathcal{O}_{K_{n+1}}/p\mathcal{O}_{K_{n+1}}$ factors through $\mathcal{O}_{K_n}/p\mathcal{O}_{K_n}\hookrightarrow\mathcal{O}_{K_{n+1}}/p\mathcal{O}_{K_{n+1}}$, and the limit $X^+_{\mathfrak{K}}:=\varprojlim_n\mathcal{O}_{K_n}/p\mathcal{O}_{K_n}$ is a complete valuation ring of characteristic $p$. Here, we denote the integer ring of a valuation field $F$ by $\mathcal{O}_F$. Let $X_{\mathfrak{K}}$ be the fraction field of $X^+_{\mathfrak{K}}$. Then, Scholl proved that a similar limit procedure gives an equivalence of categories $\mathbf{F\acute{E}t}_{K_{\infty}}\cong \mathbf{F\acute{E}t}_{X_{\mathfrak{K}}}$, where $\mathbf{F\acute{E}t}_A$ denotes the category of finite \'etale algebras over $A$. In particular, we obtain an isomorphism of Galois groups
\[
\tau:G_{K_{\infty}}\cong G_{X_{\mathfrak{K}}}.
\]

The Galois group of a complete valuation field $F$ is canonically endowed with non-log and log ramification filtrations in the sense of Abbes-Saito (\cite{AS}). By using the ramification filtrations, one can define Artin and Swan conductors of Galois representations, which are important arithmetic invariants. It is natural to ask that Scholl's isomorphism $\tau$ is compatible with ramification theory. The first goal of this paper is to answer this question in the following form:
\begin{thm}[Theorem~\ref{thm:normcompat}]\label{thm:intro1}
Let $V$ be a $p$-adic representation of $\gk$, where the $\gk$-action of $V$ factors through a finite quotient. Then, Artin and Swan conductors of $V|_{K_n}$ are stationary and their limits coincide with Artin and Swan conductors of $\tau^*(V|_{K_{\infty}})$. 
\end{thm}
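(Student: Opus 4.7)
Since $V$ has finite image, I fix a finite Galois extension $L/K$ through which the $G_K$-action on $V$ factors and set $L_n := L K_n$, so that $V|_{K_n}$ factors through $\mathrm{Gal}(L_n/K_n)$. Under Scholl's equivalence $\mathbf{F\acute{E}t}_{K_\infty} \cong \mathbf{F\acute{E}t}_{X_\mathfrak{K}}$, the extension $L_\infty := L K_\infty$ corresponds to a finite Galois extension $Y/X_\mathfrak{K}$ with a canonical identification $\mathrm{Gal}(Y/X_\mathfrak{K}) \cong \mathrm{Gal}(L_\infty/K_\infty) = \mathrm{Gal}(L_n/K_n)$ for $n$ sufficiently large. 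The Artin and Swan conductors of $V|_{K_n}$, respectively of $\tau^*(V|_{K_\infty})$, are computed from the log Abbes--Saito filtration on these finite Galois groups by the standard character formula, so the theorem reduces to the following: for $n\gg 0$, the log ramification filtration of $\mathrm{Gal}(L_n/K_n)$ with respect to $K_n$ stabilizes, and its stable value agrees with the log filtration of $\mathrm{Gal}(Y/X_\mathfrak{K})$ with respect to $X_\mathfrak{K}$.

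To compare the two filtrations, I would unwind their rigid-geometric definitions. For a finite Galois extension $M/F$ with integral presentation $\oo_M = \oo_F[x_1,\dots,x_r]/(f_1,\dots,f_s)$, each rational $a>0$ determines a rigid analytic tube $X^a_{M/F}$ in the closed polydisc, whose geometric connected components count a quotient of $\mathrm{Gal}(M/F)$ by the log ramification subgroup at $a$; the breaks are precisely the jumps of this count. Applied to $L_n/K_n$ and to $Y/X_\mathfrak{K}$, the problem becomes a comparison of two such rigid tubes, one over the mixed-characteristic base $K_n$ and one over the equal-characteristic base $X_\mathfrak{K}$.

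The core step is to choose \emph{compatible} presentations on the two sides. Using the fact that $\oo_Y^+$ is built as the inverse limit of the rings $\oo_{L_n}/p\oo_{L_n}$, I would fix an integral presentation of $\oo_Y$ over $X^+_\mathfrak{K}$ and lift it to integral presentations of $\oo_{L_n}$ over $\oo_{K_n}$ whose defining polynomials reduce to the characteristic-$p$ ones modulo ever larger powers of $\pi_{K_n}$. With these compatible presentations, and after matching the log parameter $a$ via the canonical valuations $v_{K_n}$ and $v_{X_\mathfrak{K}}$, one shows that for every rational $a$ and every sufficiently large $n$, the geometric connected components of $X^a_{L_n/K_n}$ are in bijection with those of $X^a_{Y/X_\mathfrak{K}}$. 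This simultaneously gives the stabilization of the log breaks of $L_n/K_n$ and their coincidence with the log breaks of $Y/X_\mathfrak{K}$, whence the stabilization and coincidence of Artin and Swan conductors.

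The main obstacle I expect is the careful tracking of normalizations. The log parameter $a$ over $K_n$ transforms under a Herbrand-type functor as $n$ increases, and its limit must be identified with the Abbes--Saito log parameter on the field-of-norms side; one must show that this Kummer tower is deeply ramified enough that the transform becomes trivial in the limit. A secondary technical difficulty is quantifying how close the lifted defining equations need to be: the mod-$\pi_{K_n}^{N}$ approximation of the characteristic-$p$ equations must be sharp enough, relative to the parameter $a$, to induce a genuine bijection (not merely a surjection) on geometric connected components of the tubes. Overcoming these two points is what transports the Abbes--Saito construction across Scholl's limit procedure.
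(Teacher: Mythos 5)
Your reduction is in the right direction and mirrors the paper's: reduce to comparing the Abbes--Saito log filtration of $G_{L_n/K_n}$ with that of $G_{X_{\mathfrak{L}}/X_{\mathfrak{K}}}$, i.e.\ to comparing the number of geometric connected components of the corresponding Abbes--Saito rigid tubes at each rational level $a$. But the proposal leaves a genuine gap precisely where the paper does its real work. You say ``with these compatible presentations \dots\ one shows that \dots\ the geometric connected components \dots\ are in bijection'' and flag as a ``secondary technical difficulty'' the question of how sharp the mod-$\pi_{K_n}^N$ approximation needs to be. This is not secondary: it is the heart of the matter, and you give no mechanism for establishing the bijection. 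The paper's device is to build a single flat $R$-algebra $AS^a$ over a two-dimensional complete regular local ring $R=\oo[[\Pi]]$ (a characteristic-zero lift of $X^+_{\mathfrak{K}}$), whose fiber at the prime $(p)$ recovers $as^a_{X_{\mathfrak{L}}/X_{\mathfrak{K}}}$ and whose fibers at Eisenstein primes $\mathfrak{p}_n$ recover $as^a_{L_n/K_n}$, and then to prove a ``continuity of geometric connected components'' theorem (Propositions~\ref{prop:connected} and~\ref{prop:geomconnected}) for such families. That continuity theorem is established by developing a Gr\"obner-basis theory over complete regular local rings and rings of overconvergent power series (\S\S~\ref{subsec:grobnerregular}--\ref{subsec:continuity}), which controls quotient norms via a division algorithm and yields a Hensel-type idempotent-lifting statement (Lemma~\ref{lem:connected}). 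None of this is hinted at in the proposal, and the comparison does not follow from merely knowing the defining equations agree modulo a high power of $\pi_{K_n}$.

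Two further points. First, your concern that ``the log parameter $a$ over $K_n$ transforms under a Herbrand-type functor as $n$ increases'' is misplaced. The Abbes--Saito filtration is defined intrinsically at each level; the parameter $a$ is held fixed across $n$, and the claim is precisely that $\#\mathcal{F}^a_{\log}(L_n)$ stabilizes for $n\gg 0$ at the value $\#\mathcal{F}^a_{\log}(X_{\mathfrak{L}})$. There is no Herbrand rescaling to track -- the paper explicitly notes in the introduction that because the Abbes--Saito filtration in the imperfect-residue-field case is \emph{not} a renumbering of the lower-numbering filtration, the Herbrand-compatibility shortcut available in the perfect case (used by Marmora) is unavailable, which is exactly why the Gr\"obner continuity machinery is needed. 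Second, you assert stationarity of the conductor sequence without argument. The paper first proves only \emph{convergence} of the breaks $b_{\log}(L_n/K_n)\to b_{\log}(X_{\mathfrak{L}}/X_{\mathfrak{K}})$ (Theorem~\ref{thm:normcompat}); the passage from convergence to eventual stationarity of $\sw^{\AS}(V|_{K_n})$ uses Maschke decomposition into irreducibles (Lemma~\ref{lem:normrep}) and Xiao's Hasse--Arf integrality (Theorems~\ref{thm:mixed}, \ref{thm:Xiao}): a convergent sequence of (half-)integers is eventually constant. That step should be spelled out rather than asserted.
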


We briefly mention the idea of the proof in the Artin case. Note that in the prefect residue field case, it follows from the fact that the upper numbering ramification group is a renumbering one, which is compatible with the field of norms construction (see \cite[Lemme~5.4]{Mar}). However, in the imperfect residue field case, since Abbes-Saito's ramification filtration is not a renumbering of the lower numbering one, we proceed as follows. Let $L/K$ be a finite Galois extension. Let $X_{\mathfrak{L}}$ be an extension of $X_{\mathfrak{K}}$ corresponding to the tower $\mathfrak{L}=\{L_n:=LK_n\}$ under Scholl's equivalence. Then, we may reduce to prove that the non-log ramification filtrations of $G_{L_n/K_n}$ and $G_{X_{\mathfrak{L}}/X_{\mathfrak{K}}}$ coincide with each other. Abbes-Saito's non-log ramification filtration of a finite extension $E/F$ of complete discrete valuation fields is described by a certain family of rigid analytic spaces $as^a_{E/F}$ for $a\in\Q_{\ge 0}$ attached to $E/F$. In terms of Abbes-Saito's setup, we have only to prove that the number of connected components of $as^a_{X_{\mathfrak{L}}/X_{\mathfrak{K}}}$ and $as^a_{L_n/K_n}$ for sufficiently large $n$ are the same. An optimized proof of this assertion is as follows: We construct a characteristic $0$ lift $R$ of $X^+_{\mathfrak{K}}$, which is realized as the ring of functions on the open unit ball over a complete valuation ring. We can find a prime ideal $\mathfrak{p}_n$ of $R$ such that $R/\mathfrak{p}_n$ is isomorphic to $\mathcal{O}_{K_n}$. Then, we construct a lift $AS^a_{X_{\mathfrak{L}}/X_{\mathfrak{K}}}$ over $\mathrm{Spec}(R)$ of $as^a_{X_{\mathfrak{L}}/X_{\mathfrak{K}}}$, whose generic fiber at $\mathfrak{p}_n$ is isomorphic to $as^a_{L_n/K_n}$. We may also regard $AS^a_{X_{\mathfrak{L}}/X_{\mathfrak{K}}}$ as a family of rigid spaces parametrized by $\mathrm{Spec}(R)$. What we actually prove is that in such a family of rigid spaces over $\mathrm{Spec}(R)$, the number of the connected components of the fiber varies ``continuously''. This is done by Gr\"obner basis arguments over complete regular local rings, which extends the method of Liang Xiao (\cite{Xia}). The continuity result implies our assertion since the point $\mathfrak{p}_n\in \mathrm{Spec}(R)$ ``converges'' to the point $(p)\in \mathrm{Spec}(R)$.

Note that Shin Hattori (\cite{Hat}) reproved the above ramification compatibility of Scholl's isomorphism $\tau$ by using Peter Scholze's perfectoid spaces (\cite{Scholze}), which are a geometric interpretation of Fontaine-Wintenberger theorem. We briefly explain Hattori's proof. Let $\C_p$ (resp. $\C_p^{\flat}$) be the completion of the algebraic closure of $K_{\infty}$ (resp. $X_{\mathfrak{K}}$). Scholze proved the tilting equivalence between certain adic spaces (perfectoid spaces) over $\C_p$ and $\C_p^{\flat}$. Let $C$ be a perfectoid field and $Y$ a subvariety of $\A_{C}^n$. A perfection of $Y$ is a perfectoid space defined as the pull-back of $Y$ under the canonical projection $\varprojlim_{T_i\mapsto T_i^p}\A_C^n\to\A_C^n$, where $T_1,\dots,T_n$ denotes a coordinate of $\A_{C}^n$. Hattori proved that the tilting of the perfections of $(as^a_{L_n/K_n})_{\C_p}$ and $(as^{a}_{X_{\mathfrak{L}}/X_{\mathfrak{K}}})_{\C_p^{\flat}}$ are isomorphic under the tilting equivalence. Since the underlying topological spaces are homeomorphic under taking perfections and the tilting equivalence, he obtained the ramification compatibility.

The second goal of this paper is to generalize Berger's functor $\N_{\dR}$ and prove a ramification compatibility of $\N_{\dR}$, which extends Theorem~\ref{thm:intro1}. Precisely, we construct a functor from the category of de Rham representations to the category of $(\varphi,\nabla)$-modules over the Robba ring. Our target object $(\varphi,\nabla)$-modules are defined by Kedlaya as a generalization of $p$-adic differential differential equations (\cite{Sw}). Kedlaya also defined the differential Swan conductor $\mathrm{Swan}^{\nabla}(M)$ for a $(\varphi,\nabla)$-module $M$, which is a generalization of the irregularity of $p$-adic differential equations. Then, we prove the following de Rham version of Theorem~\ref{thm:intro1}:
\begin{thm}[Theorem~\ref{thm:main}]\label{thm:intro2}
Let $V$ be a de Rham representation of $\gk$. Then, we have
\[
\mathrm{Swan}^{\nabla}(\N_{\dR}(V))=\lim_{n\to\infty}\mathrm{Swan}(V|_{K_n}),
\]
where $\mathrm{Swan}$ on the RHS means Abbes-Saito's Swan conductor. Moreover, the sequence $\{\mathrm{Swan}(V|_{K_n})\}_{n>0}$ is eventually stationary.
\end{thm}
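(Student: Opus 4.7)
The plan is to reduce the stated theorem to the finite-image case treated in Theorem~\ref{thm:intro1} via quasi-unipotence. Since $V$ is de Rham, the $(\varphi,\nabla)$-module $\ndr(V)$ over the Robba ring is quasi-unipotent by the $p$-adic local monodromy theorem of Andr\'e, Kedlaya, and Mebkhout: there exists a finite Galois extension $L/K$ with group $H$ such that the pullback of $\ndr(V)$ along the corresponding finite \'etale cover of Robba rings becomes a successive extension of rank-one log-unipotent objects, hence has vanishing differential Swan conductor. By the additivity of $\mathrm{Swan}^{\nabla}$ on short exact sequences and its compatibility with finite \'etale base change, $\mathrm{Swan}^{\nabla}(\ndr(V))$ is determined by the \'etale descent datum, i.e.\ by the $H$-cover together with its wild ramification.

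Using Scholl's equivalence and the isomorphism $\tau$, this descent datum corresponds to a finite quotient of $G_{K_\infty}$, which I would upgrade to a $p$-adic representation $V_0$ of $\gk$ with finite image, chosen so that $\ndr(V_0)$ encodes the same \'etale descent data as $\ndr(V)$. The heart of the argument is then the pair of matching statements
\begin{enumerate}
\item $\mathrm{Swan}^{\nabla}(\ndr(V))=\mathrm{Swan}^{\nabla}(\ndr(V_0))$;
\item $\mathrm{Swan}(V|_{K_n})=\mathrm{Swan}(V_0|_{K_n})$ for all sufficiently large $n$.
\end{enumerate}
Assertion (i) follows from the vanishing of $\mathrm{Swan}^{\nabla}$ on the log-unipotent graded pieces produced by quasi-unipotence, together with additivity. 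Assertion (ii) expresses that the potentially semi-stable deformation part of $V$ contributes only tame ramification over $K_n$ once $n$ is sufficiently large, so that all wild ramification of $V|_{K_n}$ is already captured by $V_0|_{K_n}$. Granting (i) and (ii), an application of Theorem~\ref{thm:intro1} to the finite-image representation $V_0$ delivers both the eventual stationarity of $\{\mathrm{Swan}(V|_{K_n})\}$ and the identification of its limit with $\mathrm{Swan}^{\nabla}(\ndr(V))$.

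The main obstacle will be assertion (ii): it requires explicit control over the wild ramification of the potentially semi-stable part of $V|_{K_n}$ as $n$ grows, and hinges on understanding how the monodromy filtration of $\ndr(V)$ relates, via $\bdr$ and $\btrigdag$, to the ramification of $V$ along the tower $\{K_n\}$. I expect this will require combining Kedlaya's slope filtration theory with the explicit construction of $\ndr$ carried out in the second part of the paper, so as to extract a finite-image representation whose Swan conductor over each $K_n$ coincides with that of $V$. Assertion (i), by contrast, should follow more formally once $V_0$ has been constructed carefully at the level of $(\varphi,\nabla)$-modules, by transporting the Swan filtration through the monodromy extension.
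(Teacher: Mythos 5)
Your proposal aims in roughly the right direction (reducing the de Rham case to the finite-monodromy case handled by Theorem~\ref{thm:intro1}), but the reduction you propose is different from the paper's and has a genuine gap exactly where you flag it. The paper does \emph{not} pass through quasi-unipotence of $\N_{\dR}(V)$ nor through a single auxiliary finite-image representation $V_0$. Instead, it applies Kedlaya's slope filtration (Theorem~\ref{thm:slopefil}) to the $(\varphi,\nabla)$-module $\N_{\dR}(V)$. Each graded piece $\gr^i(\N_{\dR}(V))$ is pure of some slope $s_i$, and by the Dieudonn\'e--Manin/pure-object dictionary of \S~\ref{subsec:pure} (Lemma~\ref{lem:pure} and Lemma~\ref{lem:diffpure}) it is shown in Lemma~\ref{lem:diffgr}~(ii) to be isomorphic, as a $(\varphi,\nabla)$-module, to $D^{\dagger}_{\rig}(\mathcal{V}_i|_{\E_K})$ for a representation $\mathcal{V}_i\in\rep^{\fg}_{\Q_{p^{h_i}}}(\gk)$ with finite \emph{geometric} monodromy. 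Then additivity of $\sw^{\nabla}$ plus Xiao's comparison theorem gives $\sw^{\nabla}(\N_{\dR}(V))=\sum_i\sw^{\AS}(\mathcal{V}_i|_{\E_K})$ (Lemma~\ref{lem:swangr}), while the arithmetic side is, by the very definition of $\sw$ for de Rham representations (Definition~\ref{dfn:swan}), $\sw(V|_{K_n})=\sum_i\sw^{\AS}(\mathcal{V}_i|_{K_n})$, and the two are matched termwise by Lemma~\ref{lem:normrep}~(ii), which is the representation-theoretic upgrade of the field-of-norms ramification compatibility.

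The concrete problems with your route are the following. First, $\N_{\dR}(V)$ is in general not \'etale, so there is no single finite-image $V_0\in\rep^f_{\Q_p}(\gk)$ whose $(\varphi,\Gamma_K)$-module over $\B^{\dagger}_{\rig,K}$ is isomorphic (or even Swan-equivalent) to $\N_{\dR}(V)$; the slope filtration is precisely the device that separates out the \'etale-after-twist parts, and you cannot avoid it. Second, your assertion~(ii) — that $\sw(V|_{K_n})=\sw(V_0|_{K_n})$ for large $n$ — is not just ``the main obstacle'': as phrased it is not even well-posed, because you have not specified how $V_0$ is constructed and because the statement that the ``potentially semi-stable deformation part of $V$ contributes only tame ramification over $K_n$'' is neither a standard fact nor clearly true without specifying what deformation is meant. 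The paper sidesteps the whole issue by \emph{defining} $\sw(V|_{K_n})$ through the $\mathcal{V}_i$ (which are already canonically attached to $V$ via $\wtil{\N}^{\nabla+}_{\rig}(V)$), so that the identification with $\lim_n\sw^{\AS}(\mathcal{V}_i|_{K_n})$ is immediate once Lemma~\ref{lem:normrep}~(ii) is available. Third, the step ``$\sw^{\nabla}$ is determined by the \'etale descent datum'' after a finite cover trivializing the connection is not a usable invariance: differential Swan conductor is \emph{not} invariant under finite \'etale pullback, and making this precise would amount to reproving Xiao's comparison theorem for the graded pieces, which is exactly what the paper does via Lemma~\ref{lem:diffgr}~(ii) and Theorem~\ref{thm:Xiao}. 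To repair your proposal you would need to replace the quasi-unipotence reduction by the slope filtration d\'evissage and replace the single $V_0$ by the family $\{\mathcal{V}_i\}$ of finite-geometric-monodromy representations, at which point you would be reproducing the paper's argument.
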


Both Theorem~\ref{thm:intro1}, \ref{thm:intro2} are due to Adriano Marmora (\cite{Mar}) when the residue field is perfect. Even when the residue field is perfect, our proof of Theorem~\ref{thm:intro2} is slightly different from Marmora's proof since we use d\'evissage argument to reduce the pure slope case. As is addressed in \cite[\S~3.7]{Sw}, it seems to be possible to define a ramification invariant of $\N_{\dR}(V)$ in terms of $(\varphi,\Gamma_K)$-modules so that one can compute $\mathrm{Swan}(V)$ instead of $\mathrm{Swan}(V|_{K_n})$. It is also important to extend the construction of $\N_{\dR}$ to the general relative case: One may expect that a relative version of slope theory, described in \cite{slope} for example, will be an important tool.


\section*{Structure of the paper}
In \S~\ref{sec:pre}, we gather various basic results used in this paper. These contain some $p$-adic Hodge theory, Abbes-Saito's ramification theory and Kedlaya's theory of overconvergent rings, Scholl's fields of norms.

In \S~\ref{sec:adeq}, we prove some ring theoretic property of overconvergent rings by using Kedlaya's slope theory.

In \S~\ref{sec:grobner}, we develop Gr\"obner basis argument over complete regular local rings and overconvergent rings. We apply Gr\"obner basis argument to study families of rigid spaces, and use it to prove Theorem~\ref{thm:intro1}.

In \S~\ref{sec:diff}, we generalize Berger's gluing argument to construct a differential module $\N_{\dR}(V)$ for de Rham representations $V$. We also study the graded pieces of $\N_{\dR}(V)$ with respect to Kedlaya's slope filtration to reduce Theorem~\ref{thm:intro2} to Theorem~\ref{thm:intro1} by d\'evissage.

\section*{Convention}
Throughout this paper, let $p$ be a prime number. All rings are assumed to be commutative unless otherwise is mentioned. For a ring $R$, denote by $\pi_0^{\Zar}(R)$ the set of connected component of $\Spec(R)$ with respect Zariski topology. For a field $E$, fix an algebraic closure (resp. a separable closure) of $E$, denote it by $E^{\alg}$ or $\overline{E}$ (resp. $E^{\sep}$) and let $G_E$ be the absolute Galois group of $E$. For a finite Galois extension $F/E$, denote the Galois group of $F/E$ by $G_{F/E}$. For a field $k$ of characteristic $p$, let $k^{\mathrm{pf}}:=k^{p^{-\infty}}$ be the perfect closure in a fixed algebraic closure of $k$.

For a complete valuation field $K$, we denote the integer ring by $\oo_K$ and a uniformizer by $\pi_K$, the residue field by $k_K$. Let $v_K:K\to\Z\cup\{\infty\}$ be the discrete valuation satisfying $v_K(\pi_K)=1$. We denote by $K^{\ur}$ the $p$-adic completion of the maximal unramified extension of $K$ and denote by $I_K$ the inertia subgroup of $\gk$. Moreover, assume that $K$ is of mixed characteristic $(0,p)$ and $[k_K:k_K^p]=p^d<\infty$ in the rest of this paragraph. Denote the absolute ramification index by $e_K$. Denote by $\C_p$ the $p$-adic completion of $K^{\alg}$ and let $v_p$ be the $p$-adic valuation of $\C_p$ normalized by $v_p(p)=1$. We fix a system of $p$-power roots of unity $\{\zeta_{p^n}\}_{n\in\N_{>0}}$ in $K^{\alg}$, i.e., $\zeta_p$ is a primitive $p$-th root of unity and $\zeta_{p^{n+1}}^p=\zeta_{p^n}$ for all $n\in\N_{>0}$. Let $\chi:\gk\to\zp^{\times}$ be the cyclotomic character defined by $g(\zeta_{p^n})=\zeta_{p^n}^{\chi(g)}$ for all $n\in\N_{>0}$. We denote the fraction field of a Cohen ring of $k_K$ by $K_0$. Denote a lift of a $p$-basis of $k_K$ in $\oo_K$ by $\{t_j\}_{1\le j\le d}$. For a given $\{t_j\}_{1\le j\le d}$, we can choose an embedding $K_0\hookrightarrow K$ such that $\{t_j\}_{1\le j\le d}\subset\oo_{K_0}$ (see \cite[\S~1.1]{Ohk}). Unless otherwise is mentioned, we always choose $\{t_j\}_{1\le j\le d}$ and an embedding $K_0\hookrightarrow K$ as this way. Unless a particular mention is stated, we also fix a sequences of $p$-power roots $\{t_j^{p^{-n}}\}_{n\in\N,1\le j\le d}$ of $\{t_j\}_{1\le j\le d}$ in $K^{\alg}$, i.e., we have $(t_j^{p^{-n-1}})^p=t_j^{p^{-n}}$ for all $n\in\N_{>0}$. For such a sequence, we define $K^{\mathrm{pf}}$ as the $p$-adic completion of $\cup_nK(\{t_j^{p^{-n}}\}_{1\le j\le d})$, which is a complete discrete valuation field with perfect residue field $k_K^{\mathrm{pf}}$, and we regard $\C_p$ as the $p$-adic completion of the algebraic closure of $K^{\mathrm{pf}}$.

For $h\in\N_{>0}$, denote $\Q_{p^h}:=W(\mathbb{F}_{p^h})[p^{-1}]$. Let $K$ be a complete discrete valuation field, $F/\Q_p$ a finite extension. A finite dimensional $F$-vector space $V$ with continuous semi-linear $G_K$-action is called an $F$-representation of $\gk$ and moreover, if $F=\Q_p$, then we call $V$ a $p$-adic representation of $\gk$. We denote the category of $F$-representations of $\gk$ by $\rep_F{(\gk)}$. We say that $V$ is finite (resp. of finite geometric monodromy) if $G_K$ (resp. $I_K$) acts on $V$ via a finite quotient. We denote the category of finite (resp. finite geometric monodromy) $F$-representations of $G_K$ by $\rep^f_F{(\gk)}$ (resp. $\rep^{\fg}_F{(\gk)}$).

For a ring $R$, denote the Witt ring with coefficients in $R$ by $W(R)$. If $R$ is of characteristic $p$, then we denote the absolute Frobenius on $R$ by $\varphi$ and also denote the ring homomorphism $W(\varphi):W(R)\to W(R)$ by $\varphi$. Denote by $[x]\in W(R)$ the Teichm\"uller lift of $x\in R$.

For group homomorphisms $f,g:M\to N$ of abelian groups, we denote by $M^{f=g}$ the kernel of the map $f-g:M\to N$.

For $x\in\mathbb{R}$, let $\lfloor x\rfloor:=\inf\{n\in\Z;n\ge x\}$ be the least integer greater than or equal to $x$.


\section{Preliminaries}\label{sec:pre}
In this section, we will recall basic results used in the following of the paper and fix notation.

\subsection{Fr\'echet spaces}\label{subsec:Frechet}
We will define some basic terminology of topological vector spaces. Although we will use both valuations and norms to consider topologies, we will define our terminology in terms of valuations for simplicity. See \cite{pde} or \cite{Schn} for details.

\begin{notation}
Let $M$ be an abelian group. A valuation $v$ of $M$ is a map $v:M\to\R\cup\{\infty\}$ such that $v(x-y)\ge \inf\{v(x),v(y)\}$ for all $x,y\in R$ and $v(x)=\infty$ if and only if $x=0$. Moreover, when $M=R$ is a ring, $v$ is multiplicative if $v(xy)=v(x)+v(y)$ for all $x,y\in R$. A ring with multiplicative valuation is called a valuation ring. If $(R,v)$ is a valuation ring and $(M,v_M)$ is an $R$-module with valuation, then we say $v_M$ is an $R$-valuation if $v_M(\lambda x)=v(\lambda)+v_M(x)$ for $\lambda\in R$ and $x\in M$.

Let $(R,v)$ be a valuation ring and $M$ a finite free $R$-module. For an $R$-basis $e_1,\dots,e_n$ of $M$, we define the $R$-valuation $v_M$ on $M$ (compatible with $v$) associated to $e_1,\dots,e_n$ by $v_M(\sum_{1\le i\le n}a_ie_i)=\inf_i{v(a_i)}$ for $a_i\in R$ (\cite[Definition~1.3.2]{pde}). The topology defined by $v_M$ is independent of the choice of a basis of $M$ (\cite[Definition~1.3.3]{pde}). Hence, we do not refer to a basis to consider $v_M$ and we just denote $v_M$ by $v$ unless otherwise is mentioned.
\end{notation}


For any valuation $v$ on $M$, we define the associated non-archimedean norm $|\cdot|:M\to\R$ by $|x|:=a^{-v(x)}$ for a fixed $a\in \R_{>1}$ (non-archimedean means that it satisfies the strong triangle inequality). Conversely, for any non-archimedean norm $|\cdot|$, $v(\cdot)=-\log_a|\cdot|$ is a valuation. We will apply various definitions made for norms to valuations, and vice versa in this manner.

\begin{notation}
Let $(K,v)$ be a complete valuation field. Let $\{w_r\}_{r\in I}$ be a family of $K$-valuations of a $K$-vector space $V$. Consider the topology $\mathcal{T}$ of $V$, whose neighborhoods at $0$ are generated by $\{x\in V;w_r(x)\ge n\}$ for all $r\in I$ and $n\in\N$. We call $\mathcal{T}$ the topology of $V$ defined by $\{w_r\}_{r\in I}$ and denote $V$ equipped with the topology $\mathcal{T}$ by $(V,\{w_r\}_{r\in I})$, or simply by $V$. If $\mathcal{T}$ is equivalent to the topology defined by $\{w_r\}_{r\in I_0}$ for some countable subset $I_0\subset I$, we call $\mathcal{T}$ the $K$-Fr\'echet topology defined by $\{w_r\}_{r\in I}$. For a $K$-vector space, it is well-known that a $K$-Fr\'echet topology is metrizable (and vice versa). Moreover, when $V$ is complete, we call $V$ a $K$-Fr\'echet space; Note that $V$ is just a $K$-Banach space when $\#I_0=1$. Also, note that a topological $K$-vector space $V$ is $K$-Fr\'echet space if and only if $V$ is isomorphic to an inverse limit of $K$-Banach spaces, whose transition maps consist of bounded $K$-linear maps: More precisely, let $V$ be a $K$-Fr\'echet space with valuations $w_0\ge w_1\ge\dots $ and $V_n$ the completion of $V$ with respect to $w_n$. Then, the canonical map $V\to\varprojlim_nV_n$ is an isomorphism of $K$-Fr\'echet spaces. Also, note that if $V$ and $W$ are $K$-Fr\'echet spaces, then $\Hom_K(V,W)$ is again a $K$-Fr\'echet space with respect to the operator norm.

Let $(R,\{w_r\})$ be a $K$-Fr\'echet space with $R$ a ring. If $\{w_r\}$ are multiplicative, then we call $R$ a $K$-Fr\'echet algebra. For a finite free $R$-module $M$, we choose a basis of $M$ and let $\{w_{r,M}\}$ be the $R$-valuations compatible with $\{w_r\}$. Obviously, $(M,\{w_{r,M}\})$ is a $K$-Fr\'echet space. Unless otherwise is mentioned, we always endow a finite free $R$-module with such a family of valuations. 

In the rest of the paper, we omit the prefix ``$K$-'' unless otherwise is mentioned.
\end{notation}

Recall that the category of Fr\'echet spaces is closed under quotient, completed tensor products and direct sums. We also recall that the open mapping theorem holds for Fr\'echet spaces.

\subsection{Continuous derivations over $K$}\label{subsec:der}
In this subsection, we recall the continuous K\"ahler differentials (\cite[\S~4]{Hyo}). In this subsection, let $K$ be a complete discrete valuation field of mixed characteristic $(0,p)$ such that $[k_K:k_K^p]=p^d<\infty$.

\begin{dfn}
Let $\hat{\Omega}^1_{\oo_K}$ be the $p$-adic Hausdorff completion of $\Omega^1_{\oo_K/\Z}$ and put $\hat{\Omega}^1_K:=\hat{\Omega}^1_{\oo_K}[p^{-1}]$. Let $d:K\to\hat{\Omega}^1_K$ be the canonical derivation.
\end{dfn}

Recall that $\hat{\Omega}^1_{K}$ is a finite $K$-vector space with basis $\{dt_j\}_{1\le j\le d}$. Moreover, if $K$ is absolutely unramified, then $\hat{\Omega}^1_{\oo_K}$ is a finite free $\oo_K$-module with basis $\{dt_j\}_{1\le j\le d}$. Also, $\hat{\Omega}^1_{\bullet}$ is compatible with base change, i.e., $L\otimes_{K}\hat{\Omega}^1_{K}\cong\hat{\Omega}^1_L$ for any finite extension $L/K$.

\begin{notation}
Let $R$ be a topological ring and $M$ a topological $R$-module. We denote the set of continuous derivations $d:R\to M$ by $\Der_{\cont}(R,M)$), which is naturally regarded as an $R$-module.
\end{notation}

One can prove the lemma below by d\'evissage and the universality of the usual K\"ahler differentials.
\begin{lem}\label{cor:kahler}
For $M$ an inductive limit of $K$-Fr\'echet spaces, we have the canonical isomorphism
\[
d^*:\Hom_K(\hat{\Omega}^1_K,M)\to\Der_{\cont}(K,M).
\]
\end{lem}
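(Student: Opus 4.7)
The plan is to construct an inverse to $d^*$ by d\'evissage, reducing to the case where $M$ is a $K$-Banach space and then invoking the universal property of the $p$-adically completed K\"ahler differentials.

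Since $\hat{\Omega}^1_K$ is a finite-dimensional $K$-vector space with basis $\{dt_j\}_{1\le j\le d}$, giving a continuous $K$-linear map $\phi:\hat{\Omega}^1_K\to M$ is the same as giving the $d$-tuple $(\phi(dt_j))_j\in M^d$; in particular, $\Hom_K(\hat{\Omega}^1_K,-)$ commutes with inverse limits of Fr\'echet spaces and with filtered colimits. The functor $\Der_{\cont}(K,-)$ commutes with inverse limits of Fr\'echet spaces, since a continuous derivation into $\varprojlim_n M_n$ is determined by its compositions with the projections, each of which is continuous. For a filtered colimit $M=\varinjlim_\alpha M_\alpha$ with $M_\alpha$ Fr\'echet, the continuity of $\delta$ together with $\delta(p^n)=0$ (every derivation vanishes on $\Z$) forces $\delta(\oo_K)$ to be bounded in $M$: if $\delta(p^N\oo_K)\subset U$ for a neighborhood $U$ of $0$ in $M$, then $p^N\delta(\oo_K)=\delta(p^N\oo_K)\subset U$, hence $\delta(\oo_K)\subset p^{-N}U$. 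Invoking the factorization property of (strict) filtered colimits for bounded sets, $\delta(\oo_K)$, and therefore $\delta(K)=\bigcup_n p^{-n}\delta(\oo_K)$, lies in some single $M_\alpha$. This reduces the lemma to the case where $M$ is a $K$-Banach space.

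In the Banach case, fix a continuous derivation $\delta:K\to M$. After rescaling $\delta$ by a suitable element of $K^\times$, we may assume $\delta(\oo_K)\subset M^\circ$, where $M^\circ$ denotes the unit ball of $M$, which is a $p$-adically complete and Hausdorff $\oo_K$-module (the $p$-adic topology on $M^\circ$ agrees with the subspace topology because the norm is compatible with the $p$-adic valuation on $K$). By the universal property of $\Omega^1_{\oo_K/\Z}$, the derivation $\delta|_{\oo_K}:\oo_K\to M^\circ$ factors uniquely as $\phi_0\circ d$ for an $\oo_K$-linear map $\phi_0:\Omega^1_{\oo_K/\Z}\to M^\circ$, and this extends uniquely to a continuous $\oo_K$-linear map $\hat{\Omega}^1_{\oo_K}\to M^\circ$ by the universal property of the $p$-adic Hausdorff completion. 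Inverting $p$ yields the desired continuous $K$-linear $\phi:\hat{\Omega}^1_K\to M$ with $\phi\circ d=\delta$. Uniqueness of $\phi$ holds because $\hat{\Omega}^1_K$ is $K$-spanned by $\{dt_j\}\subset d(K)$.

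The main subtlety will be justifying the filtered-colimit d\'evissage: showing that a continuous derivation $\delta:K\to\varinjlim_\alpha M_\alpha$ actually takes values in some single $M_\alpha$. The boundedness of $\delta(\oo_K)$ reduces this to a factorization property of the inductive limit (a Dieudonn\'e--Schwartz type statement for strict LF-spaces). Confirming that this property holds for the specific inductive limits that appear in later sections of the paper, together with making the rescaling and Hausdorff-completion manipulations in the Banach step precise, is where I expect the technical care to be concentrated.
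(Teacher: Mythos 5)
Your plan---d\'evissage to the $K$-Banach case and then the universal property of $\Omega^1_{\oo_K/\Z}$ together with $p$-adic completion---is precisely what the paper's one-line remark has in mind, and your Banach-case argument is correct. The filtered-colimit reduction, however, is a genuine gap rather than a detail to confirm later. It requires the inductive limit to be \emph{regular} (a bounded subset of $\varinjlim_\alpha M_\alpha$ is contained and bounded in a single stage), but the lemma imposes no such hypothesis, and for the inductive systems in this paper it is not automatic: the inclusions $\wtil{\B}^{\dagger,r}_{\rig}\hookrightarrow\wtil{\B}^{\dagger,r'}_{\rig}$ for $r'<r$ are \emph{not} topological embeddings, since the valuations $w_s$ with $r'<s\le r$ on the source are not controlled by those with $s\le r'$; for instance, $p^{a_n}[y]^{-n}$ with $y\in\wtil{\E}^{\times}$, $v_{\wtil{\E}}(y)>0$, and $a_n-nr'v_{\wtil{\E}}(y)\to+\infty$ sublinearly, tends to $0$ for every $w_s$ with $s\le r'$ while $w_r$ of it tends to $-\infty$. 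And even granting $\delta(\oo_K)\subset M_\alpha$ set-theoretically, one would still have to check that $\delta$ is continuous into $M_\alpha$ with its own Fr\'echet topology.

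The gap can be closed by abandoning the colimit reduction altogether, which is cleaner and proves more. Since $\hat{\Omega}^1_K$ is free of finite rank on $\{dt_j\}$, define $\phi:\hat{\Omega}^1_K\to M$ by $\phi(dt_j):=\delta(t_j)$ and $K$-linear extension; $\phi$ is automatically continuous, and $\delta':=\delta-\phi\circ d$ is a continuous derivation killing every $t_j$. Your boundedness observation (continuity plus $\delta'(p^N)=0$) shows that $\delta'(\oo_{K_0})$ is bounded in $M$, hence so is the $\oo_{K_0}$-submodule $B$ it generates (use a neighborhood basis at $0$ of open $\oo_K$-submodules). The universal property gives an $\oo_{K_0}$-linear $\psi:\Omega^1_{\oo_{K_0}/\Z}\to M$ with $\psi\circ d=\delta'|_{\oo_{K_0}}$ and $\psi(dt_j)=0$; since $\hat{\Omega}^1_{\oo_{K_0}}$ is free on $\{dt_j\}$, for every $x\in\oo_{K_0}$ and $n\ge 0$ one has $dx\equiv\sum_ja_j\,dt_j \pmod{p^n\Omega^1_{\oo_{K_0}/\Z}}$ with $a_j\in\oo_{K_0}$, hence $\delta'(x)=\psi(dx)\in p^nB$ for all $n$. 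Hausdorffness of $M$ together with boundedness of $B$ forces $\bigcap_n p^nB=\{0\}$, so $\delta'|_{K_0}=0$; finally $\Omega^1_{K/K_0}=0$ (a finite separable extension) gives $\delta'=0$. This version requires only that $M$ be a Hausdorff locally convex topological $K$-vector space, with no regularity hypothesis on the inductive limit.
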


\begin{dfn}
Let $\{\partial_j\}_{1\le j\le d}\subset \Der_{\cont}(K_0,K_0)\cong \Hom_{K_0}(\Omega^1_{K_0},K_0)$ be the dual basis of $\{dt_j\}_{1\le j\le d}$. We call $\{\partial_j\}$ the derivations associated to $\{t_j\}$. We also denote by $\partial_j$ the canonical extension of $\partial_j$ to $\partial_j:K^{\alg}\to K^{\alg}$. Since $\partial_j(t_i)=\delta_{ij}$, we may denote $\partial_j$ by $\partial/\partial t_j$.
\end{dfn}

\subsection{Some Galois extensions}\label{subsec:Galois}
In this subsection, we will fix some notation of a certain Kummer extension, which will be studied later. See \cite[\S~1]{Hyo} for details. In this subsection, let $\wtil{K}$ be an absolutely unramified complete discrete valuation field of mixed characteristic $(0,p)$ with $[k_{\wtil{K}}:k_{\wtil{K}}^p]=p^d<\infty$. We put
\[
\wtil{K}_n:=\wtil{K}(\zeta_{p^n},t_1^{p^{-n}},\dots,t_1^{p^{-n}})\text{ for }n>0,\ \wtil{K}_{\infty}:=\cup_{n>0}{\wtil{K}_{n}},\ \wtil{K}_{\arith}:=\cup_{n>0}{\wtil{K}(\zeta_{p^n})},
\]
\[
\Gamma^{\geom}_{\wtil{K}}:=G_{\wtil{K}_{\infty}/\wtil{K}_{\arith}},\ \Gamma^{\arith}_{\wtil{K}}:=G_{\wtil{K}_{\arith}/\wtil{K}},
\]
\[
\Gamma_{\wtil{K}}:=G_{\wtil{K}_{\infty}/\wtil{K}},\ H_{\wtil{K}}:=G_{\wtil{K}^{\alg}/\wtil{K}_{\infty}}.
\]
Then, we have isomorphisms
\[
\Gamma_{\wtil{K}}^{\arith}\cong\Z_p^{\times},\ \Gamma_{\wtil{K}}^{\geom}\cong \Z_p^d,
\]
which are compatible with the action of $\Gamma^{\arith}_{\wtil{K}}$ on $\Gamma^{\geom}_{\wtil{K}}$. Precisely, the isomorphisms are given as follows: An element $a\in\Z_p^{\times}$ corresponds to $\gamma_a\in \Gamma^{\arith}_{\wtil{K}}$ such that $\gamma_a(\zeta_{p^n})=\zeta_{p^n}^a$ for all $n$. An element $b=(b_j)\in\Z_p^d$ corresponds to $\gamma_b\in\Gamma^{\geom}$ for $1\le j\le d$ such that $\gamma_b(\zeta_{p^n})=\zeta_{p^n}$ for all $n\in\N$ and $\gamma_b(t_{j}^{p^{-n}})=\zeta_{p^n}^{b_j}t_{j}^{p^{-n}}$. By regarding $\Gamma_{\wtil{K}}^{\arith}$ as a subgroup $G_{\wtil{K}_{\infty}/\cup_n\wtil{K}(t_1^{p^{-n}},\dots,t_1^{p^{-n}})}$ of $\Gamma_{\wtil{K}}$, we obtain isomorphisms
\[
\eta=(\eta_0,\dots,\eta_d):\Gamma_{\wtil{K}}\cong \Gamma_{\wtil{K}}^{\arith}\ltimes\Gamma_{\wtil{K}}^{\geom}\cong\Z_p^{\times}\ltimes\Z_p^d.
\]
Since we have a canonical isomorphism
\[
\Z_p^{\times}\ltimes\Z_p^d\cong\begin{pmatrix}
\Z_p^{\times}&\Z_p&\hdots&\Z_p\\
&1&&\\
&&\ddots&\\
&&&1
\end{pmatrix}
\le GL_{d+1}(\Z_p),
\]
the group $\Gamma_{\wtil{K}}$ can be regarded as a classical $p$-adic Lie group with Lie algebra
\[
\mathfrak{g}:=\mathrm{Lie}(\Gamma_{\wtil{K}})\cong\Q_p\ltimes\Q_p^d=\begin{pmatrix}
\Q_p&\dots&\Q_p\\
&&\\
&0&
\end{pmatrix}\subset \mathfrak{gl}_{d+1}(\Q_p).
\]
For a finite extension $L/\wtil{K}$, we put
\[
L_n:=\wtil{K}_{n}L\text{ for }n\in\N_{>0},\ L_{\infty}:=\wtil{K}_{\infty}L
\]
\[
\Gamma_L:=G_{L_{\infty}/L},\ H_L:=G_{\wtil{K}^{\alg}/L_{\infty}}.
\]
Then, $\Gamma_L$ is an open subgroup of $\Gamma_{\wtil{K}}$, hence, there exists an open normal subgroup of $\Gamma_L$, which is isomorphic to an open subgroup of $(1+2p\Z_p)\ltimes \Z_p^d$ by the map $\eta$. Also, we may identify the $p$-adic Lie algebra of $\Gamma_L$ with $\mathfrak{g}$. Finally, we define closed subgroups of $\Gamma_L$
\[
\Gamma_{L,0}:=\{\gamma\in\Gamma_L;\eta_j(\gamma)=0\text{ for all }1\le j\le d\},
\]
\[
\Gamma_{L,j}:=\{\gamma\in\Gamma_L;\eta_0(\gamma)=1, \eta_{i}(\gamma)=0\text{ for all }1\le i\le d,\ i\neq j\}\text{ for }1\le j\le d.
\]
\subsection{Basic construction of Fontaine's rings}\label{subsec:periods}
In this subsection, we will recall the definition of rings of $p$-adic periods due to Fontaine. See \cite[\S~3]{Ohk} for details.

Let $K$ be a complete discrete valuation field of mixed characteristic~$(0,p)$ with $[k_K:k_K^p]=p^d<\infty$. Let $\wtil{\E}^+:=\varprojlim_n\oo_{\C_p}/p\oo_{\C_p}$, where the transition maps are Frobenius. This is a complete valuation ring of characteristic $p$, whose fractional field is denoted by $\wtil{\E}$, which is algebraically closed. We have a canonical identification
\[
\wtil{\E}\cong\{(x^{(n)})_{n\in\N}\in \C_p^{\N};(x^{(n+1)})^p=x^{(n)}\text{ for all }n\in\N\}.
\]
For $x\in \C_p$, we denote by $\wtil{x}\in\wtil{\E}$ an element $\wtil{x}=(x^{(n)})$ such that $x^{(0)}=x$. In particular, we put $\varepsilon:=(1,\zeta_p,\zeta_{p^2},\dots),\ \wtil{t}_j:=(t_j,t_j^{p^{-1}},\dots)\in\wtil{\E}^+$. We define the valuation $v_{\wtil{\E}}$ of $\wtil{\E}$ by $v_{\wtil{\E}}((x^{(n)}))=v_p(x^{(0)})$. We put
\[
\wtil{\A}^+:=W(\wtil{\E}^+)\subset \wtil{\A}:=W(\wtil{\E}),
\]
\[
\wtil{\B}^+:=\wtil{\A}^+[p^{-1}]\subset\wtil{\B}:=\wtil{\A}[p^{-1}],
\]
\[
\pi:=[\varepsilon]-1,\ q:=\pi/\varphi^{-1}(\pi)=\sum_{0\le i<p}[\varepsilon^{p^{-1}}]^i\in\wtil{\A}^+
\]
and we define a surjective ring homomorphism
\[
\theta:\wtil{\B}^+\to\C_p;\sum_{n\gg -\infty}p^n[x_n]\mapsto p^nx_n^{(0)},
\]
which maps $\wtil{\A}^+$ to $\oo_{\C_p}$. Note that $q$ is a generator of the kernel of $\theta|_{\wtil{\A}^+}$.

Let $\mathcal{K}$ be a closed subfield of $\C_p$, whose value group $v_p(\mathcal{K}^{\times})$ is discrete. We will define rings
\[
\A_{\inf,\C_p/\mathcal{K}},\ \B_{\dR,\C_p/\mathcal{K}}^+,\ \B_{\dR,\C_p/\mathcal{K}}.
\]
Let $\A_{\inf,\C_p/\mathcal{K}}$ be the universal $p$-adically formal pro-infinitesimal $\oo_{\mathcal{K}}$-thickening of $\oo_{\C_p}$: If we denote by $\theta_{\C_p/\mathcal{K}}:\oo_{\mathcal{K}}\otimes_{\Z}\wtil{\A}^+\to\oo_{\C_p}$ the linear extension of $\theta$, then $\A_{\inf,\C_p/\mathcal{K}}$ is the $(p,\ker{\theta_{\C_p/\mathcal{K}}})$-adic Hausdorff completion of $\oo_{\mathcal{K}}\otimes_{\Z}\wtil{\A}^+$. The map $\theta_{\C_p/\mathcal{K}}$ extends to $\theta_{\C_p/\mathcal{K}}:\A_{\inf,\C_p/\mathcal{K}}\to\oo_{\C_p}$. Note that $\A_{\inf,\C_p/\Q_p}$ is canonically identified with $\wtil{\A}^+$. Let $\B_{\dR,\C_p/\mathcal{K}}^+$ be the $\ker{\theta_{\C_p/\mathcal{K}}}$-adic Hausdorff completion of $\A_{\inf,\C_p/\mathcal{K}}[p^{-1}]$ and $\theta_{\C_p/\mathcal{K}}:\B_{\dR,\C_p/\mathcal{K}}\to\C_p$ the canonical map induced by $\theta_{\C_p/\mathcal{K}}$. Let
\[
u_j:=t_j-[\wtil{t}_j]\in\A_{\inf,\C_p/K_0},
\]
\[
t:=\log([\varepsilon]):=\sum_{n\ge 1}(-1)^{n-1}\frac{([\varepsilon]-1)^n}{n}\in \B_{\dR,\C_p/\Q_p}^+\subset \B_{\dR,\C_p/\mathcal{K}}^+.
\]
Finally, we define $\B_{\dR,\C_p/\mathcal{K}}:=\B_{\dR,\C_p/\mathcal{K}}^+[t^{-1}]$. These constructions are functorial with respect to $\C_p$ and $\mathcal{K}$, particularly
\[
\A_{\inf,\C_p/\Q_p}\subset \A_{\inf,\C_p/\mathcal{K}},\ \B_{\dR,\C_p/\Q_p}^+\subset\B_{\dR,\C_p/\mathcal{K}}^+,\ \B_{\dR,\C_p/\Q_p}\subset\B_{\dR,\C_p/\mathcal{K}}.
\]
Therefore, any continuous $\mathcal{K}$-algebra automorphism of $\C_p$ acts on these rings. We also have the following explicit descriptions:
\[
\A_{\inf,\C_p/K_0}\cong \wtil{\A}^+[[u_1,\dots,u_d]],\ \B_{\dR,\C_p/K}^+\cong \B_{\dR,\C_p/\Q_p}^+[[u_1,\dots,u_d]]
\]
and $\B_{\dR,\C_p/\Q_p}^+$ is a complete discrete valuation field with uniformizer $t$ and residue field $\C_p$. Also, $\B_{\dR,\C_p/K}^+$ and $\B_{\dR,\C_p/K}$ are invariant after replacing $K$ by a finite extension, in particular, these rings are endowed with canonical $K^{\alg}$-algebra structures.

For $V\in\rep_{\Q_p}(\gk)$, we define $\D_{\dR}(V):=(\B_{\dR,\C_p/K}\otimes_{\Q_p}V)^{\gk}$, which is a finite dimensional $K$-vector space such that $\dim_K\D_{\dR}(V)\le \dim_{\Q_p}V$. When the equality holds, we call $V$ de Rham and denote the category of de Rham representations of $\gk$ by $\rep_{\dR}(\gk)$.

We define the canonical topology of $\B_{\dR,\C_p/\mathcal{K}}^+$ by identifying with $\varprojlim_k\A_{\inf,\C_p/\mathcal{K}}[p^{-1}]/(\ker{\theta_{\C_p/\mathcal{K}}})^k$ endowed with the inverse limit topology, where $\A_{\inf,\C_p/\mathcal{K}}[p^{-1}]/(\ker{\theta_{\C_p/\mathcal{K}}})^k$ is endowed with $\mathcal{K}$-Banach space structure, whose unit disc is the image of $\A_{\inf,\C_p/\mathcal{K}}$. Thus, $\B_{\dR,\C_p/\mathcal{K}}^+$ is a $\mathcal{K}$-Fr\'echet algebra.

The ring $\B_{\dR,\C_p/\mathcal{K}}^+$ is endowed with a continuous $\B_{\dR,\C_p/\Q_p}^+$-linear connection
\[
\nabla^{\geom}:\B_{\dR,\C_p/\mathcal{K}}^+\to \B_{\dR,\C_p/\mathcal{K}}^+\otimes_{\mathcal{K}}\hat{\Omega}^1_{\mathcal{K}},
\]
which is induced by the canonical derivation $d:\mathcal{K}\to\hat{\Omega}^1_{\mathcal{K}}$. More precisely, if we denote by $\{\partial_j\}_{1\le j\le d}$ the derivations of $\B_{\dR,\C_p/K}^+$ given by $\nabla^{\geom}(x)=\sum_j\partial_j(x)\otimes dt'_j$, then $\partial_j$ is the unique $\B_{\dR,\C_p/\Q_p}^+$-linear extension of $\partial/\partial t_j:K\to K$. Thus, we can regard the above connection as a connection associated to a ``coordinate'' $t_1,\dots,t_d$ of $K$, so we put the superscript ``geom''. We denote the kernel of $\nabla^{\geom}$ by $\B_{\dR,\C_p/\mathcal{K}}^{\nabla+}$, which coincides with the image of $\B_{\dR,\C_p/\Q_p}^+$. Therefore, we may identify $\B_{\dR,\C_p/\mathcal{K}}^{\nabla+}$ with $\B_{\dR,\C_p/\Q_p}^+$.

We also define a subring $\wtil{\B}_{\rig,\C_p/\Q_p}^{\nabla+}$ of $\B_{\dR,\C_p/\Q_p}^+$ as follows: Let $\A_{\cris,\C_p/\Q_p}$ be the universal $p$-adically formal $\Z_p$-thickening of $\oo_{\C_p}$, i.e., the $p$-adic Hausdorff completion of the PD-envelope of $\wtil{\A}^+$ with respect to the ideal $\ker{\theta_{\C_p/\Q_p}}$, compatible with the canonical PD-structure on the ideal $(p)$. The construction is functorial, hence, the Frobenius $\varphi:\wtil{\A}^+\to\wtil{\A}^+$ acts on $\A_{\cris,\C_p/\Q_p}$ and $\B_{\cris,\C_p/\Q_p}^+:=\A_{\cris,\C_p/\Q_p}[p^{-1}]$. We define $\B_{\rig,\C_p/\Q_p}^{\nabla+}:=\cap_{n\in\N}\varphi^n(\B_{\cris,\C_p/\Q_p}^+)$, which is the maximal subring of $\B_{\cris,\C_p/\Q_p}^+$ stable under $\varphi$. By construction, $\wtil{\B}^{\nabla+}_{\rig,\C_p/\Q_p}$ is a subring of $\B_{\dR,\C_p/\Q_p}^+\cong \B_{\dR,\C_p/\mathcal{K}}^{\nabla+}$.

Finally, for simplicity, we denote
\[
\B_{\dR}^{\nabla+}:=\B^+_{\dR,\C_p/\Q_p},\ \B_{\dR}^{\nabla}:=\B_{\dR,\C_p/\Q_p},\ \B_{\dR}^+:=\B^+_{\dR,\C_p/K},\ \B_{\dR}:=\B_{\dR,\C_p/K},\ \wtil{\B}_{\rig}^{\nabla+}:=\wtil{\B}_{\rig,\C_p/\Q_p}^{\nabla+}
\]
when no confusion arises.

\subsection{Ramification theory of Abbes-Saito}\label{subsec:AS}
In this subsection, we will review Abbes-Saito's ramification theory. See \cite{AS}, \cite{AS2} for details.

Let $K$ be a complete discrete valuation field with residue field of characteristic~$p$. Let $L/K$ be a finite separable extension. Let $Z=\{z_0,\dots,z_n\}$ be a set of generators of $\oo_L$ as an $\oo_K$-algebra. Let $\oo_K\langle Z_0,\dots,Z_n\rangle\to\oo_L;Z_i\mapsto z_i$ be the corresponding surjective $\oo_K$-algebra homomorphism and $I_Z$ its kernel, where $\oo_K\langle Z_0,\dots,Z_n\rangle$ is Tate algebra. For $a\in\Q_{>0}$, we define the non-log Abbes-Saito space by
\[
as_{L/K,Z}^a:=D^{n+1}(|\pi_K|^{-a}f;f\in I_Z)=\{x\in D^{n+1};|f(x)|\le |\pi_K|^a\ \forall f\in I_Z\},
\]
which is an affinoid subdomain of the $(n+1)$-dimensional polydisc $D^{n+1}$. Let $\pi_0^{\geom}(as_{L/K,Z}^a)$ be the geometric connected components of $as_{L/K,Z}^a$, i.e., the connected components of $as_{L/K,Z}^a\times_KK^{\alg}$ with respect to Zariski topology. We define a $\gk$-set $\mathcal{F}^a(L):=\pi_0^{\geom}(as_{L/K,Z}^a)$ and define the non-log ramification break by
\[
b(L/K):=\inf\{a\in\R;\#\mathcal{F}^a(L)=[L:K]\}\in\Q.
\]
If $L/K$ is Galois, then $\mathcal{F}^a(L)$ can be identified with a quotient of $G_{L/K}$. Moreover, the system $\{\mathcal{F}^a(L)\}_{a}$ of $\gk$-sets defines a filtration $\{G_{L/K}^a\}_{a\in\Q_{\ge 0}}$ of $G_{L/K}$ such that $\mathcal{F}^a(L)\cong G_{L/K}/G_{L/K}^a$ as $\gk$-sets.

There exists a log variation of this construction by considering the following log structure: Let $P\subset Z$ be a subset containing a uniformizer. Take a lift $g_j\in\oo_K\langle Z_0,\dots,Z_n\rangle$ of $z_j^{e_K}/\pi_K^{v_L(z_j)}$ for each $z_j\in P$ and take a lift $h_{i,j}\in\oo_K\langle Z_0,\dots,Z_n\rangle$ of $z_j^{v_L(z_i)}/z_i^{v_L(z_j)}$ for each pair $(z_i,z_j)\in P\times P$. For $a\in\Q_{>0}$, we define the log Abbes-Saito space by
\[
as^a_{L/K,Z,P}:=D^{n+1}\begin{pmatrix}
|\pi_K|^{-a}f\text{ for }f\in I_Z\\
|\pi_K|^{-a-v_L(z_i)}(X_i^{e_{L/K}}-\pi_Kg_i)\text{ for }i\in P\\
|\pi_K|^{-a-v_L(z_i)v_L(z_j)/e_{L/K}}(X_j^{v_L(z_i)}-X_i^{v_L(z_j)}h_{i,j})\text{ for }(i,j)\in P\times P
\end{pmatrix}
\]
as an affinoid subdomain of $D^{n+1}$. As before, we define the $\gk$-set $\mathcal{F}^a_{\log}(L):=\pi_0^{\geom}(as_{L/K,Z,P}^a)$ and define the log ramification break by
\[
b_{\log}(L/K):=\inf\{a\in\R;\#\mathcal{F}^a_{\log}(L)=[L:K]\}\in\Q.
\]
A similar procedure as before defines the log ramification filtration $\{G_{L/K,\log}^a\}_{a\in\Q_{\ge 0}}$ of $G_{L/K}$.

In this paper, we consider only the following simple Abbes-Saito spaces: Let notation be as above. Let $p_0,\dots,p_m$ be a system of generators of the kernel of the surjection $\oo_K\langle X_0,\dots,X_n\rangle\to\oo_L$. Assume that $z_0$ is a uniformizer of $L$ and $p_0=X_0^{e_{L/K}}-\pi_K g_0$ for some $g_0\in\oo_K\langle X_0,\dots,X_n\rangle$. In this case, we have a simple log structure: We put $P:=\{z_0\}$ and we can choose $g_0$ as a lift of $z_0^{e_{L/K}}/\pi_K$. We also choose $1$ as $h_{1,1}$. Hence, Abbes-Saito spaces are given by
\[
as_{L/K,Z}^a=D^{n+1}(|\pi_K|^{-a}p_j\text{ for }0\le j\le m),
\]
\[
as^a_{L/K,Z,P}=D^{n+1}(|\pi_K|^{-a-1}p_0,|\pi_K|^{-a}p_j\text{ for }1\le j\le m).
\]

Let $F/\Q_p$ be a finite extension and $V$ an $F$-representation of $\gk$ with finite local monodromy. We define Abbes-Saito's Artin and Swan conductors by
\begin{align*}
\art^{\AS}(V)&:=\sum_{a\in\Q_{\ge 0}}a\cdot\dim_{F}(V^{\cap_{b>a}G_K^{b}}/V^{G_K^a}),\\
\sw^{\AS}(V)&:=\sum_{a\in\Q_{\ge 0}}a\cdot\dim_{F}(V^{\cap_{b>a}G_{K,\log}^{b}}/V^{G_{K,\log}^a}).
\end{align*}

Note that the above construction does not depend on the auxiliary choices such as $Z$, $P$. Also, note that both Artin and Swan conductors are additive and compatible with unramified base change. When $k_K$ is perfect, the log (resp. non-log) ramification filtration is compatible with the usual upper numbering filtration (resp. shift by one). Moreover, our Artin and Swan conductors coincide with the classical Artin and Swan conductors when $k_K$ is perfect.

\begin{thm}[{Hasse-Arf Theorem, \cite[Theorem~4.5.14]{Xia2}}]\label{thm:mixed}
Assume that $K$ is of mixed characteristic. Let $F/\Q_p$ be a finite extension and $V\in\rep^{\fg}_F(G_K)$. Then, we have $\art(V)\in\Z$ if $K$ is not absolutely unramified; we have $\sw^{\AS}(V)\in\Z$ if $p\neq 2$ and $\sw^{\AS}(V)\in 2^{-1}\Z$ if $p=2$.
\end{thm}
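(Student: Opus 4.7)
The plan is to follow Xiao's strategy in \cite{Xia2}, which builds on his earlier equal-characteristic argument in \cite{Xia}. Since $V$ has finite geometric monodromy, after replacing $K$ by a finite unramified extension (which affects neither Artin nor Swan conductor) we may assume the $G_K$-action factors through a finite Galois quotient $G=G_{L/K}$, and the task reduces to showing that for each rational break $a$ of the (non-log or log) Abbes--Saito filtration on $G$, the quantity $a\cdot\dim_F(V^{\cap_{b>a}G^b}/V^{G^a})$, respectively its log analogue, lies in $\Z$ or $\tfrac{1}{2}\Z$ as claimed.

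The main reduction is to attach functorially to $V$ a $(\varphi,\nabla)$-module $M_V$ over a Robba ring built from $K$ whose differential Swan conductor $\sw^{\nabla}(M_V)$ in the sense of Kedlaya equals $\sw^{\AS}(V)$, and then to invoke Kedlaya's Hasse--Arf theorem for $(\varphi,\nabla)$-modules, which delivers the required integrality (the $\tfrac{1}{2}\Z$ discrepancy for $p=2$ arising from the quadratic unramified twist needed to trivialize the $\varphi$-structure on certain slope pieces). In equal characteristic $p$, this identification is obtained through the equivalence between finite \'etale extensions and suitable differential modules, combined with a comparison between Abbes--Saito breaks and Newton polygon slopes. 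In the mixed characteristic case at hand, one bootstraps from equal characteristic by passing through Scholl's field of norms: the ramification-theoretic compatibility of Scholl's functor (proven in \S\ref{sec:grobner} below, and due to Marmora in the perfect residue field case) transports $\sw^{\AS}(V)$ across the field-of-norms equivalence and thereby reduces the integrality to the characteristic-$p$ statement.

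The Artin assertion is then deduced from the Swan assertion by analysing the relation between the log and non-log Abbes--Saito filtrations. In the imperfect residue field setting these two filtrations are not related by a simple shift, but the discrepancy between the corresponding breaks involves a correction of order $1/e_K$; combined with integrality of the Swan conductor, this correction is itself integral precisely when $e_K\ge 2$, accounting for the hypothesis that $K$ not be absolutely unramified.

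The principal obstacle is establishing the identity $\sw^{\AS}(V)=\sw^{\nabla}(M_V)$ in the characteristic-$p$ case, which requires Xiao's refined Swan conductor formalism valued in twisted K\"ahler differentials together with a delicate matching between the breaks of the Abbes--Saito filtration and the slopes of the associated differential module. The $p=2$ correction, intrinsic to Kedlaya's Hasse--Arf theorem for $(\varphi,\nabla)$-modules, cannot be eliminated by these methods and survives as the stated half-integrality.
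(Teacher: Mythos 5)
This statement is cited directly from Xiao's paper ([Theorem~4.5.14]{Xia2}); the present paper gives no proof of it, so there is no ``paper's own proof'' to compare against. That said, your proposal contains a real gap that is worth flagging.

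Your central idea is to transport the Swan conductor to equal characteristic via Scholl's field of norms and invoke the equal-characteristic Hasse--Arf theorem (Theorem~\ref{thm:Xiao}). This cannot work as stated. The ramification compatibility of the field of norms (Theorem~\ref{thm:normcompat} in this paper) says $\sw^{\AS}(V|_{K_n})\to\sw^{\AS}(V|_{X_{\mathfrak{K}}})$ along the Scholl tower, and Theorem~\ref{thm:Xiao} tells us the limit is an integer. But convergence of a sequence to an integer says nothing about integrality of the individual terms, nor in particular about integrality at $K$ itself, which is what Theorem~\ref{thm:mixed} asserts. Worse, the logic in the paper actually runs in the opposite direction: Lemma~\ref{lem:normrep}~(ii) \emph{uses} Theorem~\ref{thm:mixed} to promote convergence of the conductors to eventual stationarity. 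Without the Hasse--Arf integrality, the field-of-norms comparison only gives convergence, and you cannot bootstrap integrality at a fixed $K$ out of it.

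Two more concrete problems. First, you describe the identity $\sw^{\AS}=\sw^{\nabla}$ for equal characteristic $p$ as ``the principal obstacle,'' but this is exactly Xiao's Theorem~4.4.1 (cited here as Theorem~\ref{thm:Xiao}) and is a known input, not the difficulty; the hard part of $\cite{Xia2}$ is precisely the mixed-characteristic argument, which Xiao carries out \emph{directly} over $K$ (via thickenings and nonarchimedean differential modules attached to $K$ itself), not by passing through the field of norms. Second, your treatment of the Artin assertion---that the log/non-log discrepancy is ``of order $1/e_K$, hence integral precisely when $e_K\ge 2$''---is not a proof; in the imperfect residue field case the two Abbes--Saito filtrations are not related by a simple shift, and the appearance of the ``not absolutely unramified'' hypothesis in Xiao's theorem comes from a more delicate analysis of lattices in $\hat\Omega^1_{\oo_K}$ and cannot be read off a na\"ive $1/e_K$ correction.
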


Xiao gives more precise results in the equal characteristic case, as we will see in Theorem~\ref{thm:Xiao}.

\subsection{Overconvergent rings}\label{subsec:ocnotation}
In this subsection, we will recall basic definitions of overconvergent rings associated to complete valuation fields of characteristic~$p$ following \cite[\S~2,3]{mon} and \cite[\S~2]{Doc}.

\begin{construction}[{\cite[\S\S~2.1,\ 2.2]{Doc}}]\label{con:oc}
Let $(E,v)$ be a complete valuation field of characteristic~$p$. Assume that either $E$ is perfect or $v$ is a discrete valuation. We will construct overconvergent ring associated to $E$. We first consider the case where $E$ is perfect. Note that any element of $W(E)[p^{-1}]$ is uniquely expressed as $\sum_{k\gg-\infty}p^k[x_k]$ with $x_k\in E$. For $n\in\Z$, we define a ``partial valuation'' on $W(E)[p^{-1}]$ by
\[
v^{\le n}(\sum_{k\gg-\infty}p^k[x_k]):=\inf_{k\le n}v(x_k).
\]
For $r\in\R_{>0}$, we define
\[
w_{r}(x):=\inf_{n}\{rv^{\le n}(x)+n\},
\]
\[
W(E)_r:=\{x\in W(E);w_r(x)<\infty\}.
\]
Then, $W(E)_r[p^{-1}]$ is a subring of $W(E)[p^{-1}]$ and $w_r$ is a multiplicative valuation of $W(E)_r[p^{-1}]$. Moreover, we have $W(E)_r\subset W(E)_{r'}$ for $r'\le r$. We put $W_{\con}(E):=\varinjlim_{r\to 0}W(E)_r$.

Then, we consider the general case, i.e., we do not need to assume that $E$ is perfect in the following. Let $\Gamma$ be a Cohen ring of $E$ with a Frobenius lift $\varphi$. Then, we can obtain a Frobenius-compatible embedding $\Gamma\hookrightarrow W(E^{\mathrm{pf}})\hookrightarrow W(\hat{E}^{\alg})$, where $\hat{E}^{\alg}$ is the completion of $E^{\alg}$. By using this embedding, we can define $v^{\le n}$ and $w_r$ on $\Gamma$. Moreover, we define $\Gamma_r:=\Gamma\cap W(\hat{E}^{\alg})_r$ and $\Gamma_{\con}:=\varinjlim_{r\to 0}\Gamma_r=\Gamma\cap W_{\con}(\hat{E}^{\alg})$. We say that $\Gamma$ has enough $r$-units if the canonical map $\Gamma_r\to E$ is surjective. We say that $\Gamma$ has enough units if $\Gamma$ has enough $r$-units for some $r>0$. Note that if $E$ is perfect, then $\Gamma$ has enough $r$-units for any $r$. In general, by \cite[Proposition~3.11]{mon}, $\Gamma$ has enough $r$-units for all sufficiently small $r$. In the following, we fix $r_0$ such that $\Gamma$ has enough $r$-units for all $r\le r_0$. Note that $\Gamma_{r}$ for $r<r_0$ is a PID and $\Gamma_{\con}$ is a Henselian local ring with maximal ideal $(p)$, residue field $E$ and fraction field $\Gamma_{\con}[p^{-1}]$ (\cite[Lemma~2.1.12]{Doc}). We endow $\Gamma_r[p^{-1}]$ with the Fr\'echet topology defined by the family of valuations $\{w_s\}_{0<s\le r}$. Let $\Gamma_{\an,r}$ be the completion of $\Gamma_r[p^{-1}]$ with respect to the Fr\'echet topology and $\Gamma_{\an,\con}:=\varinjlim_{r\to 0}\Gamma_{\an,r}$. We can extends $v^{\le n}$ and $w_r$ to $v^{\le n},w_r:\Gamma_{\an,r}\to\mathbb{R}$ and we endow $\Gamma_{\an,r}$ (resp. $\Gamma_{\an,\con}$ ) with Fr\'echet topology defined by $\{w_s\}_{0<s\le r}$ (resp. the inductive limit topology of Fr\'echet topologies). Note that $\varphi(\Gamma_r)\subset\Gamma_{r/p}$, hence, $\varphi$ extends to a map $\varphi:\Gamma_{\an,r}\to\Gamma_{\an,r/p}$. In particular, $\Gamma_{\con}$ and $\Gamma_{\an,\con}$ are canonically endowed with endomorphisms $\varphi$. Also, note that $\Gamma_{\an,r}$ for all $r<r_0$, hence, $\Gamma_{\an,\con}$ are B\'ezout integral domains (\cite[Theorem~2.9.6]{Doc}).
\end{construction}

In the rest of this subsection, we will see explicit descriptions of $\Gamma_{\con}$, together with its finite \'etale extensions, by using rings of overconvergent power series ring.

\begin{notation}\label{notation:ocpower}
Let $\oo$ be a complete discrete valuation ring of mixed characteristic $(0,p)$. We denote by $\oo\{\{S\}\}$ the $p$-adic Hausdorff completion of $\oo((S)):=\oo[[S]][S^{-1}]$. For $r\in\Q_{>0}$, we define the ring of overconvergent power series over $\oo$ as
\[
\oo((S))^{\dagger,r}:=\{f\in\oo\{\{S\}\};f\text{ converges in }0<v_p(S)\le r\},\ \oo((S))^{\dagger}:=\cup_{r>0}\oo((S))^{\dagger,r}.
\]
Recall that $(\oo((S))^{\dagger},(\pi_{\oo}))$ is a Henselian discrete valuation ring (\cite[Proposition~2.2]{Matd}). We also define the Robba ring $\mathcal{R}$ associated to $\oo((S))^{\dagger}$ by
\[
\mathcal{R}:=\{f=\sum_{n\in\Z}a_nS^n;a_n\in\mathrm{Frac}(\oo),\ f\text{ converges on }0<v_p(S)\le r\text{ for some }r>0\}.
\]
\end{notation}

\begin{construction}\label{const:ocext}
We construct a realization of a finite \'etale extension of $\oo((S))^{\dagger}$ as an overconvergent power series ring. Let $\Gamma$ be a Cohen ring of a complete discrete valuation field $E$ of characteristic $p$. By fixing an isomorphism $f:\Gamma\cong\oo\{\{S\}\}$, where $\oo$ is a Cohen ring of $k_E$, we identify $\Gamma$ and $E$ with $\oo\{\{S\}\}$ and $k_E((S))$. Let $\Gamma'/\Gamma$ be a finite \'etale extension with $\Gamma'$ connected and $F/E$ the corresponding residue field extension. Then, $\Gamma'$ is again a Cohen ring of $F$. We identify $F$ with $k_F((T))$ and fix a Cohen ring $\oo'$ of $k_F$. We claim that there exists an isomorphism $f':\Gamma'\cong \oo'\{\{T\}\}$ such that $f'$ modulo $p$ is the identity and $f'(\oo[[S]])\subset\oo'[[T]]$ and $f':\oo[[S]]\to\oo'[[T]]$ is finite flat. We can write $S=T^{e_{F/E}}\bar{u}$ in $\oo_F$ with some $\bar{u}\in\oo_F^{\times}$. We fix $u\in\oo'[[T]]^{\times}$ a lifting of $\bar{u}$ with respect to the projection $\oo'[[T]]\to\oo_F$ and let $s':\Z[S_0]\to\oo'[[T]];S_0\mapsto T^{e_{F/E}} u$ be a ring homomorphism. Let $s:\Z[S_0]\to\oo[[S]]$ be the ring homomorphism sending $S_0$ to $S$. By the formal smoothness of $s$ (cf. \cite[\S~1A]{Ohk}), there exists a local ring homomorphism $\beta:\oo[[S]]\to\oo'[[T]]$:
\[\xymatrix{
\oo[[S]]\ar[r]\ar@{-->}[rrd]^{\beta}&\oo_E\ar[r]&\oo_F\\
\Z[S_0]\ar[u]^s\ar[rr]^{s'}&&\oo'[[T]]\ar[u].
}\]
By the local criteria of flatness and Nakayama's lemma, $\beta$ is finite flat. By the definition of $s$ and $s'$, $\beta$ induces $\beta:\oo((S))\to\oo'((T))$, hence, $\hat{\beta}:\oo\{\{S\}\}\to\oo'\{\{T\}\}$. Since $\hat{\beta}$ is finite \'etale with residue field extension $F/E$, there exists a canonical isomorphism $f':\Gamma'\cong\oo'\{\{T\}\}$, which satisfies the desired properties by the construction of $\beta$.

By the relation $S=T^{e_{F/E}}u$ with $u\in\oo'[[T]]^{\times}$, we have $f'(\oo((S))^{\dagger,r})\subset \oo'((T))^{\dagger,r/e_{F/E}}$. Passing $r\to\infty$, we obtain a flat morphism $f':\oo((S))^{\dagger}\to\oo'((T))^{\dagger}$. Finally, we prove the finiteness of $f':\oo((S))^{\dagger}\to\oo'((T))^{\dagger}$. We fix a basis $\omega_1,\dots,\omega_g$ of $\oo'[[T]]$ as an $\oo[[S]]$-module. Then, we have only to prove that $x\in\oo'((T^{e_{F/E}}))^{\dagger,r}$ is written as $\sum_i\omega_i\sum_na_{i,n}S^n$ with $\sum_na_{i,n}S^n\in\oo((S))^{\dagger,re_{F/E}}$. By the relation $Su^{-1}=T^{e_{F/E}}$ again, any element $x\in \oo'((T^{e_{F/E}}))^{\dagger,r}$ is written as $\sum_{n\in\Z}a_nS^{n}$ with $a_n\in\oo'[[T]]$ such that $|a_n||p|^{e_{F/E}nr}\to 0\ (n\to-\infty)$, where $|\cdot|$ is a norm of $\oo'[[T]]$ associated to the $p$-adic valuation. We write $a_n=\sum_ia_{n,i}\omega_i$. Then, we have $|a_n|=\sup_i{|a_{n,i}|}$, where $|\cdot|$ on the RHS is a norm of $\oo[[S]]$ associated to the $p$-adic valuation. Hence, $\sum_na_{n,i}S^{n}$ belongs to $\oo((S))^{\dagger,re_{F/E}}$, which implies the assertion.
\end{construction}

\begin{lem}[{\cite[Lemma~2.3.5, Corollary~2.3.7]{Doc}}]\label{lem:ocext}
Let $\Gamma$ be a Cohen ring of a complete discrete valuation field $E$ of characteristic~$p$ and $\varphi:\Gamma\to\Gamma$ a Frobenius lift. By fixing an isomorphism $f:\Gamma\cong\oo\{\{S\}\}$, we identify $\Gamma$ and $E$ with $\oo\{\{S\}\}$ and $k_E((S))$. Assume that $\varphi(S)\in\oo((S))^{\dagger}$. Then, we have
\[
\Gamma_{r}=\oo((S))^{\dagger,r},\ \Gamma_{\con}=\oo((S))^{\dagger}
\]
for all sufficiently small $r>0$.

Moreover, let $F/E$ be a finite separable extension and $\Gamma'/\Gamma$ the corresponding finite \'etale extension and $\varphi:\Gamma'\to\Gamma'$ the corresponding Frobenius lift extending $\varphi$. We fix an isomorphism $f':\Gamma'\cong\oo'\{\{T\}\}$ as in Construction~\ref{const:ocext}. Then, $f'$ induces isomorphisms
\[
\Gamma'_{r}\cong\oo'((T))^{\dagger,r/e_{F/E}},\ \Gamma'_{\con}\cong\oo'((T))^{\dagger}
\]
for all sufficiently small $r>0$.
\end{lem}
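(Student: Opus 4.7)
The plan is to attack the two assertions in order, using the Frobenius hypothesis $\varphi(S)\in\oo((S))^{\dagger}$ as the essential bridge between the abstract partial valuations $v^{\le n}, w_r$ on $\Gamma\hookrightarrow W(\hat{E}^{\alg})$ and the explicit overconvergence condition on Laurent-type series.

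For the first assertion, I would first locate $S$ itself inside some $\Gamma_{r_1}$. Writing $\varphi(S)=S^p+p\,h(S)$ with $h\in\oo((S))^{\dagger,s}$ for some $s>0$, iterating $\varphi$ gives $\varphi^n(S)=S^{p^n}+p\cdot(\text{controlled error})$, where the error terms can be bounded inductively using the hypothesis and the fact that $\varphi$ rescales $w_r$ by a fixed factor. Combining this with the limit formula for Teichm\"uller lifts (so that $[\bar S]$ is the limit of suitably twisted iterates of $S$) and the fact that $\bar S$ is a uniformizer of $E$, a careful Witt-vector estimate pins down $w_{r_1}(S)=r_1$ for all sufficiently small $r_1$. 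Once $S\in\Gamma_{r_1}$, multiplicativity of $w_r$ identifies it on $\oo[S,S^{-1}]\subset\Gamma$ with the Gauss valuation $\sum a_n S^n\mapsto\inf_n(v_p(a_n)+nr)$; completing both sides yields the inclusion $\oo((S))^{\dagger,r}\subset\Gamma_r$ for $r\le r_1$. For the reverse inclusion I would use the enough-$r$-units property: any $x\in\Gamma_r$ admits a Witt-style expansion in Teichm\"uller lifts of elements of $E$, and a direct $w_r$-estimate converts this into an overconvergent Laurent expansion in $S$. Passing to the limit $r\to 0$ yields $\Gamma_{\con}=\oo((S))^{\dagger}$.

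For the moreover part, the key reduction is to verify that the hypothesis of the first assertion is inherited by $\Gamma'$; that is, that $\varphi(T)\in\oo'((T))^{\dagger}$. By Construction~\ref{const:ocext}, $T$ is integral over $\oo[[S]]\subset\oo((S))^{\dagger}$, satisfying a monic polynomial $P(X)\in\oo[[S]][X]$. Applying $\varphi$ to $P(T)=0$ and using $\varphi(S)\in\oo((S))^{\dagger}$ together with the stability of $\oo$ under the Frobenius, the substitution $a\mapsto\varphi(a)$ shows the coefficients of $\varphi(P)$ lie in $\oo((S))^{\dagger}$, so $\varphi(T)$ is a root of a monic polynomial over $\oo((S))^{\dagger}$. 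The reduction of $\varphi(T)$ in $F=k_F((T))$ is a simple root since $F/E$ is separable, so by the Henselian property of $\oo((S))^{\dagger}$ recorded in Notation~\ref{notation:ocpower} and the fact that $\oo'((T))^{\dagger}$ is its essentially unique finite \'etale extension with residue field $F$, this root lifts uniquely to $\oo'((T))^{\dagger}$ and must coincide with $\varphi(T)\in\Gamma'$. Applying the first part to $(\Gamma',T)$ now gives $\Gamma'_r=\oo'((T))^{\dagger,r'}$ for small $r$. The shift $r'=r/e_{F/E}$ arises from the ramification: the relation $S=T^{e_{F/E}}u$ with $u\in\oo'[[T]]^{\times}$ from Construction~\ref{const:ocext} translates, under the normalization of the valuation on $\hat{E}^{\alg}=\hat{F}^{\alg}$ underlying $w_r$, into a rescaling of the overconvergence radius by exactly this factor.

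The main obstacle is the Henselian lifting step for $\varphi(T)$, since the compatibility of $\varphi$ with the finite \'etale extension is given only at the Cohen ring level and must be transported to the overconvergent subring; this matching of an abstract Frobenius image with a concrete Henselian root is precisely where the Henselian property of $\oo((S))^{\dagger}$ does the essential work. The remaining steps---anchoring $S\in\Gamma_{r_1}$ via Frobenius iteration and identifying $w_r$ with the Gauss valuation on Laurent polynomials---are routine technical estimates once the Frobenius hypothesis is in hand.
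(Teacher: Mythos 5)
Your strategy for the ``moreover'' part correctly identifies the crux: one must verify that $\varphi(T)\in\oo'((T))^{\dagger}$, and this is exactly where the paper's (one-line) proof is concentrated. However, the specific Henselian-lifting argument you give has a genuine gap. You take $T$ integral over $\oo[[S]]$, a monic $P$ with $P(T)=0$, apply $\varphi$ to get a monic $P^{\varphi}$ with overconvergent coefficients and $\varphi(T)$ as a root, and then assert that $\bar{T}^p=\overline{\varphi(T)}\in F$ is a \emph{simple} root of $\overline{P^{\varphi}}$ ``since $F/E$ is separable.'' This is false in general: simplicity would require $\bar{T}$ to generate $F$ over $E$, but the $T$ of Construction~\ref{const:ocext} is a uniformizer of $\oo_F$ and typically does \emph{not} generate the residue field extension. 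For instance, if $F/E$ is unramified then $\bar{T}$ lies in $E$ (up to a unit), yet the minimal monic polynomial of $T$ over $\oo[[S]]$ has degree $[F:E]>1$ and its reduction has $\bar{T}$ as a repeated root; Hensel's lemma in the simple-root form does not apply. To save your argument one would have to switch from $T$ to a primitive element $\theta$ of $F/E$ (so that $\bar\theta$ is a simple root), deduce $\varphi(\theta)\in\oo'((T))^{\dagger}$, and then express $T$ as a polynomial in $\theta$ over $\oo((S))^{\dagger}$ using $\oo'((T))^{\dagger}=\oo((S))^{\dagger}[\theta]$. That detour is avoidable.

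The paper sidesteps this entirely: from $P^{\varphi}(\varphi(T))=0$ with $P^{\varphi}$ monic over $\oo((S))^{\dagger}$, the element $\varphi(T)\in\oo'\{\{T\}\}$ is \emph{integral} over $\oo'((T))^{\dagger}$, and $\oo'((T))^{\dagger}$ is integrally closed in $\oo'\{\{T\}\}$ by Raynaud's criterion for Henselian pairs. No separability of the reduction, no choice of generator, and no case analysis is needed. This is strictly more robust than the lifting argument you propose; both approaches ultimately exploit the Henselian property of $\oo((S))^{\dagger}$, but integral closedness is the right invariant to invoke here. Finally, your re-derivation of the first assertion (anchoring $S\in\Gamma_{r_1}$ by Frobenius iteration, matching $w_r$ with the Gauss norm, etc.) is unnecessary: the paper simply cites \cite[Lemma~2.3.5, Corollary~2.3.7]{Doc} for the full first assertion and gives a proof only of the inheritance of the Frobenius hypothesis, which is the actual new content here. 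Your sketch of that re-derivation is plausible in outline but far from a proof, and duplicating Kedlaya's work is not what the lemma calls for.
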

\begin{proof}
Let $\varphi$ be the Frobenius lift of $\oo'\{\{T\}\}$ obtained by identifying $\oo'\{\{T\}\}$ with $\Gamma'$. We have only to check that the assumption $\varphi(T)\in\oo'((T))^{\dagger}$ in \cite[Convension~2.3.1]{Doc} is satisfied. It follows from the integrally closedness of $\oo'((T))^{\dagger}$ in $\oo'\{\{T\}\}$, which is a consequence of Raynaud's criteria of integral closedness for Henselian pairs (\cite[Th\'eor\`eme~3~(b), Chapitre~XI]{Ray}).
\end{proof}

Finally, we define (pure) $\varphi$-modules over overconvergent rings.
\begin{dfn}[{\cite[Definition~4.6.1]{Doc}}]
Let $R$ be either $\Gamma[p^{-1}]$, $\Gamma_{\con}[p^{-1}]$, or $\Gamma_{\an,\con}$ (Construction~\ref{con:oc}) and let $\sigma:=\varphi^h$ for some $h\in\N_{>0}$. A $\sigma$-module over $R$ is a finite free $R$-module $M$ endowed with semi-linear $\sigma$-action such that $1\otimes \sigma:M\otimes_{R,\sigma}R\to M$ is an isomorphism. Assume that $E$ is algebraically closed. Then, any $\sigma$-module over $\Gamma[p^{-1}]$ or $\Gamma_{\an,\con}$ admits a Dieudonn\'e-Manin decomposition (\cite[Theorem~4.5.7]{Doc}) and we define the slope multiset of $M$ as the multiset of the $p$-adic valuation of the ``eigenvalues''. For a $\sigma$-module $M$ over $\Gamma_{\con}[p^{-1}]$, we define a slope multiset of $M$ as the slope multisets of $\Gamma\otimes_{\Gamma_{\con}[p^{-1}]} M$, which coincides with that of $\Gamma_{\an,\con}\otimes_{\Gamma_{\con}[p^{-1}]} M$. For a general $E$, we define the slope multiset after the base change $\Gamma\to W(\hat{E}^{\alg})$. A $\sigma$-module over $R$ is a pure of slope $s$ if the slope multiset consists of only $s$. If $M$ is a $\sigma$-module of pure of slope $0$, then we call $M$ \'etale.

Let $\varphi$ be a Frobenius lift of $\Gamma:=\oo\{\{S\}\}$ such that $\varphi(S)\subset \oo((S))^{\dagger}$. By regarding $\oo((S))^{\dagger}[p^{-1}]$ and $\mathcal{R}$ in Notation~\ref{notation:ocpower} as $\Gamma_{\con}[p^{-1}]$ and $\Gamma_{\an,\con}$ by using Lemma~\ref{lem:ocext}, we can give similar definitions for $R=\oo((S))^{\dagger}[p^{-1}]$ and $\mathcal{R}$.

When $R$ is one of the above rings, we denote the category of $\sigma$-modules (resp. \'etale $\sigma$-modules, $\sigma$-modules of pure of slope $s$) over $R$ by $\Mod_{R}(\sigma)$ (resp. $\Mod_{R}^{\et}(\sigma)$, $\Mod_{R}^s(\sigma)$).
\end{dfn}

\subsection{Differential Swan conductor}\label{subsec:diffconductor}
The aim of this subsection is to recall the definition of the differential Swan conductor. The following coordinate-free definition of the continuous K\"ahler differentials for overconvergent rings will be useful.

\begin{dfn}
Let $\Gamma$ be an absolutely unramified complete discrete valuation ring of mixed characteristic $(0,p)$. For a subring $R$ of $\Gamma$, we define $\Omega^1_R$ as the $R$-submodule of $\hat{\Omega}^1_{\Gamma}$ generated by the image of $R$ under $d:\Gamma\to\hat{\Omega}^1_{\Gamma}$.
\end{dfn}

\begin{lem}\label{lem:ocdiff}
Let $\Gamma:=\oo\{\{S\}\}$ and $\Gamma^{\dagger}:=\oo((S))^{\dagger}$, where $\oo$ is a Cohen ring of a field $k$ of characteristic $p$. Assume that $[k:k^p]=p^d<\infty$. Then, $\Omega^1_{\Gamma^{\dagger}}$ is the unique $\Gamma^{\dagger}$-submodule $\mathcal{M}$ of $\hat{\Omega}^1_{\Gamma}$ such that
\begin{enumerate}
\item $\mathcal{M}$ is of finite type over $\Gamma^{\dagger}$;
\item The image of $\Gamma^{\dagger}$ under $d:\Gamma\to\hat{\Omega}^1_{\Gamma}$ is contained in $\mathcal{M}$;
\item The canonical map $\Gamma\otimes_{\Gamma^{\dagger}}\mathcal{M}\to\hat{\Omega}^1_{\Gamma}$ is an isomorphism.
\end{enumerate}
Moreover, if $\varphi:\Gamma\to\Gamma$ is a Frobenius lift such that $\varphi(\Gamma^{\dagger})\subset\Gamma^{\dagger}$, then $\Omega^1_{\Gamma^{\dagger}}$ is stable under $\varphi:\hat{\Omega}^1_{\Gamma}\to\hat{\Omega}^1_{\Gamma}$.
\end{lem}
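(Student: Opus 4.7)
The plan is to reduce everything to one explicit computation: the identification
\[
\Omega^1_{\Gamma^{\dagger}} = \Gamma^{\dagger}\,dS \oplus \bigoplus_{j=1}^d \Gamma^{\dagger}\,dt_j
\]
inside $\hat{\Omega}^1_{\Gamma}$. Since $\{S,t_1,\dots,t_d\}$ is a $p$-basis of the residue field $k((S))$ of $\Gamma$, the set $\{dS,dt_1,\dots,dt_d\}$ is a $\Gamma$-basis of $\hat{\Omega}^1_{\Gamma}$, so once this identification is established, conditions (i)--(iii) for $\mathcal{M}=\Omega^1_{\Gamma^{\dagger}}$ are automatic: (i) from finite generation, (ii) by definition, (iii) because tensoring up to $\Gamma$ recovers the same basis. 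The containment $\supseteq$ is immediate from $S,t_j\in\Gamma^{\dagger}$. For the reverse inclusion, I would expand $f=\sum_n a_n S^n\in\Gamma^{\dagger,r}$ (with $a_n\in\oo$ and $v_p(a_n)+rn\to\infty$ as $n\to-\infty$) via
\[
df=\Bigl(\sum_n na_nS^{n-1}\Bigr)dS+\sum_{j=1}^d\Bigl(\sum_n(\partial_j a_n)S^n\Bigr)dt_j
\]
and check that each coefficient series again overconverges on the same annulus. The $dS$-coefficient is controlled by the trivial bound $v_p(n)\ge 0$, while the $dt_j$-coefficients reduce to the estimate $v_p(\partial_j a)\ge v_p(a)$ for $a\in\oo$, which holds because $\partial_j$, as the $\oo$-linear dual of $dt_j\in\hat{\Omega}^1_{\oo}$, maps $\oo$ into $\oo$ and is $\Z_p$-linear, hence preserves $p^k\oo$ for each $k$.

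For uniqueness, let $\mathcal{M}$ be any submodule satisfying (i)--(iii). Property (ii) yields $\Omega^1_{\Gamma^{\dagger}}\subseteq\mathcal{M}$, and $N:=\mathcal{M}/\Omega^1_{\Gamma^{\dagger}}$ is a finitely generated $\Gamma^{\dagger}$-module by (i). The inclusion $\Gamma^{\dagger}\hookrightarrow\Gamma$ is a local map between discrete valuation rings with common uniformizer $p$, and $\Gamma$ is torsion-free over $\Gamma^{\dagger}$, hence flat and, being local, faithfully flat. Tensoring the short exact sequence $0\to\Omega^1_{\Gamma^{\dagger}}\to\mathcal{M}\to N\to 0$ with $\Gamma$ over $\Gamma^{\dagger}$ and applying (iii) to both $\Omega^1_{\Gamma^{\dagger}}$ (from the first paragraph) and $\mathcal{M}$ (by hypothesis) gives $\Gamma\otimes_{\Gamma^{\dagger}}N=0$, so $N=0$ and $\mathcal{M}=\Omega^1_{\Gamma^{\dagger}}$.

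Finally, for Frobenius stability, the Frobenius lift satisfies the standard identity $\varphi(df)=d(\varphi f)$ on $\Gamma$. Combined with $\varphi(\Gamma^{\dagger})\subset\Gamma^{\dagger}$, this gives $\varphi(r\,df)=\varphi(r)\,d(\varphi f)\in\Gamma^{\dagger}\cdot d(\Gamma^{\dagger})\subset\Omega^1_{\Gamma^{\dagger}}$ for all $f,r\in\Gamma^{\dagger}$, so $\varphi$ stabilizes $\Omega^1_{\Gamma^{\dagger}}$. The main technical point is the very first estimate $v_p(\partial_j a)\ge v_p(a)$ for $a\in\oo$; once that is secured, the explicit description of $\Omega^1_{\Gamma^{\dagger}}$, the uniqueness via faithful flatness, and the Frobenius compatibility all follow essentially formally.
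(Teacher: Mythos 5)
Your argument is correct, and it follows the route the paper itself indicates (the lemma's proof is omitted as ``elementary,'' but the remark immediately after it gives exactly the key identification $\Omega^1_{\Gamma^\dagger}=\Gamma^\dagger\,dS\oplus\bigoplus_j\Gamma^\dagger\,dt_j$ that you establish). The three parts of your argument are all sound: the coefficientwise estimates $v_p(na_n)\ge v_p(a_n)$ and $v_p(\partial_j a_n)\ge v_p(a_n)$ (the latter since $\partial_j$ is $\Z_p$-linear and maps $\oo$ into $\oo$) give the explicit free-basis description; faithful flatness of the local inclusion $\Gamma^\dagger\hookrightarrow\Gamma$ of DVRs with the same uniformizer $p$ then forces uniqueness; and $\varphi$-stability follows formally from $d\circ\varphi=\varphi\circ d$ together with $\varphi(\Gamma^\dagger)\subset\Gamma^\dagger$.
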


We omit the proof since it is elementary. Note that if $\{t_j\}\subset\oo$ is a lift of a $p$-basis of $k$, then $\Omega^1_{\Gamma^{\dagger}}$ is a free of rank $d+1$ with basis $dS,dt_1,\dots,dt_d$.

\begin{cor}\label{lem:bcomega}
Let notation be as in Lemma~\ref{lem:ocext}. Then, the canonical isomorphism $\Gamma'\otimes_{\Gamma}\hat{\Omega}^1_{\Gamma}\cong\hat{\Omega}^1_{\Gamma'}$ descends to a canonical isomorphism $\Gamma'_{\con}\otimes_{\Gamma_{\con}}\Omega^1_{\Gamma_{\con}}\cong\Omega^1_{\Gamma'_{\con}}$.
\end{cor}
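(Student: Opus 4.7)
The plan is to apply the uniqueness characterization in Lemma~\ref{lem:ocdiff} to $\Gamma'\cong\oo'\{\{T\}\}$, which satisfies its hypotheses since $[k_F:k_F^p]=[k_E:k_E^p]=p^d$. Let $M\subseteq\hat{\Omega}^1_{\Gamma'}$ be the image of the natural map
\[
\Gamma'_{\con}\otimes_{\Gamma_{\con}}\Omega^1_{\Gamma_{\con}}\longrightarrow\hat{\Omega}^1_{\Gamma'}
\]
obtained by composition with the given canonical isomorphism $\Gamma'\otimes_\Gamma\hat{\Omega}^1_\Gamma\cong\hat{\Omega}^1_{\Gamma'}$. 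Since $\Omega^1_{\Gamma_{\con}}$ is a finite free $\Gamma_{\con}$-module (Lemma~\ref{lem:ocdiff}) and $\Gamma'_{\con}\hookrightarrow\Gamma'$ stays injective on finite free modules, this map is injective, so $M$ is identified with the abstract tensor product. Verifying the three defining properties of Lemma~\ref{lem:ocdiff} for $M$ inside $\hat{\Omega}^1_{\Gamma'}$ then forces $M=\Omega^1_{\Gamma'_{\con}}$, which is exactly the claimed descent.

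Property~(i), finite generation of $M$, is immediate from finite generation of $\Omega^1_{\Gamma_{\con}}$. Property~(iii) is a short chain of base changes:
\[
\Gamma'\otimes_{\Gamma'_{\con}}M\cong\Gamma'\otimes_{\Gamma_{\con}}\Omega^1_{\Gamma_{\con}}\cong\Gamma'\otimes_\Gamma(\Gamma\otimes_{\Gamma_{\con}}\Omega^1_{\Gamma_{\con}})\cong\Gamma'\otimes_\Gamma\hat{\Omega}^1_\Gamma\cong\hat{\Omega}^1_{\Gamma'},
\]
combining property~(iii) of $\Omega^1_{\Gamma_{\con}}$ in $\hat{\Omega}^1_\Gamma$ with the hypothesized isomorphism.

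The substantive step, and the main obstacle, is property~(ii), $d(\Gamma'_{\con})\subseteq M$. My approach is to show that $\Gamma_{\con}\to\Gamma'_{\con}$ is finite \'etale and then deduce the inclusion from the vanishing of relative K\"ahler differentials. Finiteness is furnished by Construction~\ref{const:ocext}. For \'etaleness I would verify that the fibers of $\Omega^1_{\Gamma'_{\con}/\Gamma_{\con}}$ vanish at every point of $\Spec(\Gamma_{\con})$: at the closed point $(p)$ the fiber is $\Omega^1_{F/E}=0$ since $F/E$ is separable, and at the generic point $(0)$ (where $\Gamma_{\con}[p^{-1}]$ is a field by Construction~\ref{con:oc}) the finite extension $\Gamma_{\con}[p^{-1}]\hookrightarrow\Gamma'_{\con}[p^{-1}]$ is a subextension of the \'etale field extension $\Gamma[p^{-1}]\hookrightarrow\Gamma'[p^{-1}]$, hence separable and so \'etale. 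By Nakayama applied to the finitely generated $\Gamma'_{\con}$-module $\Omega^1_{\Gamma'_{\con}/\Gamma_{\con}}$, it vanishes. Therefore the algebraic universal derivation on $\Gamma'_{\con}$ takes values in $\Gamma'_{\con}\cdot d(\Gamma_{\con})$; composing with the canonical map $\Omega^1_{\Gamma'_{\con}/\Z}\to\hat{\Omega}^1_{\Gamma'}$, through which $d\colon\Gamma'_{\con}\to\hat{\Omega}^1_{\Gamma'}$ factors, yields $d(\Gamma'_{\con})\subseteq M$. A concrete alternative, avoiding the abstract \'etaleness step, is to pick a lift $t\in\Gamma'_{\con}$ of a primitive element of $F/E$, write $\Gamma'_{\con}=\Gamma_{\con}[t]/(P(t))$ with $P'(t)\in(\Gamma'_{\con})^{\times}$ (by Hensel and separability), solve $0=d(P(t))=P'(t)\,dt+\sum_j(\partial P/\partial c_j)|_{X=t}\,dc_j$ for $dt\in M$, and conclude for arbitrary $x=\sum_i a_it^i\in\Gamma'_{\con}$ by the Leibniz rule.
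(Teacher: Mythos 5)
Your proof is correct. The paper omits the proof of this corollary (and also of the preceding Lemma~\ref{lem:ocdiff}, explicitly labeling it elementary), so there is no argument to compare against; the route you take, namely applying the uniqueness characterization of Lemma~\ref{lem:ocdiff} to $\Gamma'\cong\oo'\{\{T\}\}$ and checking the three defining properties, is exactly the one the ``Corollary'' label signals. Conditions~(i) and~(iii) are routine base-change bookkeeping, as you say, and the only genuine content is condition~(ii), $d(\Gamma'_{\con})\subseteq M$. Your derivation of this from $\Omega^1_{\Gamma'_{\con}/\Gamma_{\con}}=0$ is sound: $\Gamma'_{\con}$ is module-finite over the Noetherian local ring $\Gamma_{\con}$ by Construction~\ref{const:ocext}, so $\Omega^1_{\Gamma'_{\con}/\Gamma_{\con}}$ is a finitely generated $\Gamma'_{\con}$-module whose fiber over the closed point $(p)$ is $\Omega^1_{F/E}=0$ by separability of $F/E$, and Nakayama gives the vanishing; in fact the check at the generic point is superfluous for this purpose (and is automatic in characteristic zero anyway). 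One small point worth making explicit, which you do: for Lemma~\ref{lem:ocdiff} to apply to $\Gamma'$ one needs $[k_F:k_F^p]<\infty$, which follows from $[k_F:k_F^p]=[k_E:k_E^p]=p^d$ since $k_F/k_E$ is a finite extension. The concrete alternative you sketch at the end (writing $\Gamma'_{\con}=\Gamma_{\con}[t]/(P)$ with $P'(t)$ a unit and differentiating the relation) is the same vanishing of $\Omega^1_{\Gamma'_{\con}/\Gamma_{\con}}$ made explicit; either version is fine.
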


\begin{notation}\label{notation:sw1}
In the rest of this section, let notation be as in Lemma~\ref{lem:ocdiff}. We fix a Frobenius lift $\varphi:\Gamma\to\Gamma$ satisfying $\varphi(\Gamma^{\dagger})\subset\Gamma^{\dagger}$. Let $\mathcal{R}$ be the Robba ring associated to $\Gamma^{\dagger}$ and assume that $\varphi(\mathcal{R})\subset\mathcal{R}$. We put $\Omega^1_{\mathcal{R}}:=\mathcal{R}\otimes_{\Gamma^{\dagger}}\Omega^1_{\Gamma^{\dagger}}$. Note that the canonical derivation $d:\Gamma^{\dagger}\to\Omega^1_{\Gamma^{\dagger}}$ extends to $d:\mathcal{R}\to\Omega^1_{\mathcal{R}}$.
\end{notation}

\begin{dfn}\label{dfn:phinabla}
A $\nabla$-module $M$ over $\mathcal{R}$ is a finite free module over $\mathcal{R}$ together with a connection $\nabla=\nabla_M:M\to M\otimes_{\mathcal{R}}\Omega^1_{\mathcal{R}}$ such that the composition of $\nabla_M$ with the map $M\otimes_{\mathcal{R}}\Omega^1_{\mathcal{R}}\to M\otimes_{\mathcal{R}}\wedge_{\mathcal{R}}^2\Omega^1_{\mathcal{R}}$ induced by $\nabla$ is the zero map. For $h\in\N_{>0}$, a $(\varphi^h,\nabla)$-module $M$ over $\mathcal{R}$ is a $\varphi^h$-module over $\mathcal{R}$ endowed with $\nabla$-module structure commuting with the action of $\varphi^h$. We call a $(\varphi^h,\nabla)$-module pure (resp. \'etale) if the underlying $\varphi^h$-module is pure (resp. \'etale). Similarly, we define notions of (\'etale or pure) $(\varphi^h,\nabla)$-modules over $\Gamma^{\dagger}$ and $\Gamma$. Denote by $\Mod^s_{R}(\varphi^h,\nabla)$ the category of pure $(\varphi^h,\nabla)$-modules over $R$, where $R=\Gamma$, $\Gamma^{\dagger}[p^{-1}]$ and $\mathcal{R}$.
\end{dfn}

\begin{thm}[{\cite[Theorem~3.4.6]{Sw}}]\label{thm:slopefil}
For a $(\varphi,\nabla)$-module $M$ over $\mathcal{R}$, there exists a canonical slope filtration
\[
0=\fil^0(M)\subset\dots \subset\fil^l(M)=M,
\]
whose graded pieces are $(\varphi,\nabla)$-modules of pure of slope $s_1<\dots<s_l$.
\end{thm}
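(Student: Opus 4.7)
The plan is to first apply Kedlaya's slope filtration theorem for $\varphi$-modules over the Robba ring (forgetting the connection) to obtain the candidate filtration, and then to show that each step is automatically stable under $\nabla$. Specifically, the underlying $\varphi$-module of $M$ admits a canonical filtration $0=\fil^0(M)\subset\fil^1(M)\subset\dots\subset\fil^l(M)=M$ by saturated $\varphi$-submodules whose graded pieces are pure of slopes $s_1<\dots<s_l$, so it suffices to prove that $\nabla(\fil^i(M))\subset\fil^i(M)\otimes_{\mathcal{R}}\Omega^1_{\mathcal{R}}$ for each $i$; the graded pieces will then inherit a connection compatible with the Frobenius action, yielding pure $(\varphi,\nabla)$-modules with the prescribed slopes, and canonicity of the filtration follows from the canonicity of the slope filtration on the underlying $\varphi$-module.

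To prove $\nabla$-stability of $\fil^i(M)$, I would consider the composition
\[
\bar{\nabla}_i\colon \fil^i(M)\xrightarrow{\nabla}M\otimes_{\mathcal{R}}\Omega^1_{\mathcal{R}}\twoheadrightarrow \bigl(M/\fil^i(M)\bigr)\otimes_{\mathcal{R}}\Omega^1_{\mathcal{R}}.
\]
The Leibniz rule $\nabla(rm)=r\nabla(m)+m\otimes dr$ for $r\in\mathcal{R}$ and $m\in\fil^i(M)$ shows that the second term lies in $\fil^i(M)\otimes_{\mathcal{R}}\Omega^1_{\mathcal{R}}$ and hence vanishes modulo $\fil^i(M)$, so $\bar{\nabla}_i$ is $\mathcal{R}$-linear; as $\nabla$ commutes with $\varphi$, it is also $\varphi$-equivariant. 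Now the source $\fil^i(M)$ has all slopes in $\{s_1,\dots,s_i\}$, i.e., $\le s_i$, whereas the target has all slopes strictly greater than $s_i$ (see below). A standard consequence of the Dieudonn\'e-Manin decomposition (\cite[Theorem~4.5.7]{Doc}) is the vanishing of every $\varphi$-equivariant $\mathcal{R}$-linear map from a $\varphi$-module with slopes $\le s$ to one with slopes $>s$; applying it yields $\bar{\nabla}_i=0$, which is the desired $\nabla$-stability.

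The main obstacle, as anticipated, is the slope computation for the target, for which I need to know the slopes of $\Omega^1_{\mathcal{R}}$ as a $\varphi$-module. By Lemma~\ref{lem:ocdiff}, $\Omega^1_{\mathcal{R}}$ is free on $dS,dt_1,\dots,dt_d$. Because $\varphi(S)\equiv S^p\pmod{p}$ and $\varphi(t_j)\equiv t_j^p\pmod{p}$, all the partial derivatives $\partial\varphi(S)/\partial S$, $\partial\varphi(S)/\partial t_j$, $\partial\varphi(t_i)/\partial S$, $\partial\varphi(t_i)/\partial t_j$ lie in $p\mathcal{R}$, with the diagonal entries congruent to $pS^{p-1}$ and $pt_j^{p-1}$ modulo $p^2$ respectively. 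Hence the matrix of $\varphi$ on $\Omega^1_{\mathcal{R}}$ has the form $p\cdot A$ with $A\in GL_{d+1}(\mathcal{R})$ whose entries have slope $0$, so $\Omega^1_{\mathcal{R}}$ is pure of slope $1$ as a $\varphi$-module. Since tensor products of pure $\varphi$-modules have slopes given by sums of slopes, the slopes of $(M/\fil^i(M))\otimes_{\mathcal{R}}\Omega^1_{\mathcal{R}}$ are of the form $s_j+1$ for $j>i$, all of which exceed $s_i$; this closes the loop and completes the induction.
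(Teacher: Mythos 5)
Your overall strategy — invoke Kedlaya's slope filtration of the underlying $\varphi$-module, establish $\nabla$-stability of each step via the vanishing of $\varphi$-equivariant $\mathcal{R}$-linear maps between $\varphi$-modules of separated slopes, and compute that $\Omega^1_{\mathcal{R}}$ is pure of slope $1$ — is the right one, and it is essentially the route of the cited \cite[Theorem~3.4.6]{Sw}. The reductions are sound: the Leibniz rule gives $\mathcal{R}$-linearity of $\bar{\nabla}_i$, $\varphi$-equivariance follows from $\varphi$-stability of the slope filtration, and slopes add under tensor product. However, the justification of the slope of $\Omega^1_{\mathcal{R}}$ contains a genuine gap.

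Write $\varphi(S)=S^p+pg_0$ and $\varphi(t_j)=t_j^p+pg_j$ with $g_i\in\Gamma^{\dagger}$. Then $\partial\varphi(S)/\partial S=p\bigl(S^{p-1}+\partial g_0/\partial S\bigr)$, and since $\partial g_0/\partial S$ need not lie in $p\mathcal{R}$, the assertion that the diagonal entries are $\equiv pS^{p-1}$ modulo $p^2$ is false for a general Frobenius lift; it already fails for the lift $\varphi(\pi)=(1+\pi)^p-1$ used elsewhere in this paper, since $\partial\varphi(\pi)/\partial\pi=p(1+\pi)^{p-1}\not\equiv p\pi^{p-1}\pmod{p^2}$. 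More to the point, even with unit diagonal entries modulo $p$, the off-diagonal entries of $A:=p^{-1}\cdot(\text{matrix of }\varphi\text{ on }\Omega^1_{\mathcal{R}})$ are arbitrary elements of $\Gamma^{\dagger}$, so neither the invertibility of $A\bmod p$ (which is what is actually needed, and is equivalent to $\Omega^1_{\mathcal{R}}$ being a $\varphi$-module with $p^{-1}\varphi$ \'etale) nor the ``slope $0$'' property of $A$ follows; both are asserted rather than proved. The conclusion is nonetheless correct, and the missing input is a Cartier-operator argument: in the logarithmic basis $\omega_j:=dx_j/x_j$ of $\Omega^1_E$ (where $E:=\Gamma^{\dagger}/p$, $x_0=S$, $x_j=t_j$) one computes $p^{-1}\varphi(\omega_j)\equiv\omega_j+d\bar{h}_j\pmod p$ for some $\bar{h}_j\in E$. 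Every term of $(\omega_0+d\bar{h}_0)\wedge\cdots\wedge(\omega_d+d\bar{h}_d)$ containing a factor $d\bar{h}_j$ is exact, so this wedge is $\equiv\omega_0\wedge\cdots\wedge\omega_d\pmod{d\Omega^d_E}$; since the Cartier operator is a bijection $\Omega^{d+1}_E/d\Omega^d_E\to\Omega^{d+1}_E$ carrying the class of $\omega_0\wedge\cdots\wedge\omega_d$ to the nonzero form $\omega_0\wedge\cdots\wedge\omega_d$, the scalar $\det(A\bmod p)$ (which up to a unit is the coefficient of $\omega_0\wedge\cdots\wedge\omega_d$ in that wedge) is nonzero. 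Hence $\det A\in(\Gamma^{\dagger})^{\times}$, $\Omega^1_{\Gamma^{\dagger}}$ is an \'etale lattice for $p^{-1}\varphi$, and $\Omega^1_{\mathcal{R}}$ is pure of slope $1$, closing the gap.
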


\begin{construction}[{\cite[Definition~3.3.4]{Sw}}]\label{const:dswan}
Let $F/\Q_p$ be a finite unramified extension and $V\in\rep^{\fg}_F(G_E)$. Let $\Gamma^{\dagger,\ur}$ be the maximal unramified extension of $\Gamma^{\dagger}$. We put $\Omega^1_{\Gamma^{\dagger,\ur}}:=\varinjlim\Omega^1_{\Gamma_1^{\dagger}}$, where the limit runs all the finite \'etale extensions $\Gamma_1^{\dagger}/\Gamma^{\dagger}$ with $\Gamma_1^{\dagger}$ connected. We consider the connection
\[
\nabla:\Gamma^{\dagger,\ur}\otimes_{\oo_F}V\to\Omega^1_{\Gamma^{\dagger,\ur}}\otimes_{\oo_F}V;\lambda\otimes y\mapsto d\lambda\otimes y.\quad (*)
\]
Since $\Omega^1_{\Gamma^{\dagger,\ur}}\cong\Gamma^{\dagger,\ur}\otimes_{\Gamma^{\dagger}}\Omega^1_{\Gamma^{\dagger}}$ as $G_E$-modules by Corollary~\ref{lem:bcomega}, by taking $G_E$-invariants of $(*)$, we obtain a connection
\[
\nabla:D^{\dagger}(V)\to\Omega^1_{\Gamma^{\dagger}}\otimes_{\Gamma^{\dagger}}D^{\dagger}(V),
\]
where $D^{\dagger}(V):=(\Gamma^{\dagger,\ur}\otimes_{\oo_F}V)^{G_E}$ is a finite dimensional $\Gamma^{\dagger}[p^{-1}]$-module of rank $\dim_FV$. Thus, we obtain a rank-preserving functor
\[
D^{\dagger}:\rep^{\fg}_F(G_E)\to\Mod_{\Gamma^{\dagger}[p^{-1}]}(\nabla).
\]
By extending scalars, we also obtain a rank-preserving functor
\[
D^{\dagger}_{\rig}:\rep^{\fg}_F(G_E)\to\Mod_{\mathcal{R}}(\nabla).
\]
Note that if $V$ is endowed with a semi-linear action of $\varphi^h$ for $h\in\N$, then $D^{\dagger}(V)$ and $D^{\dagger}_{\rig}(V)$ are also endowed with semi-linear $\varphi^h$-actions.
\end{construction}

\begin{dfn}\label{dfn:diffswan}
For a $\nabla$-module $M$ over $\mathcal{R}$, let $\sw^{\nabla}(M)$ be the differential Swan conductor of $M$ defined in \cite[Definition~2.8.1]{Sw}.
\end{dfn}

Recall that the differential Swan conductor is defined in terms of the behavior of the logarithmic radius of convergence (\cite[Definition~2.3.20]{Xia}), which depends only on the Jordan-H\"older factors of a given $\nabla$-module by definition. In particular, we have

\begin{lem}[The additivity of the differential Swan conductor]\label{lem:diffadd}
Let $0\to M'\to M\to M''\to 0$ be an exact sequence of $\nabla$-modules over $\mathcal{R}$. Then, we have $\sw^{\nabla}(M)=\sw^{\nabla}(M')+\sw^{\nabla}(M'')$.
\end{lem}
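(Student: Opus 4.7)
My plan is to argue that the differential Swan conductor is determined by the multiset of Jordan--Hölder factors and then invoke the Jordan--Hölder theorem for $\nabla$-modules over $\mathcal{R}$. The key point, as noted in the paragraph preceding the lemma, is that Kedlaya's definition of $\sw^{\nabla}(M)$ in \cite[Definition~2.8.1]{Sw} depends only on the (logarithmic) intrinsic subsidiary radii of convergence of $M$, which in turn are governed by the slopes of the differential Newton polygon attached to $M$, and these slopes are insensitive to the precise extension structure.

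First, I would recall the structural form of the definition. For a $\nabla$-module $M$ over $\mathcal{R}$, the differential Swan conductor $\sw^{\nabla}(M)$ is defined as a sum, over the subsidiary radii, of the limiting log-slopes of the functions $r\mapsto f_i(M,r)$ measuring the logarithmic radii of convergence. By \cite[Definition~2.3.20]{Xia} (as cited in the paper), these radii, counted with multiplicities, form a multiset of size $\operatorname{rank}(M)$ that is additive for direct sums. The crucial additional input is that this multiset depends only on the Jordan--Hölder constituents of $M$: this is essentially the fact that the Newton polygon of a $\nabla$-module is determined by its composition factors, which follows from the standard continuity/semicontinuity properties of radii of convergence combined with the observation that a short exact sequence cannot decrease (nor increase, globally) the multiset of radii.

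Given the short exact sequence $0\to M'\to M\to M''\to 0$, the Jordan--Hölder theorem for $\nabla$-modules over $\mathcal{R}$ (which holds because $\mathcal{R}$ is a Bézout domain and $\nabla$-modules form an abelian category with finite length) tells us that the multiset of simple constituents of $M$ is the disjoint union of those of $M'$ and $M''$. By the previous paragraph, this implies that the multiset of intrinsic logarithmic subsidiary radii of $M$ is the union of the corresponding multisets for $M'$ and $M''$. Since $\sw^{\nabla}$ is a sum over this multiset of quantities attached individually to each radius, we obtain
\[
\sw^{\nabla}(M)=\sw^{\nabla}(M')+\sw^{\nabla}(M'').
\]

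The main obstacle is making precise the assertion that $\sw^{\nabla}$ factors through the Grothendieck group of $\nabla$-modules, i.e.\ that it depends only on the Jordan--Hölder factors. This is not entirely formal at the level of radii of convergence for a single $r$, because a short exact sequence can rearrange which radii belong to which subquotient at intermediate scales; however, after passing to the limiting log-slopes used in the definition of $\sw^{\nabla}$ (and using that the Newton polygon stabilizes for $r$ sufficiently close to the boundary of the Robba ring), the dependence collapses to the multiset of constituents. Once this point is granted from \cite{Sw}, the lemma reduces to Jordan--Hölder and the verification is immediate.
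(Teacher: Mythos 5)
Your proof takes essentially the same route as the paper: the paper also justifies the lemma in a single sentence by observing that the logarithmic radii of convergence governing $\sw^{\nabla}$ (per \cite[Definition~2.3.20]{Xia}) depend only on the Jordan--H\"older factors of the $\nabla$-module, from which additivity is immediate. Your write-up simply spells out this reduction in more detail and flags the point where one must invoke the stabilization of the Newton polygon near the boundary of the annulus.
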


The following is Xiao's Hasse-Arf Theorem in the characteristic $p$ case.

\begin{thm}[{\cite[Theorem~4.4.1, Corollary~4.4.3]{Xia}}]\label{thm:Xiao}
Let $V$ be an $F$-representation of $G_{E}$ of finite local monodromy. Then, we have
\[
\sw^{\AS}(V)=\sw^{\nabla}(D^{\dagger}_{\rig}(V)).
\]
Moreover, these invariants are non-negative integers.
\end{thm}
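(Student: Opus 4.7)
The plan is to establish the identity $\sw^{\AS}(V)=\sw^{\nabla}(D^{\dagger}_{\rig}(V))$ by first reducing to a single geometric comparison and then exploiting that both invariants admit descriptions in terms of affinoid subdomains of rigid discs over $E$. By the additivity of $\sw^{\nabla}$ (Lemma~\ref{lem:diffadd}) together with the additivity of $\sw^{\AS}$, and by stability under unramified extension of the coefficient field $F$, I may assume that $V$ is irreducible and factors through a finite Galois quotient $G_{F/E}$ of $G_{E}$. A further d\'evissage reduces me to comparing the two invariants for the regular representation $F[G_{F/E}]$, for which Construction~\ref{const:dswan} combined with Lemma~\ref{lem:ocext} provides an explicit description of $D^{\dagger}(V)$ in terms of the overconvergent extension $\Gamma'_{\con}/\Gamma_{\con}$ associated to $F/E$, with connection induced by the canonical derivation on $\Gamma'_{\con}$.

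Both sides can then be interpreted in terms of rigid geometry. On the ramification side, the isomorphism $\Gamma'_{\con}\cong\oo'((T))^{\dagger}$ of Lemma~\ref{lem:ocext} provides explicit polynomial generators of the ideal cutting out $\oo_F$ inside a Tate algebra over $\oo_E$, from which Abbes-Saito's space $as^{a}_{F/E}$ is an explicit affinoid subdomain of a polydisc. On the differential side, the radius of convergence of $D^{\dagger}_{\rig}(V)$ at a generic point $t_r$ of the Fr\'echet valuation $w_r$ is governed by how the minimal polynomial relating $T$ and $S$ factors over $\mathcal{R}$ near $t_r$.

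The heart of the argument is to match these two descriptions via an explicit change of variable $r\leftrightarrow a$. Concretely, a Newton-polygon factorization of the minimal polynomial of $T$ over $\mathcal{R}$ at $t_r$ produces roots whose distinct reductions should correspond bijectively to the geometric connected components of $as^{a}_{F/E}$, while simultaneously giving the jumps of the radius of convergence of $D^{\dagger}_{\rig}(V)$ at $t_r$. Integrating this break-by-break matching then yields the equality $\sw^{\AS}(V)=\sw^{\nabla}(D^{\dagger}_{\rig}(V))$. Integrality is obtained by applying the slope filtration (Theorem~\ref{thm:slopefil}), which reduces the Hasse-Arf statement to the case of a pure $(\varphi,\nabla)$-module, where the compatibility of $\varphi$ with the rescaling $r\mapsto r/p$ forces the differential break at $t_r$ to be locally constant and rational, and in fact integral.

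The hard part is exactly the middle step: establishing the precise correspondence between two objects of very different flavor, namely the number of geometric connected components of a family of rigid affinoids on the one hand and the Frobenius-stabilized radii of convergence of a $\nabla$-module on the other. This requires uniform control of the finite \'etale structure $\Gamma'_{\con}/\Gamma_{\con}$ in the Fr\'echet topology on $\mathcal{R}$, and it is precisely here that the Frobenius lift $\varphi$ with $\varphi(S)\in\oo((S))^{\dagger}$ plays an essential role, since it allows one to rescale radii and propagate a single-parameter comparison to all sufficiently small Fr\'echet parameters.
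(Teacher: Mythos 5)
This statement is not proved in the paper: it is quoted directly from Xiao's work~\cite[Theorem~4.4.1, Corollary~4.4.3]{Xia} as a black-box input, so there is no in-paper proof to compare against. Your proposal is an attempt to reconstruct Xiao's argument, and while the overall flavor (compare geometric connected components of Abbes--Saito spaces with convergence data of the $\nabla$-module, using the explicit presentation $\Gamma'_{\con}\cong\oo'((T))^{\dagger}$ from Lemma~\ref{lem:ocext}) is genuinely in the right spirit, there are concrete gaps.

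First, the reduction to the regular representation $F[G_{F/E}]$ does not follow from additivity alone. Additivity tells you that $\sw^{\AS}$ and $\sw^\nabla$ are both additive in short exact sequences, but establishing the equality for the regular representation does not propagate downward to its irreducible summands: knowing $\sum_\rho (\dim\rho)\,\sw^{\AS}(\rho)=\sum_\rho(\dim\rho)\,\sw^\nabla(D^\dagger_{\rig}(\rho))$ gives a single linear relation, not termwise equality. Xiao's argument instead works with a single representation at a time, tracking the ramification break directly; the crucial step is a quantitative comparison of the highest break with the generic radius of convergence, not a sum-over-representations identity. Second, the appeal to the slope filtration of Theorem~\ref{thm:slopefil} for the integrality claim is vacuous here: a $(\varphi,\nabla)$-module coming from $D^\dagger_{\rig}$ applied to a $p$-adic Galois representation of finite local monodromy is \'etale, i.e., pure of slope~$0$, so its slope filtration is trivial and provides no leverage. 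Integrality (the Hasse--Arf part of the statement) is the deep content of Xiao's Corollary~4.4.3 and requires a separate monotonicity/convexity argument for the irregularity function, not slope reduction. Finally, the ``Newton polygon factorization of the minimal polynomial at $t_r$'' step, which you rightly identify as the heart of the matter, is precisely what must be made precise; as stated it assumes rather than proves the bijection between root clusters and geometric connected components at each radius. This is where Xiao introduces the Gr\"obner-basis machinery (later adapted in \S~\ref{sec:grobner} of the present paper) to control the connected components of the lifted family uniformly in the radius parameter.
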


\subsection{Scholl's fields of norms}\label{subsec:scholl}
In this subsection, we recall some results of Scholl (\cite[\S~1.3]{Sch}), which is a generalization of Fontaine-Wintenberger's fields of norms. Throughout this subsection, let $K$ be a complete discrete valuation field of mixed characteristic $(0,p)$ with $[k_K:k_K^p]=p^d<\infty$.

\begin{dfn}\label{dfn:scholl}
Let $K_1\subset K_2\subset\dots$ be finite extensions of $K$ and put $K_{\infty}=\cup{K_n}$. We say that a tower $\mathfrak{K}:=\{K_n\}_{n>0}$ is strictly deeply ramified if there exists $n_0>0$ and an element $\xi\in\oo_{K_{n_0}}$ such that $0<v_p(\xi)\le 1$, and such that the following condition holds: For every $n\ge n_0$, the extension $K_{n+1}/K_n$ has degree $p^{d+1}$, and there exists a surjection $\Omega^1_{\oo_{K_{n+1}}/\oo_{K_n}}\to (\oo_{K_{n+1}}/\xi\oo_{K_{n+1}})^{d+1}$ of $\oo_{K_{n+1}}$-modules.
\end{dfn}

Let $\mathfrak{K}=\{K_n\}_{n>0}$ be a strictly deeply ramified tower. For $n\ge n_0$, we have $e_{K_{n+1}/K_n}=p$ and $k_{K_{n+1}}=k_K^{p^{-1}}$, and the Frobenius $\oo_{K_{n+1}}/\xi\oo_{K_{n+1}}\to\oo_{K_{n+1}}/\xi\oo_{K_{n+1}}$ induces a surjection $f_n:\oo_{K_{n+1}}/\xi\oo_{K_{n+1}}\to\oo_{K_n}/\xi \oo_{K_n}$. We can also choose a uniformizer $\pi_{K_n}$ of $K_n$ such that $\pi_{K_{n+1}}^p\equiv\pi_{K_n}\mod{\xi\oo_{K_n}}$. Then, we define $X^+:=X^+(\mathfrak{K},\xi,n_0):=\varprojlim_{n\ge n_0}\oo_{K_n}/\xi \oo_{K_n}$, where the transition maps are $\{f_n\}$. Let $\pr_n:X^+\to\oo_{K_n}/\xi \oo_{K_n}$ be the $n$-th projection for $n\ge n_0$. We put $\Pi:=(\pi_{K_n}\mod{\xi\oo_{K_n}})\in X^+$. Let $k_{\mathfrak{K}}:=\varprojlim_{n\ge n_0}k_{K_n}$ where the transition maps are the maps induced by $f_n$'s. Since $k_{K_{n+1}}= k_{K_n}^{p^{-1}}$, the projection $\pr_n:k_{\mathfrak{K}}\to k_{K_n}$ for all $n\ge n_0$ are isomorphisms. Moreover, $X^+$ is a complete discrete valuation ring of characteristic $p$, with uniformizer $\Pi$ and residue field $k_{\mathfrak{K}}$. The construction does not depend on $\xi$ and $n_0$, also $X^+$ is invariant after changing $\{K_n\}_n$ by $\{K_{n+m}\}_n$ for some $m$. Hence, we may denote $X^+(\mathfrak{K},\xi,n_0)$ by $X^+_{\mathfrak{K}}$ and denote the fractional field of $X^+_{\mathfrak{K}}$ by $X_{\mathfrak{K}}$. Note that if $K_n/K$ is Galois for all $n$, then $X^+_{\mathfrak{K}}$ and $X_{\mathfrak{K}}$ are canonically endowed with $G_{K_{\infty}/K}$-actions by construction.

\begin{example}[Kummer tower case]\label{ex:Kummer}
Let $K=\wtil{K}$ and $\{L_n\}$ be as in \S~\ref{subsec:Galois}. Then, $\{L_n\}$ is strictly deeply ramified (\cite[Example~6.2]{JNT}).
\end{example}

Let $L_{\infty}/K_{\infty}$ be a finite extension. We can choose a finite extension $L/K$ such that $L_{\infty}=LK_{\infty}$. Then, the tower $\mathfrak{L}:=\{L_n:=LK_n\}$ depends only on $L_{\infty}$ up to shifting, and is also strictly deeply ramified with respect to any $\xi'\in K_{n_0}$ with $0<v_p(\xi')<v_p(\xi)$ (\cite[Theorem~1.3.3]{Sch}). Note that if $L_n/K$ is Galois for all $n$, then $X^+_{\mathcal{L}}$ and $X_{\mathcal{L}}$ are canonically endowed with $G_{L_{\infty}/K}$-actions by construction.

\begin{thm}[{\cite[Theorem~1.3.4]{Sch}}]\label{thm:sch}
Let notation be as above. Denote the category of finite \'etale algebras over $K_{\infty}$ (resp. $X_{\mathfrak{K}}$) by $\mathbf{F\acute{E}t}_{K_{\infty}}$ (resp. $\mathbf{F\acute{E}t}_{X_{\mathfrak{K}}}$). Then, the functor
\[
X_{\bullet}:\mathbf{F\acute{E}t}_{K_{\infty}}\to \mathbf{F\acute{E}t}_{X_{\mathfrak{K}}};L_{\infty}\mapsto X_{\mathfrak{L}}
\]
is an equivalence of Galois categories. In particular, the corresponding fundamental groups are isomorphic, i.e., $G_{K_{\infty}}\cong G_{X_{\mathfrak{K}}}$. Moreover, the sequences $\{[L_n:K_n]\}_n$, $\{e_{L_n/K_n}\}_n$ and $\{[k_{L_n}:k_{K_n}]\}_n$ are stationary for sufficiently large $n$. Their limits are equal to $[X_{\mathfrak{L}}:X_{\mathfrak{K}}]$, $e_{X_{\mathfrak{L}}/X_{\mathfrak{K}}}$ and $[k_{X_{\mathfrak{L}}}:k_{X_{\mathfrak{K}}}]$.
\end{thm}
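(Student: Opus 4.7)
The plan is to follow Scholl's original approach, which generalizes Fontaine--Wintenberger by systematically exploiting the strictly deep ramification hypothesis. The guiding principle is that for $n \ge n_0$, the ring $\oo_{K_{n+1}}/\xi\oo_{K_{n+1}}$ is ``almost a Frobenius twist'' of $\oo_{K_n}/\xi\oo_{K_n}$, so that inverse limits along these transition maps compute characteristic~$p$ invariants which in turn control the finite \'etale site of $K_\infty$.

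First, I would verify well-definedness of $X_\bullet$. For a finite extension $L_\infty/K_\infty$, choosing $L/K$ with $L_\infty=LK_\infty$, the tower $\mathfrak{L}=\{LK_n\}$ is strictly deeply ramified with respect to a suitable $\xi'$ by \cite[Theorem~1.3.3]{Sch}, so $X_\mathfrak{L}^+$ is well-defined. The projections $X_\mathfrak{K}^+\hookrightarrow X_\mathfrak{L}^+$ are injective, and I would establish finiteness and the rank formula by showing that, for $n$ sufficiently large, $\oo_{L_n}/\xi'\oo_{L_n}$ is free of fixed rank $[L_\infty:K_\infty]:=\lim_n[L_n:K_n]$ over $\oo_{K_n}/\xi'\oo_{K_n}$; here the sequence $[L_n:K_n]$ is bounded by $[L:K]$ and non-decreasing in $n$, hence eventually stationary. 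The same limit procedure, applied to uniformizers and residue fields respectively, yields the stationarity of $e_{L_n/K_n}$ and $[k_{L_n}:k_{K_n}]$ together with their equality with $e_{X_\mathfrak{L}/X_\mathfrak{K}}$ and $[k_{X_\mathfrak{L}}:k_{X_\mathfrak{K}}]$.

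Next, separability and full faithfulness. Separability of $X_\mathfrak{L}/X_\mathfrak{K}$ would follow by showing that the module of differentials has the expected rank, which is a direct translation of the surjection $\Omega^1_{\oo_{K_{n+1}}/\oo_{K_n}}\twoheadrightarrow(\oo_{K_{n+1}}/\xi\oo_{K_{n+1}})^{d+1}$ of Definition~\ref{dfn:scholl} into the characteristic~$p$ limit. For full faithfulness, any $X_\mathfrak{K}$-algebra map $X_\mathfrak{L}\to X_{\mathfrak{L}'}$ is determined by its components on $\oo_{L_n}/\xi\oo_{L_n}$; each component lifts uniquely to an $\oo_{K_n}$-algebra map $\oo_{L_n}\to\oo_{L_n'}$ by the formal \'etaleness of finite \'etale morphisms, and the resulting system is automatically compatible by construction, producing a map $L_\infty\to L_\infty'$ that is manifestly inverse to $X_\bullet$ on morphisms.

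The main obstacle is essential surjectivity. Given a connected finite \'etale $X_\mathfrak{K}$-algebra $Y$, I would construct a preimage as follows. The integral closure $Y^+$ is a complete discrete valuation ring of characteristic~$p$, and $Y_n^+:=Y^+/\xi Y^+$ is a finite \'etale $\oo_{K_n}/\xi\oo_{K_n}$-algebra for each $n\ge n_0$, compatible under the Frobenius-like transition maps. Using formal smoothness of finite \'etale extensions, each $Y_n^+$ lifts uniquely along the nilpotent thickenings $\oo_{K_n}/\xi\hookleftarrow\oo_{K_n}/\xi^2\hookleftarrow\cdots$ and then $p$-adically to a finite \'etale $\oo_{K_n}$-algebra $\oo_{M_n}$. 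The delicate point is compatibility of these lifts: to produce inclusions $M_n\hookrightarrow M_{n+1}$, one must verify that the Frobenius-like surjection $\oo_{K_{n+1}}/\xi\twoheadrightarrow\oo_{K_n}/\xi$ lifts to a map of integral closures, which is precisely where the surjection onto $(\oo_{K_{n+1}}/\xi)^{d+1}$ in the definition of strict deep ramification enters: the ``$d+1$'' captures both the ramification and the $d$-dimensional imperfection of the residue field, providing exactly the degrees of freedom needed to propagate \'etale lifts through the tower. Setting $M_\infty:=\cup_n M_n$ then yields the desired finite \'etale extension with $X_\mathfrak{M}\cong Y$.
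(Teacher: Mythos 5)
This theorem is cited from Scholl (\cite[Theorem~1.3.4]{Sch}), so the standard of comparison is Scholl's own proof, and your sketch has a genuine gap at the hard step, essential surjectivity. You assert that for a finite \'etale $X_{\mathfrak{K}}$-algebra $Y$ with integral closure $Y^+$, the reductions $Y_n^+$ are finite \'etale over $\oo_{K_n}/\xi\oo_{K_n}$ and can therefore be lifted via the infinitesimal lifting property of \'etale morphisms. This is false: ``finite \'etale over $X_{\mathfrak{K}}$'' only means $Y$ is a finite separable extension of the \emph{fraction field}, while the extension of valuation rings $Y^+/X_{\mathfrak{K}}^+$ is typically (wildly) ramified, hence not \'etale, and its reductions are not \'etale either. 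So the formal-\'etaleness lifting machinery never gets started, and the ``$d+1$ degrees of freedom'' heuristic does not repair this. The actual argument, in both Fontaine--Wintenberger and Scholl, is considerably more delicate: one writes $Y = X_{\mathfrak{K}}(\alpha)$ with $\alpha$ a root of a monic separable $P\in X_{\mathfrak{K}}^+[T]$, lifts the coefficients of $P$ to $\oo_{K_n}$ to obtain approximating polynomials $P_n \in\oo_{K_n}[T]$, and then uses Krasner-type valuation estimates to show that for $n\gg 0$ the extensions $K_n[T]/(P_n)$ assemble into a tower $\mathfrak{L}$ with $X_{\mathfrak{L}}\cong Y$. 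The strictly deeply ramified hypothesis enters quantitatively here, via control of discriminants and differents across the tower --- not merely as a lifting axiom.

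A smaller point: you claim $\{[L_n:K_n]\}$ is non-decreasing, but in fact $[L_{n+1}:K_{n+1}] = [L_nK_{n+1}:K_{n+1}] = [L_n : L_n\cap K_{n+1}] \le [L_n:K_n]$, so it is non-increasing. The conclusion (eventually stationary) survives, being a non-increasing sequence of positive integers, but the monotonicity direction is wrong. The well-definedness, full faithfulness, and separability parts of your outline are plausible in spirit, though they would need the same quantitative ramification bookkeeping to be carried through rigorously.
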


\subsection{$(\varphi,\Gamma_K)$-modules}\label{subsec:phigamma}
Throughout this subsection, let $K$ be a complete discrete valuation field of mixed characteristic $(0,p)$. In this subsection, we will recall about $(\varphi,\Gamma_K)$-modules in the Kummer tower case (\cite{And}). To avoid complications, especially verifying the assumption \cite[(2.1.2)]{Sch}, we will assume the following to work under the settings of \cite{And}, \cite{AB} and \cite{AB2}.

\begin{assumption}[{\cite[\S~1]{And}}]\label{ass:relative}
Let $\mathcal{V}$ be a complete discrete valuation field of mixed characteristic~$(0,p)$ with perfect residue field. Let $R_0$ be the $p$-adic Hausdorff completion of $\mathcal{V}[T_1,\dots,T_d][1/T_1\dots T_d]$ and $\wtil{R}$ a ring obtained from $R_0$ iterating finitely many times the following operations:
\begin{enumerate}
\item[(\'et)] The $p$-adic Hausdorff completion of an \'etale extension;
\item[(loc)] The $p$-adic Hausdorff completion of the localization with respect to a multiplicative system;
\item[(comp)] The Hausdorff completion with respect to an ideal containing $p$.
\end{enumerate}
We assume that there exists a finite flat morphism $\wtil{R}\to\oo_K$, which sends $T_j$ to $t_j$.
\end{assumption}

Note that $\wtil{R}$ is an absolutely unramified complete discrete valuation ring. Denote $\wtil{R}$ by $\oo_{\wtil{K}}$ and $\mathrm{Frac}(\wtil{R})$ by $\wtil{K}$. Let $L/\wtil{K}$ be a finite extension. In the rest of this subsection, we will use notation as in \S\S~\ref{subsec:Galois}, \ref{subsec:periods}. We also apply the results of \S~\ref{subsec:scholl} to Kummer tower $\{L_n\}_{n>0}$.

\begin{notation}[{\cite[\S~4.1]{AB}}]
We will denote
\[
\E_L^+:=X^+_{\mathfrak{L}},\ \E_L:=X_{\mathfrak{L}}.
\]
For any non-zero $\xi\in p\oo_{L_{\infty}}$, we put
\[
\wtil{\E}^+_L:=\varprojlim_{x\mapsto x^p}\oo_{L_{\infty}}/\xi\oo_{L_{\infty}},\ \wtil{\E}_L:=\mathrm{Frac}(\wtil{\E}^+_L)
\]
where both rings are independent of the choice of $\xi$. We also put
\[
\wtil{\A}_L^+:=W(\wtil{\E}_L^+),\ \wtil{\A}_L:=W(\wtil{\E}_L),\ \wtil{\B}_L:=\wtil{\A}_L[p^{-1}].
\]
\end{notation}

By definition, we have $\E_L^+\subset\wtil{\E}^+_L$, $\E_L\subset\wtil{\E}_L$ and $\wtil{\E}_L$ can be regarded as a closed subring of $\wtil{\E}$. In particular, $\wtil{\A}_L^+$, $\wtil{\A}_L$ and $\wtil{\B}_L$ can be regarded as subrings of $\wtil{\A}^+$, $\wtil{\A}$ and $\wtil{\B}$. Note that the completion of an algebraic closure of $\E_L$ coincides with $\wtil{\E}$. Moreover, $\wtil{\E}$ is perfect and $(\wtil{\E}_L,v_{\wtil{\E}})$ is a perfect complete valuation field, whose integer ring is $\wtil{\E}_L^+$. By using the $G_{\wtil{K}}$-actions on $\wtil{\E}$ and $\wtil{\A}$, we can write (\cite[Lemme~4.1]{AB})
\[
\wtil{\E}^+_L=(\wtil{\E}^+)^{H_L},\ \wtil{\E}_L=\wtil{\E}^{H_L},\ \wtil{\A}_L=\wtil{\A}^{H_L},\ \wtil{\B}_L=\wtil{\B}^{H_L}.
\]

\begin{lem}[{A special case of \cite[Proposition~4.42]{AB}}]\label{lem:plus}
We put $\A_{W(k_{\mathcal{V}})}^+:=W(k_{\mathcal{V}})[[\pi]]\subset\wtil{\A}^+$ where $\pi=[\varepsilon]-1\in\wtil{\A}^+$. Let $L/\wtil{K}$ be a finite extension. The weak topology of $\wtil{\A}_L\cong\wtil{\E}_L^{\N}$ is the product topology $\wtil{\E}_L^{\N}$, where $\wtil{\E}_L$ is endowed with the valuation topology. Then, there exists a unique subring $\A_L$ of $\wtil{\A}_L$ such that:
\begin{enumerate}
\item $\A_L$ is complete for the weak topology;
\item $p\wtil{\A}_L\cap \A_L=p\A_L$;
\item One has an commutative diagram
\[\xymatrix{
\A_L\ar@{->>}[r]\ar[d]&\E_L\ar[d]\\
\wtil{\A}_L\ar@{->>}[r]&\wtil{\E}_L
}\]
\item $[\varepsilon],[\wtil{t}_j]\in \A_L$ for all $j$;
\item There exists an $\A_{W(k)}^+$-subalgebra $\A_L^+$ of $\A_L$ and $r_L\in\Q_{>0}$ such that:
\begin{enumerate}
\item There exists $a\in\N$ such that $p/\pi^a\in \A_L^+$ and $\A^+_L/(p/\pi^a)\cong \E_L^+$;
\item If $\alpha,\beta\in\N_{>0}$ such that $\alpha/\beta<pr_L/(p-1)$, one has $\A_L^+\subset\wtil{\A}^+_L\{p^{\alpha}/\pi^{\beta}\}$, where $\wtil{\A}^+_L\{p^{\alpha}/\pi^{\beta}\}$ denotes the $p$-adic Hausdorff completion of $\wtil{\A}^+_L[p^{\alpha}/\pi^{\beta}]$;
\item $\A_L^+$ is complete for the weak topology.
\end{enumerate}
\end{enumerate}
Moreover, by the uniqueness, $\A_L$ is stable under the actions of $\varphi$ and $G_{L_{\infty}/K}$ if $L/\wtil{K}$ is Galois.
\end{lem}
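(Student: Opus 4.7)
The plan is to specialize the construction from \cite[Proposition~4.42]{AB} to our setting. The key observation is that Assumption~\ref{ass:relative} guarantees that $\oo_{\wtil{K}}$ fits into Andreatta--Brinon's framework: $\wtil{R}$ is built from a Tate algebra by iterated operations (\'et)/(loc)/(comp), and the finite flat map $\wtil{R}\to\oo_K$ lets us transport the construction. I would carry the argument out in three stages: existence in the base case $L=\wtil{K}$, extension to general $L$ via Scholl's equivalence, and finally uniqueness plus stability.

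\emph{Base case.} Starting from $\A_{W(k_{\mathcal{V}})}^+=W(k_{\mathcal{V}})[[\pi]]$, I would adjoin the Teichm\"uller lifts $[\wtil{t}_j]$ for $1\le j\le d$ together with lifts of the \'etale coordinates coming from $\wtil{R}$, and then take a suitable $p$-adic completion inside $\wtil{\A}_{\wtil{K}}^+$. The finite flat descent along $\wtil{R}\to\oo_K$ produces an $\A_{W(k_{\mathcal{V}})}^+$-subalgebra $\A_{\wtil{K}}^+\subset\wtil{\A}_{\wtil{K}}^+$ whose reduction modulo $p/\pi^a$ (for a suitable $a\in\N$) recovers $\E_{\wtil{K}}^+$. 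Setting $\A_{\wtil{K}}$ to be the weak-topology completion of $\A_{\wtil{K}}^+[\pi^{-1}]$ handles the base case.

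\emph{Extension case.} Given a finite extension $L/\wtil{K}$, Scholl's Theorem~\ref{thm:sch} gives a finite separable extension $\E_L/\E_{\wtil{K}}$ of complete discrete valuation fields. The corresponding finite \'etale extension of the Henselian pair $(\A_{\wtil{K}}^+,\,p/\pi^a)$ exists and is unique, which defines $\A_L^+$, and $\A_L$ is obtained by weak-topology completion of $\A_L^+[\pi^{-1}]$. Properties (i), (iii), (iv), (v)(a) and (v)(c) then follow directly from the construction; (ii) is $p$-torsion-freeness inherited from $\wtil{\A}_L$. For (v)(b) I would argue that since $[\wtil{t}_j]\in\wtil{\A}_{\wtil{K}}^+\{p^\alpha/\pi^\beta\}$ for suitable $\alpha,\beta$ in the base case, the étale lifting of $\E_L^+/\E_{\wtil{K}}^+$ preserves this overconvergence up to shrinking $r_L$, because an étale extension of a Henselian pair can be presented by polynomials whose coefficients lie in the same overconvergent subring as the base.

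\emph{Uniqueness and stability.} If $\A_L'$ is another candidate, then by (v)(a) both $(\A_L')^+$ and $\A_L^+$ lift $\E_L^+$ inside $\wtil{\A}_L$, and the Henselian/\'etale rigidity of $p$-adically complete $p$-torsion-free lifts of complete discrete valuation rings forces $(\A_L')^+=\A_L^+$, hence $\A_L'=\A_L$ after $\pi$-inversion and weak completion. For stability, if $\sigma$ is either $\varphi$ or an element of $G_{L_\infty/K}$ (in the Galois case), then $\sigma(\A_L)\subset\wtil{\A}_L$ again satisfies (i)--(v) (the chosen $r_L$ may need to be replaced by $\min(r_L,r_L/p)$ in the Frobenius case), so uniqueness gives $\sigma(\A_L)=\A_L$.

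The main obstacle will be the verification of the overconvergence bound (v)(b), since this requires delicate control of denominators when lifting the \'etale extension $\E_L^+/\E_{\wtil{K}}^+$ to characteristic zero; in particular the correct choice of $r_L$ has to be uniform enough that it survives both the finite \'etale lifting step and the iterated (\'et)/(loc)/(comp) operations used to build $\wtil{R}$. The simplification relative to the general \cite[Proposition~4.42]{AB} is that here the base is a genuine complete discrete valuation ring, so the completion step in (comp) is trivial and $r_L$ can essentially be inherited from $r_{\wtil{K}}$ without further loss.
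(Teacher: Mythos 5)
This lemma is not actually proved in the paper: it is stated as a special case of \cite[Proposition~4.42]{AB} and invoked by citation, so there is no in-paper argument to compare against. Your sketch is, in spirit, exactly what the citation delegates to \cite{AB}: specialize Andreatta--Brinon's construction (there carried out for relative rings of the form $\wtil{R}$) to the one-dimensional situation supplied by Assumption~\ref{ass:relative}. In that sense your plan is the natural one. However, two points in your sketch are looser than they should be, and both happen to be exactly where the substance of \cite[Prop.~4.42]{AB} lives.

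First, your uniqueness argument is not correct as stated. Condition (v)(a) does not canonically attach a ring $\A_L^+$ to $\A_L$; it only asserts the \emph{existence} of some $\A_{W(k)}^+$-subalgebra with those properties, and there is no a priori reason two candidates $\A_L$, $\A_L'$ produce comparable $\A_L^+$, $(\A_L')^+$ to which ``Henselian rigidity'' can be applied. What actually pins $\A_L$ down is the combination of (i)--(iv): one shows that any $\A_L$ satisfying these is forced to be the unique finite \'etale $\A_{\wtil{K}}$-subalgebra of $\wtil{\A}_L$ lifting $\E_L/\E_{\wtil{K}}$ (this is precisely the characterization recalled in Remark~\ref{rem:aplus}~(i) of the paper). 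The stability statement then follows from this characterization, not from (v)(a) alone; your remark about shrinking $r_L$ for $\varphi$ is correct but secondary, since $r_L$ is quantified existentially.

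Second, you are right that (v)(b) is the hard point, but your justification---``an \'etale extension of a Henselian pair can be presented by polynomials whose coefficients lie in the same overconvergent subring as the base''---is not a proof. Controlling the denominators $p^\alpha/\pi^\beta$ through the \'etale lift, and showing that the resulting overconvergence radius $r_L$ can be chosen to work uniformly, requires the quantitative estimates that occupy a substantial portion of \cite[\S 4]{AB}; this is exactly why the paper cites the result instead of re-deriving it. Your claim that the (comp) step is ``trivial'' because the end result is a complete DVR is also a non sequitur: the intermediate rings in the tower building $\wtil{R}$ need not be DVRs, and the estimates in \cite{AB} have to be propagated through each step. In short: your outline correctly identifies the shape of the argument, but the two places you gloss over (uniqueness via (i)--(iv) and the overconvergence bound) are not filler; they are the content.
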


\begin{dfn}
Let $\A$ be the $p$-adic Hausdorff completion of $\cup_{L/\wtil{K}}\A_L$, which is a subring of $\wtil{\A}$, stable under the actions of $G_K$ and $\varphi$. We also put $\B_L:=\A_L[p^{-1}]$ and $\B:=\A[p^{-1}]$.
\end{dfn}

\begin{rem}\label{rem:aplus}
\begin{enumerate}
\item As is remarked in \cite[\S~4.3]{AB}, $\A_L$ is the unique finite \'etale $\A_{\wtil{K}}$-algebra corresponding to $\E_L/\E_{\wtil{K}}$, in particular, $\A_L$ is a Cohen ring of $\E_L$.
\item The action of $\Gamma_{\wtil{K}}$ on $\A_{\wtil{K}}$ is determined by the action of $\Gamma_{\wtil{K}}$ on $\pi,[\wtil{t}_1],\dots,[\wtil{t}_d]$, since $\varepsilon-1,\wtil{t}_1,\dots,\wtil{t}_d$ forms a $p$-basis of $\E_{\wtil{K}}$. Explicit descriptions are given as follows:
\[
\gamma_a(\pi)=(1+\pi)^a-1,\ \gamma_a([\wtil{t}_j])=[\wtil{t}_j]\text{ for }a\in\Z_p^{\times},
\]
\[
\gamma_b(\pi)=\pi,\ \gamma_b([\wtil{t}_j])=(1+\pi)^{b_j}[\wtil{t}_j]\text{ for }b=(b_j)\in\Z_p^d.
\]
\end{enumerate}
\end{rem}

\begin{dfn}
For $h\in\N_{>0}$, an \'etale $(\varphi^h,\Gamma_L)$-module $M$ over $\B_L$ is an \'etale $\varphi^h$-module over $\B_L$ endowed with semi-linear continuous $\gk$-action commuting with the action of $\varphi^h$. Denote by $\Mod^{\et}_{\B_L}(\varphi^h,\Gamma_L)$ the category of \'etale $(\varphi^h,\Gamma_L)$-modules over $\B_L$.

For $V\in \rep_{\Q_{p^h}}(G_L)$, let $\D(V):=(\B\otimes_{\Q_{p^h}}V)^{H_L}$. For $M\in \Mod^{\et}_{\B_L}(\varphi^h,\Gamma_L)$, let $\V(M):=(\B\otimes_{\B_K}M)^{\varphi^h=1}$. 
\end{dfn}

\begin{thm}[{\cite[Theorem~7.11]{And} or \cite[Th\'eor\`eme~4.34]{AB}}]\label{thm:phigamma}
Let $h\in\N_{>0}$. Then, the functor $\D$ gives a rank-preserving equivalence of categories
\[
\D:\rep_{\Q_{p^h}}(G_L)\to \Mod^{\et}_{\B_L}(\varphi^h,\Gamma_L)
\]
with a quasi-inverse $\V$.
\end{thm}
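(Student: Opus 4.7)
The plan is to deduce Theorem~\ref{thm:phigamma} from the classical Fontaine equivalence in characteristic $p$ by using Scholl's field-of-norms construction to identify the relevant absolute Galois groups. The argument factors through three steps.

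First, I would restrict to $H_L = G_{L_{\infty}}$ and work without $\Gamma_L$. By Theorem~\ref{thm:sch}, the field-of-norms functor induces a canonical isomorphism $H_L \cong G_{\E_L}$. By Remark~\ref{rem:aplus}(i), $\A_L$ is a Cohen ring of $\E_L$, and by its construction inside $\wtil{\A}$ (Lemma~\ref{lem:plus} iterated over all finite extensions of $\wtil{K}$), $\A$ plays the role of the $p$-adic completion of the maximal unramified extension of $\A_L$, with $\A^{H_L}=\A_L$. Then the classical Fontaine equivalence for the complete discrete valuation field $\E_L$ of characteristic $p$ yields a rank-preserving equivalence
\[
\rep_{\Q_{p^h}}(H_L)\xrightarrow{\sim}\Mod^{\et}_{\B_L}(\varphi^h),\quad V\mapsto (\B\otimes_{\Q_{p^h}}V)^{H_L},
\]
with quasi-inverse $M\mapsto (\B\otimes_{\B_L}M)^{\varphi^h=1}$. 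The standard proof proceeds by (a) the vanishing of $H^1(H_L,GL_n(\A))$ and its mod-$p$ analogue $H^1(G_{\E_L},GL_n(\wtil{\E}))$, reducing via Artin-Schreier theory to the algebraic closedness of $\wtil{\E}$; (b) an elementary argument constructing a $\B_L$-basis of $\D(V)$ out of a $\Q_{p^h}$-basis of $V$; and (c) the verification that $\B\otimes_{\B_L}\D(V)\to\B\otimes_{\Q_{p^h}}V$ is an isomorphism.

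Second, I would upgrade to the full $G_L$. Because a $\Q_{p^h}$-representation of $G_L$ is the same datum as a $\Q_{p^h}$-representation of $H_L$ equipped with a compatible (semi-linear) continuous action of $\Gamma_L = G_L/H_L$, and because $\D$ and $\V$ are manifestly functorial in the $G_L$-action and respect the decomposition $G_L = \Gamma_L \ltimes H_L$, this upgrade is formal on the level of the abstract group actions. The only non-formal item is the continuity of the induced $\Gamma_L$-action on $\D(V)$ for the weak topology of $\B_L$, which follows from the continuity of the $G_L$-action on $V$ together with the continuity of the $G_L$-action on $\B$; the latter is obtained from the explicit description of the weak topology in Lemma~\ref{lem:plus} and the formulas for the Galois action on $\pi$ and $[\wtil{t}_j]$ recalled in Remark~\ref{rem:aplus}(ii).

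The main obstacle is the first step: one must verify that the classical Fontaine equivalence, which is most naturally stated in the perfect-residue-field case (or for power-series Cohen rings with a preferred uniformizer), applies verbatim to our Cohen ring $\A_L$ of the imperfect-residue-field $\E_L$. The technical heart of this verification is precisely the construction and control of $\A \subset \wtil{\A}$ such that $\A^{H_L}=\A_L$, which is the content of Lemma~\ref{lem:plus} and the extensive apparatus of \cite{And} and \cite{AB} built to guarantee Assumption~\ref{ass:relative} and the compatibility of Scholl's equivalence with lifts to characteristic zero.
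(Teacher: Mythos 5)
The paper does not give its own proof of this theorem; it is cited from Andreatta and Andreatta--Brinon. Your sketch correctly reproduces the strategy of those references: restrict to $H_L$, transport to $G_{\E_L}$ via Scholl's field-of-norms equivalence, invoke Fontaine's characteristic-$p$ equivalence (whose proof via Hilbert~90 and Artin--Schreier dévissage works verbatim over the imperfect-residue-field $\E_L$), and then formally reinstate the $\Gamma_L$-action, with the genuine technical content residing in the construction of $\A_L\subset\wtil{\A}$ with $\A^{H_L}=\A_L$ recalled in Lemma~\ref{lem:plus}.
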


\subsection{Overconvergence of $p$-adic representations}\label{subsec:oc}
In this subsection, we will recall the overconvergence of $p$-adic representations in \cite{AB}. We still keep the notation in \S~\ref{subsec:phigamma} and Assumption~\ref{ass:relative}.

\begin{dfn}
We apply Construction~\ref{con:oc} to $(\wtil{\E},v_{\wtil{\E}})$ with $\Gamma=\wtil{\A}$ and we denote
\[
\wtil{\A}^{\dagger,r}:=\Gamma_r,\ \wtil{\A}^{\dagger}:=\Gamma_{\con},\ \wtil{\B}^{\dagger,r}:=\Gamma_r[p^{-1}],\ \wtil{\B}^{\dagger}:=\Gamma_{\con}[p^{-1}],\ \wtil{\B}^{\dagger,r}_{\rig}:=\Gamma_{\an,r},\ \wtil{\B}_{\rig}^{\dagger}:=\Gamma_{\an,\con}.
\]
We define $v_{\wtil{\E}}^{\le n}$ and $w_r$ by the same way. For a finite extension $L/\wtil{K}$, we apply a similar construction to the following $(E,v_{\wtil{\E}})$ with $\Gamma$ and we denote:
\[
\begin{tabular}{|llllllll|}
\hline
$\Gamma$&$E$&$\Gamma_r$&$\Gamma_{\con}$&$\Gamma_r[p^{-1}]$&$\Gamma_{\con}[p^{-1}]$&$\Gamma_{\an,r}$&$\Gamma_{\an,\con}$ \\
\hline
$\A$ &$\E$&$\A^{\dagger,r}$ & $\A^{\dagger}$ &$\B^{\dagger,r}$ &$\B^{\dagger}$&$\B^{\dagger,r}_{\rig}$&$\B^{\dagger}_{\rig}$ \\
$\wtil{\A}_L$ &$\wtil{\E}_L$&$\wtil{\A}^{\dagger,r}_L$ & $\wtil{\A}^{\dagger}_L$ &$\wtil{\B}^{\dagger,r}_L$ &$\wtil{\B}^{\dagger}_L$&$\wtil{\B}^{\dagger,r}_{\rig,L}$&$\wtil{\B}^{\dagger}_{\rig,L}$ \\
$\A_L$ &$\E_L$&$\A^{\dagger,r}_L$ & $\A^{\dagger}_L$ &$\B^{\dagger,r}_L$ &$\B^{\dagger}_L$&$\B^{\dagger,r}_{\rig,L}$&$\B^{\dagger}_{\rig,L}$ \\
\hline
\end{tabular}
\]
By construction, we have $\wtil{\B}^{\dagger}=\cup_r{\wtil{\B}^{\dagger,r}}$, $\B^{\dagger}=\cup_r{\B^{\dagger,r}}$, $\wtil{\B}_K^{\dagger,r}=\wtil{\B}_K\cap \wtil{\B}^{\dagger,r}$ and $\wtil{\B}_K^{\dagger}=\cup_r{\wtil{\B}_K^{\dagger,r}}$, $\B_K^{\dagger,r}=\B_K\cap \B^{\dagger,r}$ and $\B_K^{\dagger}=\cup_r{\B_K^{\dagger,r}}$. We endow $\wtil{\B}^{\dagger,r}$ and $\wtil{\B}^{\dagger,r}_{\rig}$ ... etc. with the Fr\'echet topology defined by $\{w_s\}_{0<s\le r}$.
\end{dfn}

We can describe $\A_L^{\dagger}$ by using the ring of overconvergent power series.

\begin{lem}[{cf. \cite[Proposition~1.4]{Inv}}]\label{lem:ocexplicit}
Let $\oo$ be a Cohen ring of $k_{\E_{\wtil{K}}}$. Then, there exists an isomorphism $\A_{\wtil{K}}\cong\oo\{\{\pi\}\}$, which induces an isomorphism $\A_{\wtil{K}}^{\dagger,r}\cong\oo((\pi))^{\dagger}$ for all sufficiently small $r>0$. Similarly, there exists an isomorphism $\A_L\cong\oo'\{\{\pi'\}\}$, which induces $\A_L^{\dagger,r}\cong\oo'((\pi'))^{\dagger,r/e_{\E_L/\E_{\wtil{K}}}}$, where $\oo'$ is a Cohen ring of $k_{\E_{L}}$.
\end{lem}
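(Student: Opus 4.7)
The plan is to reduce everything to Lemma~\ref{lem:ocext} and Construction~\ref{const:ocext}, which already perform the bulk of the work; the main task is to exhibit the required uniformizer and to verify the overconvergence hypothesis on the Frobenius lift.

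First, I would show that $\A_{\wtil{K}}$ is a Cohen ring of $\E_{\wtil{K}}$ in which $\pi=[\varepsilon]-1$ is a uniformizer lifting a uniformizer of $\E_{\wtil{K}}^+$. The Cohen ring property is recorded in Remark~\ref{rem:aplus}(i). That $\pi$ reduces to a uniformizer follows from Lemma~\ref{lem:plus}(iv)-(v): $\pi\in \A_{\wtil{K}}^{+}$ maps to $\varepsilon-1\in\wtil{\E}_{\wtil{K}}^{+}$, whose $v_{\wtil{\E}}$-valuation is $1/(p-1)$, and this coincides with the valuation of the uniformizer $\Pi$ of $\E_{\wtil{K}}^+=X^+_{\mathfrak{L}}$ built from the Kummer tower $\{\wtil{K}(\zeta_{p^n},t_j^{p^{-n}})\}$ (the $\pi_{\wtil{K}_n}=\zeta_{p^n}-1$ choice). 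Thus $\pi$ generates the maximal ideal of $\E_{\wtil{K}}^+$ modulo $p$, and by the structure theorem for Cohen rings of complete discrete valuation fields of characteristic~$p$, there is an isomorphism $\A_{\wtil{K}}\cong\oo\{\{\pi\}\}$ with $\oo$ a Cohen ring of $k_{\E_{\wtil{K}}}$, sending the formal variable to $\pi$.

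Next, I would verify the Frobenius condition needed to invoke Lemma~\ref{lem:ocext}. The Frobenius on $\A_{\wtil{K}}$ restricted to $\pi$ gives
\[
\varphi(\pi)=[\varepsilon]^p-1=(1+\pi)^p-1\in\oo[\pi]\subset\oo((\pi))^{\dagger},
\]
so the hypothesis $\varphi(S)\in\oo((S))^{\dagger}$ of Lemma~\ref{lem:ocext} is satisfied. Applying that lemma yields $\A_{\wtil{K}}^{\dagger,r}\cong\oo((\pi))^{\dagger,r}$ for all sufficiently small $r>0$, which gives the first statement. (One checks transparently that the partial valuations $v^{\le n}_{\wtil{\E}}$ and $w_r$ used to define $\A_{\wtil{K}}^{\dagger,r}\subset\wtil{\A}^{\dagger,r}$ restrict to the natural Gauss valuations on $\oo((\pi))^{\dagger,r}$ via the above isomorphism, which is exactly the content of Lemma~\ref{lem:ocext}.)

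For a finite extension $L/\wtil{K}$, the extension $\A_L/\A_{\wtil{K}}$ is finite \'etale corresponding to $\E_L/\E_{\wtil{K}}$ by Remark~\ref{rem:aplus}(i), and $\A_L$ is a Cohen ring of $\E_L$. Applying Construction~\ref{const:ocext} to this finite \'etale extension produces an isomorphism $f':\A_L\cong \oo'\{\{\pi'\}\}$, where $\oo'$ is a Cohen ring of $k_{\E_L}$, $\pi'$ is a uniformizer of $\A_L$ lifting a uniformizer of $\E_L$, and $f'(\pi)=(\pi')^{e_{\E_L/\E_{\wtil{K}}}}u$ for some $u\in \oo'[[\pi']]^{\times}$. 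The Frobenius lift on $\A_L$ extends the one on $\A_{\wtil{K}}$, and by the finiteness and integral closedness argument in the proof of Lemma~\ref{lem:ocext} (or directly since $\pi'$ is integral over $\oo((\pi))^{\dagger}$, which is Henselian) we have $\varphi(\pi')\in \oo'((\pi'))^{\dagger}$. A second application of Lemma~\ref{lem:ocext} then gives $\A_L^{\dagger,r}\cong \oo'((\pi'))^{\dagger,r/e_{\E_L/\E_{\wtil{K}}}}$ for all sufficiently small $r>0$.

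The main subtle point I expect is the compatibility of partial valuations: one must check that the intrinsic Fr\'echet valuations $\{w_s\}$ defined via the Teichm\"uller expansion in $\wtil{\A}$ restrict, under the isomorphism with $\oo\{\{\pi\}\}$, to the Gauss-type valuations that define $\oo((\pi))^{\dagger,r}$. This is handled abstractly by Lemma~\ref{lem:ocext} once the Frobenius overconvergence is established, so the substantive verification is entirely that $\varphi(\pi)$, and subsequently $\varphi(\pi')$, lie in the relevant overconvergent rings, which is immediate from the Teichm\"uller formula $\varphi([\varepsilon])=[\varepsilon]^p$ and the Henselian property.
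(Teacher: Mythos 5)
Your proof is correct and takes essentially the same route as the paper's: obtain the isomorphism $\A_{\wtil{K}}\cong\oo\{\{\pi\}\}$ from Remark~\ref{rem:aplus}~(i), observe that $\varphi(\pi)=[\varepsilon]^p-1=(1+\pi)^p-1\in\oo((\pi))^{\dagger}$, and invoke Lemma~\ref{lem:ocext} (together with Construction~\ref{const:ocext} for the extension $L$). The extra detail you supply, that $\pi=[\varepsilon]-1$ reduces to a uniformizer of $\E_{\wtil{K}}^+$ so the isomorphism may be chosen to send the formal variable to $\pi$, is left implicit in the paper and worth recording, though with the paper's normalization $v_{\wtil{\E}}((x^{(n)}))=v_p(x^{(0)})$ one has $v_{\wtil{\E}}(\varepsilon-1)=p/(p-1)$, not $1/(p-1)$.
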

\begin{proof}
Fix any isomorphism $\A_{\wtil{K}}\cong\oo\{\{\pi\}\}$ (Remark~\ref{rem:aplus}~(i)). Since $\varphi(\pi)=[\varepsilon]^p-1=(1+\pi)^p-1\in\oo\{\{\pi\}\}^{\dagger}$, the assertion follows from Lemma~\ref{lem:ocext}.
\end{proof}

\begin{notation}
By using the isomorphism in Lemma~\ref{lem:ocexplicit}, we can apply the results in \S~\ref{subsec:diffconductor}. In particular, for any finite extension $L/\wtil{K}$, we have a canonical continuous derivation
\[
d:\B_{\rig,L}^{\dagger}\to\Omega^1_{\B_{\rig,L}^{\dagger}},
\]
where $\Omega^1_{\B^{\dagger}_{\rig,L}}:=\B_{\rig,L}^{\dagger}\otimes_{\A^{\dagger}_L}\Omega^1_{\A^{\dagger}_L}$ is a free $\B^{\dagger}_{\rig,L}$-module with basis $d\pi,d[\wtil{t}_1],\dots,d[\wtil{t}_d]$. We may speak about $(\varphi,\nabla)$-modules over $\B_{\rig,L}^{\dagger}$ and the associated differential Swan conductors.
\end{notation}

\begin{dfn}
Let $h\in\N_{>0}$. An \'etale $(\varphi^h,\Gamma_L)$-module $M$ over $\B^{\dagger}_L$ is an \'etale $\varphi^h$-module over $\B^{\dagger}_L$ endowed with continuous semi-linear $\gk$-action commuting with $\varphi^h$-action. Denote by $\Mod^{\et}_{\B_L^{\dagger}}(\varphi^h,\Gamma_L)$ the category of \'etale $(\varphi^h,\Gamma_L)$-modules over $\B^{\dagger}_L$.

For $V\in \rep_{\Q_{p^h}}(G_L)$, let
\[
\D^{\dagger,r}(V):=(\B^{\dagger,r}\otimes_{\Q_{p^h}}V)^{H_L},\ \D^{\dagger}(V)=\cup_r\D^{\dagger,r}(V),
\]
\[
\D^{\dagger,r}_{\rig}(V):=\B^{\dagger,r}_{\rig,L}\otimes_{\B^{\dagger,r}_L}\D^{\dagger,r}(V),\ \D^{\dagger}_{\rig}(V)=\cup_r\D^{\dagger,r}_{\rig}(V).
\]
For $M\in \Mod^{\et}_{\B_L^{\dagger}}(\varphi^h,\Gamma_L)$, let $\V(M):=(\B^{\dagger}\otimes_{\B_L^{\dagger}}M)^{\varphi^h=1}$.
\end{dfn}

\begin{thm}[{\cite[Theorem~4.35]{AB}}]\label{thm:oc}
Let $h\in\N_{>0}$. The functor $\D^{\dagger}$ gives a rank-preserving equivalence of categories
\[
\D^{\dagger}:\rep_{\Q_{p^h}}(G_L)\to \Mod^{\et}_{\B_L^{\dagger}}(\varphi^h,\Gamma_L)
\]
with a quasi-inverse $\V$. Moreover, $\D^{\dagger}$ and $\V$ are compatible with $\D$ and $\V$ in Theorem~\ref{thm:phigamma}. Furthermore, for all sufficiently small $r$, $\D^{\dagger,r}(V)$ is free of rank $\dim_{\Q_{p^h}}V$ over $\B^{\dagger,r}_K$ and we have a canonical isomorphism $\B^{\dagger}_K\otimes_{\B^{\dagger,r}_K}\D^{\dagger,r}(V)\to\D^{\dagger}(V)$.
\end{thm}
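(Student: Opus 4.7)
The plan is a two-stage descent. First build an overconvergent model of $V$ over the perfect ring $\wtil{\B}^{\dagger}_L$ by a Tate-Sen style iteration, then decomplete to land inside the non-perfect ring $\B^{\dagger}_L$. Since Theorem~\ref{thm:phigamma} already supplies an equivalence $\D \colon \rep_{\Q_{p^h}}(G_L) \simeq \Mod^{\et}_{\B_L}(\varphi^h, \Gamma_L)$, it suffices to exhibit, functorially in $V$, a $\B^{\dagger}_L$-submodule $\D^{\dagger}(V) \subset \D(V)$ that is free of rank $\dim_{\Q_{p^h}} V$, stable under $\varphi^h$ and $\Gamma_L$, and such that $\B_L \otimes_{\B^{\dagger}_L} \D^{\dagger}(V) \to \D(V)$ is an isomorphism. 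The quasi-inverse $\V$ on the overconvergent side, together with the compatibility with the non-overconvergent pair $(\D, \V)$, then follows formally, and the final statement about the $\B^{\dagger,r}_K$-lattice is just the constructive content of the descent procedure.

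For the tilde stage, I would put $\wtil{\D}^{\dagger,r}(V) := (\wtil{\B}^{\dagger,r} \otimes_{\Q_{p^h}} V)^{H_L}$ and show that for all sufficiently small $r > 0$ this is a free $\wtil{\B}^{\dagger,r}_L$-module of the expected rank. Starting from a $\B_L$-basis of $\D(V)$, one iteratively modifies it by normalized trace maps $\wtil{\B}_L \to \wtil{\B}^{\dagger,r}_L$ built out of $\Gamma_L$ and $\varphi$, whose contraction behaviour is the Tate-Sen package verified in \cite{And} and \cite{AB}. The new feature compared with the perfect-residue-field case is the geometric direction $\Gamma^{\geom}_{\wtil{K}} \cong \Z_p^d$; however, by the explicit actions $\gamma_b([\wtil{t}_j]) = (1+\pi)^{b_j}[\wtil{t}_j]$ of Remark~\ref{rem:aplus}~(ii), these commuting coordinates can be absorbed one at a time into the same contraction scheme.

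The main obstacle is the decompletion step, where one must approximate a $\wtil{\B}^{\dagger,r}_L$-basis of $\wtil{\D}^{\dagger,r}(V)$ by a $\B^{\dagger,r}_L$-basis of an actual submodule. The essential input is Lemma~\ref{lem:plus}~(v)(b), which for any $\alpha/\beta < pr_L/(p-1)$ places $\A_L^+$ inside $\wtil{\A}^+_L\{p^{\alpha}/\pi^{\beta}\}$ and thereby quantifies how densely $\A_L^+$ sits inside its perfect completion by trading $\pi$-adic depth against $p$-adic depth. Combined with $\varphi(\pi) = (1+\pi)^p - 1 \equiv \pi^p \pmod{p}$, a Frobenius pullback sharpens the $\pi$-adic precision of any approximation geometrically in the Fr\'echet topology defined by $\{w_s\}_{0 < s \le r}$. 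I would set up an iterative scheme: given a candidate $\B^{\dagger,r}_L$-basis, measure the defect from the true $\wtil{\B}^{\dagger,r}_L$-basis in the Fr\'echet seminorms, correct by a lift from $\A_L^+$ provided by Lemma~\ref{lem:plus}, then pull back through a suitable power of $\varphi^h$ to improve the defect by a definite factor. Shrinking $r$ as needed and invoking completeness of $\B^{\dagger,r}_L$ for the Fr\'echet topology, this Banach-contraction iteration converges to a genuine $\B^{\dagger,r}_L$-basis of $\D^{\dagger,r}(V)$; the stability under $\Gamma_L$ and $\varphi^h$ is then inherited from the tilde stage, which finishes the construction.
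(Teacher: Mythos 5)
The paper does not prove Theorem~\ref{thm:oc} at all: it is cited verbatim as \cite[Theorem~4.35]{AB}, and the text moves on immediately to studying $\D^{\dagger}_{\rig}$. So there is no in-paper argument to compare against; I can only judge your sketch against what Andreatta--Brinon actually do. Your two-stage plan --- first establish overconvergence at the perfect level $\wtil{\B}^{\dagger}_L$, then decomplete down to $\B^{\dagger}_L$ --- is indeed the correct skeleton, and it follows the Cherbonnier--Colmez / Tate--Sen template that \cite{AB} adapt to the relative setting.

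The decompletion stage as you have written it has a real gap. You propose to "pull back through a suitable power of $\varphi^h$" to improve the $\pi$-adic precision of a candidate $\B^{\dagger,r}_L$-basis. But $\varphi$ is not an automorphism of $\B^{\dagger,r}_L$: it sends $\B^{\dagger,r}_L$ into $\B^{\dagger,r/p}_L$ and is very far from surjective, so there is no pullback to take. The only ring on which $\varphi$ is invertible is the perfect ring $\wtil{\B}^{\dagger,r}_L$ --- exactly the ring you are trying to escape --- and inverting Frobenius there moves you away from, not towards, $\A_L^+$. The mechanism that actually drives the decompletion in \cite{AB} is not a Frobenius pullback but the Tate--Sen normalized trace operators with target the finite-level subrings (something like $\B^{\dagger,r}_{L_n}$), together with their norm estimates (the axioms (TS2) and (TS3)); the content of Lemma~\ref{lem:plus} and the explicit $\Gamma$-action recalled in Remark~\ref{rem:aplus}~(ii) feed precisely into verifying those estimates. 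Relatedly, your description of "normalized trace maps $\wtil{\B}_L \to \wtil{\B}^{\dagger,r}_L$" has the wrong target: overconvergence is a quantitative bound on the $w_s$-norms of the traces, not a change of codomain to the overconvergent subring. Finally, the last two sentences of the theorem (compatibility with $\D$, $\V$, and the existence of a $\B^{\dagger,r}_K$-basis for a single small $r$) are not merely "formal" or "the constructive content of the descent": they require showing that the basis constructed by successive approximation stabilizes at some fixed $r$, which again goes through the same trace estimates. In short, your outline names the right ingredients, but the iteration that is supposed to converge is built on an operation ($\varphi^{-1}$ on $\B^{\dagger,r}_L$) that does not exist; the actual engine is the $\Gamma_L$-equivariant normalized traces, and supplying that engine with its norm bounds is the substantial portion of \cite[\S 4]{AB}.
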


The functor $\D^{\dagger}_{\rig}$ will be studied in \S~\ref{subsec:pure}.

\section{Adequateness of overconvergent rings}\label{sec:adeq}
In this section, we will prove the ``adequateness'', which assures the elementary divisor theorem, for overconvergent rings defined in \S~\ref{subsec:ocnotation}. The adequateness of overconvergent rings seems to be well-known to the experts: At least when the overconvergent ring is isomorphic the Robba ring, the adequateness follows from Lazard's results (\cite{Laz}) as in \cite[Proposition~4.12~(5)]{Inv}). Since the author could not find an appropriate reference, we give a proof.

\begin{dfn2.0.1}[{\cite[\S~2]{Hel}}]\label{dfn:Hel}
An integral domain $R$ is adequate if the following hold:
\begin{enumerate}
\item $R$ is a B\'ezout ring, that is, any finitely generated ideal of $R$ is principal;
\item For any $a,b\in R$ with $a\neq 0$, there exists a decomposition $a=a_1a_2$ such that $(a_1,b)=R$ and $(a_3,b)\neq R$ for any non-unit factor $a_3$ of $a_2$.
\end{enumerate}
\end{dfn2.0.1}

Recall that if $R$ is an adequate integral domain, then the elementary divisor theorem holds for free $R$-modules (\cite[Theorem~3]{Hel}). Precisely speaking, let $N\subset M$ be finite free $R$-modules of rank $n$ and $m$ respectively. Then, there exists a basis of $e_1,\dots,e_m$ (resp. $f_1,\dots,f_n$) of $M$ (resp. $N$) and non-zero elements $\lambda_1|\dots|\lambda_n\in R$ such that $f_i=\lambda_ie_i$ for $1\le i\le n$.

In the rest of this section, let notation be as in Construction~\ref{con:oc}. We fix $r_0>0$ such that $\Gamma$ has enough $r_0$-units and let $r\in (0,r_0)$ unless otherwise is mentioned. Recall that $\Gamma_{\an,r}$ is a B\'ezout integral domain.

\begin{dfn2.0.2}
We recall basic terminologies (\cite[\S~3.5]{mon}). For $x\in \Gamma_{\an,r}$ non-zero, we define the Newton polygon of $x$ as the lower convex hull of the set of points $(v^{\le n}(x),n)$, minus any segments of slope less than $-r$ on the left end and/or any segments of non-negative slope on the right end of the polygon. We define the slopes of $x$ as the negatives of the slopes of the Newton polygon of $x$. We also define the multiplicity of a slope $s\in (0,r]$ of $x$ as the positive difference in $y$-coordinates between the endpoints of the segment of the Newton polygon of slope $-s$, or $0$ if there is no such segment. If $x$ has only one slope $s$, we say that $x$ is pure of slope $s$.

A slope factorization of a non-zero element $x$ of $\Gamma_{\an,r}$ is a Fr\'echet-convergent product $x=\Pi_{1\le i\le n}x_i$ for $n$ a positive integer or $\infty$, where each $x_i$ is pure of slope $s_i$ with $s_1>s_2>\dots$ (cf. an explanation before \cite[Lemma~3.26]{mon}).
\end{dfn2.0.2}

Recall that the multiplicity is compatible with multiplication, i.e., the multiplicity of a slope $s$ of $xy$ is the sum of its multiplicities as a slope of $x$ and of $y$ (\cite[Corollary~3.22]{mon}). Also, recall that $x\in\Gamma_{\an,r}$ is a unit if and only if $x$ has no slopes (\cite[Corollary~2.5.12]{Doc}).

\begin{lem2.0.3}[{\cite[Lemma~3.26]{mon}}]
Every non-zero element of $\Gamma_{\an,r}$ has a slope factorization.
\end{lem2.0.3}

For simplicity, we denote $\Gamma_{\an,r}$ by $R$ in the rest of this subsection. The lemma below is an immediate consequence of B\'ezoutness of $R$ and the additivity of the multiplicity of a slope.
\begin{lem2.0.4}\label{lem:pureslope}
\begin{enumerate}
\item Let $x,y\in R$ such that $x$ is pure of slope $s$ and let $z$ be a generator of $(x,y)$. Then, $z$ is also pure of slope $s$ with multiplicity less than or equal to the multiplicity of slope $s$ of $x$. In particular, if the multiplicity of slope $s$ of $y$ is equal to zero, then $z$ is a unit and we have $(x,y)=R$.
\item Let $x,y\in R$ such that $x$ is a pure of slope $s$. Then, the decreasing sequence of the ideals $\{(x,y^n)\}_{n\in\N}$ is eventually stationary.
\end{enumerate}
\end{lem2.0.4}

\begin{lem2.0.5}[The uniqueness of slope factorizations]\label{lem:uniquefac}
Let $x\in R$ be a non-zero element. Let $x=\Pi_i{x_i}=\Pi_i{x'_i}$ be slope factorizations, whose slopes are $s_1>s_2>\dots$ and $s'_1>s'_2>\dots$. Let $m_i$ and $m'_i$ be the multiplicities of $s_i$ and $s'_i$ for $x_i$ and $x'_i$. Then, we have $s_i=s'_i$ and $x_i=x'_iu_i$ for some $u_i\in R^{\times}$. In particular, we have $m_i=m'_i$.
\end{lem2.0.5}
\begin{proof}
We can easily reduce to the case $i=1$. Since the multiplicity of slope $s_1$ of $\Pi_{i>1}x'_i$ is equal to zero, we have $(x_1,\Pi_{i>1}x'_i)=R$ by Lemma~2.0.4~(i). Hence, we have $(x_1,x)=(x_1,x_1\Pi_{i>1}x_i)=(x_1)$. By assumption, we have $s_1\neq s'_j$ except at most one $j$. Similarly as above, we have
\[
(x_1,x)=(x_1,x'_j\Pi_{i\neq j}x'_i)=(x_1,x'_j)=(x_1\Pi_{i>1}x_i,x'_j)=(x,x'_j)=(x'_j\Pi_{i\neq j}x'_i,x'_j)=(x'_j),
\]
i.e., $(x_1)=(x'_j)$.
Hence, there exists $u\in R^{\times}$ such that $x_1=x'_ju$. By the same argument, $x'_1=x_{l}u'$ for some $l$ and $u'\in R^{\times}$. Since $\{s_i\}$ and $\{s'_i\}$ are strictly decreasing, we must have $j=l=1$, which implies the assertion.
\end{proof}

\begin{lem2.0.6}\label{lem:adeq}
The integral domain $\Gamma_{\an,r}$ is adequate. In particular, the elementary divisor theorem holds over $\Gamma_{\an,r}$.
\end{lem2.0.6}
\begin{proof}
We have only to prove the condition~(ii) in Definition~2.0.1. Let $a,b\in R$ with $a\neq 0$. If $b=0$, then it suffices to put $a_1=1$, $a_2=a$. If $b$ is a unit, then it suffices to put $a_1=a$, $a_2=1$. Therefore, we may assume that $b$ is neither a unit nor zero. Let $b=\Pi_{i>0}b_i$ be a slope factorization with slopes $s_1>s_2>\dots$. By Lemma~2.0.4~(ii), there exists $z_i\in R$ such that $(a,b_i^n)=(z_i)$ for all sufficiently large $n$. By \cite[Proposition~3.13]{mon}, we may assume that $z_i$ admits a semi-unit decomposition: That is, $z_i$ is equal to a convergent sum of the form $1+\sum_{j<0}u_{i,j}p^j$, where $u_{i,j}\in R^{\times}\cup\{0\}$. As in the proof of \cite[Lemma~3.26]{mon}, we can prove that $\{z_1\dots z_i\}_{i>0}$ converges. Then, we claim that there exists $u_i\in R$ such that $a=z_1\dots z_i u_i$. We proceed by induction on $i$. By definition, we have $a=z_1u_1$ for some $u_1$. Assume that we have defined $u_i$. Since the multiplicity of slope $s_{i+1}$ of $z_j$ is equal to zero for $1\le j\le i$, we have $(z_j,z_{i+1})=R$ for $1\le j\le i$. Hence, we have $(z_{i+1})=(a,z_{i+1})=(z_1\dots z_{i}u_i,z_{i+1})=(u_i,z_{i+1})$, which implies $z_{i+1}|u_i$. Therefore, $u_{i+1}:=u_i/z_{i+1}$ satisfies the condition. By this proof, we can choose $u_i=u_1/(z_1\dots z_i)$. We put $a_1:=\lim_{i\to\infty}u_i=u_1/\Pi_{i>1}z_i$ and $a_2:=\Pi_{i>0}z_i$, which is a slope factorization of $a_2$. We prove that the factorization $a=a_1a_2$ satisfies the condition. We first prove $(a_1,b)=R$. By the uniqueness of slope factorizations, we have only to prove $(a_1,b_i)=R$ for all $i$. Fix $i\in\N_{>0}$. Then, for all sufficiently large $n\in\N$, we have
\[
(z_i)=(a,b_i^n)=(a,b_i^{n+1})=(a_1a_2,b_i^{n+1})\subset (a_1,b_i)(a_2,b_i^n)\subset (a_1,b_i)(z_i,b_i^n)=(a_1,b_i)(z_i).
\]
Since $z_i\neq 0$, we have $R\subset (a_1,b_i)$, which implies the assertion. Finally, we prove $(a_3,b)\neq R$ for any non-unit $a_3\in R$ dividing $a_2$. By replacing $a_3$ by any factor of a slope factorization of $a_3$, we may assume that $a_3$ is pure. By the uniqueness of slope factorizations, $a_3$ divides $z_i$ for some $i$. Since $z_i|b_i^n$ for sufficiently large $n$, we also have $a_3|b_i^n$. Hence, we have $(a_3,b_i)\neq R$, in particular, $(a_3,b)\neq R$.
\end{proof}

\section{Variations of Gr\"obner basis argument}\label{sec:grobner}
In this section, we will systematically develop a basic theory of Gr\"obner basis over various rings. Our theory generalizes the basic theory of Gr\"obner basis over fields (\cite{CLO}, particularly, $\S~2$). As a first application, we will prove the continuity of connected components of flat families of rigid analytic spaces over annulus (Proposition~\ref{prop:connected}~(iii)). As a second application, we also prove the ramification compatibility of Scholl's fields of norms (Theorem~\ref{thm:normcompat}).

The idea to use a Gr\"obner basis argument to study Abbes-Saito's rigid spaces of positive characteristic is due to \cite[\S~1]{Xia}. Some results of this section, particularly \S\S~\ref{subsec:grobnerregular}, \ref{subsec:grobnerannulus}, are already proved in \cite[\S~1]{Xia}, however we do not use Xiao's results; We will work under a slightly stronger assumption and deduce stronger results, with much clearer and simpler proofs, than Xiao's.

Note that this section is independent from the other parts of this paper except \S\S~\ref{subsec:AS}, \ref{subsec:scholl}.

\begin{notation3.0.1}\label{notation:multi}
Throughout this section, we will use multi-index notation: We denote $\und{n}=(n_1,\dots,n_l)\in\N^l$ and $|\und{n}|:=n_1+\dots+n_l$, $\und{X}^{\und{n}}=X_1^{n_1}\dots X_l^{n_l}$ for variables $\und{X}=(X_1,\dots,X_l)$. We also denote by $\und{X}^{\N}$ the set of monic monomials $\{\und{X}^{\und{n}}|\und{n}\in\N^l\}$.

In this section, when we consider a topology on a ring, we will use a norm $|\cdot|$ rather than a valuation.
\end{notation3.0.1}

\subsection{Convergent power series}\label{subsec:convergent}
In this subsection, we consider rings of strictly convergent power series over the ring of rigid analytic functions over annulus, which play an analogous role to Tate algebra in the classical situation. We also gather basic definitions and facts on these rings for the rest of this section.

\begin{dfn}
Let $R$ be a ring. For $f=\sum_{\und{n}}a_{\und{n}}\und{X}^{\und{n}}\in R[[\und{X}]]$ with $a_{\und{n}}\in R$, we call each $a_{\und{n}}\und{X}^{\und{n}}$ a term of $f$. If $f=a_{\und{n}}\und{X}^{\und{n}}$ with $a_{\und{n}}\in R$ (resp. $a_{\und{n}}=1$), then we call $f$ a (resp. monic) monomial.
\end{dfn}

\begin{dfn}[{\cite[Definition~1, 1.4.1]{BGR}}]
Let $(R,|\cdot|)$ be a normed ring. We define Gauss norm on $R[\und{X}]$ by $|\sum_{\und{n}}{a_{\und{n}}X^{\und{n}}}|:=\sup_{\und{n}}{|a_{\und{n}}|}$. A formal power series $f=\sum_{\und{n}}{a_{\und{n}}X^{\und{n}}}\in R[[\und{X}]]$ is strictly convergent if $|a_{\und{n}}|\to 0$ as $|\und{n}|\to\infty$. We denote the ring of strictly convergent power series over $R$ by $R\langle\und{X}\rangle$. The above norm $|\cdot|$ can be uniquely extended to $|\cdot|:R\langle\und{X}\rangle\to\mathbb{R}_{\ge 0}$. Note that if $R$ is complete with respect to $|\cdot|$, then $R\langle\und{X}\rangle$ is also complete with respect to $|\cdot|$ (\cite[Proposition~3, 1.4.1]{BGR}).
\end{dfn}

We recall basic facts on rings of strictly convergent power series. Let $R$ be a complete normed ring, whose topology is equivalent to the $\mathfrak{a}$-adic topology for an ideal $\mathfrak{a}$. Then, $R\langle\und{X}\rangle$ is canonically identified with the $\mathfrak{a}$-adic Hausdorff completion of $R[\und{X}]$. We further assume that $R$ is Noetherian. Then, $R\langle\und{X}\rangle$ is $R$-flat. Moreover, for any ideal $\mathfrak{b}$ of $R$, we have a canonical isomorphism
\[
R\langle\und{X}\rangle\otimes_{R}(R/\mathfrak{b})\cong (R/\mathfrak{b})\langle\und{X}\rangle,
\]
where the RHS means the $\mathfrak{a}$-adic Hausdorff completion of $(R/\mathfrak{b})[\und{X}]$.

For a complete discrete valuation ring $\oo$ with $F=\mathrm{Frac}(\oo)$, we denote by $\oo\langle\und{X}\rangle$ (resp. $F\langle\und{X}\rangle$) the rings of convergent power series over $\oo$ (resp. $F$).

\begin{lem}\label{lem:flatconseq}
Assume that $R$ is a complete normed Noetherian ring, whose topology is equivalent to the $\mathfrak{a}$-adic topology for some ideal $\mathfrak{a}$ of $R$. Let $I\subset R\langle\und{X}\rangle$ be an ideal such that $R\langle\und{X}\rangle/I$ is $R$-flat. Then, $I$ is also $R$-flat. Moreover, for any ideal $J\subset R$, we have $I\cap J\cdot R\langle\und{X}\rangle=JI$. In particular, if $f\in I$ is divisible by $s\in R$ in $R\langle\und{X}\rangle$, then $f/s\in I$.
\end{lem}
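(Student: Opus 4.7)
The plan is to extract all three conclusions from the short exact sequence
$$0 \to I \to R\langle \und{X}\rangle \to R\langle \und{X}\rangle/I \to 0$$
combined with the recalled flatness of $R\langle \und{X}\rangle$ over $R$, by pure homological algebra; no input from Gr\"obner bases or from the explicit structure of strictly convergent power series is needed beyond flatness.

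First, for the $R$-flatness of $I$, I would tensor the sequence with an arbitrary $R$-module $M$ and invoke the long exact sequence of Tor:
$$\mathrm{Tor}_2^R(R\langle \und{X}\rangle/I, M) \to \mathrm{Tor}_1^R(I, M) \to \mathrm{Tor}_1^R(R\langle \und{X}\rangle, M).$$
The left term vanishes by the hypothesized flatness of $R\langle \und{X}\rangle/I$, and the right term vanishes by the flatness of $R\langle \und{X}\rangle$; hence $\mathrm{Tor}_1^R(I, M) = 0$ for every $M$, so $I$ is $R$-flat.

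Next, for the equality $I \cap J\cdot R\langle \und{X}\rangle = JI$, I would tensor the same short exact sequence with $R/J$ over $R$. Since $R\langle \und{X}\rangle/I$ is $R$-flat, $\mathrm{Tor}_1^R(R\langle \und{X}\rangle/I, R/J) = 0$, so the resulting sequence
$$0 \to I \otimes_R R/J \to R\langle \und{X}\rangle \otimes_R R/J \to (R\langle \und{X}\rangle/I)\otimes_R R/J \to 0$$
is exact. By flatness of $R\langle \und{X}\rangle$, the natural map $J \otimes_R R\langle \und{X}\rangle \to R\langle \und{X}\rangle$ is injective, identifying $R\langle \und{X}\rangle \otimes_R R/J$ with $R\langle \und{X}\rangle/J\cdot R\langle \und{X}\rangle$. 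Thus $I/JI$ injects into $R\langle \und{X}\rangle/J\cdot R\langle \und{X}\rangle$, which translates exactly into $I \cap J\cdot R\langle \und{X}\rangle = JI$. The last sentence of the lemma is then immediate by taking $J = (s)$: any $f \in I$ with $f = sg$ lies in $I \cap (s)\cdot R\langle \und{X}\rangle = sI$, so $f = sh$ for some $h \in I$, i.e.\ the quotient $f/s$ may be chosen in $I$.

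Since the whole argument is a mechanical exploitation of the two flatness hypotheses, I do not expect any real obstacle; the only routine point to verify carefully is the standard identification $R\langle \und{X}\rangle \otimes_R R/J \cong R\langle \und{X}\rangle/J\cdot R\langle \und{X}\rangle$, which itself follows from right exactness of tensor together with the flatness of $R\langle \und{X}\rangle$.
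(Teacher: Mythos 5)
Your proof is correct and complete. The paper in fact omits the proof entirely, stating only that it is ``an easy exercise of flatness''; your argument via the long exact Tor sequence (for the $R$-flatness of $I$) and tensoring the short exact sequence with $R/J$ (for the equality $I\cap J\cdot R\langle\und{X}\rangle = JI$) is exactly the kind of routine homological computation intended, and you correctly invoke the flatness of $R\langle\und{X}\rangle$ over $R$ recalled just above the lemma. The one implicit point in the final sentence, that the element $f/s$ is well-defined, is harmless since in every application in the paper $s$ is a nonzerodivisor of $R$ (a regular parameter or $p$), and $R$-flatness of $R\langle\und{X}\rangle$ ensures it remains a nonzerodivisor there; your phrasing ``may be chosen in $I$'' sidesteps this cleanly.
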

We omit the proof since it is an easy exercise of flatness.

\begin{notation}\label{notation:power}
In the rest of this subsection, we fix the notation as follows: Let $\oo$ be a Cohen ring of a field $k$ of characteristic $p$ and we fix a norm $|\cdot|$ on $\oo$ corresponding to the $p$-adic valuation. We denote 
\[
R^+:=\oo[[S]]\subset R:=\oo((S))
\]
and for $r\in\Q_{>0}$, we define a norm
\[
|\cdot|_r:R\to\mathbb{R}_{\ge 0};\sum_{n\gg-\infty}a_nS^n\mapsto\sup_n{|a_n||p|^{rn}},
\]
which is multiplicative (\cite[Proposition 2.1.2]{pde}). Recall that we have defined in Notation~\ref{notation:ocpower}
\[
R^{\dagger,r}=\left\{\sum_{n\in\Z}a_nS^n\in\oo\{\{S\}\};|a_nS^n|_r\to 0\text{ as }n\to-\infty\right\}.
\]
Note that we may canonically identify $R^{\dagger,r}/pR^{\dagger,r}$ with $k((S))$. We can extend $|\cdot|_r$ to $|\cdot|_r:R^{\dagger,r}\to\mathbb{R}_{\ge 0}$ by $|\sum_n{a_nS^n}|_r:=\sup_n{|a_nS^n|_r}$. We define subrings of $R^{\dagger,r}$ by
\[
R^{\dagger,r}_0:=\{f\in R^{\dagger,r};|f|_r\le 1\},
\]
\[
\mathcal{R}^{\dagger,r}_0:=R^{\dagger,r}_0\cap R=\{f\in R;|f|_r\le 1\}.
\]
Note that for $a\in\N$ and $b\in\N_{>0}$, $|p^a/S^b|_r\le 1$ if and only if $a/b\ge r$. Also, note that $R^{\dagger,r}=R^{\dagger,r}_0[S^{-1}]$ since $|S|_r<1$. We may regard $R^{\dagger,r}$ as the ring of rigid analytic functions on the annulus $[p^r,1)$, whose values at the boundary $|S|=1$ are bounded by $1$.
\end{notation}

\begin{lem}\label{lem:conv}
\begin{enumerate}
\item The $R^+$-algebra $\mathcal{R}^{\dagger,r}_0$ is finitely generated.
\item The topologies of $R^{\dagger,r}_0$ defined by $|\cdot|_r$ and defined by the ideal $(p,S)$ are equivalent.
\item The rings $R^{\dagger,r}_0$ and $R^{\dagger,r}$ are complete with respect to $|\cdot|_r$, and $\mathcal{R}^{\dagger,r}_0$ is dense in $R^{\dagger,r}_0$.
\item The rings $\mathcal{R}^{\dagger,r}_0$, $R^{\dagger,r}_0$, and $R^{\dagger,r}$ are Noetherian integral domains.
\end{enumerate}
\end{lem}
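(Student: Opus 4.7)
For part (i), I will write $r = a_0/b_0$ in lowest terms with $a_0, b_0 \in \N_{>0}$ and introduce the auxiliary elements $u_s := p^{\lceil rs \rceil}/S^s \in \mathcal{R}^{\dagger,r}_0$ for $1 \le s \le b_0$. Any $f \in \mathcal{R}^{\dagger,r}_0$ splits as $f_+ + f_-$ with $f_+ \in R^+ = \oo[[S]]$ and $f_- = \sum_{k=1}^N a_{-k}S^{-k}$ a finite Laurent tail whose coefficients satisfy $v_p(a_{-k}) \ge \lceil rk \rceil$. Writing $k = qb_0 + s$ with $0 \le s < b_0$ and using the identity $u_{b_0}^{q} u_s = u_k$ (with the convention $u_0 := 1$), each term becomes an $\oo$-multiple of a monomial in $u_1, \ldots, u_{b_0}$, giving $\mathcal{R}^{\dagger,r}_0 = R^+[u_1, \ldots, u_{b_0}]$.

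For part (iii), a $|\cdot|_r$-Cauchy sequence $(f_i)$ with $f_i = \sum_n a_{i,n}S^n$ gives coefficient-wise $p$-adic Cauchy sequences in $\oo$; assembling the limits as $f := \sum_n a_n S^n$, one checks $f \in \oo\{\{S\}\}$, the overconvergence condition $|a_n S^n|_r \to 0$ as $n \to -\infty$ (by comparison against a fixed late approximant $f_I$), and the convergence $f_i \to f$ in $|\cdot|_r$. The subring $R^{\dagger,r}_0 \subset R^{\dagger,r}$ is the closed unit ball, hence also complete. Density of $\mathcal{R}^{\dagger,r}_0$ in $R^{\dagger,r}_0$ follows because the truncations $f_N := \sum_{n \ge -N} a_n S^n$ of $f \in R^{\dagger,r}_0$ lie in $\mathcal{R}^{\dagger,r}_0$ and $|f - f_N|_r = \sup_{n < -N}|a_n S^n|_r \to 0$ by overconvergence.

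For part (iv), all three rings embed into $\oo\{\{S\}\}$, the $p$-adic completion of the DVR $\oo[[S]][S^{-1}]$, hence are integral domains; $\mathcal{R}^{\dagger,r}_0$ is then Noetherian by (i) and Hilbert's basis theorem. For $R^{\dagger,r}_0$ I will introduce the $(p, S)$-adic Tate algebra $A := \oo[[S]]\{U_1, \ldots, U_{b_0}\}$, defined as the $(p, S)$-adic completion of the Noetherian ring $\oo[[S]][U_1, \ldots, U_{b_0}]$; this $A$ is Noetherian and $(p, S)$-adically complete by Matsumura's theorem on adic completions. The assignment $U_s \mapsto u_s$ extends to a continuous ring homomorphism $\varphi : A \to R^{\dagger,r}_0$ once one verifies the estimate $|b|_r \le |p|^{N \min(1, r)}$ for $b \in (p, S)^N \oo[[S]]$ by inspecting the generators $p^a S^b$ of $(p, S)^N$. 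Surjectivity is an adaptation of (i) to infinite sums: given $f \in R^{\dagger,r}_0$, write $f - f_+ = \sum_{k \ge 1} p^{\mu_k} c_k u_{b_0}^{q_k} u_{s_k}$ with $\mu_k := v_p(a_{-k}) - \lceil rk \rceil \to \infty$ by overconvergence, and observe that the lift with $U_s$ in place of $u_s$ converges in $A$ in the $(p, S)$-adic topology. Hence $R^{\dagger,r}_0 \cong A/\ker\varphi$ is Noetherian, and $R^{\dagger,r} = R^{\dagger,r}_0[S^{-1}]$ is Noetherian by localization.

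For part (ii), the inclusion $(p, S)^n R^{\dagger,r}_0 \subset \{x : |x|_r \le |p|^{n \min(1, r)}\}$ is immediate from $|p|_r = |p|$ and $|S|_r = |p|^r$. The reverse inclusion is the main obstacle of the lemma, and I will handle it via the surjection $\varphi$ from (iv). Since $A$ is Noetherian and $(p, S)$-adically complete, $\ker \varphi$ is closed in the $(p, S)$-adic topology on $A$, so the quotient topology on $R^{\dagger,r}_0 \cong A/\ker\varphi$ coincides with the intrinsic $(p, S)$-adic topology and makes $R^{\dagger,r}_0$ into a complete Hausdorff topological ring. Combined with the completeness of $R^{\dagger,r}_0$ in $|\cdot|_r$ from (iii), and the continuity of the identity map from the $(p, S)$-adic topology to $|\cdot|_r$ (which is precisely the easy direction above), the open mapping theorem for complete metrizable topological groups forces the identity to be a homeomorphism, yielding the desired equivalence.
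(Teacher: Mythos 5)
Your proofs of (i) and (iii) follow the paper's argument closely. For (iv), you take a genuinely different route: whereas the paper reads (iv) off from (i)--(iii) (realizing $R^{\dagger,r}_0$ as the $(p,S)$-adic completion of the finitely generated, hence Noetherian, $R^+$-algebra $\mathcal{R}^{\dagger,r}_0$), you exhibit a surjection $\varphi$ from the $(p,S)$-adically completed polynomial ring $A = \oo[[S]]\{U_1,\dots,U_{b_0}\}$. This is correct and self-contained (it uses only the easy half of (ii), which you verify separately), and it additionally hands you $(p,S)$-adic completeness of $R^{\dagger,r}_0$ and closedness of $\ker\varphi$. Reordering the lemma so that (iv) precedes (ii) is a legitimate choice.

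Your proof of (ii), however, has a genuine gap. You invoke ``the open mapping theorem for complete metrizable topological groups,'' but no such theorem holds at that level of generality. Consider $\Z_p[[S]]$ with the $(p,S)$-adic topology (compact, hence Polish) and with the $p$-adic topology (complete and metrizable, with open subgroups $(p^n)$): the identity from the former to the latter is a continuous bijective homomorphism, yet the two topologies are distinct, since $S^n\to 0$ in the first but not in the second. The Banach-space open mapping theorem escapes separability hypotheses via the scaling trick $Y=\bigcup_n T(nB_X)$, but $R^{\dagger,r}_0$ is not a vector space over a nonarchimedean field, so that trick is unavailable; the topological-group version (Pettis/Banach--Schauder) requires a Baire-category hypothesis supplied by separability or $\sigma$-compactness. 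Here separability is not free: $R^{\dagger,r}_0/(p,S)R^{\dagger,r}_0\cong k$, and the paper does not assume $k$ countable.

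The simplest repair is the paper's short direct computation for the nontrivial inclusion: if $x=\sum_m a_mS^m\in R^{\dagger,r}_0$ satisfies $|x|_r\le|(pS)^n|_r$, then $|a_mS^{m-n}|_r\le|p|^n\le 1$ for all $m$, hence $x=S^n\cdot\sum_m a_mS^{m-n}\in S^nR^{\dagger,r}_0\subset(p,S)^nR^{\dagger,r}_0$. Alternatively, your own surjectivity construction in (iv) already contains a quantitative version of this: tracking the exponents $\mu_k$ shows that $|x|_r\le|p|^m$ forces the explicit lift $\hat x$ to lie in $(p,S)^N A$ with $N$ growing linearly in $m$, which gives the same conclusion without any category argument.
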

\begin{proof}
Let $a,b\in\N$ denote the relatively prime integers such that $r=a/b$.
\begin{enumerate}
\item It is straightforward to check that $\mathcal{R}^{\dagger,r}_0$ is generated as an $R^+$-algebra by $p^{\lfloor rb'\rfloor}/S^{b'}$ for $b'\in\{0,\dots,b\}$.
\item For $n\in\N$, we have
\[
\sup\{|x|_r;x\in (p,S)^nR^{\dagger,r}_0\}\le \{\inf (|p|,|S|_r)\}^n
\]
and the RHS converges to 0 as $n\to\infty$. Hence, the $(p,S)$-adic topology of $R^{\dagger,r}_0$ is finer than the topology defined by $|\cdot|_r$. To prove that the topology of $R^{\dagger,r}_0$ defined by $|\cdot|_r$ is finer than the $(p,S)$-adic topology, it suffices to prove that
\[
\{x\in R^{\dagger,r}_0;|x|_r\le |(pS)^n|_r\}\subset (p,S)^nR^{\dagger,r}_0
\]
for all $n\in\N$. Let $x=\sum_{m\in\Z}{a_mS^m}\in\mathrm{LHS}$ with $a_m\in\oo$. Then, we have $|a_mS^{m-n}|_r\le |p^n|\le 1$. Hence, $x=S^n\sum_{m\in\Z}a_mS^{m-n}\in S^n\cdot R^{\dagger,r}_0$, which implies the assertion.
\item If $f=\sum_{n\in\Z}a_nS^n\in R^{\dagger,r}_0$ with $a_n\in\oo$, then $\{\sum_{n\ge -m}a_nS^n\}_{m\in\N}\subset\mathcal{R}^{\dagger,r}_0$ converges to $f$, which implies the last assertion. Since $R^{\dagger,r}_0$ is an open subring of $R^{\dagger,r}$, we have only to prove a completeness for $R^{\dagger,r}_0$. Let $\{f_m\}_{m\in\N}\subset R^{\dagger,r}_0$ be a sequence such that $|f_m|_r\to 0$ as $m\to\infty$. We have only to prove that the limit $\sum_m{f_m}$ exists in $R^{\dagger,r}_0$ with respect to $|\cdot|_r$. Write $f_m=\sum_{n\in\Z}a_n^{(m)}S^n$ with $a_n^{(m)}\in\oo$. For $n\in\Z$, we have
\[
|a_n^{(m)}|\le\frac{|f_m|_r}{|S^n|_r}=|p|^{-nr}|f_m|_r,
\]
hence, $|a_n^{(m)}|\to 0$ as $m\to\infty$. Moreover, $a_n:=\sum_{m\in\N}{a_n^{(m)}}\in\oo$ converges to $0$ as $n\to-\infty$. Hence, the formal Laurent series $f:=\sum_{n\in\Z}{a_nS^n}$ belongs to $\oo\{\{S\}\}$. Since
\[
|a_nS^n|_r\le\sup_{m\in\N}{|a_n^{(m)}S^n|_r}\le\sup_{m\in\N}|f_m|_r\le 1,
\]
we have $f\in R^{\dagger,r}_0$. For $m\in\N$, we have
\begin{align*}
|f-(f_0+\dots+f_m)|_r&\le\sup_n{|a_nS^n-(a_n^{(0)}+\dots+a_n^{(m)})S^n|_r}\le\sup_n\sup_{l>m}{|a_n^{(l)}S^n|_r}\\
&=\sup_{l>m}\sup_n{|a_n^{(l)}S^n|_r}\le\sup_{l>m}|f_l|_r
\end{align*}
and the last term converges to $0$ as $m\to\infty$, which implies $f=\sum_{m}f_m$.
\item It follows from (i), (ii) and (iii).
\end{enumerate}
\end{proof}

\begin{dfn}
We define $R^+\langle\und{X}\rangle$ as the $(p,S)$-adic Hausdorff completion of $R^+[\und{X}]$. We also define $R^{\dagger,r}_0\langle\und{X}\rangle$ and $R^{\dagger,r}\langle\und{X}\rangle$ as the rings of strictly convergent power series over $R^{\dagger,r}_0$ and $R^{\dagger,r}$ with respect to $|\cdot|_r$. We endow $R^{\dagger,r}_0\langle\und{X}\rangle$ and $R^{\dagger,r}\langle\und{X}\rangle$ with the topology defined by the norm $|\cdot|_r$. By Lemma~\ref{lem:conv}~(iii), $R^{\dagger,r}_0\langle\und{X}\rangle$ and $R^{\dagger,r}\langle\und{X}\rangle$ are complete. By Lemma~\ref{lem:conv}~(ii), $R^{\dagger,r}_0\langle\und{X}\rangle$ can be regarded as the $(p,S)$-adic Hausdorff completion of $R^{\dagger,r}_0[\und{X}]$, hence, $R^{\dagger,r}_0\langle\und{X}\rangle$ and $R^{\dagger,r}\langle\und{X}\rangle=R^{\dagger,r}_0\langle\und{X}\rangle[S^{-1}]$ are Noetherian integral domains by Lemma~\ref{lem:conv}~(iv). Also, we may regard $R^+\langle\und{X}\rangle$ as a subring of $R^{\dagger,r}_0\langle\und{X}\rangle$.
\end{dfn}

The following lemma seems to be used implicitly in \cite[\S~1]{Xia}. The proof is due to Liang Xiao.
\begin{lem}[Liang Xiao]\label{lem:Xiao}
The canonical map $R^+\langle\und{X}\rangle\to R^{\dagger,r}\langle\und{X}\rangle$ is flat.
\end{lem}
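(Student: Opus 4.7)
The plan is to apply the standard flatness criterion: if $R$ is a Noetherian ring, $r \in R$ a non-zerodivisor, and $M$ an $r$-torsion-free $R$-module, then $M$ is flat over $R$ if and only if $M/rM$ is flat over $R/rR$ and $M[1/r]$ is flat over $R[1/r]$. My plan is to apply this first with $r = p$ and then recursively with $r = S$, reducing the question to two tractable flatness checks.

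Set $A := R^+\langle\und{X}\rangle$ and $B := R^{\dagger,r}\langle\und{X}\rangle$. The ring $A$ is Noetherian as the $(p,S)$-adic completion of the Noetherian ring $R^+[\und{X}]$, and $B$ is $p$-torsion-free since it embeds in the $p$-torsion-free ring $\oo\{\{S\}\}\langle\und{X}\rangle$. For the mod-$p$ condition, I would use $R^+/pR^+ = k[[S]]$ together with the identification $R^{\dagger,r}/pR^{\dagger,r} \cong k((S))$ noted in Notation~\ref{notation:power} to see that $A/pA = k[[S]]\langle\und{X}\rangle$ and $B/pB = k((S))\langle\und{X}\rangle$. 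A direct comparison of Laurent expansions in $S$ then shows that any element of $B/pB$ is an element of $A/pA$ divided by a suitable power of $S$, so $B/pB = (A/pA)[S^{-1}]$ is a localization, which is flat.

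For the inverted-$p$ condition, I apply the criterion a second time with the non-zerodivisor $S$ in $A[1/p]$. Since $S$ is a unit in $R^{\dagger,r}$, it is also a unit in $B[1/p]$, so $SB[1/p] = B[1/p]$ and thus $B[1/p]/SB[1/p] = 0$, trivially flat. This reduces us to showing that the localization map $A[1/p, S^{-1}] \to B[1/p]$ is flat. At this stage both sides are Banach $F$-algebras for the $|\cdot|_r$-norm, with $F := \oo[1/p]$, and one identifies $B[1/p]$ as the $|\cdot|_r$-completion of the Noetherian ring $A[1/p, S^{-1}] \supset F((S))[\und{X}]$---essentially a strictly convergent power series construction over the Banach field $F((S))$. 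Flatness of this completion then follows from the standard result that the completion of a Noetherian ring along a finitely generated ideal is flat, once the $|\cdot|_r$-topology is identified with an ideal-adic topology on a suitable integral model.

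The main obstacle is precisely this final identification: verifying that the $|\cdot|_r$-topology on an integral model of $A[1/p, S^{-1}]$ is adic for a finitely generated ideal, and that the completion with respect to that topology produces exactly $B[1/p]$ after inverting $p$. This is where the explicit structure of the rings as strictly convergent power series (cf.\ Lemma~\ref{lem:conv}), together with the Bezout property of Robba-type rings (Lemma~\ref{lem:adeq}), must be exploited; once established, the rest of the argument is a routine combination of the flatness criterion with standard Noetherian completion theory.
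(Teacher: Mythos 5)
Your d\'evissage by $p$ and then $S$ is correctly set up: the flatness criterion you invoke (for $A$ Noetherian, $t$ a non-zerodivisor, $M$ $t$-torsion-free, $M$ is flat over $A$ if and only if $M/tM$ is flat over $A/tA$ and $M[1/t]$ is flat over $A[1/t]$) is indeed valid, and your identification of the special fibre $B/pB \cong (A/pA)[S^{-1}]$ as a localization is correct. Where the argument actually breaks down is exactly where you flag it. After inverting both $p$ and $S$, the ring $A[1/p,S^{-1}]$ carries no adic topology for which $B[1/p]$ is the completion: both $p$ and $S$ are units there, so the $|\cdot|_r$-topology is not $\mathfrak a$-adic for any proper ideal $\mathfrak a$, and the standard ``completion of a Noetherian ring is flat'' theorem has no purchase. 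Saying that this ``must be exploited'' via Lemma~\ref{lem:conv} and the B\'ezout Lemma~2.0.6 does not resolve it; the B\'ezout/adequateness result concerns ideal structure of Robba rings and gives no flatness of a map out of $A[1/p,S^{-1}]$.

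What fills the gap is to work \emph{before} inverting anything, with the explicit finitely generated integral model from Lemma~\ref{lem:conv}: $\mathcal R^{\dagger,r}_0$ is a finitely generated $R^+$-subalgebra of $R^{\dagger,r}_0$, dense for the $|\cdot|_r$-topology, which on $R^{\dagger,r}_0$ agrees with the $(p,S)$-adic topology. Consequently $R^{\dagger,r}_0\langle\und X\rangle$ is the $(p,S)$-adic Hausdorff completion of the Noetherian ring $R^+\langle\und X\rangle\otimes_{R^+}\mathcal R^{\dagger,r}_0$, so the completion map is flat; inverting $S$ and composing with the (flat) base change $R^+\langle\und X\rangle\to R^+\langle\und X\rangle\otimes_{R^+}\mathcal R^{\dagger,r}_0[S^{-1}]=R^+\langle\und X\rangle[S^{-1}]$ gives the lemma. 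Once this integral-model observation is in hand, the preliminary reductions mod $p$ and at $S$ in your proposal become unnecessary; conversely, without it your final step is an unproved assertion and the proof is incomplete.
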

\begin{proof}
We may regard $R^{\dagger,r}_0\langle\und{X}\rangle$ as the $(p,S)$-adic Hausdorff completion of $R^+\langle\und{X}\rangle\otimes_{R^+}R^{\dagger,r}_0$. Since $\mathcal{R}^{\dagger,r}_0$ is dense in $R^{\dagger,r}_0$ by Lemma~\ref{lem:conv}~(iii), $R^{\dagger,r}_0\langle\und{X}\rangle$ can be regarded as the $(p,S)$-adic Hausdorff completion of $R^+\langle\und{X}\rangle\otimes_{R^+}\mathcal{R}^{\dagger,r}_0$, which is Noetherian by Lemma~\ref{lem:conv}~(i). Hence, a canonical map
\[
\alpha:R^+\langle\und{X}\rangle\otimes_{R^+}\mathcal{R}^{\dagger,r}_0\to R^{\dagger,r}_0\langle\und{X}\rangle
\]
is flat. Since $\mathcal{R}^{\dagger,r}_0[S^{-1}]=R$ and $R^{\dagger,r}_0\langle\und{X}\rangle[S^{-1}]=R^{\dagger,r}\langle\und{X}\rangle$, the canonical map $\alpha[S^{-1}]$ is also flat, which implies the assertion.
\end{proof}

Next, we consider prime ideals corresponding to good ``points'' of the open unit disc $R^+=\oo[[S]]$.

\begin{dfn}\label{dfn:Eisenstein}
An Eisenstein polynomial in $R^+$ is a polynomial in $\oo[S]$ of the form $P(S)=S^e+a_{e-1}S^{e-1}+\dots+a_0$ with $a_i\in\oo$ such that $p|a_i$ for all $i$ and $p^2\nmid a_0$. We call $\mathfrak{p}\in \mathrm{Spec}(R^+)$ an Eisenstein prime ideal if $\mathfrak{p}$ is generated by an Eisenstein polynomial $P(S)$. Then, we put $\deg{(\mathfrak{p})}:=e$ if $e\neq 0$ and $\deg{(\mathfrak{p})}:=\infty$ if $e=0$. Note that we may regard $\kappa(\mathfrak{p}):=R/\mathfrak{p}R$ as a complete discrete valuation field with integer ring $R^+/\mathfrak{p}R^+$. We denote by $\pi_{\mathfrak{p}}\in \oo_{\kappa(\mathfrak{p})}$ the image of $S$, which is a uniformizer of $\oo_{\kappa(\mathfrak{p})}$. Note that $\deg(\mathfrak{p})<\infty$ if and only if the characteristic of $R/\mathfrak{p}$ is zero. For simplicity, we write $\kappa(p)$ and $S$ instead of $\kappa((p))$ and $\pi_{\kappa((p))}$.
\end{dfn}

\begin{lem}\label{lem:idealchange}
Let $\mathfrak{p}$ and $\mathfrak{q}$ be Eisenstein prime ideals of $R^+$. For $x\in R^+$, if
\[
\inf{(v_{\kappa(\mathfrak{p})}(x\mod{\mathfrak{p}}),v_{\kappa(\mathfrak{q})}(x\mod{\mathfrak{q}}))}<\inf{(\deg{\mathfrak{p}},\deg{\mathfrak{q}})},
\]
then we have $v_{\kappa(\mathfrak{p})}(x\mod{\mathfrak{p}})=v_{\kappa(\mathfrak{q})}(x\mod{\mathfrak{q}})$.
\end{lem}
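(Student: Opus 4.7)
The plan is to show that both valuations compute the same intrinsic invariant of $x$. Writing $x=\sum_{n\geq 0}a_nS^n$ with $a_n\in\oo$, I would introduce
\[
n_0:=\min\{n\geq 0 : a_n\in\oo^{\times}\}\in\N\cup\{\infty\},
\]
which depends only on $x$, not on $\mathfrak{p}$ or $\mathfrak{q}$.

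The key step will be to prove the following claim: for any Eisenstein prime $\mathfrak{p}$ of finite degree $e=\deg(\mathfrak{p})$, one has $v_{\kappa(\mathfrak{p})}(x\bmod\mathfrak{p})<e$ if and only if $n_0<e$, and in that case $v_{\kappa(\mathfrak{p})}(x\bmod\mathfrak{p})=n_0$. To see this, I would use that the generator $P(S)$ of $\mathfrak{p}$ is Eisenstein of degree $e$, so Weierstrass preparation identifies $R^+/\mathfrak{p}$ with a totally ramified extension of $\oo$ of degree $e$; in particular $\kappa(\mathfrak{p})$ has absolute ramification index $e$, so $v_{\kappa(\mathfrak{p})}(a_n\pi_{\mathfrak{p}}^n)=e\cdot v_p(a_n)+n$ for each $n$. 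Among these term-valuations, those with $n<e$ and $a_n\in\oo^{\times}$ contribute exactly $n$, while all remaining ones (either $n<e$ with $a_n\in p\oo$, or $n\geq e$) contribute $\geq e$. Hence when $n_0<e$, the $n_0$-th term uniquely attains the strictly smallest valuation $n_0<e$, and the total valuation equals $n_0$; when $n_0\geq e$, every term has valuation $\geq e$, so does the sum. The degenerate case $\deg(\mathfrak{p})=\infty$, i.e.\ $\mathfrak{p}=(p)$, is immediate: $\kappa((p))=k((S))$ and the $S$-adic valuation of $\bar{x}\in k[[S]]$ is literally $n_0$, which is consistent with interpreting $e=\infty$.

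Given the claim, the lemma is essentially automatic. Swapping $\mathfrak{p}$ and $\mathfrak{q}$ if necessary, I may assume $v_{\kappa(\mathfrak{p})}(x\bmod\mathfrak{p})\leq v_{\kappa(\mathfrak{q})}(x\bmod\mathfrak{q})$; the hypothesis then yields $v_{\kappa(\mathfrak{p})}(x\bmod\mathfrak{p})<\inf(\deg\mathfrak{p},\deg\mathfrak{q})\leq \deg\mathfrak{p}$, so the claim applied to $\mathfrak{p}$ gives $n_0=v_{\kappa(\mathfrak{p})}(x\bmod\mathfrak{p})<\deg\mathfrak{q}$, and the claim applied to $\mathfrak{q}$ gives $v_{\kappa(\mathfrak{q})}(x\bmod\mathfrak{q})=n_0$, proving the desired equality.

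The only place where something non-trivial happens is in the claim: one has to observe that the values $\{e\cdot v_p(a_n)+n\colon n<e\}$ take pairwise distinct residues modulo $e$, which forces the smallest among them to be uniquely attained and prevents cancellation in the sum. This is a triviality once the absolute ramification index of $\kappa(\mathfrak{p})$ has been identified via the Eisenstein condition, and I do not foresee any further obstacles.
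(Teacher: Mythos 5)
Your proof is correct. Conceptually it follows the same strategy as the paper: you compare both $\mathfrak{p}$ and $\mathfrak{q}$ to the reference prime $(p)$ (your invariant $n_0$ is nothing but $v_{\kappa(p)}(x\bmod p)$), and the substance of the argument is showing that for $i<\deg\mathfrak{p}$ the condition $v_{\kappa(\mathfrak{p})}(x\bmod\mathfrak{p})=i$ depends only on $x\bmod p$. The mechanism you use to establish this differs, though. The paper's proof is purely ideal-theoretic: it observes that $(\mathfrak{p},S^j)=(p,S^j)$ for $0\le j\le\deg\mathfrak{p}$ (an immediate consequence of the Eisenstein shape of the generator $P$) and reads off $v_{\kappa(\mathfrak{p})}(x\bmod\mathfrak{p})=i$ as $x\in(\mathfrak{p},S^i)\setminus(\mathfrak{p},S^{i+1})$, which makes the transfer between $\mathfrak{p}$, $(p)$, and $\mathfrak{q}$ a one-line chain of equivalences. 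You instead expand $x$ as a power series in $S$, use that $\kappa(\mathfrak{p})$ is totally ramified of degree $e=\deg\mathfrak{p}$ over $\mathrm{Frac}(\oo)$ to compute $v_{\kappa(\mathfrak{p})}(a_n\pi_\mathfrak{p}^n)=e\,v_p(a_n)+n$ termwise, and then run a Newton-polygon/uniqueness-of-minimum argument. Both are elementary and about equally short; the paper's version avoids any discussion of convergence of the infinite series and of uniqueness of the minimal term, while yours makes the underlying ramification-index picture explicit and generalizes verbatim to the degenerate case $\mathfrak{p}=(p)$ rather than treating it separately.
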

\begin{proof}
Let $x\in R^+$ and $i\in\N$ such that $0\le i<\deg{\mathfrak{p}}$. Then, we have the following equivalences:
\[
v_{\kappa(p)}(x\mod{p})=i\Leftrightarrow x\in (p,S^i)\setminus (p,S^{i+1})\Leftrightarrow x\in (\mathfrak{p},S^i)\setminus (\mathfrak{p},S^{i+1})\Leftrightarrow v_{\kappa(\mathfrak{p})}(x\mod{\mathfrak{p}})=i,
\]
where the second equivalence follows from the fact $(\mathfrak{p},S^i)=(p,S^i)$, and the other equivalences follow by definition. By replacing $\mathfrak{q}$ by $\mathfrak{p}$, we obtain similar equivalences. As a result, $v_{\kappa(\mathfrak{p})}(x\mod{\mathfrak{p}})=i\Leftrightarrow v_{\kappa(\mathfrak{q})}(x\mod{\mathfrak{q}})=i$ for $x\in R^+$ and $i<\inf(\deg(\mathfrak{p}),\deg(\mathfrak{q}))$, which implies the assertion.
\end{proof}

The ring $R^{\dagger,r}\langle\und{X}\rangle$ can be considered as a family of Tate algebras:

\begin{lem}\label{lem:redp}
Let $\mathfrak{p}$ be an Eisenstein prime ideal of $R^+$ with $e=\deg(\mathfrak{p})$. Let $r\in\Q_{>0}$ such that $1/e\le r$. Then, there exists a canonical isomorphism
\[
R^{\dagger,r}\langle\und{X}\rangle/\mathfrak{p}R^{\dagger,r}\langle\und{X}\rangle\to\kappa(\mathfrak{p})\langle\und{X}\rangle.
\]
In particular, $\mathfrak{p}R^{\dagger,r}\neq R^{\dagger,r}$.
\end{lem}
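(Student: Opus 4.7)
The plan is to construct the evaluation map $\phi: R^{\dagger,r}\langle\underline{X}\rangle \to \kappa(\mathfrak{p})\langle\underline{X}\rangle$ sending $\sum_{\und{n}} a_{\und{n}}(S) \und{X}^{\und{n}} \mapsto \sum_{\und{n}} a_{\und{n}}(\pi_\mathfrak{p}) \und{X}^{\und{n}}$, and to show its kernel is exactly $\mathfrak{p} R^{\dagger,r}\langle\underline{X}\rangle$. Well-definedness of the evaluation on coefficients follows from the identity $v_\mathfrak{p}(c_k \pi_\mathfrak{p}^k) = e\, v_p(c_k) + k$ combined with the overconvergence condition $v_p(c_k) + rk \to \infty$ as $k \to -\infty$ and the hypothesis $er \ge 1$; a routine norm estimate then shows $\phi$ respects strict convergence in $\und{X}$.

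Next I would reduce to the case $\und{X} = \emptyset$ via the general compatibility $R^{\dagger,r}\langle\und{X}\rangle / \mathfrak{p} R^{\dagger,r}\langle\und{X}\rangle \cong (R^{\dagger,r}/\mathfrak{p} R^{\dagger,r})\langle\und{X}\rangle$ of strict convergence with coefficient-ring quotients. Surjectivity of the resulting $R^{\dagger,r}/\mathfrak{p} R^{\dagger,r} \to \kappa(\mathfrak{p})$ is immediate, since the image already contains $\oo[\pi_\mathfrak{p}, \pi_\mathfrak{p}^{-1}] = \kappa(\mathfrak{p})$ (using that $\pi_\mathfrak{p}$ is a uniformizer).

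The main technical step is the inclusion $\ker \subseteq P(S) R^{\dagger,r}$, which I would establish by a Weierstrass-type division. Given $f = \sum_n a_n S^n$ with $f(\pi_\mathfrak{p}) = 0$, decompose $f = f_+ + f_-$ with $f_+ = \sum_{n\ge 0} a_n S^n \in \oo[[S]]$, and apply classical Weierstrass preparation for the distinguished polynomial $P(S)$ in $\oo[[S]]$ to write $f_+ = P(S) g_+ + s_+$ with $g_+ \in \oo[[S]]$ and $s_+ \in \oo[S]$ of degree $< e$. Treating $f_-$ via the substitution $T = S^{-1}$, the key identity $P(S) = S^e Q(T)$ with $Q(T) := 1 + a_{e-1} T + \ldots + a_0 T^e$ exhibits $Q$ as a unit in $\oo[[T]]$ (since $Q(0) = 1$); dividing $\tilde{f}(T) := \sum_{m \ge 0} a_{-m-1} T^m$ by $Q$ and transforming back yields $f_- = P(S) g_-$ for some $g_- \in R^{\dagger,r}$. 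Combining, $f = P(S)(g_+ + g_-) + s_+$, and $f(\pi_\mathfrak{p}) = 0$ together with $P(\pi_\mathfrak{p}) = 0$ forces $s_+(\pi_\mathfrak{p}) = 0$; since $\deg s_+ < e$ and $P$ is the minimal polynomial of $\pi_\mathfrak{p}$ over $\mathrm{Frac}(\oo)$, we conclude $s_+ = 0$ and hence $f \in P(S) R^{\dagger,r}$.

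The main obstacle I expect is the boundary case $r = 1/e$: the zeros of $Q(T)$ lie exactly on $|T| = |p|^{-1/e}$, the boundary of the disc of convergence of $\tilde{f}$, so dividing by $Q$ does not directly produce an overconvergent series on the closed disc. To overcome this, one exploits that $f$ has $\oo$-coefficients and is therefore invariant under $\mathrm{Gal}(\overline{\mathrm{Frac}(\oo)}/\mathrm{Frac}(\oo))$, so $f(\pi_\mathfrak{p}) = 0$ forces $f$ to vanish at all $e$ Galois conjugates of $\pi_\mathfrak{p}$, yielding the extra cancellation required for $\tilde{f}/Q$ to extend analytically across the boundary. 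The final assertion $\mathfrak{p} R^{\dagger,r} \neq R^{\dagger,r}$ is then immediate from the isomorphism, since $\kappa(\mathfrak{p}) \neq 0$.
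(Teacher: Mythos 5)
Your well-definedness estimate, the reduction to $\und{X}=\emptyset$, and the surjectivity argument are all fine. The gap is in the injectivity step, and it is not the minor boundary nuisance you anticipated but a failure of the central claim ``$f_- = P(S)\,g_-$ for some $g_-\in R^{\dagger,r}$.''

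Concretely: the zeros of $Q(T)$ sit exactly at $|T|=|p|^{-1/e}$, which lies in the region $1<|T|\le |p|^{-r}$ \emph{for every} $r\ge 1/e$, not only at $r=1/e$ (in fact for $r>1/e$ they are strictly interior). Since $Q(0)=1$, the inverse $1/Q$ is a legitimate formal power series in $\oo[[T]]$, but its radius of convergence is exactly $|p|^{-1/e}$; hence $\tilde f/Q$ converges only on $|T|<|p|^{-1/e}$. Transforming back, $f_-/P$ defines an element of $R^{\dagger,r'}$ only for $r'<1/e$, never of $R^{\dagger,r}$ under the hypothesis $r\ge 1/e$. So the decomposition $f = P(g_++g_-)+s_+$ is not valid in the ring you need. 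Your proposed remedy via Galois invariance does not help: the vanishing you get from $f(\pi_{\mathfrak p})=0$ and Galois equivariance concerns $f$ as a whole at the conjugates $\zeta_i$ of $\pi_{\mathfrak p}$, and says nothing about $\tilde f$ (equivalently $f_-$) vanishing at the points $1/\zeta_i$ where $Q$ vanishes; one only gets $f_+(\zeta_i)+f_-(\zeta_i)=0$, and neither term vanishes individually. The underlying problem is that the additive split $f=f_++f_-$ is incompatible with division by $P$: each piece acquires a remainder which is not a polynomial of degree $<e$, and you cannot control the two remainders separately.

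What the paper actually does is cite Lazard's division lemma directly (``Lemme 2'' of \cite{Laz}): for an $r'$-extremal polynomial $P$ with $r'\le r$ and any $f\in L_F[0,r]$ there exist unique $g\in L_F[0,r]$ and $s\in F[S]$ of degree $<\deg P$ with $f=Pg+s$. This handles the full Laurent series at once, rather than the $\pm$ halves separately, and is exactly the statement you would need to make your final step (``$f(\pi_{\mathfrak p})=0\Rightarrow s_+(\pi_{\mathfrak p})=0\Rightarrow s_+=0$'') go through. Your argument is essentially an attempt to reprove Lazard's lemma, but the Weierstrass-on-halves strategy does not succeed; a correct elementary proof would instead have to track the Newton polygon of $f/P$ globally, which amounts to the content of Lazard's result.
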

\begin{proof}
We will briefly recall a result in \cite{Laz}. Let $F$ be a complete discrete valuation field of mixed characteristic $(0,p)$. Recall that $L_F[0,r]$ is the ring of Laurent series with variable $S$ and coefficients in $F$, which converge in the annulus $|p|^r\le |S|<1$ (\cite[1.3]{Laz}). For $r'\in\Q_{>0}$, a polynomial $P\in F[S]$ is said to be $r'$-extremal if all zeroes $x$ of $P$ in $F^{\alg}$ satisfy $v(x)=r'$ (\cite[2.7']{Laz}). Let $r'\le r$ be positive rational numbers and $P\in F[S]$ an $r'$-extremal polynomial. Then, for $f\in L_F[0,r]$, there exist a unique $g\in L_F[0,r]$ and a unique polynomial $Q\in F[S]$ of degree $<\deg{P}$ such that $f=Pg+Q$, which is a special case of \cite[Lemme 2]{Laz}. Note that if $f\in F[S]$ with $\deg(f)<\deg(P)$, then we have $g=0$ and $Q=f$ by the uniqueness. In particular, the canonical map $\delta:F[S]/P\cdot F[S]\to L_F[0,r]/P\cdot L_F[0,r]$ is an isomorphism.

We prove the assertion. We can easily reduce to the case $\underline{X}=\phi$. That is, we have only to prove that the canonical map
\[
R^{\dagger,r}/\mathfrak{p}R^{\dagger,r}\to\kappa(\mathfrak{p})
\]
is an isomorphism. The assertion is trivial when $\mathfrak{p}=(p)$. Hence, we may assume $\mathfrak{p}\neq (p)$. Since $p$ is invertible in $\kappa(\mathfrak{p})$, $p$ is also invertible in $R^{\dagger,r}/\mathfrak{p}R^{\dagger,r}$. Hence, we have $R^{\dagger,r}/\mathfrak{p}R^{\dagger,r}=R^{\dagger,r}[p^{-1}]/\mathfrak{p}R^{\dagger,r}[p^{-1}]$. Note that $R^{\dagger,r}[p^{-1}]$ coincides with the above $L_F[0,r]$ with $F:=\mathrm{Frac}(\oo)$ by definition. Let $P\in \oo[S]$ be an Eisenstein polynomial, which generates $\mathfrak{p}$. Then, $P$ is $1/e$-extremal by a property of Eisenstein polynomial. Hence, the assertion follows from isomorphisms
\[
L_F[0,r]/\mathfrak{p}L_F[0,r]\cong F[S]/P\cdot F[S]\cong (\oo[S]/P\cdot\oo[S])[p^{-1}]\cong (R^+/\mathfrak{p})[p^{-1}]=\kappa(\mathfrak{p}).
\]
Here, the first equality is given by Lazard's isomorphism $\delta$ with $r'=1/e$.
\end{proof}

\subsection{Gr\"obner basis argument over complete regular local rings}\label{subsec:grobnerregular}
In this subsection, we will develop a basic theory of Gr\"obner basis over complete regular local rings $R$, which generalizing that over fields. This is done in \cite[\S~1.1]{Xia} when $R$ is a $1$-dimensional complete regular local ring of characteristic $p$. We assume the classical theory of Gr\"obner basis over fields and our basic reference is \cite{CLO}.

Recall that the classical theory of Gr\"obner basis on $F[\und{X}]$ for a field $F$ can be regarded as a multi-variable version of Euclidean division algorithm of the $1$-variable polynomial ring $F[X]$: To obtain an appropriate division algorithm in $F[\und{X}]$, we need to fix a ``monomial order'' of $F[\und{X}]$ to define a leading term, which plays an analogue role of the na\"ive degree function in the $1$-variable case. Hence, we should first define a notion of leading terms over the ring of convergent power series.
\begin{dfn}\label{dfn:order}
A monomial order $\succeq$ on a commutative monoid $(M,+)$ is an well-order such that if $\alpha\succeq \beta$, then $\alpha+\gamma\succeq\beta+\gamma$. When $\alpha\succeq\beta$ and $\alpha\neq\beta$, we denote by $\alpha\succ\beta$.

In the following, we consider only in the case where $M$ is isomorphic to $\N^l$. Moreover, the reader may assume that $\succ$ is a lexicographic order: The lexicographic order $\succeq_{\lex}$ on $\N^l$ is defined by $(a_1,\dots,a_l)\succ_{\lex} (a'_1,\dots,a'_l)$ if $a_1=a'_1,\dots,a_i=a'_i,a_{i+1}>a'_{i+1}$. A lexicographic order is a monomial order (\cite[Proposition 4, Chapter 2, \S 2]{CLO}).

For convenience, we define a monoid $M\cup\{\infty\}$ by $\alpha+\infty=\infty$ for any $\alpha\in M\cup\{\infty\}$. We extend any monomial order $\succeq$ on $M$ to $M\cup\{\infty\}$ by $\infty\succ \alpha$ for any $\alpha\in M$.
\end{dfn}

\begin{construction}\label{const:lead}
Let $R$ be a complete regular local ring of Krull dimension $d$ with a fixed regular system of parameters $\{s_1,\dotsc,s_d\}$. We put $R_i:=R/(s_1,\dotsc,s_{i})R$, which is also a regular local ring. We denote the image of $s_{i+1},\dots,s_d$ in $R_i$ by $s_{i+1},\dots,s_d$ again and we regard these as a fixed regular system of parameters. Let $v_{s_i}:R_i\twoheadrightarrow\N\cup\{\infty\}$ be the multiplicative valuation associated to the divisor $s_i=0$. For a non-zero $f\in R$ and $0\le i\le d$, we define a non-zero $f^{(i)}\in R_{i}$ inductively as follows: Put $f^{(0)}:=f$. If we have defined $f^{(i)}$, then we define $f^{(i+1)}$ as the image of $f^{(i)}/s_{i+1}^{v_{s_{i+1}}(f^{(i)})}$ in $R_{i+1}$, which is non-zero by definition. We put $\und{v}_R(f):=(v_{s_1}(f^{(0)}),v_{s_2}(f^{(1)}),\dots,v_{s_d}(f^{(d-1)}))\in \N^d$ and $\und{v}_R(0):=\infty$. Thus, we obtain a map $\und{v}_R:R\to\N^d\cup\{\infty\}$. We also apply this construction to each $R_i$. Note that we have a formula
\begin{equation}\label{eq:val}
\und{v}_R(f)=(v_{s_1}(f),\und{v}_{R_1}(f^{(1)})).
\end{equation}
Also, note that $\und{v}_R$ is multiplicative, i.e., $\und{v}_R(fg)=\und{v}_R(f)+\und{v}_R(g)$, which follows by induction on $d$ and using the formula.

Let $R\langle\und{X}\rangle$ be the $\mathfrak{m}_R$-adic Hausdorff completion of $R[\und{X}]$. We fix a monomial order $\succeq$ on $\und{X}^{\N}\cong\N^l$. For any non-zero $f=\sum_{\und{n}}a_{\und{n}}\und{X}^{\und{n}}\in R\langle\und{X}\rangle$ with $a_{\und{n}}\in R$, we define $\und{v}_R(f):=\inf_{\succeq_{\lex}}\und{v}_R(a_{\und{n}})$, where $\succeq_{\lex}$ is the lexicographic order on $\N^d$, and $\und{\deg}_R(f):=\inf_{\succeq}\{\und{n}\in\N^l;\und{v}_{R}(a_{\und{n}})=\und{v}_R(f)\}$. We put $\und{\deg}_R(0):=\infty$. Note that when $f\neq 0$, we have a formula
\begin{equation}\label{eq:deg}
\und{\deg}_R(f)=\und{\deg}_R(f^{(0)})=\und{\deg}_R(f^{(1)})=\dots=\und{\deg}_R(f^{(d)}),
\end{equation}
which follows from (\ref{eq:val}). Also, note that $\und{\deg}_R$ is multiplicative. Indeed, by the above formula (\ref{eq:deg}), we can reduce to the case where $R$ is a field, which follows from \cite[Lemma~8, Chapter~2]{CLO}. Thus, we obtain a multiplicative map
\[
\und{v}_R\times\und{\deg}_R:R\langle\und{X}\rangle\to (\N^d\times\N^l)\cup\{\infty\},
\]
where $\infty$ in the RHS denotes $(\infty,\infty)$. We endow $\N^d\times\N^l$ with a total order $\succeq$ by
\[
(\und{a},\und{n})\succeq (\und{a}',\und{n}')\text{ if }\und{a}\preceq_{\lex}\und{a}'\text{ or }\und{a}=\und{a}'\text{ and }\und{n}\succeq\und{n}'
\]
and extend it to $(\N^d\times\N^l)\cup\{\infty\}$ as in Definition~\ref{dfn:order}. Note that this order is an extension of the fixed order on $\N^l=\{0\}\times\dots\{0\}\times\N^l$. As in the classical notation, we also define
\[
\LT_R(f):=\und{s}^{\und{v}_R(f)}\und{X}^{\und{\deg}_R(f)}\text{ for }f\neq 0,\ \LT_R(0):=0,
\]
where $\und{s}=(s_1,\dots,s_d)$. Note that $\LT_R$ is also multiplicative by the multiplicativities of $\und{v}_R$ and $\und{\deg}_R$. Also, we have a formula
\begin{equation}\label{eq:LT}
\LT_R(f)\equiv\LT_{R_i}(f\mod{(s_1,\dots,s_i)})\mod{(s_1,\dots,s_i)},\ \forall f\in R\langle\und{X}\rangle.
\end{equation}
Indeed, if $s_i|f^{(i-1)}$ for some $i$, then both sides are zero. If $s_i\nmid f^{(i-1)}$ for all $i$, then the formula follows from (\ref{eq:val}) and (\ref{eq:deg}). The map $\LT_R$ takes values in the subset $\und{s}^{\N}\und{X}^{\N}\cup\{0\}$ of $R\langle\und{X}\rangle$. We identify $\und{s}^{\N}\und{X}^{\N}\cup\{0\}$ with $(\N^d\times\N^l)\cup\{\infty\}$ as a monoid and consider the total order $\succeq$ on $\und{s}^{\N}\und{X}^{\N}\cup\{0\}$.
\end{construction}

When $R$ is a field, the above definition coincides with the classical definitions as in \cite[\S~2]{CLO}.

\begin{rem}
$\LT$ stands for ``leading term'' with respect to a given monomial order in the classical case $d=0$. To define an appropriate $\LT$ in the case of $d>0$, we should consider a suitable order on the coefficient ring $R$, which is defined by using an ordered regular system of parameters as above. Our definition is compatible with d\'evissage, namely, compatible with a parameter-reducing maps $R\to R_1\to\dots\to R_d$. This property enables us to reduce everything about Gr\"obner basis to the classical case under assuming a certain ``flatness'' as we will see below. 
\end{rem}

In the rest of this subsection, let notation be as in Construction~\ref{const:lead}. In particular, we fix a monomial order $\succeq$ on $\und{X}^{\N}$.

\begin{dfn}
For $I$ be an ideal of $R\langle\und{X}\rangle$, we denote by $\LT_R(I)$ the ideal of $R\langle\und{X}\rangle$ generated by $\{\LT_R(f);f\in I\}$. Assume that $R\langle\und{X}\rangle/I$ is $R$-flat. We say that $f_1,\dots,f_s\in I$ forms a Gr\"obner basis if $(\LT_R(f_1),\dots,\LT_R(f_s))=\LT_R(I)$. Note that a Gr\"obner basis always exists since $R\langle\und{X}\rangle$ is Noetherian.
\end{dfn}

Note that for monomials $f,f_1,\dots,f_s\in R\langle\und{X}\rangle$, we have $f\in (f_1,\dots,f_s)$ if and only if $f$ is divisible by some $f_i$. Indeed, any term of $g\in (f_1,\dots,f_s)$ is divisible by some $f_i$, which implies the necessity.

\begin{notation}
Let $I$ be an ideal of $R\langle\und{X}\rangle$ such that $R\langle\und{X}\rangle/I$ is $R$-flat. We denote $I_i:=I/(s_1,\dotsc,s_i)I$. We may identify $R\langle\und{X}\rangle\otimes_R R_i$ and $I\otimes_RR_i$ with $R_i\langle\und{X}\rangle$ and $I_i$ respectively. Note that $R_i\langle\und{X}\rangle/I_i$ is $R_i$-flat.
\end{notation}

\begin{lem}\label{lem:grobred}
Let $I$ be an ideal of $R\langle\und{X}\rangle$ such that $R\langle\und{X}\rangle/I$ is $R$-flat. For $f_1,\dots,f_s\in I$, the following are equivalent:
\begin{enumerate}
\item $f_1,\dots,f_s$ forms a Gr\"obner basis of $I$;
\item The images of $f_1,\dots,f_s$ forms a Gr\"obner basis of $I_i\subset R_i\langle\und{X}\rangle$ for some $i$.
\end{enumerate}
Moreover, when $f_1,\dots,f_s$ is a Gr\"obner basis of $I$, $f_1,\dots,f_s$ generates $I$.
\end{lem}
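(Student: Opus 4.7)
The strategy is to prove the equivalence (i) $\Leftrightarrow$ (ii) only in the case $i=1$ and then iterate: applying the case $i=1$ with $R$ replaced by $R_j$ and $I$ replaced by $I_j$ shows that $f_1,\dots,f_s$ being a Gr\"obner basis of $I_j$ is equivalent to their images being a Gr\"obner basis of $I_{j+1}$, and chaining these equivalences yields the statement for every $i$.

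For the case $i=1$, the starting point is that $R$-flatness of $R\langle\und{X}\rangle/I$ forces $s_1$ to be a non-zero-divisor on the quotient, so by Lemma~\ref{lem:flatconseq} any $g\in I$ factors as $g=s_1^k g'$ with $g'\in I$ and $s_1\nmid g'$. Since $\LT_R(g)=s_1^k\LT_R(g')$, both directions reduce to the subcase $s_1\nmid g$, in which formula~(\ref{eq:LT}) yields $\LT_R(g)\equiv \LT_{R_1}(\bar g)\pmod{s_1}$ and both leading terms are literally the same monomial $\und{s}^{\und{a}}\und{X}^{\und{n}}$ with $a_1=0$, viewed in $R$ or $R_1$ respectively. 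For (i)$\Rightarrow$(ii), given a nonzero $\bar g\in I_1$, any lift $g\in I$ automatically satisfies $s_1\nmid g$; applying (i) gives $\LT_R(g)\in(\LT_R(f_j))$, which reduces modulo $s_1$ to $\LT_{R_1}(\bar g)\in(\LT_{R_1}(\bar f_j))$. For (ii)$\Rightarrow$(i), after reducing to $s_1\nmid g$, condition (ii) implies that some $\LT_{R_1}(\bar f_j)$ divides $\LT_{R_1}(\bar g)$ as a monomial in $R_1\langle\und{X}\rangle$, and since all leading terms in question have $s_1$-exponent zero, this divisibility lifts verbatim to $R\langle\und{X}\rangle$.

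The main technical ingredient is the monomial-ideal fact used in (ii)$\Rightarrow$(i): if a monomial $\und{s}^{\und{a}}\und{X}^{\und{n}}$ lies in the ideal generated by monomials $\{\und{s}^{\und{a}_j}\und{X}^{\und{n}_j}\}$ in $R\langle\und{X}\rangle$ (or in $R_1\langle\und{X}\rangle$), then some $\und{s}^{\und{a}_j}\und{X}^{\und{n}_j}$ divides it. Reading off the coefficient of $\und{X}^{\und{n}}$ from an identity witnessing membership reduces this to the analogous statement inside $R$: if $\und{s}^{\und{a}}\in(\und{s}^{\und{a}_j})_j$, then $\und{a}_j\le\und{a}$ componentwise for some $j$. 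The latter follows by induction on $d$ via the $s_1$-adic valuation, reducing to the trivial residue-field case, and this monomial step is what I expect to be the main (though elementary) obstacle.

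For the ``moreover'' clause, iterating (i)$\Leftrightarrow$(ii) down to $i=d$ shows that $\bar f_1,\dots,\bar f_s$ form a Gr\"obner basis of $I_d$ in $R_d\langle\und{X}\rangle=k[\und{X}]$, where $k=R/\mathfrak{m}_R$, hence generate $I_d$ by the classical theorem over a field. Given $g\in I$, lift an expression $\bar g=\sum_j\bar h_j\bar f_j$ in $k[\und{X}]$ to $h_j^{(0)}\in R\langle\und{X}\rangle$; then $g-\sum_j h_j^{(0)} f_j\in I\cap \mathfrak{m}_R R\langle\und{X}\rangle$, which equals $\mathfrak{m}_R I$ by Lemma~\ref{lem:flatconseq}. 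Repeating this construction on successive remainders produces approximations whose errors lie in $\mathfrak{m}_R^n I$, and since $R\langle\und{X}\rangle$ is $\mathfrak{m}_R$-adically complete and separated, the partial sums converge to an expression $g=\sum_j h_j f_j$, proving that $f_1,\dots,f_s$ generate $I$.
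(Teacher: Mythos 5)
Your proof is correct and takes essentially the same route as the paper: reduce the equivalence to the case $i=1$ by induction on $\dim R$, use the $\LT$-compatibility formulas and the factorization $g=s_1^k g'$ with $g'\in I$ (from Lemma~\ref{lem:flatconseq}) for both implications, and deduce the generation statement from the $i=d$ case and the classical theorem over the residue field. The only cosmetic difference is that you unwind the Nakayama step of the ``moreover'' clause into an explicit $\mathfrak{m}_R$-adic successive-approximation argument, and you spell out the monomial-ideal membership fact that the paper records in the ``Note'' immediately preceding the lemma.
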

\begin{proof}
We prove the first assertion. We proceed by induction on $d=\dim{R}$. When $d=0$, there is nothing to prove. Assume the assertion is true for dimension $<d$. By the induction hypothesis, we have only to prove the equivalence between (i) and (ii) with $i=1$.

We first prove $(i)\Rightarrow (ii)$. Let $\bar{f}\in I_1$ be a non-zero element and $f\in I$ a lift of $\bar{f}$. By assumption, we have $\LT_R(f_j)|\LT_R(f)$ for some $j$. Then, $\LT_{R_1}(f_j\mod{s_1})|\LT_{R_1}(\bar{f})$ by the formula~(\ref{eq:LT}).

We prove $(ii)\Rightarrow (i)$. Let $f\in I$ be a non-zero element. By Lemma~\ref{lem:flatconseq}, we have $f^{(1)}=f/s_1^{v_{s_1}(f)}\in I$. By assumption, we have $\LT_{R_1}(f_j\mod s_1)|\LT_{R_1}(f^{(1)}\mod{s_1})$ for some $j$. Since $\LT_{R_1}(f^{(1)}\mod{s_1})\neq 0$, $s_1$ does not divides $f_j$, i.e., $v_{s_1}(f_j)=0$. By the formulas~(\ref{eq:val}) and (\ref{eq:deg}), $\LT_R(f_j)$ divides $\LT_R(f^{(1)})$, hence, divides $\LT_R(f)$, which implies the assertion.

We prove the last assertion. By Nakayama's lemma and (ii) with $i=d$, the assertion is reduced to the case where $R$ is a field. In this case, the assertion follows from \cite[Corollary~2, \S~6]{CLO}.
\end{proof}

\begin{rem}
By Lemma~\ref{lem:grobred}, $f_1,\dots,f_s$ is a Gr\"obner basis of $I$ if and only if $f_1\mod{\mathfrak{m}_R},\dots,f_s\mod{\mathfrak{m}_R}$ is a Gr\"obner basis of $I/\mathfrak{m}_RI$. In particular, the definition of Gr\"obner basis does not depend on the choice of a regular system of parameters $\{s_1,\dots,s_d\}$.
\end{rem}

We can generalize the classical division algorithm, which is a basic tool in Gr\"obner basis argument.

\begin{prop}[Division algorithm]\label{prop:div}
Let $I$ be an ideal of $R\langle\und{X}\rangle$ such that $R\langle\und{X}\rangle/I$ is $R$-flat. Let $f_1,\dots,f_s\in I$ be a Gr\"obner basis of $I$. Then, for any non-zero $f\in R\langle\und{X}\rangle$, there exist $a_i,r\in R\langle\und{X}\rangle$ for all $i$ such that
\[
f=\sum_{1\le i\le s}a_if_i+r
\]
with $\LT_R(f)\succeq \LT_R(a_if_i)$ if $a_if_i\neq 0$, and any non-zero term of $r$ is not divisible by any $\und{X}^{\und{\deg}_R(f_i)}$. Moreover, such $r$ is uniquely determined (but $a_i$'s are not uniquely determined), and $f\in I$ if and only if $r=0$.
\end{prop}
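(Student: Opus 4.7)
The plan is to adapt the classical polynomial division algorithm (\cite[Theorem~3, \S~2.3]{CLO}) to $R\langle\und X\rangle$: reduce the combinatorial content to the residue field $k := R/\mathfrak{m}_R$ via Lemma~\ref{lem:grobred}, then lift iteratively, exploiting the $\mathfrak{m}_R$-adic completeness of $R\langle\und X\rangle$.

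For existence, I will construct $a_i$ and $r$ as $\mathfrak{m}_R$-adically convergent sums. Set $f^{(0)} := f$, and suppose inductively that $a_i^{(j)}, r^{(j)} \in \mathfrak{m}_R^j R\langle\und X\rangle$ have been constructed for $j < k$ such that $f^{(k)} := f - \sum_{j<k,\, i} a_i^{(j)} f_i - \sum_{j<k} r^{(j)}$ lies in $\mathfrak{m}_R^k R\langle\und X\rangle$ and the accumulated data satisfies both the leading-term bound and the remainder condition. Decompose (non-canonically) $f^{(k)} = \sum_{|\und\alpha|=k} \und s^{\und\alpha}\, f^{(k)}_{\und\alpha}$ with $f^{(k)}_{\und\alpha} \in R\langle\und X\rangle$. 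For each $\und\alpha$, apply the classical division algorithm over $k[\und X]$ to $\bar f^{(k)}_{\und\alpha}$ using the classical Gr\"obner basis $\bar f_1, \dots, \bar f_s$ of $\bar I$ provided by Lemma~\ref{lem:grobred}, producing $\bar a_{i,\und\alpha}, \bar r_{\und\alpha} \in k[\und X]$ with the classical leading-term bound and the remainder condition. Lift these term-by-term to polynomials $a_{i,\und\alpha}, r_{\und\alpha} \in R[\und X]$ preserving $\und X$-support, and set $a_i^{(k)} := \sum_{\und\alpha} \und s^{\und\alpha} a_{i,\und\alpha}$, $r^{(k)} := \sum_{\und\alpha} \und s^{\und\alpha} r_{\und\alpha}$. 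The compatibility formula~(\ref{eq:LT}) between $\LT_R$ modulo parameters and the classical leading term ensures both the leading-term bound and the remainder condition persist after lifting; the new residual $f^{(k+1)}$ lies in $\mathfrak{m}_R^{k+1} R\langle\und X\rangle$. Summing, $a_i := \sum_k a_i^{(k)}$ and $r := \sum_k r^{(k)}$ converge in $R\langle\und X\rangle$ by $\mathfrak{m}_R$-adic completeness, and the accumulated bounds yield $\LT_R(a_i f_i) \preceq \LT_R(f)$.

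For uniqueness of $r$ and the equivalence $f \in I \iff r = 0$, I will establish a direct-sum decomposition $R\langle\und X\rangle = I \oplus M$, where $M$ is the closed $R$-submodule of series all of whose terms $\und X^{\und n}$ have $\und n$ not divisible by any $\und \deg_R(f_i)$. The decomposition is proven by induction on $k$ at the level of $(R/\mathfrak{m}_R^k)[\und X] = \bar I_k \oplus \bar M_k$: the base case $k=1$ is the classical Gr\"obner direct sum in $k[\und X]$; the inductive step uses the snake lemma applied to
\[
0 \to (\mathfrak{m}_R^k/\mathfrak{m}_R^{k+1})[\und X] \to (R/\mathfrak{m}_R^{k+1})[\und X] \to (R/\mathfrak{m}_R^k)[\und X] \to 0
\]
together with the $R$-flatness of $I$ from Lemma~\ref{lem:flatconseq}. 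Passing to the $\mathfrak{m}_R$-adic inverse limit---valid because $I$ is closed by Noetherianness of $R\langle\und X\rangle$ and $M$ is closed by construction---produces the global decomposition.

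The principal obstacle is propagating the leading-term bound $\LT_R(a_i f_i) \preceq \LT_R(f)$ through the iterative lifting. Classically one obtains $\LT(\bar a_{i,\und\alpha} \bar f_i) \preceq \LT(\bar f^{(k)}_{\und\alpha})$ inside $k[\und X]$, and this must be converted into the $R\langle\und X\rangle$-bound via~(\ref{eq:LT}); the $\und s^{\und\alpha}$-shift and the degeneration when $\bar f^{(k)}_{\und\alpha} = 0$ (i.e., $\und v_R > 0$) require careful bookkeeping so that the contributions accumulated from all levels $k$ remain controlled by $\LT_R(f)$.
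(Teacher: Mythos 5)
Your sketch has the right ingredients (reduce to the classical division over the residue field, exploit completeness, then lift), but the leading--term bound $\LT_R(f)\succeq\LT_R(a_if_i)$---which is part of the statement and which you yourself flag as ``the principal obstacle''---is not actually established, and it is precisely the point on which the $\mathfrak m_R$-adic strategy differs from, and is harder than, the argument the paper uses.

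The difficulty is structural. The valuation $\und v_R$ is defined \emph{iteratively}: first one extracts the $s_1$-order $v_{s_1}(f)$, then reduces mod $s_1$ and extracts the $s_2$-order of what remains, and so on; correspondingly the order on $\N^d$ entering $\LT_R$ is the \emph{lexicographic} one. This nested structure matches the filtration by powers of $(s_1)$ but not the filtration by powers of $\mathfrak m_R$. Concretely, at level $k$ your construction introduces into $a_i$ contributions $\und s^{\und\alpha}a_{i,\und\alpha}$ for all $\und\alpha$ with $|\und\alpha|=k$; to conclude $\und v_R(a_i)\succeq_{\lex}\und v_R(f)$ one needs that every such $\und\alpha$ with $\bar a_{i,\und\alpha}\neq 0$ satisfies $\und\alpha\succeq_{\lex}\und v_R(f)$, and then, in case of equality of valuations, that the $\und X$-degree bound coming from the classical division at the right level $\und\alpha=\und v_R(f)$ is the one that controls $\und\deg_R(a_if_i)$. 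Neither of these follows from formula~(\ref{eq:LT}) alone: you would first need a lemma to the effect that if $g\in R\langle\und X\rangle$ and $\und v_R(g)\succeq_{\lex}\und v_R(f)$, then every monomial $\und s^{\und\alpha}$ appearing in the image of $g$ in $\gr^k_{\mathfrak m_R}R\langle\und X\rangle$ satisfies $\und\alpha\succeq_{\lex}\und v_R(f)$; and then that this property of $f^{(k)}$ is preserved under the iteration. Both of these facts are true, but proving them is essentially an induction on $d$ extracting the $s_1$-order first---which is exactly what the paper does directly, so nothing is gained.

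The paper's existence proof avoids this entirely by inducting on $d=\dim R$ and working $s_1$-adically: at each step it divides out the maximal power of $s_1$ from the current residue, reduces modulo $s_1$, invokes the inductive hypothesis over $R_1=R/(s_1)$, and lifts; the leading--term bound then falls out of formulas~(\ref{eq:val}) and (\ref{eq:deg}), because the first coordinate of $\und v_R$ \emph{is} the $s_1$-order and the remaining coordinates are exactly $\und v_{R_1}$ of the reduction. Your uniqueness argument (via the direct sum $R\langle\und X\rangle=I\oplus M$ lifted from $k[\und X]$ with the snake lemma and $R$-flatness of $I$) is a valid, if heavier, alternative to the paper's one-line leading--term argument. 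But the existence half, as written, is incomplete: you would need to supply and prove the compatibility between the $\mathfrak m_R$-graded decomposition and the lexicographic valuation, and once you do that you have effectively reconstructed the paper's induction on $d$.
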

\begin{proof}
When $d=0$, i.e, $R$ is a field, the assertion is well-known (see \cite[Theorem 3, Chapter 2, \S~3]{CLO} for example). We prove the first assertion by induction on $d=\dim{R}$. Assume that the assertion is true for dimension $<d$. We may assume $s_1\nmid f_i$ for all $i$. Indeed, by Lemma~\ref{lem:grobred}, $\{f_i;s_1\nmid f_i\}$ forms a Gr\"obner basis of $I$. Moreover, any $\LT_R(f_j)$ is divisible by some $\LT_R(f_i)$ with $s_1\nmid f_i$. Therefore, if there exists a desired expression $f=\sum_{i:s_1\nmid f_i}a_if_i+r$ with respect to $\{f_i;s_1\nmid f_i\}$, then this expression is also a desired expression with respect to $f_1,\dots,f_s$. First, we construct $g_n\in R\langle\und{X}\rangle$ by induction on $n\in\N$; For $h\in R\langle\und{X}\rangle$, we denote by $\bar{h}$ its image in $R_1\langle\und{X}\rangle$. Put $g_0:=f$. Assume that $g_n$ has been defined. Put $g'_n:=g_n/s_1^{v_{s_1}(g_n)}$. By applying the induction hypothesis to $I_1=(\bar{f}_1,\dots,\bar{f}_s)$, we have $\bar{a}_{i,n},\bar{r}_n\in R_1\langle\und{X}\rangle$ such that
\[
\bar{g}'_n=\sum_i{\bar{a}_{i,n}\bar{f}_i}+\bar{r}_n
\]
such that $\LT_{R_1}(\bar{g}'_n)\succeq \LT_{R_1}(\bar{a}_{i,n}\bar{f}_i)$ if $\bar{a}_{i,n}\bar{f}_i\neq 0$, and any non-zero term of $\bar{r}_n$ is not divisible by any $\und{X}^{\und{\deg}_{R_1}(\bar{f}_i)}$. We choose lifts $a_{i,n}$ and $r_n$ in $R\langle\und{X}\rangle$ of $\bar{a}_{i,n}$ and $\bar{r}_n$ respectively such that any non-zero term of $a_{i,n}$ and $r_n$ is not divisible by $s_1$. Then, we put $g_{n+1}:=g_n-s_1^{v_{s_1}(g_n)}(\sum_i{a_{i,n}f_i}+r_n)$. By construction, we have $v_{s_1}(g_{n+1})>v_{s_1}(g_n)$, hence, $\{g_n\}$ converges $s_1$-adically to zero. Moreover, $a_i:=\sum_n{s_1^{v_{s_1}(g_n)}a_{i,n}}$ and $r:=\sum_n{s_1^{v_{s_1}(g_n)}r_n}$ converge $s_1$-adically and we have $f=\sum_i{a_if_i}+r$. We will check that $a_i$ and $r$ satisfy the condition. Since $s_1\nmid f_i$ and any non-zero term of $r_n$ is not divisible by $s_1$, any non-zero term of $r$ is not divisible by $\und{X}^{\und{\deg}_R(f_i)}$ for all $i$. We have $v_{s_1}(f_i)=0$ by assumption and $v_{s_1}(a_i)\ge v_{s_1}(f)$ by definition. If $v_{s_1}(a_i)>v_{s_1}(f)$, then we have $\und{v}_{R}(f)\preceq_{\lex}\und{v}_R(a_if_i)$, hence, $\LT_R(f)\succeq \LT_R(a_if_i)$. If $v_{s_1}(a_i)=v_{s_1}(f)$, then we have $a_i^{(0)}\equiv a_{i,0}\mod{s_1}$, hence, $\und{v}_{R}(f)\preceq\und{v}_R(a_if_i)$ by the formulas~(\ref{eq:val}), (\ref{eq:deg}) and the choice of $\bar{a}_{i,0}$. In particular, $\LT_R(f)\succeq \LT_R(a_if_i)$. Thus, we obtain the first assertion.

We prove the rest of the assertion. We first prove the uniqueness of $r$. Let $f=\sum a_if_i+r=\sum a'_if_i+r'$ be expressions satisfying the conditions. Then, we have $r-r'\in I$, hence, $\LT_R(r-r)\in \LT_R(I)$. Therefore, $r-r'$ is divisible by $\LT_R(f_i)$ for some $i$. Since any non-zero term of $r-r'$ is not divisible by any $\LT_R(f_i)$, we must have $r=r'$. We prove the equivalence $r=0\Leftrightarrow f\in I$. We have only to prove the necessity. Since $r\in I$, we have $\LT_R(r)\in \LT_R(I)$. Hence, $\LT_R(r)$ is divisible by $\LT_R(f_i)$ for some $i$. Since any non-zero term of $r$ is divisible by $\und{X}^{\und{\deg}_R(f_i)}$, we must have $r=0$.
\end{proof}

\begin{dfn}
We call the above expression $f=\sum a_if_i+r$ a standard expression (of $f$) and call $r$ the reminder of $f$ (with respect to $f_1,\dots,f_s$). Note that standard expressions are additive and compatible with scalar multiplications: That is, if $f=\sum_ia_if_i+r$ and $g=\sum_ia'_if_i+r'$ are standard expressions, then $f+g=\sum_i(a_i+a'_i)f_i+r+r'$ is also a standard expression of $f+g$, and $\lambda f=\sum_i\lambda a_i f_i+\lambda r$ is a standard expression of $\lambda f$ for $\lambda\in R$ by the formulas~(\ref{eq:val}) and (\ref{eq:deg}). The reminder of $f$ depends only on the class $f\mod{I}$ by Proposition~\ref{prop:div} and the above additive property. Therefore, we may call $r$ the reminder of $f\mod{I}$.
\end{dfn}

As in the classical case, we have the following.
\begin{lem}\label{lem:remchar}
Let $I$ be an ideal of $R\langle\und{X}\rangle$ such that $R\langle\und{X}\rangle/I$ is $R$-flat. Let $f_1,\dots,f_s\in I$ be a Gr\"obner basis of $I$. Let $f\in R\langle\und{X}\rangle$ be a non-zero element. For $r\in R\langle\und{X}\rangle$, the following are equivalent:
\begin{enumerate}
\item $r$ is the reminder of $f$;
\item $f-r\in I$ and any non-zero term of $r$ is not divisible by $\und{X}^{\und{\deg}(f_i)}$ for all $i$.
\end{enumerate}
\end{lem}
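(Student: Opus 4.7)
The plan is to prove (i)$\Rightarrow$(ii) directly from the definition of the remainder supplied by Proposition~\ref{prop:div}, then prove (ii)$\Rightarrow$(i) by the standard uniqueness argument: invoke the characterization of $\LT_R(I)$ via the Gr\"obner basis to show that any difference of two ``remainders'' in the sense of (ii) must vanish.

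For (i)$\Rightarrow$(ii): if $r$ is the remainder of $f$, then by definition there exist $a_1,\dots,a_s\in R\langle\und{X}\rangle$ with $f=\sum_ia_if_i+r$, so $f-r=\sum_ia_if_i\in I$, and the condition on the non-zero terms of $r$ is precisely part of the definition in Proposition~\ref{prop:div}.

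For (ii)$\Rightarrow$(i): let $r'$ be the remainder of $f$ in the sense of Proposition~\ref{prop:div}. Then by (i)$\Rightarrow$(ii) just proven, $r'$ also satisfies (ii). Hence $r-r'=(f-r')-(f-r)\in I$, and no non-zero term of $r-r'$ (which is a sum of a term of $r$ and the negative of a term of $r'$, for each fixed monomial $\und{X}^{\und{n}}$) can be divisible by any $\und{X}^{\und{\deg}_R(f_i)}$, because the same is true of $r$ and of $r'$ separately. Suppose for contradiction that $r-r'\neq 0$. Since $r-r'\in I$, we have $\LT_R(r-r')\in\LT_R(I)=(\LT_R(f_1),\dots,\LT_R(f_s))$; recalling that $\LT_R$-values are monomials and that a monomial lying in an ideal generated by monomials is divisible by one of them, there exists $i$ with $\LT_R(f_i)\mid\LT_R(r-r')$, and in particular $\und{X}^{\und{\deg}_R(f_i)}\mid\und{X}^{\und{\deg}_R(r-r')}$.

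It remains to observe that the monomial $\und{X}^{\und{\deg}_R(r-r')}$ is the monomial of a non-zero term of either $r$ or $r'$: by definition of $\und{\deg}_R(r-r')$, the coefficient of $\und{X}^{\und{\deg}_R(r-r')}$ in $r-r'$ is non-zero (it has valuation $\und{v}_R(r-r')$), hence the coefficient of $\und{X}^{\und{\deg}_R(r-r')}$ in at least one of $r$ or $r'$ must be non-zero. This contradicts condition (ii) for that one of $r,r'$, finishing the proof. The only subtlety is the last observation—that one cannot trade off coefficients between $r$ and $r'$ to escape divisibility—but this is immediate once one unpacks what ``term'' means. Overall the lemma is a formal consequence of Proposition~\ref{prop:div} and the definition of $\LT_R(I)$, so no new ideas are required.
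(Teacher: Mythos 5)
Your proof is correct, but it takes a different route from the paper's. The paper proves $(ii)\Rightarrow(i)$ by \emph{constructing} a standard expression: it applies the division algorithm of Proposition~\ref{prop:div} to $f-r$, which lies in $I$ and hence has zero remainder, yielding $f-r=\sum_i a_if_i$ with the appropriate leading-term bound; then $f=\sum_i a_if_i+r$ is a standard expression of $f$, so $r$ is its remainder. You instead run the \emph{uniqueness} argument: you set $r'$ to be the true remainder, observe $r-r'\in I$, and deduce $r=r'$ by showing that otherwise $\LT_R(r-r')$ would be divisible by some $\LT_R(f_i)$, forcing a non-zero term of $r$ or of $r'$ to be divisible by $\und{X}^{\und{\deg}_R(f_i)}$. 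This is precisely the argument that appears inside the uniqueness step of Proposition~\ref{prop:div}, so you are re-deploying that step rather than the existence step. Both routes are valid; yours is arguably a bit safer, since the paper's approach silently requires that $\LT_R(f)\succeq\LT_R(a_if_i)$ follows from $\LT_R(f-r)\succeq\LT_R(a_if_i)$, a step the paper does not justify, whereas your argument makes no use of the ordering condition at all and relies only on the term-divisibility condition and the fact that $\LT_R(I)$ is generated by the $\LT_R(f_i)$. You also correctly flag and handle the one genuine subtlety, namely that the monomial $\und{X}^{\und{\deg}_R(r-r')}$ must occur as the monomial of a non-zero term of at least one of $r$ or $r'$, which is what the ``no trading off coefficients'' remark addresses.
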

\begin{proof}
Since the assertion $(i)\Rightarrow (ii)$ is trivial, we prove the converse. By applying the division algorithm to $f-r$, we have $f-r=\sum{a_if_i}$ such that $\LT_R(f)\succeq \LT_R(a_if_i)$ if $a_if_i\neq 0$. This is nothing but to say that $r$ is the reminder of $f$.
\end{proof}

\begin{cor}
Let notation be as in Lemma~\ref{lem:remchar}. We regard $f_1\mod s_1,\dots,f_s\mod s_1$ as a Gr\"obner basis of $I_1$. For $f\in R\langle\und{X}\rangle$ with $s_1\nmid f$, denote by $r$ and $r'$ the reminders of $f$ and $f\mod s_1$. Then, we have $r\mod{s_1}\equiv r'$.
\end{cor}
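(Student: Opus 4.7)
The plan is to invoke the uniqueness characterization of the reminder given in Lemma~\ref{lem:remchar} and show that $r \bmod s_1$ satisfies the two defining conditions for the reminder of $f \bmod s_1$ with respect to the Gr\"obner basis $\{f_i \bmod s_1\}$ of $I_1$. By the assumption $s_1 \nmid f$, both $f \bmod s_1$ and its reminder $r'$ are defined in the sense of Proposition~\ref{prop:div}, so the uniqueness argument applies.

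First I would verify the membership condition. Since $r$ is the reminder of $f$, we have $f - r \in I$; reducing modulo $s_1$ yields $(f \bmod s_1) - (r \bmod s_1) \in I_1$, which is the first defining property for a reminder.

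Next I would verify the non-divisibility condition on the individual terms of $r \bmod s_1$. A non-zero term of $r \bmod s_1$ is of the form $(a_{\und{n}} \bmod s_1)\,\und{X}^{\und{n}}$ arising from a term $a_{\und{n}} \und{X}^{\und{n}}$ of $r$ with $s_1 \nmid a_{\und{n}}$; in particular this is a non-zero term of $r$, so by Lemma~\ref{lem:remchar} applied to $r$, the monomial $\und{X}^{\und{n}}$ is not divisible by $\und{X}^{\und{\deg}_R(f_i)}$ for any $i$. It then suffices to pass from $\und{\deg}_R(f_i)$ to $\und{\deg}_{R_1}(f_i \bmod s_1)$ for those $i$ with $f_i \bmod s_1 \ne 0$ (the others do not contribute to the Gr\"obner basis of $I_1$). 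For such $i$ we have $s_1 \nmid f_i$, so $v_{s_1}$ of the leading coefficient vanishes, and formula~(\ref{eq:LT}) of Construction~\ref{const:lead} gives
\[
\LT_R(f_i) \equiv \LT_{R_1}(f_i \bmod s_1) \pmod{s_1},
\]
whence $\und{\deg}_R(f_i) = \und{\deg}_{R_1}(f_i \bmod s_1)$. Thus $\und{X}^{\und{n}}$ is not divisible by $\und{X}^{\und{\deg}_{R_1}(f_i \bmod s_1)}$ for any such $i$, which is the second defining property.

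Having verified both conditions, the uniqueness of the reminder in Lemma~\ref{lem:remchar} forces $r \bmod s_1 = r'$. The only nontrivial point in the argument is the compatibility $\und{\deg}_R(f_i) = \und{\deg}_{R_1}(f_i \bmod s_1)$ for $f_i$ with $s_1 \nmid f_i$, which is however immediate from the formula~(\ref{eq:LT}); aside from this, the proof is a routine application of the characterization lemma.
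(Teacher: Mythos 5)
Your proof is correct and uses exactly the intended route: verifying the two conditions of the characterization in Lemma~\ref{lem:remchar} for $r\bmod s_1$ with respect to the Gr\"obner basis $\{f_i\bmod s_1\}$ of $I_1$, with the only nontrivial ingredient being $\und{\deg}_R(f_i)=\und{\deg}_{R_1}(f_i\bmod s_1)$ when $s_1\nmid f_i$, which is formula~(\ref{eq:LT}). This is precisely what the paper intends when it states the result as a corollary of Lemma~\ref{lem:remchar} without further proof.
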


Finally, we give a concrete example of Gr\"obner basis, which will appear in \S~\ref{sec:ram}.
\begin{prop}\label{prop:concrete}
Let $I=(f_1,\dots,f_s)\subset R\langle\und{X}\rangle$ be an ideal. Assume that there exists relatively prime monic monomials $T_1,\dots,T_s$, and units $u_1,\dots,u_s\in R^{\times}$ such that $\LT_R(f_i)=u_iT_i$ for $1\le i\le s$. Then, we have the following:
\begin{enumerate}
\item $R\langle\und{X}\rangle/I$ is $R$-flat;
\item $f_1,\dots,f_s$ is a Gr\"obner basis of $I$;
\item $f_1,\dots,f_s$ is a regular sequence in $R\langle\und{X}\rangle$.
\end{enumerate}
\end{prop}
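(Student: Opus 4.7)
The plan is to induct on $d := \dim R$, establishing (i), (ii), and (iii) simultaneously. For the base case $d = 0$, $R$ is a field and $R\langle\und X\rangle = R[\und X]$ is a polynomial ring: (ii) comes from Buchberger's $S$-pair criterion, which is immediate because the leading monomials $T_i$ are pairwise coprime so every $S$-pair reduces to zero via $f_1,\dots,f_s$; (i) is automatic; and (iii) I would deduce from the Gr\"obner-theoretic equality of the Hilbert series of $R[\und X]/(f_1,\dots,f_j)$ and $R[\und X]/(T_1,\dots,T_j)$ combined with the Cohen-Macaulayness of $R[\und X]$ and the fact that $T_1,\dots,T_s$, having pairwise disjoint variables, already form a monomial regular sequence.

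For the inductive step, the hypothesis is preserved modulo $s_1$ by formula~(\ref{eq:LT}), so the induction hypothesis applies to $\bar I \subset R_1\langle\und X\rangle$ with $R_1 := R/s_1 R$. The central step will be to establish an $R$-module direct sum decomposition
\[
R\langle\und X\rangle = N \oplus I,
\]
where $N$ is the closed $R$-submodule of convergent series supported only on \emph{standard} monomials, i.e.\ those not divisible by any $T_i$. Since $N$ is free (a topological direct sum of copies of $R$), this identification will immediately give (i). For surjectivity $R\langle\und X\rangle = N + I$, I would argue by successive approximation: at stage $k$, use the induction hypothesis on $R_1$ to decompose the current $s_1$-residue as $\bar n_k + \bar g_k$, lift each summand to $R\langle\und X\rangle$, and continue. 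The resulting series $\sum s_1^k n_k$ and $\sum s_1^k g_k$ converge $\mathfrak m_R$-adically because $s_1^k \in \mathfrak m_R^k$, and their limits remain in the closed submodules $N$ (closed by its defining support condition) and $I$ (finitely generated, hence closed by Artin-Rees).

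Injectivity $N \cap I = 0$ reduces to proving that $s_1$ is a non-zero-divisor on $R\langle\und X\rangle/I$. Given a relation $s_1 x = \sum a_i f_i$, reducing modulo $s_1$ yields a syzygy $\sum \bar a_i \bar f_i = 0$; by the induction hypothesis~(iii), $\bar f_1,\dots,\bar f_s$ is a regular sequence and its Koszul complex is exact, so the syzygy arises from antisymmetric $\bar c_{ij} = -\bar c_{ji}$ with $\bar a_i = \sum_j \bar c_{ij}\bar f_j$. Lifting the $c_{ij}$ antisymmetrically to $R\langle\und X\rangle$ makes $\sum_{i,j} c_{ij} f_i f_j = 0$, which places $x$ in $I$. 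Granted this, any $n \in N \cap I$ satisfies $\bar n \in \bar N \cap \bar I = 0$ by the induction hypothesis in dimension $d-1$, so $n = s_1 n'$; torsion-freeness of $N$ over the domain $R$ forces $n' \in N$, and the non-zero-divisor property gives $n' \in I$, so iterating places $n$ in $\bigcap_k s_1^k R\langle\und X\rangle = 0$ by $\mathfrak m_R$-adic separatedness of $R\langle\und X\rangle$. Then (ii) follows from Lemma~\ref{lem:grobred}. For (iii), I would apply (i) to each subideal $I_{k-1} = (f_1,\dots,f_{k-1})$ (which satisfies the same coprime-leading-monomial hypothesis); letting $K_k := \ker\bigl(R\langle\und X\rangle/I_{k-1} \xrightarrow{\cdot f_k} R\langle\und X\rangle/I_{k-1}\bigr)$, flatness combined with the induction hypothesis~(iii) on $R_1$ gives $K_k \otimes_R R_1 = 0$, whence $K_k = s_1 K_k$, and $\mathfrak m_R$-adic separatedness of the free module $R\langle\und X\rangle/I_{k-1} \cong N_{k-1}$ forces $K_k = 0$. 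The main anticipated obstacle is the Koszul-lifting step, which must be carried out antisymmetrically in the possibly infinite-rank module $R\langle\und X\rangle$ while simultaneously bootstrapping all three conclusions; closely related is the bookkeeping needed to ensure the successive-approximation series stays inside the closed submodules $N$ and $I$ at every stage.
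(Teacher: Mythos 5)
Your proof is correct, and it takes a route that differs from the paper's in several respects worth recording.

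\begin{itemize}
\item \emph{Induction structure.} The paper runs an outer induction on the number $s$ of generators with an inner induction on $\dim R$ for each $s$ (the outer induction is what lets the paper conclude in the (iii) step that $f_1,\dots,f_{i-1}$ is already a Gr\"obner basis). You run a single induction on $\dim R$ and, in the (iii) step, appeal to the already-proved (i) for the subideals $(f_1,\dots,f_{k-1})$ at the same dimension, which removes the need for the nested $s$-induction.
\item \emph{Flatness.} The paper proves (i) via the local criterion: it shows $I\cap s_1 R\langle\und X\rangle\subset s_1 I$ by a diagram chase on the two Koszul complexes. You instead construct an $R$-module splitting $R\langle\und X\rangle=N\oplus I$, with $N$ the space of series supported on standard monomials, using successive approximation mod $s_1$ for the sum and the non-zero-divisor property for the intersection. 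This is a strictly stronger intermediate conclusion (it essentially re-proves Proposition~\ref{prop:div} for this special class of ideals on the fly), but the real content --- Koszul exactness mod $s_1$, coming from the inductive hypothesis that $\bar f_1,\dots,\bar f_s$ is a regular sequence --- is the same; you phrase it via explicit antisymmetric Koszul syzygies rather than the paper's chase around the complex $C_\bullet\to\bar C_\bullet$. One small wording quibble: $N$ is not literally free but is a closed direct summand of the flat $R$-module $R\langle\und X\rangle$; that already yields flatness of $R\langle\und X\rangle/I\cong N$, so your conclusion stands.
\item \emph{Regular sequence.} For (iii), the paper takes a standard expression of $f$ (via the division algorithm), supposes the remainder is nonzero, and derives a contradiction from the coprimality of the $T_i$. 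Your route --- apply (i) to the subideals, deduce $K_k\otimes_R R_1=0$ from the inductive regular-sequence hypothesis, and kill $K_k$ by $s_1$-adic separatedness of $R\langle\und X\rangle/I_{k-1}\cong N_{k-1}$ --- bypasses the Gr\"obner-combinatorial argument at the cost of the flatness bookkeeping (breaking the four-term exact sequence into two short exact ones so that $\otimes_R R_1$ behaves). Both work.
\item \emph{Base case.} The paper cites \cite{CLO} for (ii) and \cite[Proposition 15.15]{Eis} for (iii). You replace these with Buchberger's coprime-leading-term criterion, Gr\"obner deformation (dimension preservation under passing to the initial ideal for an arbitrary term order), and the fact that in the Cohen--Macaulay ring $k[\und X]$ a sequence generating an ideal of height equal to its length is regular. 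All standard; just note that ``equality of Hilbert series'' should be read as ``equality of Hilbert functions with respect to the degree filtration'' since the term order need not be graded.
\end{itemize}

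In short: correct, slightly more self-contained in the base case, structurally simpler (one induction instead of two), at the cost of working a bit harder to establish the $N\oplus I$ splitting where the paper only needs the non-zero-divisor statement.
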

\begin{proof}
We may assume that $\LT_R(f_1),\dots,\LT_R(f_s)$ are relatively prime monic monomials by replacing $f_i$ by $f_i/u_i$. We first note that in the case of $d=0$, the assertion is basic: Indeed, the condition (i) is automatically satisfied. The condition (ii) directly follows from \cite[Theorem 3 and Proposition 4, \S 2]{CLO}. The condition (iii) follows by applying \cite[Proposition 15.15]{Eis} with $F=S=R[\und{X}]$ and $M=0$, $h_j=f_j$, where $F$, $S$ and $M$, $h_j$'s are as in the reference. We prove the assertion by induction on $s$. In the case of $s=1$, we have only to prove the condition (i). We proceed by induction on $d$. By the local criteria of flatness and the induction hypothesis, we have only to prove that the multiplication by $s_1$ on $R\langle\und{X}\rangle/I$ is injective. Let $f\in R\langle\und{X}\rangle$ such that $s_1f\in I$. Write $s_1f=f_1h$ for some $h\in R\langle\und{X}\rangle$. By taking $v_{s_1}$, we have $s_1|h$ since $s_1\nmid f_1$. This implies $f_1|f$, i.e., $f\in I$. Thus, we finish the case of $s=1$. We assume that the assertion is true when the cardinality of $f_i$'s is $<s$. We proceed by induction on $d$. The case of $d=0$ has been done as above. Assume that the assertion is true for dimension $<d$. For $h\in R\langle\und{}X\rangle$, denote by $\bar{h}$ its image in $R_1\langle\und{X}\rangle$. By assumption, $s_1\nmid f_i$ for all $i$, hence, we can apply the induction hypothesis to $\bar{f}_1,\dots,\bar{f}_s\in I_1:=(\bar{f}_1,\dots,\bar{f}_s)\subset R\langle\und{X}\rangle$ by the formula (\ref{eq:LT}). Hence, $R_1\langle\und{X}\rangle/I_1$ is $R_1$-flat, $\bar{f}_1,\dots,\bar{f}_s$ are Gr\"obner basis of $I_1$, and $\bar{f}_1,\dots,\bar{f}_s$ is a regular sequence in $R_1\langle\und{X}\rangle$. The condition (ii) follows from Lemma~\ref{lem:grobred}. Then, we check the condition (i). By the local criteria of flatness, we have only to prove that the multiplication by $s_1$ on $R\langle\und{X}\rangle/I$ is injective. It suffices to prove $I\cap s_1\cdot R\langle\und{X}\rangle\subset s_1I$. Denote by $C_{\bullet}$ and $\bar{C}_{\bullet}$ Koszul complexes for $\{f_1,\dotsc,f_s\}$ and $\{\bar{f}_1,\dots,\bar{f}_s\}$ (\cite[18.D]{Mat}). Then, we have $\bar{C}_i=C_i/s_1C_i$ for $i\ge 1$ by definition and $\bar{C}_{\bullet}$ is exact since $\bar{f}_1,\dots,\bar{f}_s$ is a regular sequence. We also have a morphism of complexes $C_{\bullet}\to\bar{C}_{\bullet}$, whose the first few terms are
\[\xymatrix{
\hdots\ar[r]&C_2\ar[r]^{d_2}\ar[d]&C_1\ar@{->>}[r]^{d_1}\ar[d]&I\ar[r]\ar[d]&0\\
\hdots\ar[r]&\bar{C}_2\ar[r]^{\bar{d}_2}&\bar{C}_1\ar[r]^{\bar{d}_1}&I_1\ar[r]&0.
}\]
Let $f\in I\cap s_1\cdot R\langle\und{X}\rangle$. Then, there exists $a\in C_1$ such that $d_1(a)=f$. Since $\bar{d}_1(\bar{a})\equiv 0\mod{s_1}$, there exists $\bar{b}\in C_2$ such that $\bar{d}_2(\bar{b})=\bar{a}$. Let $b\in C_2$ be a lift of $\bar{b}$. Then, there exists $a'\in C_1$ such that $a-d_2(b)=s_1a'$. Therefore, we have $f=d_1(a-d_2(b))=s_1d_1(a')\in s_1I$. Thus, the condition (i) is proved. Finally, we check the condition (iii). We have only to prove that for $1\le i \le s$, if $f_if\in (f_1,\dots,f_{i-1})$ for some $f\in R\langle\und{X}\rangle$, then we have $f\in (f_1,\dots,f_{i-1})$. Note that $f_1,\dots,f_{i-1}$ is a Gr\"obner basis of $(f_1,\dots,f_{i-1})$ by induction hypothesis. Let $f=\sum_{1\le j< i} a_{j}f_{j}+r$ be a standard expression of $f$ with respect to $f_1,\dots,f_{i-1}$. It suffices to prove $r=0$. We suppose the contrary and we will deduce a contradiction. Any non-zero term of $r$ is not divisible by $\LT_R(f_j)$ for any $1\le j<i$, in particular, we have $\LT_R(f_j)\nmid \LT_R(r)$. By assumption, $f_if=f_i(\sum_{1\le j<i} a_jf_j)+f_ir\in (f_1,\dots,f_{i-1})$, hence, we have $f_ir\in (f_1,\dots,f_{i-1})$. In particular, there exists $1\le j <i$ such that $\LT_R(f_j)|\LT_R(f_ir)$. Since $\LT_R(f_i)$ and $\LT_R(f_j)$ are relatively prime, we have $\LT_R(f_j)|\LT_R(r)$, which is a contradiction. Thus, we obtain the assertion (iii).
\end{proof}

A remarkable feature of the reminder is the compatibility with the quotient norms:
\begin{lem}\label{lem:normcompat}
Let $I$ be an ideal of $R\langle\und{X}\rangle$ such that $R\langle\und{X}\rangle/I$ is $R$-flat. Let $f_1,\dots,f_s\in I$ be a Gr\"obner basis of $I$. Let $|\cdot|:R\to \mathbb{R}_{\ge 0}$ be any non-archimedean norm satisfying $|R|\le 1$ and $|\mathfrak{m}_R|<1$. We extend $|\cdot|$ to a norm on $R\langle\und{X}\rangle$ by $|\sum_{\und{n}}a_{\und{n}}\und{X}^{\und{n}}|:=\sup_{\und{n}}|a_{\und{n}}|<\infty$. If we denote by $|\cdot|_{\qt}:R\langle\und{X}\rangle/I\to\mathbb{R}_{\ge 0}$ the quotient norm of $|\cdot|$, then the reminder $r$ of $f\in R\langle\und{X}\rangle$ achieves the quotient norm of $f\mod{I}$, i.e.,
\[
|r|=|f\mod{I}|_{\qt}.
\]
\end{lem}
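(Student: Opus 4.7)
The inequality $|r|\ge|f\bmod I|_\qt$ holds by definition of the quotient norm, since $r\equiv f\pmod I$. The content of the lemma is the reverse inequality. By Lemma~\ref{lem:remchar} the remainder depends only on $f\bmod I$, so if I can show that the remainder map $\rho: R\langle\und X\rangle\to R\langle\und X\rangle_r$ is contractive for the sup norm, i.e.\ $|\rho(h)|\le|h|$ for every $h$, then applying this to $h=f-g$ for arbitrary $g\in I$ (whose remainder is still $r$) and taking the infimum yields $|r|\le|f\bmod I|_\qt$. Here $R\langle\und X\rangle_r$ denotes the $R$-submodule of elements whose nonzero terms are supported on the monomials $\und X^{\und n}$ not divisible by any $\und X^{\und{\deg}_R(f_i)}$.

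The key step is to show $\rho$ is continuous for the $\mathfrak{m}_R$-adic topology. By Proposition~\ref{prop:div} the remainder map is $R$-linear and $R\langle\und X\rangle=I\oplus R\langle\und X\rangle_r$ as $R$-modules. The hypothesis that $R\langle\und X\rangle/I$ is $R$-flat, together with $R$-flatness of $R\langle\und X\rangle$ itself (as the $\mathfrak{m}_R$-adic completion of the $R$-flat ring $R[\und X]$ over the Noetherian ring $R$), forces both $I$ and $R\langle\und X\rangle_r\cong R\langle\und X\rangle/I$ to be $R$-flat. Tensoring the direct sum with $R/\mathfrak{m}_R^k$ and invoking $\mathrm{Tor}_1^R(-,R/\mathfrak{m}_R^k)=0$ on each flat summand yields a direct sum decomposition
\[
R\langle\und X\rangle/\mathfrak{m}_R^k R\langle\und X\rangle \;=\; I/\mathfrak{m}_R^k I \;\oplus\; R\langle\und X\rangle_r/\mathfrak{m}_R^k R\langle\und X\rangle_r.
\]
For $h\in\mathfrak{m}_R^k R\langle\und X\rangle$ with decomposition $h=g+r'$ ($g\in I$, $r'\in R\langle\und X\rangle_r$), its image in the quotient vanishes; the direct sum decomposition then forces the images of $g$ and $r'$ separately to vanish, so in particular $\rho(h)=r'\in\mathfrak{m}_R^k R\langle\und X\rangle_r$. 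Hence $\rho$ preserves the $\mathfrak{m}_R$-adic filtration, which is the required continuity.

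Now write $h=\sum_{\und n}a_{\und n}\und X^{\und n}$. Additivity of remainders on finite truncations and $\mathfrak{m}_R$-adic continuity of $\rho$ yield the expansion
\[
\rho(h) \;=\; \sum_{\und n}\,a_{\und n}\,\rho(\und X^{\und n}),
\]
as an $\mathfrak{m}_R$-adically convergent series in $R\langle\und X\rangle$, which also converges in the sup norm since the hypothesis $|\mathfrak{m}_R|<1$ forces $\mathfrak{m}_R$-adic convergence to imply norm-convergence. Each $\rho(\und X^{\und n})$ lies in $R\langle\und X\rangle$ and so has coefficients in $R$; by $|R|\le 1$ this gives $|\rho(\und X^{\und n})|\le 1$. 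The non-archimedean bound for convergent series, combined with submultiplicativity, then gives $|\rho(h)|\le\sup_{\und n}|a_{\und n}|\cdot|\rho(\und X^{\und n})|\le\sup_{\und n}|a_{\und n}|=|h|$, as desired.

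\textbf{Main obstacle.} The only nontrivial input is the $\mathfrak{m}_R$-adic continuity of $\rho$, extracted from the flatness hypothesis via a standard $\mathrm{Tor}$-vanishing argument. Once continuity is in hand, the norm bound reduces to the tautological estimate $|\rho(\und X^{\und n})|\le 1$ on monomials, which holds purely because coefficients lie in $R$ with $|R|\le 1$. This strategy circumvents any delicate norm tracking through the recursive division algorithm of Proposition~\ref{prop:div}, which would otherwise require careful lift choices and a separate induction on $\dim R$.
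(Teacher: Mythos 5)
Your proof is correct and rests on the same core mechanism as the paper's: expand $f$ over monomials, observe that the remainder of a monic monomial lies in $R\langle\und{X}\rangle$ and hence has coefficients in $R$ of norm $\le 1$, and conclude by the non-archimedean estimate for convergent sums. The one genuine divergence is in how you verify that $\rho(f)=\sum_{\und{n}}\lambda_{\und{n}}\rho(\und{X}^{\und{n}})$: you establish $\mathfrak{m}_R$-adic continuity of the remainder map $\rho$ and pass the $\mathfrak{m}_R$-adic limit through $\rho$, whereas the paper simply writes the sum down and invokes Lemma~\ref{lem:remchar} to identify it as the remainder of $f$ --- both defining conditions ($f$ minus the sum lies in $I$, and no term of the sum is divisible by any $\und{X}^{\und{\deg}_R(f_i)}$) are visibly preserved under convergent sums of the monomial standard expressions. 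Either route works, but the $\mathrm{Tor}$-vanishing argument you use to get continuity is superfluous: Proposition~\ref{prop:div} already gives an $R$-module direct sum decomposition $R\langle\und{X}\rangle=I\oplus R\langle\und{X}\rangle_r$ (in your notation) with $\rho$ the second projection, and since multiplication by $\mathfrak{m}_R^k$ distributes over direct sums one has $\mathfrak{m}_R^kR\langle\und{X}\rangle=\mathfrak{m}_R^kI\oplus\mathfrak{m}_R^kR\langle\und{X}\rangle_r$ with no further appeal to flatness; continuity of $\rho$ is then immediate. Finally, the remark in your last paragraph is off the mark: the paper's proof does not thread norm estimates through the recursive division algorithm of Proposition~\ref{prop:div} or require careful lift choices; it works monomial by monomial exactly as you do, and is if anything slightly more direct.
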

\begin{proof}
Let $f=\sum \lambda_{\und{n}}\und{X}^{\und{n}}$ with $\lambda_{\und{n}}\in R$. Let $\und{X}^{\und{n}}=\sum a_{\und{n},i}f_i+r_{\und{n}}$ be a standard expression of $\und{X}^{\und{n}}$. Let $a_i:=\sum_{\und{n}}\lambda_{\und{n}}a_{\und{n},i}$ and $r:=\sum_{\und{n}} \lambda_{\und{n}}r_{\und{n}}$, which converge since $\lambda_{\und{n}}\to 0$ as $|\und{n}|\to\infty$. Then, $f=\sum a_if_i+r$ is a standard expression of $f$ by Lemma~\ref{lem:remchar}. We have $|a_if_i|\le |a_i|\le \sup_{\und{n}}|\lambda_{\und{n}}a_{\und{n},i}|\le \sup_{\und{n}}|\lambda_{\und{n}}|=|f|$. Hence, we have $|r|\le |f|$. Since the reminder depend only on the class $f\mod{I}$, we have
\[
|f\mod{I}|_{\qt}=\inf_{g\in I}|f+g|\ge |r|\ge |f\mod{I}|_{\qt},
\]
which implies the assertion.
\end{proof}

\subsection{Gr\"obner basis argument over annulus}\label{subsec:grobnerannulus}
In this subsection, we will give an analogue of a Gr\"obner basis argument over rings of overconvergent power series. In this subsection, we use the notation as in \S~\ref{subsec:convergent} and \S~\ref{subsec:grobnerregular}. Also, let notation be as follows:

\begin{notation}\label{notation:annulus}
Let $\mathcal{O}$, $R^+$, and $R$ be as in Notation~\ref{notation:power}. Fix $\{p,S\}$ as a regular system of parameter of $R^+$. Let $I\subset R^+\langle\und{X}\rangle$ be an ideal such that $R^+\langle\und{X}\rangle/I$ is $R^+$-flat. For $r\in\Q_{>0}$, we endow $R^{\dagger,r}$ with the topology defined by the norm $|\cdot|_r$. We denote
\[
A:=R^+\langle\und{X}\rangle/I,\ I^{\dagger,r}:=I\otimes_{R^+\langle\und{X}\rangle}R^{\dagger,r}\langle\und{X}\rangle,\ A^{\dagger,r}:=A\otimes_{R\langle\und{X}\rangle}R^{\dagger,r}\langle\und{X}\rangle.
\]
(When $I=0$, $R^{\dagger,r}\langle\und{X}\rangle$ is denoted by $R\langle\und{X}\rangle^{\dagger,r}$ in this notation. However, we use this notation for simplicity.) Since $R^+\langle\und{X}\rangle\to R^{\dagger,r}\langle\und{X}\rangle$ is flat (Lemma~\ref{lem:Xiao}), we may identify $I^{\dagger,r}$ and $A^{\dagger,r}$ with $I\cdot R^{\dagger,r}\langle\und{X}\rangle$ and $R^{\dagger,r}\langle\und{X}\rangle/I^{\dagger,r}$. Since $R^+$ is an integral domain, $A$, hence, $A^{\dagger,r}$ are $R^+$-torsion free by flatness.

Denote $|\cdot|_{r,\qt}:A^{\dagger,r}\to\mathbb{R}_{\ge 0}$ by the quotient norm of $|\cdot|_{r}$. Note that $A^{\dagger,r}$ is complete with respect to $|\cdot|_{r,\qt}$ by \cite[Proposotion~3, 1.1.7]{BGR}.
\end{notation}

\begin{lem}[{cf. \cite[Lemma~1.1.22]{Xia}}]\label{lem:remannulus}
Let $f_1,\dots,f_s\in I$ be a Gr\"obner basis of $I$. For $f\in R^{\dagger,r}\langle\und{X}\rangle$, there exists a unique $\mathfrak{r}\in R^{\dagger,r}\langle\und{X}\rangle$ such that $f-\mathfrak{r}\in I^{\dagger,r}$ and any non-zero term of $\mathfrak{r}$ is not divisible by $\und{X}^{\und{\deg}_R(f_i)}$. Moreover, we have $|\mathfrak{r}|_{r'}=|f|_{r',\qt}$ for $r'\in\Q\cap (0,r]$, and $\mathfrak{r}=0$ if and only if $f\in I^{\dagger,r}$. We call $\mathfrak{r}$ the reminder of $f$ (with respect to $f_1,\dots,f_s$).
\end{lem}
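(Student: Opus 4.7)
The plan is to construct $\mathfrak{r}$ monomial-by-monomial and assemble the pieces into strictly convergent series, exploiting the uniform bounds provided by Lemma~\ref{lem:normcompat}. Expand $f=\sum_{\und{n}}\lambda_{\und{n}}\und{X}^{\und{n}}$ with $\lambda_{\und{n}}\in R^{\dagger,r}$ and $|\lambda_{\und{n}}|_r\to 0$, and apply Proposition~\ref{prop:div} over the complete regular local ring $R^+=\oo[[S]]$ (with ordered regular system of parameters $\{p,S\}$) to each monomial $\und{X}^{\und{n}}$, obtaining a polynomial standard expression
\[
\und{X}^{\und{n}}=\sum_{i=1}^{s}a_{\und{n},i}f_i+r_{\und{n}},
\]
where no non-zero term of $r_{\und{n}}$ is divisible by any $\und{X}^{\und{\deg}_R(f_i)}$. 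For every $r'\in\Q\cap(0,r]$, applying Lemma~\ref{lem:normcompat} to $R^+$ with the norm obtained by restricting $|\cdot|_{r'}$ (which satisfies $|R^+|_{r'}\le 1$ and $|(p,S)|_{r'}<1$) yields the crucial uniform bounds $|a_{\und{n},i}|_{r'}\le 1$ and $|r_{\und{n}}|_{r'}\le 1$. Setting $\mathfrak{a}_i:=\sum_{\und{n}}\lambda_{\und{n}}a_{\und{n},i}$ and $\mathfrak{r}:=\sum_{\und{n}}\lambda_{\und{n}}r_{\und{n}}$, these series converge in $|\cdot|_r$ on $R^{\dagger,r}\langle\und{X}\rangle$; the identity $f=\sum_i\mathfrak{a}_if_i+\mathfrak{r}$ gives $f-\mathfrak{r}\in I^{\dagger,r}$, and the non-divisibility condition on $\mathfrak{r}$ is inherited termwise from the $r_{\und{n}}$.

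The map $\rho\colon f\mapsto\mathfrak{r}$ is $R^{\dagger,r}$-linear by construction, bounded by $|\rho(f)|_{r'}\le|f|_{r'}$ for every $r'\in\Q\cap(0,r]$ (hence $|\cdot|_r$-continuous), and restricts on $R^+\langle\und{X}\rangle$ to the polynomial reminder of Proposition~\ref{prop:div} by the additivity of that reminder. The heart of the argument---and the main obstacle---is to prove $\rho(g)=0$ for every $g\in I^{\dagger,r}$. I plan to write $g=\sum_{j=1}^{s}b_jf_j$ with $b_j\in R^{\dagger,r}\langle\und{X}\rangle$, expand $b_j=\sum_{\und{n}}\mu_{j,\und{n}}\und{X}^{\und{n}}$, and verify that the double sum $\sum_{j,\und{n}}\mu_{j,\und{n}}\und{X}^{\und{n}}f_j$ converges unconditionally to $g$ in $|\cdot|_r$ (using the estimate $|\mu_{j,\und{n}}\und{X}^{\und{n}}f_j|_r\le|\mu_{j,\und{n}}|_r\to 0$). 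By $R^{\dagger,r}$-linearity and continuity of $\rho$ this reduces the vanishing to showing $\rho(\und{X}^{\und{n}}f_j)=0$ for each individual term, which holds because $\und{X}^{\und{n}}f_j\in R^+\langle\und{X}\rangle$ lies in $I$, so Proposition~\ref{prop:div} gives polynomial reminder zero.

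Granting $\rho|_{I^{\dagger,r}}=0$, the remaining conclusions follow formally. Uniqueness: if $\mathfrak{r}'$ is another candidate, then $f-\mathfrak{r}'\in I^{\dagger,r}$ yields $\rho(f)=\rho(\mathfrak{r}')$; since Lemma~\ref{lem:remchar} (applied monomial-wise) shows that $\rho$ fixes every element whose non-zero terms fail to be divisible by any $\und{X}^{\und{\deg}_R(f_i)}$, we get $\rho(\mathfrak{r}')=\mathfrak{r}'$, hence $\mathfrak{r}'=\mathfrak{r}$. The norm identity follows since for every $g\in I^{\dagger,r}$ we have $\mathfrak{r}=\rho(f+g)$, giving $|\mathfrak{r}|_{r'}\le|f+g|_{r'}$ and therefore $|\mathfrak{r}|_{r'}\le|f|_{r',\qt}$, while the reverse inequality is automatic from $\mathfrak{r}\equiv f\bmod I^{\dagger,r}$. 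The equivalence $\mathfrak{r}=0\Leftrightarrow f\in I^{\dagger,r}$ is then immediate from the existence of the decomposition $f=\sum_i\mathfrak{a}_if_i+\mathfrak{r}$ together with $\rho|_{I^{\dagger,r}}=0$.
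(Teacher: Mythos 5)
Your construction of $\mathfrak{r}$ as $\sum_{\und{n}}\lambda_{\und{n}}r_{\und{n}}$, the verification that it converges and satisfies the divisibility condition, and the deduction of the norm identity and the equivalence $\mathfrak{r}=0\Leftrightarrow f\in I^{\dagger,r}$ are all essentially what the paper does. Where you diverge is in the uniqueness argument, and this is a genuinely different route. The paper clears the $S$- and $p$-denominators to move $\mathfrak{r}-\mathfrak{r}'$ into $I^{\dagger,r}_0$, divides out the largest power of $p$ (using Lemma~\ref{lem:flatconseq}), then reduces modulo $p$ via the identification $I^{\dagger,r}_0/pI^{\dagger,r}_0\cong I/pI$ from Lemma~\ref{lem:redp}, landing in the complete DVR $R^+/pR^+$ where the characterization of Lemma~\ref{lem:remchar} forces a contradiction. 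You instead prove the stronger statement $\rho|_{I^{\dagger,r}}=0$ directly: express $g=\sum_j b_jf_j\in I^{\dagger,r}$ as an unconditionally convergent double sum of $R^{\dagger,r}$-multiples of the polynomials $\und{X}^{\und{n}}f_j\in I$, and use the $R^{\dagger,r}$-linearity and $|\cdot|_r$-continuity of $\rho$ plus the vanishing of the polynomial reminder on $I$ (Proposition~\ref{prop:div}) to conclude $\rho(g)=0$. Uniqueness then falls out formally because $\rho$ fixes any element meeting the non-divisibility condition. Your approach avoids the structure theory of $R^{\dagger,r}_0$ and the characteristic-$p$ reduction entirely, at the cost of the double-sum bookkeeping; both arguments are correct.

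Two small points of rigor. First, you cite Lemma~\ref{lem:normcompat} for the bounds $|a_{\und{n},i}|_{r'}\le 1$ and $|r_{\und{n}}|_{r'}\le 1$, but these are automatic: $a_{\und{n},i}$ and $r_{\und{n}}$ lie in $R^+\langle\und{X}\rangle$ and $|R^+\langle\und{X}\rangle|_{r'}\le 1$. Lemma~\ref{lem:normcompat} is not needed at that spot. Second, when you claim $\rho|_{R^+\langle\und{X}\rangle}$ agrees with the reminder of Proposition~\ref{prop:div} ``by the additivity of that reminder,'' additivity alone handles only finite sums; for the full strictly convergent series you should invoke Lemma~\ref{lem:remchar} (the characterization of the reminder), which shows $f-\sum\lambda_{\und{n}}r_{\und{n}}\in I$ together with the non-divisibility of each term of $\sum\lambda_{\und{n}}r_{\und{n}}$ forces it to be the reminder of $f$. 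With that replacement the argument is airtight.
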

\begin{proof}
We first construct $\mathfrak{r}$. Let $f=\sum_{\und{n}}{\lambda_{\und{n}}\und{X}^{\und{n}}}\in R^{\dagger,r}\langle\und{X}\rangle$ with $\lambda_{\und{n}}\in R^{\dagger,r}$. Let
\[
\und{X}^{\und{n}}=\sum_i{a_{{\und{n}},i}f_i}+r_{\und{n}}
\]
be the standard expression of $\und{X}^{\und{n}}$ in $R^+\langle\und{X}\rangle$ with respect to $f_1,\dots,f_s$. Since $\lambda_{\und{n}}\to 0$ as $|\und{n}|\to\infty$, the series
\[
a_i:=\sum_{\und{n}}{\lambda_{\und{n}}a_{{\und{n}},i}},\ \mathfrak{r}:=\sum_{\und{n}}{\lambda_{\und{n}}r_{\und{n}}}
\]
converge in $R^{\dagger,r}\langle\und{X}\rangle$ with respect to the topology defined by $|\cdot|_r$. Then, we have
\begin{equation}\label{eq:normdagger}
|\mathfrak{r}|_{r'}\le\sup_{\und{n}}{|\lambda_{\und{n}}r_{\und{n}}|_{r'}}\le\sup_{\und{n}}{|\lambda_{\und{n}}|_{r'}}=|f|_{r'}.
\end{equation}
Obviously, any non-zero term of $\mathfrak{r}$ is not divisible by any $\und{X}^{\und{\deg}_R(f_i)}$ and we have $f-\mathfrak{r}=\sum_i{a_if_i}\in I^{\dagger,r}$.

We prove the uniqueness of $\mathfrak{r}$. We suppose the contrary and deduce a contradiction. Let $\mathfrak{r}'\in R^{\dagger,r}\langle\und{X}\rangle$ be an element such that $f-\mathfrak{r}'\in I^{\dagger,r}$ and any non-zero term of $\mathfrak{r}'$ is not divisible by any $\und{X}^{\und{\deg}_R(f_i)}$. We choose $m\in\N$ such that $\delta:=S^m(\mathfrak{r}-\mathfrak{r}')$ belongs to $I^{\dagger,r}_0:=I\otimes_{R^+\langle\und{X}\rangle}R^{\dagger,r}_0\langle\und{X}\rangle$. If we write $\delta=p^n\delta'$ such that $\delta'\in R^{\dagger,r}_0\langle\und{X}\rangle$ is not divisible by $p$ in $R^{\dagger,r}_0\langle\und{X}\rangle$, then we have $\delta'\in I^{\dagger,r}_0$ by Lemma~\ref{lem:flatconseq}. We may identify $I^{\dagger,r}_0/pI^{\dagger,r}_0$ with $I/pI$ by Lemma~\ref{lem:redp}. We denote $\bar{\delta}':=\delta'\mod{pI^{\dagger,r}_0}\in I/pI$. We also denote $R_1^+:=R^+/pR^+$, which is a complete discrete valuation ring with uniformizer $S$. Then, any non-zero term of $\bar{\delta}'$ is not divisible by $\und{X}^{\und{\deg}_{R^+_1}(f_i\mod{p})}$. Hence, $\bar{\delta}'$ is the reminder of $0$ with respect to $f_1\mod{p},\dots,f_s\mod{p}$ in $R_1\langle\und{X}\rangle$. By Lemma~\ref{lem:remchar}, $\bar{\delta}'=0$, i.e., $\delta'\in\mod{pI^{\dagger,r}_0}$, which contradicts to $p\nmid \delta'$.

We prove $f=I^{\dagger,r}\Leftrightarrow \mathfrak{r}=0$. If $f\in I^{\dagger,r}$, then $0$ satisfies the required property for the reminder. Hence, $\mathfrak{r}=0$ by the uniqueness of the reminder. If $\mathfrak{r}=0$, then $f\in I^{\dagger,r}$ by definition.

We prove $|\mathfrak{r}|_{r'}=|f\mod{I^{\dagger,r}}|_{r',\qt}$. Let $\alpha\in I^{\dagger,r}$. Since $\mathfrak{r}$ satisfies the required condition for the reminder of $f+\alpha$, the reminder $f+\alpha$ is equal to $\mathfrak{r}$ by the uniqueness of the reminder. In particular, the reminder depends only on the class $f\mod{I^{\dagger,r}}$. Hence, the assertion follows from
\[
|f\mod{I^{\dagger,r}}|_{r',\qt}=\inf_{\alpha\in I^{\dagger,r}}{|\mathfrak{r}+\alpha|_{r'}}\ge |\mathfrak{r}|_{r'}\ge |f\mod{I^{\dagger,r}}|_{r',\qt},
\]
where the first equality follows from (\ref{eq:normdagger}) and the second inequality follows by definition.
\end{proof}

The following is an immediate consequence of the above lemma.
\begin{lem}\label{lem:remanu}
Let $f_1,\dots,f_s$ be a Gr\"obner basis of $I$. Let $f,g\in R^{\dagger,r}\langle\und{X}\rangle$ and $\mathfrak{r}$, $\mathfrak{r}'$ its reminders with respect to $f_1,\dots,f_s$. Then, we have the following:
\begin{enumerate}
\item The reminder of $f+g$ is equal to $\mathfrak{r}+\mathfrak{r}'$.
\item The reminder $\mathfrak{r}$ depends only on $f\mod{I^{\dagger,r}}$; One may call the reminder of $f$ the reminder of $f\mod{I^{\dagger,r}}$.
\item For $\lambda\in R^{\dagger,r}$, the reminder of $\lambda f$ is equal to $\lambda \mathfrak{r}$. Moreover, if $f\mod{I^{\dagger,r}}$ is divisible by $\lambda\in R^{\dagger,r}$, then $\mathfrak{r}$ is also divisible by $\lambda$.
\end{enumerate}
\end{lem}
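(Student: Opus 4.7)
The plan is to derive all three claims directly from the uniqueness characterization of reminders given in Lemma~\ref{lem:remannulus}: the reminder of $f\in R^{\dagger,r}\langle\und{X}\rangle$ is the unique element of $R^{\dagger,r}\langle\und{X}\rangle$ that is congruent to $f$ modulo $I^{\dagger,r}$ and whose nonzero terms are not divisible by any $\und{X}^{\und{\deg}_R(f_i)}$. In each case I would produce a candidate that manifestly satisfies these two properties.

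For (i), the sum $\mathfrak{r}+\mathfrak{r}'$ differs from $f+g$ by $(f-\mathfrak{r})+(g-\mathfrak{r}')\in I^{\dagger,r}$. Writing $\mathfrak{r}=\sum_{\und{n}}a_{\und{n}}\und{X}^{\und{n}}$ and $\mathfrak{r}'=\sum_{\und{n}}b_{\und{n}}\und{X}^{\und{n}}$, any index $\und{n}$ for which $a_{\und{n}}+b_{\und{n}}\neq 0$ already appears with a nonzero coefficient in either $\mathfrak{r}$ or $\mathfrak{r}'$, so $\und{X}^{\und{n}}$ is not divisible by any $\und{X}^{\und{\deg}_R(f_i)}$; uniqueness then forces $\mathfrak{r}+\mathfrak{r}'$ to be the reminder of $f+g$. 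Claim (ii) is immediate: if $f\equiv g\bmod I^{\dagger,r}$, the reminder of $f-g$ vanishes by Lemma~\ref{lem:remannulus}, and by the additivity just established the reminders of $f$ and $g$ coincide.

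For the first part of (iii), $\lambda f-\lambda\mathfrak{r}=\lambda(f-\mathfrak{r})$ lies in $I^{\dagger,r}$, and scaling $\mathfrak{r}$ by $\lambda$ can only kill coefficients, not create new ones at forbidden indices, so $\lambda\mathfrak{r}$ again satisfies both characterizing properties and is therefore the reminder of $\lambda f$. For the last sentence, if $f\equiv \lambda g\bmod I^{\dagger,r}$ for some $g\in R^{\dagger,r}\langle\und{X}\rangle$, then applying (ii) and then the first half of (iii) to $\lambda g$ gives $\mathfrak{r}=\lambda\cdot(\text{reminder of }g)$, which is divisible by $\lambda$. No step presents a genuine obstacle; the only point requiring a moment of care is the observation in (i) that cancellation among the coefficients of $\mathfrak{r}$ and $\mathfrak{r}'$ cannot introduce a ``bad'' term, since the allowed set of monomials is closed under $R^{\dagger,r}$-linear combinations.
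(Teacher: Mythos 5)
Your proof is correct and takes essentially the same approach as the paper, which declares the lemma an immediate consequence of Lemma~\ref{lem:remannulus}: in each part you verify the two characterizing conditions (congruence to the target modulo $I^{\dagger,r}$ and absence of terms divisible by $\und{X}^{\und{\deg}_R(f_i)}$) and invoke uniqueness. The only point worth noting explicitly is the one you already flagged, that additive and scalar combinations cannot introduce forbidden monomials because the allowed index set is fixed in advance and closed under $R^{\dagger,r}$-linear combinations.
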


\begin{cor}\label{cor:intersection}
Let $\mathfrak{a}\subsetneq R^{\dagger,r}$ be a principal ideal. Then, we have $\cap_{n\in\N}\mathfrak{a}^n\cdot A^{\dagger,r}=0$.
\end{cor}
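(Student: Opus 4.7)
The plan is to combine the reminder theory of Lemmas~\ref{lem:remannulus} and \ref{lem:remanu} with Krull's intersection theorem in a Noetherian integral domain. Write $\mathfrak{a}=(\lambda)$ for some non-unit $\lambda\in R^{\dagger,r}$, and fix a Gr\"obner basis $f_1,\dots,f_s$ of the ideal $I\subset R^+\langle\und{X}\rangle$. Given $x\in\bigcap_n\mathfrak{a}^n A^{\dagger,r}$, I would choose any lift $f\in R^{\dagger,r}\langle\und{X}\rangle$ of $x$ and let $\mathfrak{r}\in R^{\dagger,r}\langle\und{X}\rangle$ denote its reminder in the sense of Lemma~\ref{lem:remannulus}; the goal is to prove $\mathfrak{r}=0$, which by the last clause of that lemma gives $x=0$.

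For every $n\in\N$, the class of $f$ modulo $I^{\dagger,r}$ equals $x$, which is divisible by $\lambda^n$ in $A^{\dagger,r}$. Applying Lemma~\ref{lem:remanu}(iii) with $\lambda$ replaced by $\lambda^n\in R^{\dagger,r}$ therefore forces $\lambda^n\mid\mathfrak{r}$ in $R^{\dagger,r}\langle\und{X}\rangle$. Consequently $\mathfrak{r}\in\bigcap_n\lambda^n R^{\dagger,r}\langle\und{X}\rangle$, so it remains to show that this latter intersection is zero.

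To conclude I would invoke Krull's intersection theorem. By Notation~\ref{notation:annulus}, $R^{\dagger,r}\langle\und{X}\rangle$ is a Noetherian integral domain. Any element of $R^{\dagger,r}$ which becomes a unit in $R^{\dagger,r}\langle\und{X}\rangle$ is already a unit in $R^{\dagger,r}$: if $\lambda\cdot\sum_{\und{n}}b_{\und{n}}\und{X}^{\und{n}}=1$, then $\lambda b_{\und{0}}=1$ and $\lambda b_{\und{n}}=0$ for $\und{n}\neq \und{0}$, and since $R^{\dagger,r}$ is a domain all $b_{\und{n}}$ with $\und{n}\neq\und{0}$ must vanish. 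Hence $\lambda R^{\dagger,r}\langle\und{X}\rangle$ is a proper ideal, and Krull's intersection theorem for a proper ideal in a Noetherian domain gives $\bigcap_n\lambda^n R^{\dagger,r}\langle\und{X}\rangle=0$. Thus $\mathfrak{r}=0$ and $x=0$.

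I do not anticipate a serious obstacle. The one piece of content beyond bookkeeping is the compatibility of the reminder with divisibility by scalars from $R^{\dagger,r}$, which is exactly Lemma~\ref{lem:remanu}(iii); Krull's intersection theorem in a Noetherian domain is classical. The rest is the translation of divisibility in the quotient $A^{\dagger,r}$ into divisibility of an honest element $\mathfrak{r}\in R^{\dagger,r}\langle\und{X}\rangle$, which is precisely what the reminder construction accomplishes.
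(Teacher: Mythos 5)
Your proof is correct and follows essentially the same approach as the paper: pass from the element $x$ to its reminder $\mathfrak{r}$ via the Gr\"obner basis, transfer the divisibility by $\lambda^n$ to $\mathfrak{r}$ using Lemma~\ref{lem:remanu}(iii), and then invoke Krull's intersection theorem. The only cosmetic difference is that you apply Krull directly to the Noetherian domain $R^{\dagger,r}\langle\und{X}\rangle$ (after verifying that $\lambda$ remains a non-unit there), whereas the paper's terse ``$\mathfrak{r}\in\cap_n\mathfrak{a}^n=0$'' is most naturally read as applying Krull coefficientwise in $R^{\dagger,r}$; the two readings are equivalent.
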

\begin{proof}
Fix a Gr\"obner basis $f_1,\dots,f_s$ of $I$. Let $f\in\cap_{n\in\N}\mathfrak{a}^n\cdot A^{\dagger,r}$ and $\mathfrak{r}$ the reminder of $f$ with respect to $f_1,\dots,f_s$. By Lemma~\ref{lem:remanu}~(iii) and the assumption, $\mathfrak{r}\in\cap_{n\in\N}\mathfrak{a}^n=0$.
\end{proof}

\begin{rem}
One can prove that $R^{\dagger,r}$ is a principal ideal domain by using \cite[Proposition~2.6.5]{Doc}. We do not use this fact in this paper.
\end{rem}

\subsection{Continuity of connected components for families of affinoids}\label{subsec:continuity}
In this subsection, we will apply the previous results to prove a continuity of connected components of fibers of families of affinoids.

\begin{lem}\label{lem:conn}
Let $f:R\to S$ be a morphism of Noetherian rings and $\Idem(T)$ denote the set of idempotents for a ring $T$. If a canonical map $f_*:\Idem(R)\to \Idem(S)$ is surjective and $f_*^{-1}(\{0\})=\{0\}$, then $f^*:\pi_0^{\Zar}(S)\to\pi_0^{\Zar}(R)$ is bijective.
\end{lem}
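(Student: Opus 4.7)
Since $R$ and $S$ are Noetherian, $\Spec(R)$ and $\Spec(S)$ have only finitely many connected components, so $\pi_0^{\Zar}(R)$ and $\pi_0^{\Zar}(S)$ are finite. My plan is to reformulate everything in terms of the (finite) sets of primitive idempotents and reduce the claim to a bijection between these sets induced by $f_\ast$.

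First I will recall that for a Noetherian ring $T$, there is a unique decomposition $1=g_1+\dots+g_k$ into pairwise orthogonal primitive idempotents, and $\Spec(T)=\bigsqcup_j\Spec(T/(1-g_j))$ is the decomposition into connected components; this yields a canonical bijection between $\pi_0^{\Zar}(T)$ and the set of primitive idempotents of $T$. I will then write $1=e_1+\dots+e_n$ for such a decomposition in $R$ and observe that $1=f(e_1)+\dots+f(e_n)$ is an orthogonal decomposition in $S$ with all summands nonzero, by the assumption $f_\ast^{-1}(\{0\})=\{0\}$.

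The core step is to show that $e_i\mapsto f(e_i)$ defines a bijection between the primitive idempotents of $R$ and those of $S$. For primitivity of $f(e_i)$: if $f(e_i)=g_1+g_2$ with $g_1,g_2$ nonzero orthogonal idempotents of $S$, then by surjectivity of $f_\ast$ write $g_j=f(h_j)$; the relation $g_1g_2=0$ together with $f_\ast^{-1}(\{0\})=\{0\}$ gives $h_1h_2=0$, and $f(e_i-h_1-h_2)=0$ gives $e_i=h_1+h_2$, contradicting primitivity of $e_i$ unless some $h_j=0$, which would force $g_j=0$. Injectivity of $e_i\mapsto f(e_i)$ is immediate: if $f(e_i)=f(e_j)$ with $i\neq j$, multiply by $f(e_j)$ to get $0=f(e_ie_j)=f(e_j)$. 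For surjectivity, take a primitive idempotent $g$ of $S$, write $g=f(h)$ with $h$ idempotent, and decompose $h=\sum_i he_i$ into pairwise orthogonal idempotents; then $g=\sum_i f(he_i)$ is an orthogonal decomposition of the primitive $g$, so exactly one $f(he_{i_0})$ equals $g$ and the others vanish. By $f_\ast^{-1}(\{0\})=\{0\}$, $he_i=0$ for $i\neq i_0$, so $h\leq e_{i_0}$; primitivity of $e_{i_0}$ and $g\neq 0$ then force $h=e_{i_0}$.

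Finally I will identify this bijection with $f^*$: the connected component of $\Spec(S)$ attached to the primitive idempotent $f(e_i)$ consists of primes $\mathfrak{q}\supset(1-f(e_i))$, and for such $\mathfrak{q}$ the pullback $f^{-1}(\mathfrak{q})$ contains $1-e_i$, hence lies in the connected component of $\Spec(R)$ attached to $e_i$. Thus $f^*$ on $\pi_0^{\Zar}$ corresponds to the inverse of $e_i\mapsto f(e_i)$ and is bijective. The only mildly delicate point is keeping track that products and sums of idempotents behave correctly under the hypotheses; otherwise the argument is bookkeeping once the problem is translated into primitive idempotents.
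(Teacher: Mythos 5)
Your argument follows essentially the same route as the paper's: both start from the complete orthogonal decomposition $1 = e_1 + \cdots + e_n$ in $R$ coming from the connected components of $\Spec(R)$, observe that the $f(e_i)$ are nonzero orthogonal idempotents summing to $1$ in $S$, and then reduce the lemma to showing that the $f(e_i)$ cut out the connected components of $\Spec(S)$. The paper phrases this as "each $Sf(e_i)$ has only trivial idempotents"; you phrase it as "$f(e_i)$ is primitive." These are equivalent.

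There is, however, a gap in your primitivity step. You write that $f(e_i-h_1-h_2)=0$ "gives $e_i=h_1+h_2$," invoking $f_\ast^{-1}(\{0\})=\{0\}$. But that hypothesis applies only to \emph{idempotents}, and $e_i-h_1-h_2 = e_i - (h_1+h_2)$ is not a priori an idempotent: the difference of two idempotents $a-b$ (with $ab=ba$) is idempotent only when $b\le a$, and you have not yet shown $h_1+h_2\le e_i$. To patch this, either first derive injectivity of $f_\ast$ on $\Idem(R)$ — note that if $f(e)=f(e')$ then $e(1-e')$ and $e'(1-e)$ are idempotents in $\ker f_\ast$, hence zero, so $e=e'$ — or first apply the hypothesis to the idempotent $(h_1+h_2)(1-e_i)$ to deduce $h_1+h_2\le e_i$, after which $e_i-(h_1+h_2)$ is an idempotent and the conclusion follows. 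A cleaner fix, which is precisely what the paper does, is to replace $h_j$ with $h_je_i$ from the outset: these lie in $\Idem(Re_i)$, are orthogonal, still satisfy $f(h_je_i)=g_jf(e_i)=g_j$, and $e_i=(h_1+h_2)e_i+(1-h_1-h_2)e_i$ then gives the contradiction to primitivity of $e_i$ with no further argument. By multiplying by $e_i$ immediately, the paper never leaves $\Idem(Re_i)$ and so never faces the issue you ran into. The remainder of your argument (injectivity of $e_i\mapsto f(e_i)$, surjectivity, and the identification with $f^*$) is correct.
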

\begin{proof}
We first recall a basic fact on commutative algebras: Let $A$ be a ring. Then, finite partitions of $\Spec(A)$ into non-empty open subspaces as a topological space correspond to finite sets of non-zero idempotents $e_1,\dots,e_n$ of $A$ such that $\sum_i{e_i}=1$ and $e_ie_j=0$ for all $i\neq j$. Precisely, $e_1,\dots,e_n$ corresponds to $\Spec(Ae_1)\sqcup\dots\sqcup\Spec(Ae_n)$ (for details, see \cite[Proposition 15, II, \S 4, $n^o$ 3]{Bou}).

Decompose $\Spec(R)$ into the connected components and choose the corresponding idempotents $e_1,\dots,e_n$ by the above fact. Since the non-zero idempotents $f(e_1),\dots,f(e_n)$ satisfies $\sum_{1\le i\le n}f(e_i)=1$ and $f(e_i)f(e_j)=0$ for $i\neq j$, we obtain a finite partition $\Spec(S)=\Spec(Sf(e_1))\sqcup\dots\sqcup\Spec(Sf(e_n))$. Hence, we have only to prove that $\Spec(Sf(e_i))$ is connected for all $1\le i\le n$. Let $e'\in\Idem(Sf(e_i))$. By regarding $e'\in\Idem(S)$, there exists $x\in \Idem(R)$ such that $e'=f(x)$. Since $xe_i\in\Idem(Re_i)$ and $\Spec(Re_i)$ is connected by definition, we have $xe_i=0$ or $e_i$. Since we have $e'=e'f(e_i)=f(x)f(e_i)=f(xe_i)$, we have $e'=0$ or $f(e_i)$. Hence, $Sf(e_i)$ has only trivial idempotents, which implies the assertion.
\end{proof}

\begin{notation}
In the rest of this subsection, unless otherwise is mentioned, let notation be as in Notation~\ref{notation:annulus} and Definition~\ref{dfn:Eisenstein}. For an Eisenstein prime ideal $\mathfrak{p}$ of $R^+$, fix a norm $|\cdot|_{\mathfrak{p}}$ of complete discrete valuation field $\kappa(\mathfrak{p})$ and denote
\[
A_{\kappa(\mathfrak{p})}:=(A/\mathfrak{p}A)[S^{-1}].
\]
We identify $R^+\langle\und{X}\rangle/\mathfrak{p}R^+\langle\und{X}\rangle$ with $\oo_{\kappa(\mathfrak{p})}\langle\und{X}\rangle$ and denote Gauss norm on $\kappa(\mathfrak{p})\langle\und{X}\rangle$ by $|\cdot|_{\mathfrak{p}}$. We also denote the quotient (resp. spectral) norm of $|\cdot|_{\mathfrak{p}}$ on $A/\mathfrak{p}A$ and $A_{\kappa(\mathfrak{p})}$ by $|\cdot|_{\mathfrak{p},\qt}$ (resp. $|\cdot|_{\mathfrak{p},\spe}$). For simplicity, we also denote $|f\mod{I/\mathfrak{p}I}|_{\mathfrak{p},\qt}$ (resp. $|f\mod{I/\mathfrak{p}I}|_{\mathfrak{p},\qt}$) by $|f|_{\mathfrak{p},\qt}$ (resp. $|f|_{\mathfrak{p},\qt}$) for $f\in \kappa(\mathfrak{p})\langle\und{X}\rangle$.

For $f=\sum_{\und{n}}a_{\und{n}}\und{X}^{\und{n}}\in\oo_{\kappa(\mathfrak{p})}\langle\und{X}\rangle$ with non-zero $a_{\und{n}}\in\oo_{\kappa(\mathfrak{p})}$, let $\wtil{a}_{\und{n}}\in R^+$ be a lift of $a_{\und{n}}$. Then, $\wtil{f}:=\sum_{\und{n}}\wtil{a}_{\und{n}}\und{X}^{\und{n}}\in R^+\langle\und{X}\rangle$ is called a minimal lift of $f$.
\end{notation}

We may apply Construction~\ref{const:lead} to $R=\oo_{\kappa(\mathfrak{p})}$ and $s_1=\pi_{\mathfrak{p}}$ with the same monomial order $\succeq$ for $\oo[[S]]$. Let $f_1,\dots,f_s$ be a Gr\"obner basis of $I$. Then, the images of $f_i$'s in $R^+/\mathfrak{m}_{R^+}[\und{X}]$ is a Gr\"obner basis by Lemma~\ref{lem:grobred}. Hence, the images of $f_i$'s in $\oo_{\kappa(\mathfrak{p})}\langle\und{X}\rangle$ is a Gr\"obner basis of $I/\mathfrak{p}I$ by Lemma~\ref{lem:grobred} again. In particular, if $\mathfrak{r}$ is the reminder of $f\in R^+\langle\und{X}\rangle$ with respect to $f_1,\dots,f_s$, then the image of $\mathfrak{r}$ in $\oo_{\kappa(\mathfrak{p})}\langle\und{X}\rangle$ is the reminder of $f\mod{\mathfrak{p}}$ with respect to $f_1\mod{\mathfrak{p}},\dots,f_s\mod{\mathfrak{p}}$.

By using our Gr\"obner basis argument, Lemma~\ref{lem:idealchange} can be converted into the following form:
\begin{lem}\label{lem:normtransfer} 
Let $c\in\N$ and let $\mathfrak{p}$, $\mathfrak{q}$ be Eisenstein prime ideals of $R^+$ such that $c<\inf{(\deg{\mathfrak{p}},\deg{\mathfrak{q}})}$. Assume that for $n\in\N$, we have
\[
|f^n|_{\mathfrak{p},\qt}\ge  |\pi_{\mathfrak{p}}|_{\mathfrak{p}}^c|f|^n_{\mathfrak{p},\qt},\ \forall f\in A_{\kappa(\mathfrak{p})}.
\]
Then, we have
\[
|f^n|_{\mathfrak{q},\qt}\ge  |\pi_{\mathfrak{q}}|_{\mathfrak{q}}^c|f|^n_{\mathfrak{q},\qt},\ \forall f\in A_{\kappa(\mathfrak{q})}.
\]
\end{lem}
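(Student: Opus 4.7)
The plan is to transfer the norm inequality from $\mathfrak{p}$ to $\mathfrak{q}$ by producing a single ``universal'' reminder in $R^+\langle\und{X}\rangle$ whose reductions modulo the two primes simultaneously compute both quotient norms, and then invoking the rigidity supplied by Lemma~\ref{lem:idealchange}. Since $|\cdot|_{\mathfrak{q},\qt}$ takes values in $|\pi_{\mathfrak{q}}|_{\mathfrak{q}}^{\Z}\cup\{0\}$ (it is achieved as the Gauss norm of a reminder) and both sides of the target inequality are homogeneous of degree $n$ in $f$ under scaling by $\kappa(\mathfrak{q})^{\times}$, I may assume $|f|_{\mathfrak{q},\qt}=1$ after discarding the trivial case $f=0$. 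Fix a Gr\"obner basis $f_1,\dots,f_s$ of $I$. Using that $f_1\mod\mathfrak{q},\dots,f_s\mod\mathfrak{q}$ form a Gr\"obner basis of $I\mod\mathfrak{q}$ in $\oo_{\kappa(\mathfrak{q})}\langle\und{X}\rangle$ (Lemma~\ref{lem:grobred}), Lemma~\ref{lem:normcompat} produces a representative $g\in\oo_{\kappa(\mathfrak{q})}\langle\und{X}\rangle$ of $f$ whose non-zero terms are not divisible by any $\und{X}^{\und{\deg}_R(f_i)}$ and whose Gauss norm equals $1$; in particular, at least one coefficient of $g$ is a unit in $\oo_{\kappa(\mathfrak{q})}$.

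Next I would choose a minimal lift $\tilde g\in R^+\langle\und{X}\rangle$ of $g$, which automatically retains the reminder property with respect to $f_1,\dots,f_s$ since its monomial support coincides with that of $g$. For the distinguished coefficient $\tilde a_{\und{n}_0}$ lifting the unit coefficient of $g$, we have $v_{\kappa(\mathfrak{q})}(\tilde a_{\und{n}_0}\mod\mathfrak{q})=0<\min(\deg\mathfrak{p},\deg\mathfrak{q})$, so Lemma~\ref{lem:idealchange} forces $v_{\kappa(\mathfrak{p})}(\tilde a_{\und{n}_0}\mod\mathfrak{p})=0$ as well; combined with the fact that the remaining coefficients of $\tilde g\mod\mathfrak{p}$ lie in $\oo_{\kappa(\mathfrak{p})}$, this gives $|\tilde g\mod\mathfrak{p}|_{\mathfrak{p}}=1$. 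Since $\tilde g\mod\mathfrak{p}$ is still in reminder form, Lemma~\ref{lem:normcompat} yields $|f_{\mathfrak{p}}|_{\mathfrak{p},\qt}=1$ where $f_{\mathfrak{p}}$ denotes the class of $\tilde g\mod\mathfrak{p}$ in $A_{\kappa(\mathfrak{p})}$, and the hypothesis applied to $f_{\mathfrak{p}}$ produces $|f_{\mathfrak{p}}^n|_{\mathfrak{p},\qt}\ge|\pi_{\mathfrak{p}}|_{\mathfrak{p}}^c$.

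Finally, let $\mathfrak{r}\in R^+\langle\und{X}\rangle$ denote the reminder of $\tilde g^n$ with respect to $f_1,\dots,f_s$ (Proposition~\ref{prop:div}). The reductions $\mathfrak{r}\mod\mathfrak{p}$ and $\mathfrak{r}\mod\mathfrak{q}$ are the reminders of $(\tilde g\mod\mathfrak{p})^n$ and $(\tilde g\mod\mathfrak{q})^n=g^n$ in their respective rings by uniqueness (Lemma~\ref{lem:remchar}), so by Lemma~\ref{lem:normcompat} their Gauss norms compute $|f_{\mathfrak{p}}^n|_{\mathfrak{p},\qt}$ and $|f^n|_{\mathfrak{q},\qt}$ respectively. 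The inequality at $\mathfrak{p}$ exhibits a coefficient $\tilde b\in R^+$ of $\mathfrak{r}$ with $v_{\kappa(\mathfrak{p})}(\tilde b\mod\mathfrak{p})\le c<\min(\deg\mathfrak{p},\deg\mathfrak{q})$; Lemma~\ref{lem:idealchange} then transfers this bound to $v_{\kappa(\mathfrak{q})}(\tilde b\mod\mathfrak{q})=v_{\kappa(\mathfrak{p})}(\tilde b\mod\mathfrak{p})\le c$, whence $|f^n|_{\mathfrak{q},\qt}=|\mathfrak{r}\mod\mathfrak{q}|_{\mathfrak{q}}\ge|\pi_{\mathfrak{q}}|_{\mathfrak{q}}^c$, as desired. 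The substantive content is that a single reminder over $R^+$ simultaneously encodes both quotient norms, while Lemma~\ref{lem:idealchange} is precisely the statement that $\mathfrak{p}$ and $\mathfrak{q}$ are indistinguishable in the controlled valuation range $[0,c]$ — exactly the rigidity the transfer requires; the only real care needed is matching the $A/\mathfrak{p}A$ and $A_{\kappa(\mathfrak{p})}$ quotient norms on the integral elements we work with, which is automatic from the uniqueness of reminders.
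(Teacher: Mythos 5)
Your argument is correct and follows essentially the same route as the paper: normalize so that $|f|_{\mathfrak{q},\qt}=1$, take the reminder $\mathfrak{r}$ of $f$ in $\oo_{\kappa(\mathfrak{q})}\langle\und{X}\rangle$, pass to a minimal lift in $R^+\langle\und{X}\rangle$ (which stays in reminder form), compute the reminder of the $n$-th power over $R^+$, and use Lemma~\ref{lem:idealchange} twice — first to see that the $\mathfrak{p}$-reduction of the lift still has norm $1$, and then to transfer the valuation bound $\le c$ on the witnessing coefficient of $\mathfrak{r}_n$ from $\mathfrak{p}$ to $\mathfrak{q}$ — together with Lemma~\ref{lem:normcompat} to identify reminders' Gauss norms with quotient norms. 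This is exactly the proof in the paper, with only notational differences ($g,\tilde g$ for $\mathfrak{r},\wtil{\mathfrak{r}}$).
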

\begin{proof}
We fix a Gr\"obner basis $f_1,\dots,f_s$ of $I$. We may regard $f_i\mod{\mathfrak{p}}$'s (resp. $f_i\mod{\mathfrak{q}}$'s) as a Gr\"obner basis of $I/\mathfrak{p}I$ (resp. $I/\mathfrak{q}I$). To prove the assertion, we may assume that $f\in A/\mathfrak{q}A$. Let $\mathfrak{r}\in \oo_{\kappa(\mathfrak{q})}\langle\und{X}\rangle$ be the reminder of $f$. We have $|f|_{\mathfrak{q},\qt}=|\mathfrak{r}|_{\mathfrak{q}}=|\pi_{\mathfrak{q}}|_{\mathfrak{q}}^m$ for some $m\in\N$. To prove the assertion, we may assume $|f|_{\mathfrak{q},\qt}=|\mathfrak{r}|_{\mathfrak{q}}=1$ by replacing $f$, $\mathfrak{r}$ by $f/\pi_{\mathfrak{q}}^m$, $\mathfrak{r}/\pi_{\mathfrak{q}}^m$.

Let $\wtil{\mathfrak{r}}\in R^+\langle\und{X}\rangle$ be a minimal lift of $\mathfrak{r}$ and let $\wtil{f}\in A$ denote the image of $\wtil{\mathfrak{r}}$. Denote by $\mathfrak{r}_n\in R^+\langle\und{X}\rangle$ the reminder of $\wtil{f}^n$. Then, we have
\[
|\mathfrak{r}_n\mod{\mathfrak{p}}|_{\mathfrak{p}}=|\wtil{f}^n\mod{\mathfrak{p}}|_{\mathfrak{p},\qt}\ge |\pi_{\mathfrak{p}}|^c_{\mathfrak{p}}|\wtil{f}\mod{\mathfrak{p}}|^n_{\mathfrak{p},\qt}
\]
by Lemma~\ref{lem:normcompat} and assumption. By $|\mathfrak{r}|_{\mathfrak{q}}=1$, the coefficient of some $\und{X}^{\und{n}}$ in $\mathfrak{r}$ belongs to $\oo_{\kappa(\mathfrak{p})}^{\times}$. Therefore, the coefficient of $\und{X}^{\und{n}}$ in $\wtil{\mathfrak{r}}$, hence, in $\wtil{\mathfrak{r}}\mod{\mathfrak{p}}$ are units. Therefore, we have
\[
|\wtil{f}\mod{\mathfrak{p}}|_{\mathfrak{p},\qt}=|\wtil{\mathfrak{r}}\mod{\mathfrak{p}}|_{\mathfrak{p}}=1,
\]
hence, $|\mathfrak{r}_n\mod{\mathfrak{p}}|_{\mathfrak{p}}\ge |\pi_{\mathfrak{p}}|^c_{\mathfrak{p}}$. By applying Lemma~\ref{lem:idealchange} to the coefficient $\lambda$ of $\mathfrak{r}_n$ such that $|\lambda\mod{\mathfrak{p}}|_{\mathfrak{p}}\ge |\pi_{\mathfrak{p}}|^c$, we have $|\mathfrak{r}_n\mod{\mathfrak{q}}|_{\mathfrak{q}}\ge |\pi_{\mathfrak{q}}|_{\mathfrak{q}}^c$. Since $\mathfrak{r}_n\mod{\mathfrak{q}}$ is the reminder of $f^n$, we have $|f^n|_{\mathfrak{q},\qt}=|\mathfrak{r}_n\mod{\mathfrak{q}}|_{\mathfrak{q}}\ge |\pi_{\mathfrak{q}}|_{\mathfrak{q}}^c$ by Lemma~\ref{lem:normcompat}, which implies the assertion.
\end{proof}

The following lemma can be considered as an analogue of Hensel's lemma.
\begin{lem}[{cf. \cite[Theorem~1.2.11]{Xia}}]\label{lem:connected}
Assume that there exists $c\in\mathbb{R}_{\ge 0}$ such that
\[
|\cdot|_{\mathfrak{p},\spe}\ge |\pi_{\mathfrak{p}}|^c|\cdot|_{\mathfrak{p},\qt}\text{ on }A_{\kappa(\mathfrak{p})}.
\]
Then, for all $\mathfrak{r}\in\Q_{>0}\cap [1/\deg{\mathfrak{p}},1/2c)$, there exists a canonical bijection
\[
\pi_0^{\Zar}(A_{\kappa(\mathfrak{p})})\to\pi_0^{\Zar}(A^{\dagger,r}).
\]
\end{lem}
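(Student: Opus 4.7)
The plan is to apply Lemma~\ref{lem:conn} to the specialization map $\rho:A^{\dagger,r}\to A_{\kappa(\mathfrak{p})}$ coming from reduction modulo $\mathfrak{p}$; this is well defined because $1/\deg{\mathfrak{p}}\le r$ makes Lemma~\ref{lem:redp} applicable. Both rings are Noetherian, so it suffices to check that $\rho$ induces a surjection on idempotents whose only preimage of $0$ is $0$.

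The kernel statement is immediate: if $e\in A^{\dagger,r}$ is an idempotent with $\rho(e)=0$, then $e=e^n\in\mathfrak{p}^nA^{\dagger,r}$ for every $n\ge 1$; since $\mathfrak{p}R^{\dagger,r}$ is a proper principal ideal (generated by the Eisenstein polynomial that cuts out $\mathfrak{p}$ in $R^+$, with properness by Lemma~\ref{lem:redp}), Corollary~\ref{cor:intersection} forces $e=0$.

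For surjectivity, given an idempotent $\bar{e}\in A_{\kappa(\mathfrak{p})}$, note that $\bar{e}^2=\bar{e}$ gives $|\bar{e}|_{\mathfrak{p},\spe}\in\{0,1\}$, so the hypothesis yields $|\bar{e}|_{\mathfrak{p},\qt}\le|\pi_{\mathfrak{p}}|^{-c}$. Fix a Gr\"obner basis of $I$ once and for all, and use the reminder calculus of Lemma~\ref{lem:remannulus} to produce a lift $e_0\in A^{\dagger,r}$ whose norm $|e_0|_{r,\qt}$ is controlled in terms of $|\bar{e}|_{\mathfrak{p},\qt}$. Then I would run the Newton-type iteration
\[
e_{n+1}:=3e_n^2-2e_n^3,
\]
for which the defect $\delta_n:=e_n-e_n^2$ satisfies the identity $\delta_{n+1}=\delta_n^2(3+4\delta_n)$. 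Provided $|\delta_0|_{r,\qt}$ is small enough, an induction shows $|\delta_n|_{r,\qt}\to 0$ doubly exponentially while $|e_n|_{r,\qt}$ stays bounded; by completeness of $(A^{\dagger,r},|\cdot|_{r,\qt})$, the sequence $\{e_n\}$ converges to a genuine idempotent lifting $\bar{e}$.

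The main obstacle is the quantitative norm estimate that makes the Newton iteration contractive, and this is precisely where $r<1/(2c)$ enters. I would first transfer the spectral-norm hypothesis from $\mathfrak{p}$ to the reference prime $(p)$ via Lemma~\ref{lem:normtransfer}, thereby recasting all bookkeeping in the single norm $|\cdot|_r$ rather than the varying $|\pi_{\mathfrak{p}}|$. The bound $|e_0|_{r,\qt}\le|\pi_{\mathfrak{p}}|^{-c}$ contributes a factor of order $|\pi_{\mathfrak{p}}|^{-2c}$ to the norm of each Newton step, while the initial defect lies in $\mathfrak{p}A^{\dagger,r}$ and thus supplies a compensating factor of $|\pi_{\mathfrak{p}}|$; the choice $r<1/(2c)$ is exactly the threshold that makes the product $|\pi_{\mathfrak{p}}|^{1-2c}$ strictly smaller than $1$, initiating the quadratic convergence.
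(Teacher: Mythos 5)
Your proposal follows essentially the same route as the paper's proof: reduce to idempotents via Lemma~\ref{lem:conn}, get injectivity on the fiber over $0$ from Corollary~\ref{cor:intersection}, and run the standard idempotent-lifting Newton iteration $e\mapsto 3e^2-2e^3$ (the paper writes it as $f_{n+1}=f_n+h_n-2h_nf_n$, which expands to the same thing; your defect identity $\delta_{n+1}=\delta_n^2(3+4\delta_n)$ matches the paper's $h_{n+1}=h_n^2(4h_n-3)$ up to sign). The only inaccuracies are in your last paragraph. You do not need Lemma~\ref{lem:normtransfer} here — that lemma is invoked in Proposition~\ref{prop:connected}(ii), not in this argument — and the bookkeeping should stay entirely in the fixed norm $|\cdot|_{r,\qt}$ on $A^{\dagger,r}$. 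Concretely, the initial defect $h_0$ lies in $\mathfrak{p}\cdot S^{-2c}A$, and since $\mathfrak{p}\subset(p,S^{\deg\mathfrak{p}})R^{+}$ while $r\geq 1/\deg\mathfrak{p}$ forces $|S|_r^{\deg\mathfrak{p}}\leq|p|$, one gets $|h_0|_{r,\qt}\leq |p|\cdot|S|_r^{-2c}=|p|^{1-2cr}$; the contraction condition is $1-2cr>0$, which is where $r<1/(2c)$ enters. Your formula "$|\pi_{\mathfrak{p}}|^{1-2c}<1$" drops the $r$ (and mixes the $\kappa(\mathfrak{p})$-norm with the $|\cdot|_r$-norm), so it isn't literally the right quantity, but the threshold you name is correct and the rest of the argument goes through as you describe.
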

\begin{proof}
Replacing $c$ by $\lfloor c\rfloor$, we may assume $c\in\N$. Denote by $\alpha$ the canonical map $\Idem(A^{\dagger,r})\to \Idem(A_{\kappa(\mathfrak{p})})$. By Lemma~\ref{lem:conn}, we have only to prove that we have $\alpha^{-1}(\{0\})=\{0\}$ and $\alpha$ is surjective. Let $e\in \Idem(A^{\dagger,r})$ such that $\alpha(e)=0$. Then, we have $e\in \mathfrak{p}\cdot A^{\dagger,r}$. Since $e=e^n$, we have $e\in \cap_{n\in\N}\mathfrak{p}^n\cdot A^{\dagger,r}=0$ by Corollary~\ref{cor:intersection}, which implies the first assertion. We will prove the surjectivity of $\alpha$. Let $e\in \Idem(A_{\kappa(\mathfrak{p})})$. Since $|e|_{\mathfrak{p},\spe}=1\ge |\pi_{\mathfrak{p}}|_{\mathfrak{p}}^c|e|_{\mathfrak{p},\qt}$ by assumption, we have $e\in \pi_{\mathfrak{p}}^{-c}A/\mathfrak{p}A$. Hence, we can choose $e'\in A$ such that $e\equiv S^{-c}e'\mod{\mathfrak{p}}$. Put $h_0:=S^{-2c}({e'}^2-S^ce')\in A[S^{-1}]$. Since
\[
{e'}^2-S^ce'\equiv (S^ce)^2-S^c\cdot S^ce\equiv S^{2c}(e^2-e)\equiv 0\mod{\mathfrak{p}},
\]
we have $h_0\in \mathfrak{p}S^{-2c}\cdot A$. Since $\mathfrak{p}\subset (p,S^e)R^+$, we obtain
\[
|h_0|_{r,\qt}\le\sup{(|S|^e,|p|)}|S|^{-2c}=|p^{1-2cr}|<1.
\]
We define sequences $\{f_n\}$ and $\{h_n\}$ in $A[S^{-1}]$ inductively as follows: Put $f_0:=S^{-c}e'$ and let $h_0$ be as above. For $n\ge 0$, we put
\[
f_{n+1}:=f_n+h_n-2h_nf_n,\ h_{n+1}:=f_{n+1}^2-f_{n+1}\in A[S^{-1}].
\]
Note that for $n\in\N$, we have
\[
f_{n+1}=-f_n^2(2f_n-3),\ f_{n+1}-1=-(f_n-1)^2(2f_n+1),
\]
hence, $h_{n+1}=f_n^2(f_n-1)^2(4f_n^2-4h_n-3)=h_n^2(4h_n-3)$. Then, we have
\[
|h_{n+1}|_{r,\qt}\le |h_n|^2_{r,\qt}\sup{(|h_n|_{r,\qt},1)}.
\]
Therefore, by induction on $n$, we have $|h_n|_r<1$, hence, $|h_{n+1}|_r\le |h_n|^3_r$. In particular, we have $|h_n|_r\to 0\ (n\to\infty)$. We also have
\[
\sup{(|f_{n+1}|_{r,\qt},1)}\le\sup{(|f_n|_{r,\qt},|h_n|_{r,\qt},|h_n|_{r,\qt}|f_n|_{r,\qt},1)}=\sup{(|f_n|_{r,\qt},1)},
\]
hence, $\sup{(|f_n|_{r,\qt},1)}\le\sup{(|f_0|_{r,\qt},1)}$. Therefore, we have
\[
|f_{n+1}-f_n|_{r,\qt}=|h_n(1-2f_n)|_{r,\qt}\le |h_n|_{r,\qt}\sup{(|f_n|_{r,\qt},1)}\le |h_n|_{r,\qt}\sup{(|f_0|_{r,\qt},1)},
\]
in particular, $\{f_n\}_n$ is a Cauchy sequence in $A^{\dagger,r}$ with respect to $|\cdot|_{r,\qt}$. Denote by $f:=\lim_{n\to\infty}f_n$. Since $f^2-f=\lim_{n\to\infty}{h_n}=0$, $f$ is an idempotent of $A^{\dagger,r}$. Since we have $h_n\in\mathfrak{p}\cdot A^{\dagger,r}$ by induction on $n$, $f\equiv f_0\equiv e\mod{\mathfrak{p}}$, i.e., $\alpha(f)=e$.
\end{proof}

\begin{prop}[Continuity of connected components]\label{prop:connected}
Assume that $A_{\kappa(p)}$ is reduced.
\begin{enumerate}
\item There exists $c\in\mathbb{R}_{\ge 0}$ such that
\[
|\cdot|_{(p),\spe}\ge |S|_{(p)}^c|\cdot|_{(p),\qt}\text{ on }A_{\kappa(p)}.
\]
We fix such a $c$ in the following.
\item Let $n\in\N_{\ge 2}$ and $\mathfrak{p}$ an Eisenstein prime ideal of $R^+$ such that $\deg{\mathfrak{p}}>nc$. Then, we have
\[
|\cdot|_{\mathfrak{p},\spe}\ge |\pi_{\mathfrak{p}}|_{\mathfrak{p}}^{\frac{nc}{n-1}}|\cdot|_{\mathfrak{p},\qt}\text{ on }A_{\kappa(\mathfrak{p})}.
\]
\item Let $\mathfrak{p}$ be an Eisenstein prime ideal of $R^+$ such that $\deg{\mathfrak{p}}>3c$. Then, for $r\in \Q_{>0}\cap [1/\deg{\mathfrak{p}},1/2c)$, there exists a canonical bijection
\[
\pi^{\Zar}_0(A_{\kappa(\mathfrak{p})})\to\pi^{\Zar}_0(A^{\dagger,r}).
\]
In particular, we have
\[
\#\pi_0(A_{\kappa(\mathfrak{p})})=\#\pi_0(A_{\kappa(p)})=\#\pi^{\Zar}_0(A^{\dagger,r}).
\]
\end{enumerate}
\end{prop}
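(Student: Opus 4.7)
My plan is to prove the three parts in order, with (i) as base, (ii) derived from (i) via Lemma~\ref{lem:normtransfer} and an iteration, and (iii) an application of (ii) via Lemma~\ref{lem:connected}. For part~(i), the hypothesis that $A_{\kappa(p)}$ is reduced places it in the setting of reduced affinoid algebras over the complete discretely valued field $\kappa(p)=k((S))$. By the classical equivalence-of-norms theorem for reduced affinoid algebras (see \cite[6.2.4/Theorem~1]{BGR}), there is a constant $C>0$ with $|\cdot|_{(p),\spe}\ge C\,|\cdot|_{(p),\qt}$; since $|S|_{(p)}<1$, choosing $c\in\R_{\ge 0}$ so that $|S|_{(p)}^c\le C$ gives the desired inequality.

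For part~(ii), I will first convert (i) into a statement purely about quotient norms. Raising (i) to the $n$-th power gives $|f|_{(p),\spe}^n\ge|S|_{(p)}^{nc}|f|_{(p),\qt}^n$, and combining with the universal bound $|f|_{(p),\spe}^n=|f^n|_{(p),\spe}\le|f^n|_{(p),\qt}$ yields $|f^n|_{(p),\qt}\ge|S|_{(p)}^{nc}|f|_{(p),\qt}^n$ for every $f\in A_{\kappa(p)}$. Applying Lemma~\ref{lem:normtransfer} with constant $nc$ (valid since $nc<\deg\mathfrak{p}=\inf(\deg(p),\deg\mathfrak{p})$) transfers this to
\[
|f^n|_{\mathfrak{p},\qt}\ge|\pi_{\mathfrak{p}}|_{\mathfrak{p}}^{nc}|f|_{\mathfrak{p},\qt}^n\qquad\forall f\in A_{\kappa(\mathfrak{p})}.
\]
I then iterate this inequality by applying it with $f$ replaced successively by $f,f^n,f^{n^2},\dots$; inducting on $k$ with the geometric sum $1+n+\dots+n^{k-1}=(n^k-1)/(n-1)$ yields $|f^{n^k}|_{\mathfrak{p},\qt}\ge|\pi_{\mathfrak{p}}|_{\mathfrak{p}}^{nc(n^k-1)/(n-1)}|f|_{\mathfrak{p},\qt}^{n^k}$. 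Taking $n^k$-th roots and letting $k\to\infty$, the left-hand side converges to $|f|_{\mathfrak{p},\spe}$ by Gelfand's spectral-radius formula in the Banach algebra $A_{\kappa(\mathfrak{p})}$, while the right-hand side converges to $|\pi_{\mathfrak{p}}|_{\mathfrak{p}}^{nc/(n-1)}|f|_{\mathfrak{p},\qt}$, giving the stated inequality.

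For part~(iii), given $r\in[1/\deg\mathfrak{p},1/(2c))$, I choose an integer $n\ge 2$ with $\deg\mathfrak{p}>nc$ and $r<(n-1)/(2nc)=1/(2c')$, where $c':=nc/(n-1)$; the hypothesis $\deg\mathfrak{p}>3c$ together with $r\ge 1/\deg\mathfrak{p}$ guarantees such an $n$ via the easy estimate $1/(1-2cr)<\deg\mathfrak{p}/c$. Part~(ii) then supplies $|\cdot|_{\mathfrak{p},\spe}\ge|\pi_{\mathfrak{p}}|_{\mathfrak{p}}^{c'}|\cdot|_{\mathfrak{p},\qt}$, and Lemma~\ref{lem:connected} applied with constant $c'$ produces the canonical bijection $\pi_0^{\Zar}(A_{\kappa(\mathfrak{p})})\to\pi_0^{\Zar}(A^{\dagger,r})$. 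The \emph{in particular} equalities follow by applying the same reasoning to $\mathfrak{p}=(p)$ (which satisfies $\deg(p)=\infty>3c$) for any $r\in(0,1/(2c))$, identifying $\#\pi_0^{\Zar}(A_{\kappa(p)})$ with $\#\pi_0^{\Zar}(A^{\dagger,r})$ and hence with $\#\pi_0^{\Zar}(A_{\kappa(\mathfrak{p})})$ in the common range of~$r$.

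The main obstacle is the spectral-norm bound in part~(ii). Lemma~\ref{lem:normtransfer} only compares quotient norms for a single fixed power $n$, and since $A_{\kappa(\mathfrak{p})}$ need not be reduced for general $\mathfrak{p}$, the spectral norm can a priori be strictly smaller than the quotient norm; a naive limit $n\to\infty$ in the transferred inequality is blocked by the constraint $\deg\mathfrak{p}>nc$. The resolution is to fix $n$ and iterate on the powers $f,f^n,f^{n^2},\dots$, so that Gelfand's formula in the Banach algebra $A_{\kappa(\mathfrak{p})}$ upgrades the quotient-norm bound into a spectral-norm bound at the cost of the multiplicative factor $n/(n-1)$ in the exponent, which tends to $1$ as $n$ grows within the range allowed by $\deg\mathfrak{p}$.
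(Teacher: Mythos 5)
Your parts (i) and (ii) reproduce the paper's argument essentially verbatim: equivalence of spectral and quotient norms on the reduced affinoid $A_{\kappa(p)}$, the multiplicativity trick to convert a spectral--quotient bound into a pure quotient-norm bound for $f^n$, transfer to $\mathfrak{p}$ via Lemma~\ref{lem:normtransfer}, and iteration along $f,f^n,f^{n^2},\dots$ combined with the limit characterization of the spectral norm. That much is sound.

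The gap is in part (iii). Your ``easy estimate'' $1/(1-2cr)<\deg\mathfrak{p}/c$ does not follow from $\deg\mathfrak{p}>3c$ and $r\ge 1/\deg\mathfrak{p}$; in fact the hypothesis $r\ge 1/\deg\mathfrak{p}$ makes $1/(1-2cr)$ \emph{larger}, not smaller, and the estimate holds only when $r\le 1/\deg\mathfrak{p}$, i.e.\ at the single point $r=1/\deg\mathfrak{p}$. Concretely, with $c=1$, $\deg\mathfrak{p}=4$, $r=2/5$ one has $1/(1-2cr)=5>4=\deg\mathfrak{p}/c$, so no integer $n$ satisfies $1/(1-2cr)<n<\deg\mathfrak{p}/c$. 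More structurally, as $r\uparrow 1/(2c)$ the requirement $n>1/(1-2cr)$ forces $n\to\infty$, which is incompatible with the bound $n<\deg\mathfrak{p}/c$ needed to invoke part~(ii); your method therefore covers at most $r<1/(3c)$ (the range where taking $n=3$ already works). The paper handles exactly this obstruction differently: it fixes $n=3$ to get the bijection for $r\in[1/\deg\mathfrak{p},1/(3c))$, and for a general $r\in[1/\deg\mathfrak{p},1/(2c))$ it takes $\pi_0^{\Zar}$ of the commutative square relating $A^{\dagger,r}$ to $A^{\dagger,1/\deg\mathfrak{p}}$ and to both fibers $A_{\kappa(p)}$, $A_{\kappa(\mathfrak{p})}$; the $\mathfrak{p}=(p)$ case (valid for all $r<1/(2c)$ by part~(i)) together with the $r=1/\deg\mathfrak{p}$ case pins down the remaining arrow by a diagram chase. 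This naturality/bootstrapping step is what your proposal is missing.
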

\begin{proof}
\begin{enumerate}
\item By assumption, $|\cdot|_{(p),\spe}$ is equivalent to $|\cdot|_{(p),\qt}$ on $A_{\kappa(p)}$. Hence, there exists $\lambda\in\mathbb{R}_{>0}$ such that $|\cdot|_{\spe}\ge\lambda |\cdot|_{\qt}$. By $|1|_{\spe}=|1|_{\qt}=1$, we have $\lambda\le 1$. Hence, $c=\log_{|S|}{\lambda}\ge 0$ satisfies the condition.
\item By (i), we have
\[
|f^n|_{(p),\qt}\ge |f^n|_{(p),\spe}=|f|^n_{(p),\spe}\ge |S|_{(p)}^{nc}|f|^n_{(p),\qt},\ \forall f\in A_{\kappa(p)}.
\]
By Lemma~\ref{lem:normtransfer}, we obtain
\[
|f^n|_{\mathfrak{p},\qt}\ge |\pi_{\mathfrak{p}}|_{\mathfrak{p}}^{nc}|f|^n_{\mathfrak{p},\qt},\ \forall f\in A_{\kappa(\mathfrak{p})}.
\]
By using this inequality iteratively, we obtain
\[
|f^{n^i}|_{\mathfrak{p},\qt}\ge |\pi_{\mathfrak{p}}|_{\mathfrak{p}}^{nc+n^2c+\dots+n^ic}|f|^{n^i}_{\mathfrak{p},\qt}=|\pi_{\mathfrak{p}}|_{\mathfrak{p}}^{\frac{nc(n^i-1)}{n-1}}|f|^{n^i}_{\mathfrak{p},\qt},\forall f\in A_{\kappa(\mathfrak{p})}.
\]
Hence, for all $f\in A_{\kappa(\mathfrak{p})}$, we have $|f|_{\mathfrak{p},\spe}=\inf_{i\in\N}{|f^{n^i}|^{1/n^i}_{\mathfrak{p},\qt}}\ge |\pi_{\mathfrak{p}}|_{\mathfrak{p}}^{nc/(n-1)}|f|_{\mathfrak{p},\qt}$.
\item When $\mathfrak{p}=(p)$, the assertion follows from (i) and Lemma~\ref{lem:connected}. We consider the case of $\mathfrak{p}\neq (p)$. By applying Lemma~\ref{lem:connected} to the inequality in (ii) with $n=3$, we obtain the assertion for $r\in \Q\cap [1/\deg{\mathfrak{p}},1/3c)$. For general $r\in \Q\cap [1/\deg{\mathfrak{p}},1/2c)$, the assertion is reduced to the previous case by taking $\pi_0^{\Zar}$ of the following commutative diagram
\[\xymatrix{
A_{\kappa(p)}\ar@{=}[d]^{\id}&A^{\dagger,r}\ar[l]_{\can.}\ar[r]^{\can.}\ar[d]^{\can.}&A_{\kappa(\mathfrak{p})}\ar@{=}[d]^{\id}\\
A_{\kappa(p)}&A^{\dagger,\frac{1}{\deg{\mathfrak{p}}}}\ar[l]_{\can.}\ar[r]^{\can.}&A_{\kappa(\mathfrak{p})}.
}\]
\end{enumerate}
\end{proof}

\begin{rem}
In \cite[Theorem~1.2.11]{Xia}, Xiao proves $\#\pi_0(A_{\kappa(p)})=\#\pi^{\Zar}_0(A^{\dagger,r})$ under a slightly mild hypothesis on $A$ (cf. \cite[Hypothesis~1.1.10]{Xia}) by a similar idea. To generalize Xiao's result for Eisenstein prime ideals, it seems to be needed to assume that $A$ is flat over $R$.
\end{rem}

To obtain a geometric version of this proposition, we need the following lifting lemma.

\begin{lem}\label{lem:extlifting}
Let $\mathfrak{p}$ be an Eisenstein prime ideal of $R^+$ and $L/\kappa(\mathfrak{p})$ a finite extension. Let $\oo'$ be a Cohen ring of $k_L$ and put $R':=\oo'[[T]]$. Then, there exists a finite flat morphism $\alpha:R^+\to R'$ and an isomorphism $R'/\mathfrak{p}R'\cong \oo_L$ as $R^+/\mathfrak{p}$-algebras. Moreover, for any Eisenstein prime $\mathfrak{q}$ of $R^+$, $\mathfrak{q}R'$ is again an Eisenstein prime ideal with degree $e_{L/\kappa(\mathfrak{p})}\deg(\mathfrak{q})$.
\end{lem}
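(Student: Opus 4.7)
The plan is to construct $\alpha$ explicitly by lifting the canonical embedding $\oo_{\kappa(\mathfrak{p})}\hookrightarrow\oo_L$ to a local morphism between the two-dimensional regular local rings $R^+$ and $R'$, and then to read off each assertion from Weierstrass preparation. First, since $L/\kappa(\mathfrak{p})$ is finite and the residue field of $\kappa(\mathfrak{p})$ coincides with $k$ (by the Eisenstein condition on $\mathfrak{p}$), the extension $k_L/k$ is finite; formal smoothness of Cohen rings lifts $k\hookrightarrow k_L$ to an embedding $\oo\hookrightarrow\oo'$. Cohen's structure theorem for $\oo_L$ and the uniqueness of Cohen rings of $k_L$ let me identify $\oo'$ with a Cohen subring of $\oo_L$ so that the composite $\oo\hookrightarrow\oo'\hookrightarrow\oo_L$ agrees with the canonical chain $\oo\subset\oo_{\kappa(\mathfrak{p})}\subset\oo_L$. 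Then $\oo_L=\oo'[\pi_L]$ is totally ramified over $\oo'$, and the surjection $\oo'[[T]]\twoheadrightarrow\oo_L$ sending $T\mapsto\pi_L$ has kernel generated by an Eisenstein polynomial $Q(T)\in\oo'[T]$ of degree $e_{L/\kappa(\mathfrak{p})}\deg(\mathfrak{p})$.

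Next, let $P(S)\in\oo[S]$ be an Eisenstein polynomial generating $\mathfrak{p}$, and write $\pi_{\mathfrak{p}}=u\,\pi_L^{e_{L/\kappa(\mathfrak{p})}}$ in $\oo_L$ with $u\in\oo_L^{\times}$. Lift $u$ to a unit $\tilde u\in\oo'[[T]]^{\times}$ (possible since $\oo'[[T]]\twoheadrightarrow\oo_L$ is a local surjection, so units lift to units), and define $\alpha:R^+\to R'$ as the continuous $\oo$-algebra morphism, via the chosen embedding $\oo\hookrightarrow\oo'$, sending $S$ to $\tilde u\, T^{e_{L/\kappa(\mathfrak{p})}}$. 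The special fibre $R'/\mathfrak{m}_{R^+}R'=k_L[T]/(T^{e_{L/\kappa(\mathfrak{p})}})$ is artinian of $k$-dimension $e_{L/\kappa(\mathfrak{p})}[k_L:k]=[L:\kappa(\mathfrak{p})]$; since $\alpha$ is a local morphism between $2$-dimensional regular local rings with a $0$-dimensional fibre, the local criterion of flatness combined with Nakayama's lemma (exactly as in Construction~\ref{const:ocext}) shows that $\alpha$ is finite flat of rank $[L:\kappa(\mathfrak{p})]$.

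For the first identification, observe that by construction the composition $R^+\xrightarrow{\alpha}R'\twoheadrightarrow R'/(Q)\cong\oo_L$ sends $S\mapsto u\pi_L^{e_{L/\kappa(\mathfrak{p})}}=\pi_{\mathfrak{p}}$ and restricts on $\oo$ to $\oo\hookrightarrow\oo_L$, so it coincides with the canonical composite $R^+\twoheadrightarrow R^+/\mathfrak{p}=\oo_{\kappa(\mathfrak{p})}\hookrightarrow\oo_L$. In particular $\alpha(\mathfrak{p})\subset(Q)$, yielding a surjection of $R^+/\mathfrak{p}$-algebras $R'/\mathfrak{p}R'\twoheadrightarrow\oo_L$ between two free $\oo_{\kappa(\mathfrak{p})}$-modules of the same rank $[L:\kappa(\mathfrak{p})]$; it must therefore be an isomorphism.

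Finally, for the moreover, let $\mathfrak{q}=(P_\mathfrak{q})$ be an Eisenstein prime of degree $e_\mathfrak{q}$. Substituting $S=\tilde u\, T^{e_{L/\kappa(\mathfrak{p})}}$ and reducing modulo $p$ yields $P_\mathfrak{q}(\alpha(S))\equiv\tilde u^{e_\mathfrak{q}}\, T^{e_\mathfrak{q} e_{L/\kappa(\mathfrak{p})}}\pmod{p\oo'[[T]]}$, a unit times $T^{e_\mathfrak{q} e_{L/\kappa(\mathfrak{p})}}$. Weierstrass preparation then produces a factorisation $P_\mathfrak{q}(\alpha(S))=Q_\mathfrak{q}(T)\,V_\mathfrak{q}(T)$ with $V_\mathfrak{q}\in\oo'[[T]]^{\times}$ and $Q_\mathfrak{q}\in\oo'[T]$ distinguished of degree $e_\mathfrak{q} e_{L/\kappa(\mathfrak{p})}$. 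Comparing constant terms gives $Q_\mathfrak{q}(0)=P_\mathfrak{q}(0)V_\mathfrak{q}(0)^{-1}$, which lies in $p\oo'\setminus p^2\oo'$ since $P_\mathfrak{q}(0)\in p\oo\setminus p^2\oo$ and $V_\mathfrak{q}(0)\in(\oo')^{\times}$; hence $Q_\mathfrak{q}$ is Eisenstein. Weierstrass division identifies $\oo'[[T]]/(Q_\mathfrak{q})$ with the DVR $\oo'[T]/(Q_\mathfrak{q})$, so $\mathfrak{q}R'=(P_\mathfrak{q}(\alpha(S)))=(Q_\mathfrak{q})$ is an Eisenstein prime of degree $e_\mathfrak{q} e_{L/\kappa(\mathfrak{p})}$, as required. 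The main delicate step is the coordinated choice of Cohen subrings in the first paragraph, guaranteeing that the surjection $R'/\mathfrak{p}R'\to\oo_L$ is genuinely an $R^+/\mathfrak{p}$-algebra map; everything else reduces to a direct application of Weierstrass preparation and rank counting.
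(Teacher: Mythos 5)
Your first paragraph contains the decisive gap. After lifting $k\hookrightarrow k_L$ to $\oo\hookrightarrow\oo'$, you claim that one can ``identify $\oo'$ with a Cohen subring of $\oo_L$ so that the composite $\oo\hookrightarrow\oo'\hookrightarrow\oo_L$ agrees with the canonical chain $\oo\subset\oo_{\kappa(\mathfrak{p})}\subset\oo_L$''. Neither Cohen's structure theorem (which only yields \emph{some} coefficient ring inside $\oo_L$) nor the abstract uniqueness of Cohen rings of $k_L$ gives an embedding $\oo'\hookrightarrow\oo_L$ that restricts to the given $\oo\hookrightarrow\oo_L$, and in fact this can fail whenever $k_L/k$ is inseparable and $\deg(\mathfrak{p})\ge 2$. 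Indeed, if such an $\oo'\subset\oo_L$ existed then for $\bar b\in k_L$ with $\bar b^{\,p}=\bar t$ (where $t\in\oo$ lifts a $p$-basis element $\bar t\in k$), $\oo'$ would contain a lift $b$ of $\bar b$ with $b^p-t\in p\oo'\subset p\oo_L$. Take $L=\kappa(\mathfrak{p})(\sigma)$ with $\sigma^p=t+\pi_{\mathfrak{p}}$, so that $e_{L/\kappa(\mathfrak{p})}=1$, $v_L(\pi_{\mathfrak{p}})=1$ and $v_L(p)=\deg(\mathfrak{p})\ge 2$; writing any lift of $\bar b$ as $b=\sigma+m$ with $v_L(m)\ge 1$ gives $b^p-t=\pi_{\mathfrak{p}}+m^p+p(\cdots)$ with $v_L(\pi_{\mathfrak{p}}+m^p)=1<v_L(p)$, so no such $b$ exists. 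Since both your check that $R'/\mathfrak{p}R'\twoheadrightarrow\oo_L$ is $R^+/\mathfrak{p}$-linear and your computation of the constant term of $\alpha(P_{\mathfrak{q}})$ rest on $\alpha|_{\oo}$ landing in $\oo'$ compatibly with the fixed chain, the gap is essential.

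The paper sidesteps this entirely by not requiring $\alpha(\oo)\subset\oo'$: it produces $\alpha$ by lifting the canonical map $R^+\to\oo_L$ through the surjection $f\colon R'\twoheadrightarrow R'/(Q)\cong\oo_L$, with the $\Z[S]$-algebra structure $S\mapsto T^{e_{L/\kappa(\mathfrak{p})}}u$ prescribed in advance; this is legitimate because $R^+$ is $p$-adically formally smooth over $\Z[S]$ and $R'$ is $(Q)$-adically complete (by Weierstrass division, $R'\cong\varprojlim_n R'/Q^nR'$). Then $f\circ\alpha$ agrees with the canonical map by construction, and the Eisenstein claim for $\mathfrak{q}R'$ still holds: writing $P_{\mathfrak{q}}(0)=pc$ with $c\in\oo^\times$, the constant term $\alpha(P_{\mathfrak{q}})(0)=p\,\alpha(c)(0)$ lies in $p\oo'\setminus p^2\oo'$ because $\alpha(c)\in(R')^\times$, with no need for $\alpha(c)$ itself to lie in $\oo'$. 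Your remaining steps (miracle flatness plus Nakayama, rank counting, Weierstrass preparation) match the paper's argument and go through once $\alpha$ is constructed this way.
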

\begin{proof}
We can define $\alpha$ by a similar way to the definition of $\beta$ in Construction~\ref{const:ocext}: We fix an $\oo'$-algebra structure on $\oo_L$. Let $f:R'\to\oo_L$ be the local $\oo'$-algebra homomorphism, which maps $T$ to a uniformizer $\pi_L$ of $L$. Write $\pi_{\mathfrak{p}}=\pi_L^{e_{L/\kappa(\mathfrak{p})}}\bar{u}$ with $u\in\oo_L^{\times}$. Since $f$ is surjective by Nakayama's lemma, we can choose a lift $u\in (R')^{\times}$ of $\bar{u}$. Since $R^+$ is $p$-adically formally smooth over $\Z[S]$, we can define a morphism $\alpha:R^+\to R'$, which maps $S$ to ${T}^{e_{L/\kappa(\mathfrak{p})}}u$, by the lifting property.

We claim that $\mathfrak{p}R'$ is an Eisenstein prime. Let $P$ be an Eisenstein polynomial of $\oo[S]$, which generates $\mathfrak{p}$. We have $P\equiv T^{\deg(\mathfrak{p})e_{L/\kappa(\mathfrak{p})}}u\mod{pR'}$ for some unit $u\in R'$. By Weierstrass preparation theorem, there exists a distinguished polynomial $Q(T)$ of degree $\deg(\mathfrak{p})e_{L/\kappa(\mathfrak{p})}$ and a unit $U(T)\in R'$ such that $P=Q(T)U(T)$. By evaluating $T=0$, $Q(0)$ is equal to $p$ times a unit of $\oo'$, which implies the claim. In particular, $R'/\mathfrak{p}R'$ is a discrete valuation ring. Hence, the canonical surjection $R'/\mathfrak{p}R'\to\oo_L$ induced by $f$ is an isomorphism. By Nakayama's lemma and the local criteria of flatness, $\alpha$ is finite flat. The second assertion also follows from Weierstrass preparation theorem.
\end{proof}

The following is our main result of this subsection:
\begin{prop}[Continuity of geometric connected components]\label{prop:geomconnected}
Assume that $A_{\kappa(p)}$ is geometrically reduced.
\begin{enumerate}
\item If all connected components of $A_{\kappa(p)}$ are geometrically connected, then all connected components of $A_{\kappa(\mathfrak{p})}$ are also geometrically connected for all Eisenstein prime ideals $\mathfrak{p}$ of $R^+$ with $\deg{\mathfrak{p}}\gg 0$.
\item For all Eisenstein prime ideals $\mathfrak{p}$ of $R^+$ with $\deg{\mathfrak{p}}\gg 0$, we have
\[
\#\pi_0^{\geom}(A_{\kappa(\mathfrak{p})})=\#\pi_0^{\geom}(A_{\kappa(p)}).
\]
\end{enumerate}
\end{prop}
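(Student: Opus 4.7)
The plan is to reduce both parts to Proposition~\ref{prop:connected}(iii) by using Lemma~\ref{lem:extlifting} to transport a chosen splitting field from one Eisenstein fiber to another via a finite flat extension $R^+\to R'$. The key technical point is that geometric reducedness of $A_{\kappa(p)}$ is inherited by all such base changes $A':=A\otimes_{R^+}R'$, keeping us in the hypothesis regime of Proposition~\ref{prop:connected}.

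I would prove (i) first. Assume every connected component of $A_{\kappa(p)}$ is geometrically connected and set $N:=\#\pi_0(A_{\kappa(p)})$. Fix any finite separable extension $L/\kappa(\mathfrak{p})$ and lift it via Lemma~\ref{lem:extlifting} to a finite flat $R^+$-algebra $R'=\oo'[[T]]$ with $R'/\mathfrak{p}R'\cong\oo_L$. Form $A':=A\otimes_{R^+}R'\cong R'\langle\und{X}\rangle/IR'\langle\und{X}\rangle$; this is $R'$-flat, so it fits into Notation~\ref{notation:annulus}. Geometric reducedness of $A_{\kappa(p)}$ forces $A'_{\kappa(pR')}=A_{\kappa(p)}\otimes_{\kappa(p)}\kappa(pR')$ to be reduced, so the hypothesis of Proposition~\ref{prop:connected} holds for $A'$, and part (iii) of that proposition applied at the Eisenstein prime $\mathfrak{p}R'$ should yield
\[
\#\pi_0(A_{\kappa(\mathfrak{p})}\otimes_{\kappa(\mathfrak{p})}L)=\#\pi_0(A'_{\kappa(\mathfrak{p}R')})=\#\pi_0(A'_{\kappa(pR')})=\#\pi_0(A_{\kappa(p)}\otimes_{\kappa(p)}\kappa(pR'))=N,
\]
the last equality because each component of $A_{\kappa(p)}$ is geometrically connected. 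Since Proposition~\ref{prop:connected}(iii) applied to $A$ itself already gives $\#\pi_0(A_{\kappa(\mathfrak{p})})=N$, letting $L$ range over all finite separable extensions of $\kappa(\mathfrak{p})$ will force $\#\pi_0^{\geom}(A_{\kappa(\mathfrak{p})})=\#\pi_0(A_{\kappa(\mathfrak{p})})$, which is the statement of (i).

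Next I would deduce (ii) from (i). Choose a finite Galois extension $L_0/\kappa(p)$ splitting $A_{\kappa(p)}$ geometrically, i.e.\ such that $A_{\kappa(p)}\otimes_{\kappa(p)}L_0$ has exactly $N:=\#\pi_0^{\geom}(A_{\kappa(p)})$ connected components, each geometrically connected. Lift $L_0$ via Lemma~\ref{lem:extlifting} at $(p)$ to $R_0=\oo_0[[T_0]]$ with $R_0/pR_0\cong\oo_{L_0}$, and set $A_0:=A\otimes_{R^+}R_0$. Then $A_{0,\kappa(pR_0)}=A_{\kappa(p)}\otimes L_0$ has each component geometrically connected and remains geometrically reduced, so applying (i) to the pair $(A_0,R_0)$ ensures that each component of $A_{0,\kappa(\mathfrak{p}R_0)}$ is geometrically connected once $\deg_{R_0}(\mathfrak{p}R_0)=e_{L_0/\kappa(p)}\deg_{R^+}(\mathfrak{p})$ is large, hence for $\deg_{R^+}(\mathfrak{p})\gg 0$. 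Combining with Proposition~\ref{prop:connected}(iii) for $A_0$ and the invariance of $\pi_0^{\geom}$ under field extensions of the base then gives
\[
\#\pi_0^{\geom}(A_{\kappa(\mathfrak{p})})=\#\pi_0^{\geom}(A_{0,\kappa(\mathfrak{p}R_0)})=\#\pi_0(A_{0,\kappa(\mathfrak{p}R_0)})=\#\pi_0(A_{0,\kappa(pR_0)})=N.
\]

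The main obstacle will be ensuring that the degree condition in Proposition~\ref{prop:connected}(iii) is uniform across the varying lifts $R'$ appearing in the argument for (i), because both $L$ and the constant $c_{A'}$ from Proposition~\ref{prop:connected}(i) depend on $\mathfrak{p}$ a priori. The resolution will be the standard rigid-analytic fact that for a reduced affinoid algebra the spectral seminorm coincides with the residue (quotient) norm, so reducedness of $A'_{\kappa(pR')}$ allows $c_{A'}=0$; the condition on $\deg(\mathfrak{p}R')$ then becomes automatic, and only the original bound on $\deg(\mathfrak{p})$ used for $A$ itself is needed. Routine verifications along the way include that $A\otimes_{R^+}R'$ is of the form described in Notation~\ref{notation:annulus} (from finite flatness of $R'/R^+$ and commutation of the $(p,S)$-adic completion with finite flat base change) and the invariance $\pi_0^{\geom}(B\otimes_k L)=\pi_0^{\geom}(B)$ applied to the fibers.
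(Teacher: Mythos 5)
Your overall reduction is the same as the paper's: for (i), lift each finite extension $L/\kappa(\mathfrak{p})$ via Lemma~\ref{lem:extlifting} to a finite flat extension $R^+\to R'$, base change to $A'=A\otimes_{R^+}R'$, and apply Proposition~\ref{prop:connected}(iii) once to $A'$ (comparing $\mathfrak{p}R'$ with $(p)R'$) and once to $A$ itself; for (ii), lift a geometric splitting field $L_0/\kappa(p)$ and combine (i) with Proposition~\ref{prop:connected}(iii). You also correctly isolate the one genuinely delicate point: the constant $c_{A'}$ of Proposition~\ref{prop:connected}(i) for the base-changed algebra, and hence the threshold $\deg(\mathfrak{p}R')>3c_{A'}$, could a priori depend on $L$.

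However, your proposed resolution of that point is false. Reducedness of an affinoid algebra does \emph{not} imply that the spectral seminorm equals the residue (quotient) norm; it only implies the two are equivalent. For instance, $A=\Q_p\langle X\rangle/(X^2-pX)\cong\Q_p\times\Q_p$ is reduced, yet the class of $X$ has spectral norm $|p|$ and residue norm $1$. So you cannot take $c_{A'}=0$; if reducedness forced $c=0$, then Proposition~\ref{prop:connected}(iii) would carry no degree threshold at all, which it plainly does. The correct resolution, which is the one the paper uses, is a scaling argument: if $c$ is chosen so that $|\cdot|_{(p),\spe}\ge |S|_{(p)}^c|\cdot|_{(p),\qt}$ already holds on $A_{\kappa(p)}\otimes_{\kappa(p)}\kappa(p)^{\alg}$, then one can take $c_{A'}=c\,e_{L/\kappa(\mathfrak{p})}$ for $A'$, because $|S|=|\pi_{\kappa(pR')}|^{e_{L/\kappa(\mathfrak{p})}}$ in $\kappa(pR')$. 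Since $\deg(\mathfrak{p}R')=e_{L/\kappa(\mathfrak{p})}\deg(\mathfrak{p})$ by Lemma~\ref{lem:extlifting}, the ramification index cancels and the condition $\deg(\mathfrak{p}R')>3c_{A'}$ is equivalent to $\deg(\mathfrak{p})>3c$, uniformly in $L$. With this substitution your argument closes up and matches the paper's proof.
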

\begin{proof}
\begin{enumerate}
\item By assumption, there exists $c\in\mathbb{R}_{\ge 0}$ such that $|\cdot|_{(p),\spe}\ge |S|^c_{(p)}|\cdot|_{(p),\qt}$ on $A_{\kappa(p)}\otimes_{\kappa(p)}\kappa(p)^{\alg}$. We prove that any Eisenstein prime ideal $\mathfrak{p}$ of $R^+$ with $\deg(\mathfrak{p})>3c$ satisfies the condition. Let $L/\kappa(\mathfrak{p})$ be any finite extension. Let $R'$ be as in Lemma~\ref{lem:extlifting}. Since $R'$ is finite flat over $R^+$, we have $R^+\langle\und{X}\rangle\otimes_{R^+}R'\cong R'\langle\und{X}\rangle$ and $I':=I\otimes_{R^+\langle\und{X}\rangle}R'\langle\und{X}\rangle\cong I\cdot R'\langle\und{X}\rangle$. Hence, we can apply Proposition~\ref{prop:connected} to $R^+=R'$, $I=I'$ and $A=A':=A\otimes_{R^+}R'\cong R'\langle\und{X}\rangle/I'$: Note that $ce_{L/\kappa(\mathfrak{p})}$ can be taken as $c$ in Proposition~\ref{prop:connected}~(i). Therefore, by applying Proposition~\ref{prop:connected}~(iii), we have
\[
\#\pi_0^{\Zar}(A_{\kappa(\mathfrak{p})}\otimes_{\kappa(\mathfrak{p})}L)=\#\pi_0^{\Zar}(A'_{\kappa(\mathfrak{p}R')})=\pi_0^{\Zar}(A'_{\kappa(p)})=\#\pi_0^{\Zar}(A_{\kappa(p)})=\#\pi_0^{\Zar}(A_{\kappa(\mathfrak{p})}),
\]
where the third equality follows from assumption. Therefore, we have $\#\pi_0^{\geom}(A_{\kappa(\mathfrak{p})})=\#\pi_0(A_{\kappa(\mathfrak{p})})$, which implies the assertion.
\item Let $L/\kappa(p)$ be a finite extension such that all connected components of $A_{\kappa(p)}\otimes_{\kappa(p)}L$ are geometrically connected. Let $R'$ be a lifting of $\oo_L$ as in Lemma~\ref{lem:extlifting} and $A'$ as in the proof of (i). By applying (i) and Proposition~\ref{prop:connected}~(iii), we obtain the assertion.
\end{enumerate}
\end{proof}

\subsection{Application: Ramification compatibility of fields of norms}\label{sec:ram}
In this subsection, we prove Theorem~\ref{thm:normcompat}, which is the ramification compatibility of Scholl's equivalence in Theorem~\ref{thm:sch}, as an application of our Gr\"obner basis argument.

We first construct a characteristic zero lift of Abbes-Saito space in characteristic $p$.
\begin{lem}\label{lem:family}
Let $F/E$ be a finite extension of complete discrete valuation fields of characteristic $p$. Assume that the residue field extension $k_F/k_E$ is either trivial or purely inseparable. For $m\in\mathbb{N}$, we denote $\und{X}:=(X_0,\dots,X_m)$ and $\und{Y}:=(Y_0,\dots,Y_m)$.
\begin{enumerate}
\item (\cite[Notation 3.3.8]{Xia}) For some $m\in \mathbb{N}$, there exist a set of generators $\{z_0,\dots,z_m\}$ of $\oo_F$ with $z_0$ a uniformizer of $F$ as an $\oo_E$-algebra and a set of generators $\{p_0,\dots,p_m\}$ of the kernel of the $\oo_E$-algebra homomorphism $\oo_E\langle\und{X}\rangle\twoheadrightarrow\oo_F;X_j\mapsto z_j$ such that
\begin{align*}
p_0&=X_0^{e_{F/E}}+\pi_E\eta_0,\\
p_j&=X_j^{f_j}-\varepsilon_j+X_0\delta_j+\pi_E\eta_j\text{ for }1\le j\le m
\end{align*}
where $\delta_j,\eta_j\in\oo_E\langle\und{X}\rangle$, $\varepsilon_j\in\oo_E\langle X_0,\dots,X_{j-1}\rangle$ and $f_j\in\N$.
\item Let $\succeq$ be the lexicographic order on $\oo_E\langle\und{X}\rangle$ defined by $X_m\succ\dots\succ X_0$. By regarding $\pi_E$ as a regular system of parameter of $\oo_E$, we apply Construction~\ref{const:lead}. Then, we have $\LT_{\oo_E}(p_0^n)=X_0^{ne_{F/E}}$ for all $n\in\N$. Let $l,n\in\N_{>0}$ such that $p^ln\ge e_{F/E}$. Then, for $1\le j\le m$, there exists  $\theta_{j,l,n}\in \oo_E\langle\und{X}\rangle$ such that $\LT_{\oo_E}(p_j^{p^ln}-p_0^{\lfloor p^ln/e_{F/E}\rfloor}\theta_{j,l,n})=uX_j^{f_jp^ln}$ for some unit $u\in 1+\pi_E\oo_E$.
\item (cf. \cite[Example~1.3.4.]{Xia}). Fix an isomorphism $E\cong k_E((S))$. Let $\oo$ be a Cohen ring of $k_E$ and $R:=\oo[[S]]$ with a canonical projection $R\to\oo_E$. Fix a lift $P_j\in R\langle\und{X}\rangle$ of $p_j$ for all $j$. Let $\und{\alpha}\in\N^{m+1},\und{\beta}\in\N^{m+1}_{>0}$. Assume that $\lfloor\beta_j/e_{F/E}\rfloor\ge \beta_0$ for all $1\le j\le m$, and there exists $l\in\N_{>0}$ such that $p^l|\beta_j$ for all $1\le j\le m$. Then, the $R$-algebra
\[
A_{\und{\alpha},\und{\beta}}:=R\langle\und{X},\und{Y}\rangle/(S^{\alpha_j}Y_j-P_j^{\beta_j},\ 0\le j\le m).
\]
is $R$-flat. Moreover, the fiber of $A_{\und{\alpha},\und{\beta}}$ at any Eisenstein prime $\mathfrak{p}$ of $R$ is an affinoid variety, which gives rise to the following affinoid subdomain of $D^{m+1}_{\kappa(\mathfrak{p})}$
\[
D^{m+1}(|\pi_{\mathfrak{p}}|^{-\alpha_j/\beta_j}(P_j\mod{\mathfrak{p}}),\ 0\le j\le m).
\]
\end{enumerate}
\end{lem}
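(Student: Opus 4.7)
For part (i), I take $z_0 \in \oo_F$ to be a uniformizer; it is a root of an Eisenstein polynomial over $\oo_E$, giving $p_0$ of the required form. For $j \ge 1$, I choose $z_1,\dots,z_m$ to lift a $p$-basis of $k_F$ over $k_E$, ordered so that $\bar z_j^{f_j}$ lies in $k_E[\bar z_1,\dots,\bar z_{j-1}]$ for some power $f_j$ of $p$. Lifting the relation $\bar z_j^{f_j}=\bar\varepsilon_j(\bar z_1,\dots,\bar z_{j-1})$, the discrepancy lies in $\mathfrak{m}_F=(z_0)$ and, by the $p$-adic completeness of $\oo_F$, can be expanded as $-z_0\delta_j(\und{z})-\pi_E\eta_j(\und{z})$ with $\delta_j,\eta_j\in\oo_E\langle \und{X}\rangle$.

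For part (ii), the crucial point is that $E$ has characteristic $p$, so Frobenius distributes over addition and yields
\[
p_j^{p^l}=X_j^{f_jp^l}+(-\varepsilon_j)^{p^l}+X_0^{p^l}\delta_j^{p^l}+\pi_E^{p^l}\eta_j^{p^l}.
\]
The only summand that can produce a lex-leading monomial exceeding $X_j^{f_jp^l}$ (under $X_m\succ\cdots\succ X_0$) is $X_0^{p^l}\delta_j^{p^l}$, because of potentially high-$X_m$ terms in $\delta_j$. Using $X_0^{e_{F/E}}\equiv p_0\pmod{\pi_E}$, I subtract $X_0^{p^l-e_{F/E}\lfloor p^l/e_{F/E}\rfloor}\delta_j^{p^l}\cdot p_0^{\lfloor p^l/e_{F/E}\rfloor}$ to cancel this term modulo $\pi_E$. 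Iterating this operation on each factor of the binomial expansion of $(p_j^{p^l})^n$ and collecting the resulting corrections produces the desired $\theta_{j,l,n}\in\oo_E\langle \und{X}\rangle$; the factor $u\in 1+\pi_E\oo_E$ records precisely the $\pi_E\eta_0$-contributions from $p_0$. The hypothesis $p^ln\ge e_{F/E}$ ensures $\lfloor p^ln/e_{F/E}\rfloor\ge 1$ so that the subtraction makes sense.

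For part (iii), I reduce to the case $\alpha_j>0$ for all $j$ by substituting out $Y_j$ from the presentation whenever $\alpha_j=0$, and then apply Proposition~\ref{prop:concrete}. Equip $R\langle \und{X},\und{Y}\rangle$ with the lexicographic order $X_m\succ\cdots\succ X_0\succ Y_m\succ\cdots\succ Y_0$ and the $\LT_R$ attached to the regular system $\{p,S\}$. For $f_0:=S^{\alpha_0}Y_0-P_0^{\beta_0}$, the monomial $-X_0^{\beta_0e_{F/E}}$ has unit $R$-coefficient while $S^{\alpha_0}Y_0$ does not, so $\LT_R(f_0)=-X_0^{\beta_0e_{F/E}}$. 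For $j\ge 1$, put $k_j:=\lfloor\beta_j/e_{F/E}\rfloor\ge\beta_0$ and $n_j:=\beta_j/p^l$, lift $\theta_{j,l,n_j}$ to $\Theta_j\in R\langle \und{X}\rangle$, and replace $f_j:=S^{\alpha_j}Y_j-P_j^{\beta_j}$ by
\[
\tilde f_j:=f_j-P_0^{k_j-\beta_0}\Theta_j\cdot f_0=S^{\alpha_j}Y_j-S^{\alpha_0}P_0^{k_j-\beta_0}\Theta_jY_0-\bigl(P_j^{\beta_j}-P_0^{k_j}\Theta_j\bigr).
\]
By part (ii), the reduction modulo $p$ of the last summand has $\LT_{\oo_E}=uX_j^{f_j\beta_j}$ with $u\in 1+\pi_E\oo_E$, so the coefficient of $X_j^{f_j\beta_j}$ in that summand lifts to a unit of $R$; the first two summands have $S$-divisible coefficients (this uses $\alpha_j,\alpha_0>0$) and hence strictly larger $\und{v}_R$. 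Consequently $\LT_R(\tilde f_j)=u_jX_j^{f_j\beta_j}$ for some $u_j\in R^\times$. The leading monomials $X_0^{\beta_0e_{F/E}},X_1^{f_1\beta_1},\dots,X_m^{f_m\beta_m}$ involve pairwise disjoint variables and so are coprime, and Proposition~\ref{prop:concrete} delivers the $R$-flatness of $A_{\und{\alpha},\und{\beta}}$.

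Finally, the fiber at an Eisenstein prime $\mathfrak{p}$ becomes $\kappa(\mathfrak{p})\langle \und{X},\und{Y}\rangle/(\pi_{\mathfrak{p}}^{\alpha_j}Y_j-\bar P_j^{\beta_j})_{0\le j\le m}$, in which $Y_j=\pi_{\mathfrak{p}}^{-\alpha_j}\bar P_j^{\beta_j}$. The unit-disc condition $|Y_j|\le 1$ translates precisely into $|\bar P_j(x)|\le|\pi_{\mathfrak{p}}|^{\alpha_j/\beta_j}$, recovering the claimed affinoid subdomain of $D_{\kappa(\mathfrak{p})}^{m+1}$. The main technical obstacle I anticipate is the explicit construction in part (ii): one must carry out a careful induction and track the $\pi_E$-corrections cleanly enough to guarantee that the leading coefficient $u$ lies in $1+\pi_E\oo_E$ rather than merely being a unit.
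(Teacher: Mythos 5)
Your parts (i) and (iii) essentially follow the paper: (i) paraphrases Xiao's construction, which the paper merely cites, and in (iii) you form the same corrected generators $Q_j = f_j - P_0^{k_j-\beta_0}\Theta_j f_0$, compute leading terms, and invoke Proposition~\ref{prop:concrete}. Your preliminary reduction to $\alpha_j>0$ is superfluous --- the order $X_m\succ\cdots\succ X_0\succ Y_m\succ\cdots\succ Y_0$ already places every $Y$-monomial strictly below $X_j^{f_j\beta_j}$ --- and is even problematic when $\alpha_0=0$: eliminating $Y_0$ sends $f_0$ to zero and thereby destroys the very correction $\tilde f_j = f_j - P_0^{k_j-\beta_0}\Theta_j f_0$ you use in the next step.

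The genuine gap is in part (ii), and you only half-see it. The paper does not iterate; it fixes $\theta_{j,l,n}:=X_0^{p^ln-e_{F/E}\lfloor p^ln/e_{F/E}\rfloor}\delta_j^{p^ln}$ once and for all, so that $p_0^{\lfloor p^ln/e_{F/E}\rfloor}\theta_{j,l,n}\equiv X_0^{p^ln}\delta_j^{p^ln}\pmod{\pi_E}$, and then reads off the leading monomial. Your plan to ``iterate on each factor of the binomial expansion of $(p_j^{p^l})^n$'' cannot produce a $\theta_{j,l,n}$ of the prescribed form: reducing modulo $\pi_E$, write $\bar p_j^{p^l}=X_j^{f_jp^l}-\bar\varepsilon_j^{p^l}+X_0^{p^l}\bar\delta_j^{p^l}$ and expand $\bar p_j^{p^ln}=(\bar p_j^{p^l})^n$ multinomially; a cross-term in which $X_0^{p^l}\bar\delta_j^{p^l}$ occurs $c$ times with $0<c<n$ carries only the factor $X_0^{cp^l}$, which for small $c$ falls well short of $X_0^{e_{F/E}\lfloor p^ln/e_{F/E}\rfloor}$, so such a term cannot be absorbed into a multiple of $p_0^{\lfloor p^ln/e_{F/E}\rfloor}$. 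These cross-terms are not cancelled at all in the paper's argument; the whole content of (ii) is that each of them has lex-leading monomial strictly below $X_j^{f_jp^ln}$, and that depends on structural bounds on $\delta_j$ coming from Xiao's presentation of $\oo_F$ over $\oo_E$ (so that $X_0^{p^l}\bar\delta_j^{p^l}$ does not outrun $X_j^{f_jp^l}$), an input your sketch never invokes. The difficulty you do flag --- securing $u\in 1+\pi_E\oo_E$ --- is the easy part, immediate from $p_0\equiv X_0^{e_{F/E}}\bmod\pi_E$; what is missing is the argument that the uncancelled cross-terms stay below $X_j^{f_jp^ln}$.
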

\begin{proof}
\begin{enumerate}
\item[(i)] See \cite[Construction~3.3.5]{Xia} for details.
\item[(ii)] Since the coefficient of $X_0^{ne_{F/E}}$ in $p_0^n$ is equal to $1$, the first assertion follows from $p_0^n\equiv X_0^{ne_{F/E}}\mod{\pi_E}$. We prove the second assertion. Put $\theta_{j,l,n}:=X_0^{p^ln-e_{F/E}\lfloor p^ln/e_{F/E}\rfloor}\delta_j^{p^ln}$. Since
\[
p_j^{p^ln}\equiv X_j^{p^lnf_j}-\varepsilon_j^{p^ln}+X_0^{p^ln}\delta_j^{p^ln}\equiv X_j^{p^lnf_j}-\varepsilon_j^{p^ln}+p_0^{\lfloor p^ln/e_{F/E}\rfloor}\theta_{j,l,n}\mod{\pi_E},
\]
we have $\LT_{k_E}(p_j^{p^ln}-p_0^{\lfloor p^ln/e_{F/E}\rfloor}\theta_{j,l,n}\mod{\pi_E})=\LT_{k_E}(X_j^{p^lnf_j}-\varepsilon_j^{p^ln}\mod{\pi_E})=X_j^{f_jp^ln}$, which implies the assertion.
\item[(iii)] The last assertion is trivial. We prove the first assertion. Let $\succeq$ be the lexicographic order on $\oo_E\langle\und{X},\und{Y}\rangle$ defined by $X_m\succ\dots\succ X_0\succ Y_m\succ\dots\succ Y_0$. By regarding $\{p,S\}$ as a regular system of parameter of $R$, we apply Construction~\ref{const:lead}. For $1\le j\le m$, we choose a lift of $\theta_{j,l,\beta_j/p^l}$ and denote by $\Theta_j$ for simplicity. Then, the ideal $(S^{\alpha_j}Y_j-P_j^{\beta_j},\ 0\le j\le m)$ is generated by $Q_0:=S^{\alpha_0}Y_0-P_0^{\beta_0}$ and
\[
Q_j:=S^{\alpha_j}Y_j-P_j^{\beta_j}-(S^{\alpha_0}Y_0-P_0^{\beta_0})P_0^{\lfloor\beta_j/e_{F/E}\rfloor-\beta_0}\Theta_j
\]
for $1\le j\le m$. We have only to prove that $\LT_{R/\mathfrak{m}_R}(-Q_j\mod{\mathfrak{m}_R})$ are relatively prime monic monomials by Proposition~\ref{prop:concrete}. We have $\LT_{R/\mathfrak{m}_R}(Q_0\mod{\mathfrak{m}_R})=-\LT_{R/\mathfrak{m}_R}(p_0^{\beta_0})=-X_0^{e_{F/E}\beta_0}$. Since
\[
Q_j\equiv -p_j^{\beta_j}+p_0^{\lfloor \beta_j/e_{F/E}\rfloor}\theta_{j,l,\beta_j/p^l}\mod{\mathfrak{m}_R},
\]
we have $\LT_{R/\mathfrak{m}_R}(Q_j\mod{\mathfrak{m}_R})=-X_j^{f_j\beta_j}$ by (ii), which implies the assertion.
\end{enumerate}
\end{proof}

In the rest of this subsection, let notation be as in Definition~\ref{dfn:scholl}.

\begin{lem}\label{lem:asfamily}
Fix an isomorphism $X_{\mathfrak{K}}\cong k_{\mathfrak{K}}((\Pi))$ and let $\oo$ be a Cohen ring of $k_{\mathfrak{K}}$ and $R:=\oo[[\Pi]]$.
\begin{enumerate}
\item There exists a surjective local ring homomorphism $\phi_n:R\to\oo_{K_n}$ for all sufficiently large $n$ making the diagram commutative
\[\xymatrix{
R\ar[d]^{\phi_n}\ar@{->>}[r]^{\can.}&X_{\mathfrak{K}}^+\ar@{->>}[d]^{\pr_n}\\
\oo_{K_n}\ar@{->>}[r]^(.35){\can.}&\oo_{K_n}/\xi\oo_{K_n},
}\]
and $\ker{(\phi_n)}$ is an Eisenstein prime ideal of $R$. We fix $\phi_n$ in the following and denote $\mathfrak{p}_n:=\ker{(\phi_n)}$.
\item Let $r\in\Q_{>0}$ and $L_{\infty}/K_{\infty}$ a finite extension and $\mathfrak{L}=\{L_n\}_{n>0}$ a corresponding strictly deeply ramified tower. Assume that the residue field extension of $X_{\mathfrak{L}}/X_{\mathfrak{K}}$ is either trivial or purely inseparable. Then, there exists a flat $R$-algebra $AS^r$ (resp. $AS^r_{\log}$) of the form $R\langle\und{X}\rangle/I$ for an ideal $I\subset R\langle\und{X}\rangle$, whose fiber at $(p)$, $\mathfrak{p}_n$ are isomorphic to Abbes-Saito spaces $as^r_{X_{\mathfrak{L}}/X_{\mathfrak{K}},\bullet}$, $as^r_{L_n/K_n,\bullet}$ (resp. $as^r_{X_{\mathfrak{L}}/X_{\mathfrak{K}},\bullet,\bullet}$, $as^r_{L_n/K_n,\bullet,\bullet}$) for all sufficiently large $n$.
\item Let notation and assumption be as in (ii). For all sufficiently large $n$, we have
\[
\#\mathcal{F}^r(X_{\mathfrak{L}})=\#\mathcal{F}^r(L_n),\ \#\mathcal{F}^r_{\log}(X_{\mathfrak{L}})=\#\mathcal{F}_{\log}^r(L_n).
\]
\end{enumerate}
\end{lem}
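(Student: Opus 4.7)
\textbf{Proof plan for Lemma~\ref{lem:asfamily}.}

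For (i), I would use the paper's convention on Cohen ring embeddings. Since $k_{\mathfrak{K}}\cong k_{K_n}$ via $\pr_n$ for $n\ge n_0$, fix an embedding $\oo\hookrightarrow\oo_{K_n}$ of Cohen rings lifting this identification, then define $\phi_n$ as the unique continuous $\oo$-algebra homomorphism sending $\Pi$ to $\pi_{K_n}$. Surjectivity is immediate by Nakayama (the image contains a uniformizer and the Cohen ring), and commutativity of the diagram is forced by the construction of $\Pi=(\pi_{K_m}\!\!\mod\xi)_m\in X^+_{\mathfrak{K}}$. For the Eisenstein assertion, note that $K_n/\Frac(\oo)$ is totally ramified of degree $e_{K_n}$, so $\pi_{K_n}$ satisfies an Eisenstein polynomial $P_n(\Pi)\in\oo[\Pi]$ of degree $e_{K_n}$; Weierstrass preparation shows the kernel of $\phi_n$ in $R=\oo[[\Pi]]$ is exactly $(P_n)$.

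For (ii), apply Lemma~\ref{lem:family}(i) to $F/E:=X_{\mathfrak{L}}/X_{\mathfrak{K}}$ (the residue extension hypothesis is satisfied by assumption) to obtain generators $z_0,\dots,z_m$ and relations $p_0,\dots,p_m\in\oo_{X_{\mathfrak{K}}^+}\langle\und{X}\rangle$ of the stated form. Choose lifts $P_0,\dots,P_m\in R\langle\und{X}\rangle$ along a set-theoretic section of $R\twoheadrightarrow X_{\mathfrak{K}}^+$. Now apply Lemma~\ref{lem:family}(iii) with $\und{\alpha},\und{\beta}$ chosen so that $\alpha_j/\beta_j=r$ (non-log case), respectively $\alpha_0/\beta_0=r+1$ and $\alpha_j/\beta_j=r$ for $j\ge 1$ (log case), rescaling if necessary so that $p^l\mid \beta_j$ and $\lfloor \beta_j/e_{F/E}\rfloor\ge\beta_0$. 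This yields $R$-flat algebras $AS^r$ and $AS^r_{\log}$ of the required form. The fiber at $(p)$ is tautologically the Abbes-Saito space of $X_{\mathfrak{L}}/X_{\mathfrak{K}}$. For the fiber at $\mathfrak{p}_n$ with $n$ large, Theorem~\ref{thm:sch} gives $[L_n:K_n]=[X_{\mathfrak{L}}:X_{\mathfrak{K}}]$, $e_{L_n/K_n}=e_{X_{\mathfrak{L}}/X_{\mathfrak{K}}}$, and matching residue field extensions; together with the fact that the tower is strictly deeply ramified, the specializations $P_j\!\!\mod\mathfrak{p}_n$ give a valid presentation of $\oo_{L_n}$ over $\oo_{K_n}$ of the shape in Lemma~\ref{lem:family}(i), and hence realize $as^r_{L_n/K_n,\bullet}$ (respectively $as^r_{L_n/K_n,\bullet,\bullet}$) by Lemma~\ref{lem:family}(iii).

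For (iii), apply Proposition~\ref{prop:geomconnected} to $A:=AS^r$ and $AS^r_{\log}$. The fiber at $(p)$ is an Abbes-Saito space over $X_{\mathfrak{K}}$ attached to the finite separable extension $X_{\mathfrak{L}}/X_{\mathfrak{K}}$ (which is finite étale upstairs), hence is reduced, and in fact geometrically reduced. Because $\mathfrak{K}$ is strictly deeply ramified, $\deg(\mathfrak{p}_n)=e_{K_n}$ satisfies $e_{K_{n+1}}=p\cdot e_{K_n}$ for $n\ge n_0$, so $\deg(\mathfrak{p}_n)\to\infty$ and in particular $\deg(\mathfrak{p}_n)\gg 0$ for $n\gg 0$. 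Proposition~\ref{prop:geomconnected}(ii) then gives $\#\pi_0^{\geom}(\text{fiber at }\mathfrak{p}_n)=\#\pi_0^{\geom}(\text{fiber at }(p))$, which is the desired equality of $\#\mathcal{F}^r$'s (respectively $\#\mathcal{F}^r_{\log}$'s).

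The main obstacle is step (ii): pinning down that one fixed family $\{P_j\}$ over $R$ simultaneously specializes to the correct defining data for $\oo_{L_n}/\oo_{K_n}$ at every $\mathfrak{p}_n$ (with $n$ large) and for $\oo_{X_{\mathfrak{L}}^+}/\oo_{X_{\mathfrak{K}}^+}$ at $(p)$. This compatibility is where Scholl's theory really does the work: the stabilization statements in Theorem~\ref{thm:sch} and the definition of strict deep ramification ensure that the algebraic presentations match up along the tower. Once (ii) is in place, (iii) is a clean invocation of the Gr\"obner-basis continuity developed in \S~\ref{subsec:continuity}.
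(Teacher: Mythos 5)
Your plan for (i) and (iii) matches the paper's argument. In (i), the paper also lifts the residue-field isomorphism $\Phi_n$ to an embedding $\oo\hookrightarrow\oo_{K_n}$, sends $\Pi\mapsto\pi_{K_n}$, and deduces the Eisenstein property from total ramification; in (iii), the paper likewise invokes Proposition~\ref{prop:geomconnected} once $\deg(\mathfrak{p}_n)=e_{K_n}\to\infty$.

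The gap is in (ii), and you flag it yourself without filling it. It is not true that ``the specializations $P_j\bmod\mathfrak{p}_n$ give a valid presentation of $\oo_{L_n}$ over $\oo_{K_n}$.'' The element $\phi_n(P_j)\in\oo_{K_n}\langle\und{X}\rangle$ need not lie in the kernel of the evaluation map to $\oo_{L_n}$ at all. By construction of $X^+_{\mathfrak{L}}$, what one can say is only that $\phi_n(P_j)$ is \emph{congruent} modulo some $\xi'$ (with $0<v_p(\xi')<v_p(\xi)$ and $\mathfrak{L}$ strictly deeply ramified with respect to $\xi'$) to an actual relation $p_j^{(n)}$ in $\ker(\oo_{K_n}\langle\und{X}\rangle\twoheadrightarrow\oo_{L_n})$, where $p_j^{(n)}$ is obtained by lifting $\pr_n(p_j)$. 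So the fiber of $AS^r$ at $\mathfrak{p}_n$ is defined by the $\phi_n(P_j)$'s, which is a priori not an Abbes-Saito space for $L_n/K_n$. The missing step is an approximation argument: once $n$ is large enough that $v_{K_n}(\xi')\ge r$ (resp.\ $\ge r+1$ in the log case), the congruence $\phi_n(P_j)\equiv p_j^{(n)}\bmod\xi'$ forces $|\phi_n(P_j)(\bm{x})|\le|\pi_{K_n}|^r\iff|p_j^{(n)}(\bm{x})|\le|\pi_{K_n}|^r$ on the polydisc, so the two affinoid subdomains coincide and the fiber of $AS^r$ at $\mathfrak{p}_n$ is isomorphic to $as^r_{L_n/K_n,Z^{(n)}}$ after all. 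Without this, ``Scholl's theory does the work'' is not an argument: the stabilization of $[L_n:K_n]$, $e_{L_n/K_n}$, and residue extensions from Theorem~\ref{thm:sch} does not by itself make $\phi_n(P_j)$ a relation for $\oo_{L_n}$, it only guarantees that the approximate relations $p_j^{(n)}$ exist and have the right numerology.

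One more small point worth noting: to invoke Proposition~\ref{prop:geomconnected} in (iii) you need $A_{\kappa(p)}\cong as^r_{X_\mathfrak{L}/X_\mathfrak{K},\bullet}$ to be geometrically reduced. You assert this; it is true because $r\in\Q$ and the Abbes-Saito affinoid is a rational subdomain of the polydisc over a field, but you should at least name the reason rather than treat it as automatic.
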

\begin{proof}
Denote $E:=X_{\mathfrak{K}}$ and $F:=X_{\mathfrak{L}}$.
\begin{enumerate}
\item For all sufficiently large $n$, the projection $\pr_n:\oo_E\to\oo_{K_n}/\xi\oo_{K_n}$ induces an isomorphism $\Phi_n:k_{\mathfrak{K}}\to k_{K_n}$ of the residue fields. Hence, we can choose an embedding $\oo\to \oo_{K_n}$ lifting $\Phi_n$. Let $\pi_{K_n}$ be a uniformizer of $\oo_{K_n}$, which is a lift of $\pr_n(\Pi)\in\oo_{K_n}/\xi\oo_{K_n}$. By the formally \'etaleness of the $\oo$-algebra homomorphism $\oo[\Pi]\to R;\Pi\mapsto\Pi$, we obtain $\phi_n$, which maps $\Pi$ to $\pi_{K_n}$. Since $\oo_{K_n}/\oo$ is totally ramified, the kernel of $\phi_n$ is generated by an Eisenstein polynomial.
\item Fix $\xi'\in\oo_{K_{\infty}}$ such that $0<v_p(\xi')< v_p(\xi)$ and $\{L_n\}_{n>0}$ is strictly deeply ramified with respect to $\xi'$. We denote the composite $\mathrm{can}\circ\mathrm{pr}_n:\oo_E\to\oo_{K_n}/\xi\oo_{K_n}\to\oo_{K_n}/\xi'\oo_{K_n}$ by $\mathrm{pr}_n$ again. Fix an expression $r=a/b$ with $a,b\in\N$. Also, fix $l\in\N$ such that $p^l\ge e_{F/E}$. Define $\und{\alpha},\und{\alpha}_{\log},\und{\beta},\und{\beta}_{\log}\in\N^l$ as $\alpha_0:=a$, $\alpha_{\log,0}:=a+b$, $\beta_0:=\beta_{\log,0}:=b$, $\alpha_j=\alpha_{\log,j}=ap^l$, $\beta_j=\beta_{\log,j}:=bp^l$ for $1\le j\le m$. Then, we can apply Lemma~\ref{lem:family} to the finite extension $F/E$: In the following, we use the notation as in the lemma. We will prove that $A_{\und{\alpha},\und{\beta}}$ (resp. $A_{\und{\alpha}_{\log},\und{\beta}_{\log}}$) satisfies the desired condition. We first consider in the non-log case. By Lemma~\ref{lem:family}~(iii), the fiber of $A_{\und{\alpha},\und{\beta}}$ at $(p)$ is isomorphic to $as^r_{F/E,Z}$, where $Z=\{z_0,\dots,z_m\}$. Recall that we have a canonical surjection $\pr_n:\oo_{F}\to\oo_{L_n}/\xi'\oo_{L_n}$ for all sufficiently large $n$. We choose a lift $z_j^{(n)}\in\oo_{L_n}$ of $\pr_n(z_j)\in\oo_{L_n}/\xi'\oo_{L_n}$. Then, $z_j^{(n)}$'s is a generator of $\oo_{L_n}$ as an $\oo_{K_n}$-algebra by Nakayama's lemma and $z_j^{(0)}$ is a uniformizer of $\oo_{L_n}$ by Lemma~\ref{lem:family}~(i). We consider the surjection $\varphi_n:\oo_{K_n}\langle\und{X}\rangle\to\oo_{L_n};X_j\mapsto z^{(n)}_j$ and choose a lift $p_j^{(n)}\in\ker{(\varphi_n)}$ of $\pr_n(p_j)\in\oo_{K_n}/\xi'\oo_{K_n}[\und{X}]$:
\[\xymatrix{
\oo_E\langle\und{X}\rangle\ar@{->>}[d]_{\pr_n}\ar@{->>}[rr]^{X_j\mapsto z_j}&&\oo_F\ar@{->>}[d]^{\pr_n}\\
\oo_{K_n}/\xi'\oo_{K_n}[\und{X}]\ar@{->>}[rr]^{X_j\mapsto \pr_n(z_j)}&&\oo_{L_n}/\xi'\oo_{L_n}\\
\oo_{K_n}\langle\und{X}\rangle\ar@{->>}[rr]^{\varphi_n;X_j\mapsto z_j^{(n)}}\ar@{->>}[u]^{\can.}&&\oo_{L_n}.\ar@{->>}[u]_{\can.}
}\]
By Nakayama's lemma, $p_j^{(n)}$'s is a generator of $\ker{(\varphi_n)}$. We may assume $v_{K_n}(\xi')\ge r$ by choosing $n$ sufficiently large. Since $\phi_n(P_j)\equiv p_j^{(n)}\mod{(\xi')}$, we have $|\phi_n(P_j)(\bm{x})|\le |\pi_{K_n}|^r$ if and only if $|p_j^{(n)}(\bm{x})|\le |\pi_{K_n}|^r$ for any $\bm{x}\in \oo_{\overline{K}}^{m+1}$. This implies that the fiber of $AS^r$ at $\mathfrak{p}_n$ is isomorphic to $as^r_{L_n/K_n,Z^{(n)}}$, where $Z^{(n)}=\{z^{(n)}_0,\dots,z^{(n)}_m\}$, which implies the assertion. In the log case, a similar proof works if we choose $n$ sufficiently large such that $v_{K_n}(\xi')\ge r+1$.
\item By applying Proposition~\ref{prop:geomconnected} to $AS^r$ and $AS^r_{\log}$, we obtain the assertion.
\end{enumerate}
\end{proof}

The following is the main theorem in this subsection. See \cite[\S~6]{Hat} for an alternative proof.

\begin{thm}\label{thm:normcompat}
Let $L_{\infty}/K_{\infty}$ be a finite separable extension and $\mathfrak{L}=\{L_n\}_{n>0}$ a corresponding strictly deeply ramified tower. Then, the sequence $\{b(L_n/K_n)\}_{n>0}$ (resp. $\{b_{\log}(L_n/K_n)\}_{n>0}$) converges to $b(X_{\mathfrak{L}}/X_{\mathfrak{K}})$ (resp. $b_{\log}(X_{\mathfrak{L}}/X_{\mathfrak{K}})$).
\end{thm}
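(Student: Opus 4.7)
The plan is to combine a reduction to the case where the residue field extension of $X_{\mathfrak{L}}/X_{\mathfrak{K}}$ is trivial or purely inseparable (the hypothesis under which the geometric Lemma~\ref{lem:asfamily}(iii) has been established) with a threshold argument deducing convergence of the breaks from equality of the connected-component counts $\#\mathcal{F}^a$ and $\#\mathcal{F}_{\log}^a$.

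For the reduction, let $M_{\infty}/K_{\infty}$ be the maximal unramified subextension of $L_{\infty}/K_{\infty}$. Under Scholl's equivalence (Theorem~\ref{thm:sch}) this corresponds to an intermediate fields-of-norms extension $X_{\mathfrak{K}}\subseteq X_{\mathfrak{M}}\subseteq X_{\mathfrak{L}}$ with $X_{\mathfrak{M}}/X_{\mathfrak{K}}$ unramified, and the degree-stabilization part of the same theorem forces $M_n/K_n$ to be unramified and $k_{L_n}/k_{M_n}$ to be purely inseparable (or trivial) for all sufficiently large $n$. Using that Abbes-Saito's breaks are preserved under unramified base change on the base field (a standard compatibility, which one verifies by passing to the Galois closure and noting that the non-trivial part of the ramification filtration is concentrated on the inertia subgroup), we have
\[
b(L_n/K_n)=b(L_n/M_n),\qquad b(X_{\mathfrak{L}}/X_{\mathfrak{K}})=b(X_{\mathfrak{L}}/X_{\mathfrak{M}}),
\]
and the analogous identities for the log breaks. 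Thus it suffices to prove the theorem under the assumption that $k_{X_{\mathfrak{L}}}/k_{X_{\mathfrak{M}}}$ is trivial or purely inseparable, in which case Lemma~\ref{lem:asfamily} applies.

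For the convergence step, set $b:=b(X_{\mathfrak{L}}/X_{\mathfrak{M}})$, and note that by Theorem~\ref{thm:sch} the degree $[L_n:M_n]$ is eventually constant equal to $[X_{\mathfrak{L}}:X_{\mathfrak{M}}]$. The function $a\mapsto \#\mathcal{F}^a(L)$ is non-decreasing (as $a$ increases, the Abbes-Saito affinoid shrinks, and connected components can only split, never merge, which one sees explicitly via the Galois closure where $\mathcal{F}^a$ is the quotient of the Galois group by a decreasing ramification filtration) and bounded above by $[L:K]$, so the set $\{a;\ \#\mathcal{F}^a(L)=[L:K]\}$ is an upward-closed interval with infimum $b(L/K)$. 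For any rational $a>b$ one has $\#\mathcal{F}^a(X_{\mathfrak{L}})=[X_{\mathfrak{L}}:X_{\mathfrak{M}}]$; Lemma~\ref{lem:asfamily}(iii) transfers this to $\#\mathcal{F}^a(L_n)=[L_n:M_n]$ for $n$ large, whence $b(L_n/M_n)\le a$. For any rational $a<b$ one has $\#\mathcal{F}^a(X_{\mathfrak{L}})<[X_{\mathfrak{L}}:X_{\mathfrak{M}}]$, and the same lemma gives $\#\mathcal{F}^a(L_n)<[L_n:M_n]$ for $n$ large, so $b(L_n/M_n)>a$. Letting $a\to b$ from above and below yields $b(L_n/M_n)\to b$, and the identical argument with $\mathcal{F}_{\log}^a$ produces the log statement.

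\textbf{Main obstacle.} The crux is the reduction step: one needs Scholl's equivalence to restrict compatibly to maximal unramified subextensions on both sides (guaranteed by the residue-degree stabilization in Theorem~\ref{thm:sch}), and Abbes-Saito's breaks for possibly non-Galois extensions must be shown invariant under unramified base change of the base field. Once this is in hand, the threshold argument is essentially formal, using only Lemma~\ref{lem:asfamily}(iii), Theorem~\ref{thm:sch}, and the monotonicity of $\#\mathcal{F}^a$.
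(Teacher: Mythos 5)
Your proposal follows the same two-step structure as the paper's own argument: first reduce by unramified base change to the case where $k_{X_{\mathfrak{L}}}/k_{X_{\mathfrak{K}}}$ is trivial or purely inseparable (so that Lemma~\ref{lem:asfamily} applies), then squeeze $b(L_n/K_n)$ between rationals above and below $b(X_{\mathfrak{L}}/X_{\mathfrak{K}})$ using the stabilization of degrees from Theorem~\ref{thm:sch} and the equality of component counts from Lemma~\ref{lem:asfamily}(iii). Your version is slightly more explicit than the paper's about how the unramified reduction is implemented (via the maximal unramified subextension $M_{\infty}$ and the corresponding $X_{\mathfrak{M}}$) and about why $\#\mathcal{F}^a$ is monotone, but the decomposition, the key lemma, and the limsup/liminf threshold argument are the same.
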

\begin{proof}
Since the non-log and log ramification filtrations are invariant under base change, so are the non-log and log ramification breaks. Hence, we may assume that the residue field extension of $X_{\mathfrak{L}}/X_{\mathfrak{K}}$ is either trivial or purely inseparable by replacing $K_{\infty}$ and $L_{\infty}$ by its maximal unramified extensions. We first prove in the non-log case. Recall that we have $[X_{\mathfrak{L}}:X_{\mathfrak{K}}]=[L_n:K_n]$ for all sufficiently large $n$ by Theorem~\ref{thm:sch}. For $r\in\Q_{>0}$ such that $b(X_{\mathfrak{L}}/X_{\mathfrak{K}})<r$, we have $\#\mathcal{F}^r(L_n)=\#\mathcal{F}^r(X_{\mathfrak{L}})=[L_n:K_n]$ for all sufficiently large $n$ by Lemma~\ref{lem:asfamily}. Hence, we have $\limsup_n{b(L_n/K_n)}\le b(X_{\mathfrak{L}}/X_{\mathfrak{K}})$. For $r\in\Q_{>0}$ such that $b(X_{\mathfrak{L}}/X_{\mathfrak{K}})>r$, we have $\#\mathcal{F}^r(L_n)=\#\mathcal{F}^r(X_{\mathfrak{L}})<[L_n:K_n]$ for all sufficiently large $n$ by Lemma~\ref{lem:asfamily} and the definition of $\mathcal{F}^r$. Hence, we have $\liminf_n{b(L_n/K_n)}\ge b(X_{\mathfrak{L}}/X_{\mathfrak{K}})$. Therefore, we have $b(X_{\mathfrak{L}}/X_{\mathfrak{K}})\le\liminf_n{b(L_n/K_n)}\le\limsup_n{b(L_n/K_n)}\le b(X_{\mathfrak{L}}/X_{\mathfrak{K}})$, which implies the assertion. In the log case, the same argument replaced $b$ and $\mathcal{F}^r$ by $b_{\log}$ and $\mathcal{F}^r_{\log}$ works.
\end{proof}

The following representation version of Theorem~\ref{thm:normcompat} will be used in the proof of Main Theorem~\ref{thm:main}.
\begin{lem}\label{lem:normrep}
Let $F/\Q_p$ be a finite extension and $V\in\rep^f_{F}(G_{K_n})$ a finite $F$-representation for some $n$. We identify $G_{X_{\mathfrak{K}}}$ with $G_{K_{\infty}}$ by the equivalence in Theorem~\ref{thm:sch}.
\begin{enumerate}
\item For $m\ge n$, let $L_m$ (resp. $L_{\infty}$, $X'$) be the finite Galois extension corresponding to the kernel of the action of $G_{K_m}$ (resp. $G_{K_{\infty}}$, $G_{X_{\mathfrak{K}}}$) on $V$. Then, $L_{\infty}$ corresponds to $X'$ under the equivalence in Theorem~\ref{thm:sch} and $\{L_m\}_{m\ge n}$ is a strictly deeply ramified tower corresponding to $L_{\infty}$.
\item The sequences $\{\art^{\AS}(V|_{K_m})\}_{m\ge n}$ and $\{\sw^{\AS}(V|_{K_m})\}_{m\ge n}$ are eventually stationary and their limits are equal to $\art^{\AS}(V|_{X_{\mathfrak{K}}})$ and $\sw^{\AS}(V|_{X_{\mathfrak{K}}})$.
\end{enumerate}
\end{lem}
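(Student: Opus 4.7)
The plan for part (i) is as follows. Since $V$ is finite, the action of $G_{K_n}$ on $V$ has open kernel $H_V \subset G_{K_n}$, and $L_n/K_n$ is the corresponding finite Galois extension. For any $m \ge n$, the kernel of $G_{K_m} \to \mathrm{Aut}(V)$ is $H_V \cap G_{K_m}$, whose fixed field in $\overline{K}$ is the compositum $L_n K_m$; by definition this equals $L_m$. Passing to the union, $L_\infty = L_n K_\infty = \bigcup_m L_m$ is precisely the fixed field of $\ker(G_{K_\infty} \to \mathrm{Aut}(V))$. Under Scholl's equivalence $G_{K_\infty} \cong G_{X_{\mathfrak{K}}}$ (Theorem~\ref{thm:sch}), this kernel matches the kernel of $G_{X_{\mathfrak{K}}} \to \mathrm{Aut}(V)$, so functoriality of the equivalence yields $L_\infty \leftrightarrow X'$, i.e.\ $X_{\mathfrak{L}} = X'$. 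That $\{L_m\}_{m \ge n}$ is strictly deeply ramified is the content of the paragraph following Theorem~\ref{thm:sch}, applied to the finite extension $L_n/K$.

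For part (ii), first note that by Theorem~\ref{thm:sch}, $[L_m:K_m]$ stabilizes to $[X_{\mathfrak{L}}:X_{\mathfrak{K}}]$, so for $m$ sufficiently large the natural surjection $G_{L_m/K_m} \twoheadrightarrow G := G_{L_\infty/K_\infty} \cong G_{X_{\mathfrak{L}}/X_{\mathfrak{K}}}$ is an isomorphism. Both sides of the claimed equality depend only on $V$ viewed through the (common) quotient $G$ together with the non-log (respectively log) upper-numbering ramification filtration induced on $G$ from $G_{L_m/K_m}$ and from $G_{X_{\mathfrak{L}}/X_{\mathfrak{K}}}$. Denoting these filtrations by $\{G^a_m\}$ and $\{G^a_\infty\}$, the plan is to show that, for all sufficiently large $m$, $G^a_m = G^a_\infty$ for every $a$ (and similarly for the log versions); this immediately implies both the eventual stationarity of the sequences and their limits.

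To compare the two filtrations I would exploit that $G$ is finite, with only finitely many normal subgroups $N_1,\dots,N_k$, that each $G^a$ is itself normal in $G$, and that, by the compatibility of Abbes--Saito's upper-numbering filtration with quotients (a Herbrand-type property in \cite{AS}), for each normal $N \triangleleft G$ one has
\[
\inf\{a : G^a_m \subset N\} = b(L_m^N/K_m), \qquad \inf\{a : G^a_\infty \subset N\} = b(X_{\mathfrak{L}}^N/X_{\mathfrak{K}}),
\]
the right-hand sides being the non-log ramification breaks of the corresponding Galois extensions with group $G/N$. The tower $\{L_m^N\}_{m \ge n}$ is strictly deeply ramified and corresponds under Scholl's equivalence to $X_{\mathfrak{L}}^N/X_{\mathfrak{K}}$, so Theorem~\ref{thm:normcompat} gives $b(L_m^N/K_m) \to b(X_{\mathfrak{L}}^N/X_{\mathfrak{K}})$. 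By Hasse--Arf (Theorem~\ref{thm:mixed} on the mixed-characteristic side, Theorem~\ref{thm:Xiao} on the fields-of-norms side, both applied to the regular representation of $G/N$), these breaks lie in a fixed discrete subset of $\Q$, so each convergent sequence stabilizes. Taking $m$ large enough to stabilize all $k$ breaks simultaneously, and using that the filtration is recovered from the breaks via $G^a = \bigcap\{N_i : a \ge b(N_i)\}$, yields $G^a_m = G^a_\infty$ for all $a$. The identical argument with the log version of Theorem~\ref{thm:normcompat} and log Hasse--Arf handles the log filtrations.

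The main obstacle I anticipate is the clean invocation of the quotient-compatibility of Abbes--Saito's upper numbering filtration, together with a careful check that it really matches the naive break $b(L_m^N/K_m)$ in our setting: this is the Herbrand-type input that converts the a~priori infinite comparison of filtrations into finitely many numerical comparisons, each of which is then exactly handled by Theorem~\ref{thm:normcompat}. All other ingredients (stabilization of degrees, Hasse--Arf, and the Galois correspondence for subextensions under Scholl's equivalence) are then essentially bookkeeping.
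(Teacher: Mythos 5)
Your argument for part (i) is correct and in fact slightly more direct than the paper's: you identify $L_\infty = L_nK_\infty = \bigcup_m L_m$ at once, whereas the paper first shows $\bigcup_m L_m \subset L_\infty$ via a matrix presentation and then closes the gap with a degree count $[L_\infty:K_\infty] \le [L_m:K_m] = [\bigcup_m L_m:K_\infty]$.

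For part (ii), you take a genuinely different route, and it has a real gap. The paper decomposes $V|_{X_{\mathfrak{K}}}$ into irreducibles via Maschke's theorem (after enlarging $m$ so that $G_{L_\infty/K_\infty}\to G_{L_m/K_m}$ is an isomorphism); for each irreducible piece $V^\lambda$ the Artin conductor collapses to the single number $b(L_m^\lambda/K_m)\cdot\dim V^\lambda$, so Theorem~\ref{thm:normcompat} directly gives convergence of conductors, and Theorem~\ref{thm:mixed} applied to the honest representation $V|_{K_m}$ gives integrality, hence stabilization. You instead try to prove the stronger statement that the entire non-log and log filtrations on $G$ agree for large $m$, reconstructing them from the breaks $b(L_m^N/K_m)$ over all normal $N\triangleleft G$. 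Two points in this plan need repair. First, the reconstruction formula $G^a = \bigcap\{N_i : a \ge b(N_i)\}$ is not correct as written: by the conventions underlying the paper's formula for $\art^{\AS}$, the filtration jumps at a break in such a way that $G^{b(N)}\not\subset N$ while $G^a\subset N$ for $a>b(N)$, so the inequality should be strict; you already flag this as your main worry, and it is genuine. Second, and more seriously, your claim that ``these breaks lie in a fixed discrete subset of $\Q$'' by Hasse--Arf applied to the \emph{regular} representation of $G/N$ is not justified. Hasse--Arf (Theorem~\ref{thm:mixed}, Theorem~\ref{thm:Xiao}) constrains \emph{conductors}, and the conductor of the regular representation is a weighted sum over all jumps, which can remain integral while individual breaks vary continuously. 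What does work is to apply Hasse--Arf to each \emph{irreducible} representation $W$ of $G/N$: an irreducible has a single jump, so $\art^{\AS}(W)=b_W\dim W\in\Z$ forces $b_W\in\tfrac1{\dim W}\Z$, and $b(L_m^N/K_m)=\max_W b_W^{(m)}$ lies in the finite union $\bigcup_W \tfrac1{\dim W}\Z$. This is, in essence, exactly the Maschke reduction the paper performs — so once you correct the invocation you have converged to the paper's argument anyway, and the detour through the full filtration and the Herbrand-type quotient compatibility is avoidable.
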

\begin{proof}
\begin{enumerate}
\item The first assertion is trivial. We prove the second assertion. Since $G_{L_n}\cap G_{K_m}=G_{L_m}$ for all $m\ge n$, we have $L_m=L_nK_m$. Therefore, $\{L_m\}$ is a strictly deeply ramified tower corresponding to $L'_{\infty}:=\cup_mL_m$. Hence, we have only to prove that $L_{\infty}=L'_{\infty}$. Let $\rho:G_{K_n}\to GL(V)$ be a matrix presentation of $V$. By the commutative diagram
\[\xymatrix{
1\ar[r]&G_{L_{\infty}}\ar[r]^{\inc.}&G_{K_{\infty}}\ar@{^(->}[d]^{\can.}\ar[r]^{\rho}&GL(V)\ar[d]^{\id}\\
1\ar[r]&G_{L_m}\ar[r]^{\inc.}&G_{K_m}\ar[r]^{\rho|_{G_{K_m}}}&GL(V),
}\]
where the horizontal sequences are exact, we obtain a canonical injection $G_{L_{\infty}}\hookrightarrow G_{L_m}$. Therefore, we have $L_m\subset L_{\infty}$, hence, $L'_{\infty}\subset L_{\infty}$. To prove the converse, we have only to prove $[L_{\infty}:K_{\infty}]\le [L'_{\infty}:K_{\infty}]$. Since $(K_{\infty}\cap L_n)/K_n$ is finite, we have $K_{\infty}\cap L_n=K_m\cap L_n$ for sufficiently large $m$, in particular,
\[
[L'_{\infty}:K_{\infty}]=[L_nK_{\infty}:K_{\infty}]=[L_n: K_{\infty}\cap L_n]=[L_n:K_m\cap L_n]=[L_nK_m:K_m]=[L_m:K_m].
\]
Then, the assertion follows from
\[
[L_{\infty}:K_{\infty}]=\#\rho(G_{K_{\infty}})\le \#\rho(G_{K_m})=[L_m:K_m].
\]
\item By Maschke's theorem, there exists an irreducible decomposition $V|_{X_{\mathfrak{K}}}=\oplus_{\lambda}V^{\lambda}$ with $V^{\lambda}\in\rep^f_F(G_{X_{\mathfrak{K}}})$. We choose $m_0\in\N$ such that the canonical map $G_{L_{\infty}/K_{\infty}}\to G_{L_m/K_m}$ is an isomorphism for all $m\ge m_0$. Then, $V^{\lambda}$ is $G_{K_m}$-stable for all $m\ge m_0$. Moreover, $V^{\lambda}|_{K_m}\in\rep^f_F(G_{K_m})$ is irreducible. For $m\ge m_0$, let $L_m^{\lambda}/K_m$ be the finite Galois extension corresponding to the kernel of the action of $G_{K_m}$ on $V^{\lambda}$. By (i), $\mathfrak{L}^{\lambda}=\{L_m^{\lambda}\}_{m\ge m_0}$ is a strictly deeply ramified tower and $X_{\mathfrak{L}^{\lambda}}$ corresponds to the kernel of the action of $G_{X_{\mathfrak{K}}}$ on $V^{\lambda}$. By the irreducibility of the action of $G_{K_m}$ (resp. $G_{X_{\mathfrak{K}}}$) on $V^{\lambda}$, we have
\[
\art^{\AS}(V^{\lambda}|_{K_m})=b(L_m^{\lambda}/K_m)\dim_F(V),
\]
\[
\art^{\AS}(V^{\lambda}|_{X_{\mathfrak{K}}})=b(X_{\mathfrak{L}^{\lambda}}/X_{\mathfrak{K}})\dim_F(V)
\]
for $m\ge m_0$. By applying Theorem~\ref{thm:normcompat} to each $\mathfrak{L}^{\lambda}$, we have $\lim_{m\to\infty}\art(V|_{K_m})=\art(V|_{X_{\mathfrak{K}}})$. Note that $K_m$ is not absolutely unramified for sufficiently large $m$. Indeed, the definition of strictly deeply ramifiedness implies that $K_{m+1}/K_m$ is not unramified. By Theorem~\ref{thm:mixed}, the convergence of $\{\art(V|_{K_m})\}$ implies that $\{\art(V|_{K_m})\}$ is eventually stationary, which implies the assertion for Artin conductor. The assertion for Swan conductor follows from the same argument by replacing $\art$, $b$ by $\sw$, $b_{\log}$.
\end{enumerate}
\end{proof}

\begin{rem}[Some Hasse-Arf property]
Let notation be as in Lemma~\ref{lem:normrep} and let $p=2$. By Theorem~\ref{thm:Xiao} and Lemma~\ref{lem:normrep}~(ii), $\sw(V|_{K_m})$ is an integer for all sufficiently large $m$ (cf. Theorem~\ref{thm:mixed}).
\end{rem}

\section{Differential modules associated to de Rham representations}\label{sec:diff}
In this section, we first construct $\N_{\dR}(V)$ for de Rham representations $V\in\rep_{\Q_p}(\gk)$ as a $(\varphi,\Gamma_K)$-module (\S~\ref{subsec:const}). Then, we prove that $\N_{\dR}(V)$ can be endowed with a $(\varphi,\nabla)$-module structure (\S~\ref{subsec:diffaction}). Then, we define Swan conductors of de Rham representations (\S~\ref{subsec:swan}) and we will prove that the differential Swan conductor of $\N_{\dR}(V)$ and Swan conductor of $V$ are compatible (\S~\ref{subsec:main}).

Throughout this section, let $K$ be a complete discrete valuation field of mixed characteristic $(0,p)$. Except for \S~\ref{subsec:swan}, we assume that $K$ satisfies Assumption~\ref{ass:relative}, and we use the notation of \S~\ref{subsec:Galois}.

\subsection{Calculation of horizontal sections}\label{subsec:horizontal}
When $k_K$ is perfect, $\N_{\dR}(V)$ is constructed by gluing a certain family of vector bundles over $K_n[[t]]$ for $n\gg 0$ (\cite[II~1]{Ber}). When $k_K$ is not perfect, $K_n[[t]]$ should be replaced by the ring of horizontal sections of $K_n[[u,t_1,\dots,t_d]]$ with respect to the connection $\nabla^{\geom}$, which will be studied in this subsection.

\begin{dfn}
\begin{enumerate}
\item We have a canonical $K_n$-algebra injection
\[
K_n[[t,u_1,\dots,u_d]]\to \B_{\dR}^+
\]
since $\B_{\dR}^+$ is a complete local $K^{\alg}$-algebra. We call the topology of $K_n[[t,u_1,\dots,u_d]]$ as a subring of $\B_{\dR}^+$ (endowed with the canonical topology) the canonical topology. Note that $K_n[[t,u_1,\dots,u_d]]$ is stable by $\gk$ and the $\gk$-action factors through $\Gamma_K$.
\item Let $F$ be a complete valuation field. The Fr\'echet topology on
\[
F[[X_1,\dots,X_n]]\cong \varprojlim_m{F[X_1,\dots,X_n]/(X_1,\dots,X_n)^m}
\]
is the inverse limit topology, where $F[X_1,\dots,X_n]/(X_1,\dots,X_n)^m$ is endowed with the (unique) topological $F$-vector space structure. Note that $F[[X_1,\dots,X_n]]$ is a Fr\'echet space, and that the $(X_1,\dots,X_n)$-adic topology of $F[[X_1,\dots,X_n]]$ is finer than the Fr\'echet topology.
\end{enumerate}
\end{dfn}

\begin{lem}
The Fr\'echet topology and the canonical topology of $K_n[[t,u_1,\dots,u_d]]$ are equivalent. In particular, $K_n[[t,u_1,\dots,u_d]]$ is a closed subring of $\B_{\dR}^+$.
\end{lem}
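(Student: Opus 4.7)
The plan is to reduce the comparison of topologies to the quotients $\B_{\dR}^+/(\ker\theta_{\C_p/K})^k$ and apply the uniqueness of Hausdorff topological vector space topology on finite-dimensional vector spaces over the complete valued field $K_n$.

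First, I would use the description $\B_{\dR}^+=\B_{\dR}^{\nabla+}[[u_1,\dots,u_d]]$ together with the fact that $\B_{\dR}^{\nabla+}$ is a complete discrete valuation ring with uniformizer $t$ and residue field $\C_p$. This identifies $\ker\theta_{\C_p/K}$ with the ideal generated by $t,u_1,\dots,u_d$, and shows that the associated graded ring is canonically $\C_p[t,u_1,\dots,u_d]$ (graded by total degree). By definition, the canonical topology on $\B_{\dR}^+$ is the inverse limit over $k$ of the $K$-Banach topologies on $\B_{\dR}^+/(t,u_1,\dots,u_d)^k$.

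Second, I would verify the identity
\[
K_n[[t,u_1,\dots,u_d]]\cap (t,u_1,\dots,u_d)^k\B_{\dR}^+ \;=\; (t,u_1,\dots,u_d)^kK_n[[t,u_1,\dots,u_d]].
\]
Given $f\in K_n[[t,u_1,\dots,u_d]]$, split $f=f_{<k}+f_{\ge k}$ with $f_{<k}$ a polynomial of degree $<k$ in $t,u_j$ with coefficients in $K_n\subseteq\C_p$; if $f\in (t,u_1,\dots,u_d)^k\B_{\dR}^+$ then so is $f_{<k}$, and inspecting the lowest nonzero homogeneous component of $f_{<k}$ in the graded ring $\C_p[t,u_1,\dots,u_d]$ forces $f_{<k}=0$ by linear independence of monomials. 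Consequently, for each $k$ the image of $K_n[[t,u_1,\dots,u_d]]$ in $\B_{\dR}^+/(t,u_1,\dots,u_d)^k$ is exactly the finite-dimensional $K_n$-subspace $K_n[t,u_1,\dots,u_d]/(t,u_1,\dots,u_d)^k$.

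Third, invoking the uniqueness of Hausdorff topological vector space topology on a finite-dimensional vector space over a complete valued field, the subspace topology on this finite-dimensional $K_n$-subspace, inherited from the $K$-Banach space $\B_{\dR}^+/(t,u_1,\dots,u_d)^k$, coincides with its canonical $K_n$-Banach topology. Taking the inverse limit over $k$ then identifies the subspace topology on $K_n[[t,u_1,\dots,u_d]]$ from $\B_{\dR}^+$ with the Fréchet topology. For the closedness statement, the Fréchet topology is complete as an inverse limit of finite-dimensional (hence complete) $K_n$-Banach spaces, so $K_n[[t,u_1,\dots,u_d]]$ is complete in the subspace topology inherited from $\B_{\dR}^+$ and therefore closed in $\B_{\dR}^+$. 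The main obstacle is the algebraic identity comparing the intersection $K_n[[t,u_1,\dots,u_d]]\cap(\ker\theta_{\C_p/K})^k$ with $(\ker\theta_{\C_p/K})^k K_n[[t,u_1,\dots,u_d]]$, which requires a careful degree inspection in the associated graded ring; everything else is a formal consequence of this identity.
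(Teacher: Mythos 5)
Your proof is correct and follows essentially the same route as the paper's: identify $K_n[[t,u_1,\dots,u_d]]$ with the inverse limit of the finite-dimensional quotients $V_m:=K_n[t,u_1,\dots,u_d]/(t,u_1,\dots,u_d)^m$, compare the two topologies level by level using the uniqueness of the Hausdorff topological $K_n$-vector space structure on a finite-dimensional space, and deduce closedness from completeness. The one place where you add genuine content is the explicit verification of the intersection identity $K_n[[t,u_1,\dots,u_d]]\cap(t,u_1,\dots,u_d)^k\B_{\dR}^+=(t,u_1,\dots,u_d)^kK_n[[t,u_1,\dots,u_d]]$ via the graded ring $\C_p[t,u_1,\dots,u_d]$, which is exactly what makes the map $V_m\to\B_{\dR}^+/(t,u_1,\dots,u_d)^m$ injective; the paper asserts this injectivity without proof, so your argument is a useful fill-in rather than a different approach.
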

\begin{proof}
Denote $V_m:=K_n[t,u_1,\dots,u_d]/(t,u_1,\dots,u_d)^m$ and we identify $K_n[[t,u_1,\dots,u_d]]$ with $\varprojlim_mV_m$. If we endow $V_m$ with the (unique) topological $K_n$-vector space structure, then the resulting inverse limit topology is the Fr\'echet topology. We have a canonical injection $V_m\to \B_{\dR}^+/(t,u_1,\dots,u_d)^m$. If we endow $V_m$ with the subspace topology as a subset of $\B_{\dR}^+/(t,u_1,\dots,u_d)^m$, which is endowed with the canonical topology, then the resulting inverse limit topology is the canonical topology. Since $\B_{\dR}^+/(t,u_1,\dots,u_d)^m$ is $K_n$-Banach space by definition, $V_m$ endowed with this topology is a topological $K_n$-vector space, which implies the assertion.
\end{proof}

\begin{notation}
For $n\in\N$, we denote by $K_n[[t,u_1,\dots,u_d]]^{\nabla}$ the subring $K_n[[t,u_1,\dots,u_d]]^{\nabla^{\geom}=0}=\B_{\dR}^{\nabla+}\cap K_n[[t,u_1,\dots,u_d]]$ of $\B_{\dR}^{\nabla+}$. We call the subspace topology of $K_n[[t,u_1,\dots,u_d]]^{\nabla}$ as a subring of $\B_{\dR}^+$ (endowed with the canonical topology) the canonical topology. Note that $K_n[[t,u_1,\dots,u_d]]^{\nabla}$ is a closed subring of $\B_{\dR}^{\nabla+}$ since the connection $\nabla^{\geom}:\B_{\dR}^+\to \B_{\dR}^+\otimes_K \hat{\Omega}^1_K$ is continuous and $\B_{\dR}^{\nabla+}$ is closed in $\B_{\dR}^+$.
\end{notation}

\begin{lem}\label{lem:hori}
The ring $K_n[[t,u_1,\dots,u_d]]^{\nabla}$ is a complete discrete valuation ring with residue field $K_n$ and uniformizer $t$.
\end{lem}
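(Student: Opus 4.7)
The plan is to solve the horizontality condition by an explicit recursion on the coefficients of a formal power series expansion.

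Since $\partial_j$ is $\B_{\dR,\C_p/\Q_p}^+$-linear and $t,\ [\widetilde{t}_i] \in \B_{\dR,\C_p/\Q_p}^+$, one has $\partial_j(t)=0$ and $\partial_j(u_i) = \delta_{ij}$, while $\partial_j|_{K_n}$ is the unique extension of $\partial/\partial t_j$ from $K$ along the separable algebraic extension $K_n/K$. I would expand a general element as $f = \sum_{\alpha\in\N,\,\beta\in\N^d} a_{\alpha,\beta}\,t^{\alpha} u^{\beta}$ with $a_{\alpha,\beta}\in K_n$ and equate coefficients in $\partial_j(f)=0$; this gives the recursion
\[
a_{\alpha,\beta+e_j} = -\frac{1}{\beta_j+1}\,\partial_j(a_{\alpha,\beta}),
\]
whose iteration yields
\[
a_{\alpha,\beta} = \frac{(-1)^{|\beta|}}{\beta!}\,\partial^{\beta}(a_{\alpha,0}),
\]
the consistency of which follows from the commutativity of $\partial_1,\dots,\partial_d$ on $K_n$.

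This suggests defining the formal Taylor embedding
\[
E: K_n \longrightarrow K_n[[u_1,\dots,u_d]],\quad a \longmapsto \sum_{\beta\in\N^d}\frac{(-u)^{\beta}}{\beta!}\,\partial^{\beta}(a).
\]
A short calculation using the multinomial Leibniz identity $\partial^{\beta}(ab) = \sum_{\gamma+\delta=\beta}\binom{\beta}{\gamma}\partial^{\gamma}(a)\,\partial^{\delta}(b)$ shows that $E$ is a ring homomorphism, and $E(a)|_{u=0}=a$ shows injectivity. Combined with the preceding paragraph, this yields the identity
\[
K_n[[t,u_1,\dots,u_d]]^{\nabla} = E(K_n)[[t]]
\]
of subrings of $K_n[[t,u_1,\dots,u_d]]$: every horizontal element is uniquely of the form $\sum_{\alpha} E(a_\alpha)\, t^\alpha$, and every such element is visibly annihilated by each $\partial_j$ (since $t$ and the $E(a_\alpha)$ are).

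The right-hand side is a formal power series ring in the variable $t$ over the subfield $E(K_n) \cong K_n$, hence a complete discrete valuation ring with uniformizer $t$; composing the quotient by $t$ with evaluation at $u = 0$ identifies its residue field canonically with $K_n$. I do not anticipate any serious obstacle, since every step is a direct verification; the only mild subtlety is the commutativity of the $\partial_j$'s on $K_n$, which is automatic from the uniqueness of extensions of derivations along a separable algebraic extension.
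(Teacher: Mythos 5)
Your proof is correct and follows essentially the same route as the paper's: the key object in both cases is the Taylor embedding $a\mapsto\sum_\beta\frac{(-u)^\beta}{\beta!}\partial^\beta(a)$, which the paper packages as a continuous retraction $f:K_n[[t,u_1,\dots,u_d]]\to K_n[[t,u_1,\dots,u_d]]^\nabla$ (surjective since it is the identity on horizontal elements, then shown injective on $K_n[[t]]$), whereas you arrive at it by solving $\partial_j f=0$ coefficient by coefficient. The two presentations differ only in organization, not in substance.
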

\begin{proof}
We define a map
\[
f:K_n[t,u_1,\dots,u_d]\to K_n[[t,u_1,\dots,u_d]];x\mapsto \sum_{(n_1,\dots,n_d)\in\N^d}\frac{(-1)^{n_1+\dots+n_d}}{n_1!\dots n_d!}u_1^{n_1}\dots u_d^{n_d}\partial_1^{n_1}\circ\dots\circ\partial_d^{n_d}(x).
\]
It is easy to check that this is an abstract ring homomorphism such that $\mathrm{Im}(f)\subset K_n[[t,u_1,\dots,u_d]]^{\nabla}$, $f(tx)=tf(x)$ for all $x\in K_n[t,u_1,\dots,u_d]$ and $f(u_j)=0$ for all $j$. In particular, $f$ is $(t,u_1,\dots,u_d)$-adically continuous. Passing to the completion, we obtain a ring homomorphism $f:K_n[[t,u_1,\dots,u_d]]\to K_n[[t,u_1,\dots,u_d]]^{\nabla}$. Since $f$ is identity on $K_n[[t,u_1,\dots,u_d]]^{\nabla}$, $f$ is surjective and $f$ induces a surjection
\[
\bar{f}:K_n[[t]]\cong K_n[[t,u_1,\dots,u_d]]/(u_1,\dots,u_d)\to K_n[[t,u_1,\dots,u_d]]^{\nabla},
\]
where the first isomorphism is induced by the inclusion $K_n[[t]]\subset K_n[[t,u_1,\dots,u_d]]$. Since $\bar{f}(t)=t$ is non-zero, $\bar{f}$ is an isomorphism, which implies the assertion.
\end{proof}

\begin{lem}\label{lem:comparisontop}
The $t$-adic topology is finer than the canonical topology in $K_n[[t,u_1,\dots,u_d]]^{\nabla}$.
\end{lem}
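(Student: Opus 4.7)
The plan is to reduce the comparison to the analogous comparison for the ambient ring $K_n[[t,u_1,\dots,u_d]]$, where the paper has just recorded (in the preceding lemma and in the definition of the Fr\'echet topology) that the Fr\'echet topology agrees with the canonical topology and is coarser than the $(t,u_1,\dots,u_d)$-adic topology. Concretely, to say that the $t$-adic topology of $R:=K_n[[t,u_1,\dots,u_d]]^{\nabla}$ is finer than the canonical topology is equivalent to saying that every canonical-open neighborhood of $0$ in $R$ contains $t^m R$ for some $m\in\N$.

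First I would lift such a neighborhood $U$ to a canonical-open (equivalently, Fr\'echet-open) neighborhood $\tilde{U}$ of $0$ in $K_n[[t,u_1,\dots,u_d]]$ with $U=\tilde{U}\cap R$. Since $(t,u_1,\dots,u_d)^m$ is the preimage of $0$ under the projection $K_n[[t,u_1,\dots,u_d]]\twoheadrightarrow V_m$ onto the finite-dimensional $K_n$-vector space $V_m=K_n[t,u_1,\dots,u_d]/(t,u_1,\dots,u_d)^m$ used to define the Fr\'echet topology, and any Fr\'echet-open neighborhood of $0$ contains such a preimage for $m$ sufficiently large, we obtain
\[
\tilde{U}\supset (t,u_1,\dots,u_d)^m\cdot K_n[[t,u_1,\dots,u_d]]
\]
for some $m\in\N$. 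Second, since $t^m\in (t,u_1,\dots,u_d)^m$, multiplication by $t^m$ carries $K_n[[t,u_1,\dots,u_d]]$ into $(t,u_1,\dots,u_d)^m\cdot K_n[[t,u_1,\dots,u_d]]$. Intersecting with $R$ gives $t^m R\subset \tilde{U}\cap R=U$, which is exactly what is needed.

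There is essentially no obstacle: the statement is a formal compatibility between the inverse-limit Fr\'echet topology and an adic topology, and the subring structure of $R$ enters only through the trivial inclusion $R\subset K_n[[t,u_1,\dots,u_d]]$. In particular, no properties of the connection $\nabla^{\geom}$, nor the complete discrete valuation ring structure of $R$ from Lemma~\ref{lem:hori}, are actually needed.
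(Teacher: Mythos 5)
Your proof is correct, and your observation that Lemma~\ref{lem:hori} is unnecessary is a genuine (if small) simplification over the paper's argument. The paper identifies $R:=K_n[[t,u_1,\dots,u_d]]^{\nabla}$ with $\varprojlim_m R/t^mR$ and compares two inverse limit topologies on it: the one coming from the discrete topology on each $R/t^mR$ (the $t$-adic topology) and the one coming from the subspace topology under the injection $R/t^mR\hookrightarrow K_n[t,u_1,\dots,u_d]/(t,u_1,\dots,u_d)^m$ (the canonical topology); the comparison then follows because discrete is finest. That injectivity is exactly where the paper invokes Lemma~\ref{lem:hori} (together with a d\'evissage in the $t$-filtration). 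You instead work directly with a neighborhood base of $0$: a canonical neighborhood in $R$ is the trace of a Fr\'echet neighborhood in the ambient ring, any such contains $(t,u_1,\dots,u_d)^m$ for large $m$, and the inclusion $t^mR\subset (t,u_1,\dots,u_d)^m\cdot K_n[[t,u_1,\dots,u_d]]\cap R$ is immediate once one notes $t\in R$. In substance both proofs rest on the trivial containment of the ideal $(t^m)$ inside $(t,u_1,\dots,u_d)^m$; your phrasing just avoids the need for the injective map $R/t^mR\hookrightarrow V_m$, hence the DVR structure of $R$, which makes the argument more self-contained.
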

\begin{proof}
Denote $K_n[[t,u_1,\dots,u_d]]^{\nabla}$ by $R$ and we identify $R$ with $\varprojlim_mR/t^mR$. If we endow $R/t^mR$ with the discrete topology, then the resulting inverse limit topology is the $t$-adic topology. By Lemma~\ref{lem:hori} and d\'evissage, the canonical map $R/t^mR\to K_n[t,u_1,\dots,u_d]/(t,u_1,\dots,u_d)^m$ is injective. If we endow $R/t^mR$ with the subspace topology as a subset of $K_n[t,u_1,\dots,u_d]/(t,u_1,\dots,u_d)^m$, which is endowed with the (unique) topological $K_n$-vector space structure, then the resulting inverse limit topology is the canonical topology. Since the discrete topology is the finest topology, we obtain the assertion.
\end{proof}

The map $f$ defined in the proof of Lemma~\ref{lem:hori} is continuous when $K=\wtil{K}$:

\begin{lem}\label{lem:contmap}
Let $\varphi:\oo_{\wtil{K}}\to\oo_{\wtil{K}}$ be the unique Frobenius lift characterized by $\varphi(t_j)=t_j^p$ for all $1\le j\le d$. Then, the map $f:\wtil{K}_n[[t,u_1,\dots,u_d]]\to \wtil{K}_n[[t,u_1,\dots,u_d]]^{\nabla}$ defined in the proof of Lemma~\ref{lem:hori} is continuous with respect to the Fr\'echet topologies.
\end{lem}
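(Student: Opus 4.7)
The plan is to reduce continuity of $f$ to a finite-dimensional question. By definition, the Fr\'echet topology on $\wtil{K}_n[[t,u_1,\dots,u_d]]$ is the inverse limit topology of the finite-dimensional $\wtil{K}_n$-vector spaces $V_{m}:=\wtil{K}_n[t,u_1,\dots,u_d]/(t,u_1,\dots,u_d)^{m}$, each carrying its unique topological $\wtil{K}_n$-vector space structure, and the target $\wtil{K}_n[[t,u_1,\dots,u_d]]^{\nabla}$ inherits the subspace topology. It thus suffices to verify, for every $m'\in\N$, that the composite $f_{m'}:\wtil{K}_n[[t,u_1,\dots,u_d]]\xrightarrow{f}\wtil{K}_n[[t,u_1,\dots,u_d]]\twoheadrightarrow V_{m'}$ is continuous.

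The first step is to show that $f_{m'}$ descends through the quotient map onto $V_{m'}$. Since $u^{\und{n}}\in(u_1,\dots,u_d)^{|\und{n}|}$ vanishes in $V_{m'}$ for $|\und{n}|\ge m'$, and each $\partial_j$ lowers the $(t,u_1,\dots,u_d)$-adic filtration by at most one, the congruence $x\equiv x'\pmod{(t,u_1,\dots,u_d)^{m'}}$ implies $u^{\und{n}}\partial^{\und{n}}(x)\equiv u^{\und{n}}\partial^{\und{n}}(x')\pmod{(t,u_1,\dots,u_d)^{m'}}$ for every $\und{n}$. Hence $f_{m'}$ factors as $\wtil{K}_n[[t,u_1,\dots,u_d]]\twoheadrightarrow V_{m'}\xrightarrow{\bar f_{m'}}V_{m'}$, and continuity of $f_{m'}$ reduces to continuity of the $\Q_p$-linear endomorphism $\bar f_{m'}$. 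Expanding $\partial_j$ on $V_{m'}$ as a derivation (with $\partial_j(t)=0$, $\partial_j(u_i)=\delta_{ij}$, and $\partial_j$ already defined on the coefficient field $\wtil{K}_n$), the Leibniz rule exhibits $\bar f_{m'}$ as a finite $\Q_p$-linear combination of operators that multiply by a monomial in $u$ and apply $\partial^{\und{n}_1}$ (with $|\und{n}_1|<m'$) coefficient-wise to elements of $\wtil{K}_n$. Multiplication by a monomial in $u$ is $\wtil{K}_n$-linear, hence continuous on $V_{m'}$, so continuity of $\bar f_{m'}$ is reduced to boundedness of each $\Q_p$-linear operator $\partial_j:\wtil{K}_n\to\wtil{K}_n$.

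The remaining ingredient is boundedness of $\partial_j$ on $\wtil{K}_n$. On $\wtil{K}$ it is immediate from absolute unramifiedness: by \S~\ref{subsec:der}, $\hat\Omega^1_{\oo_{\wtil{K}}}$ is $\oo_{\wtil{K}}$-free with basis $\{dt_j\}$, so its dual basis $\{\partial_j\}$ preserves $\oo_{\wtil{K}}$, yielding $\|\partial_j|_{\wtil{K}}\|\le 1$. For the finite extension $\wtil{K}_n/\wtil{K}$ I would choose a primitive element $\alpha$ with $\wtil{K}_n=\wtil{K}(\alpha)$ of degree $r$, write $x=\sum_{i<r}a_i\alpha^i$ with $a_i\in\wtil{K}$, and apply the Leibniz rule to obtain
\[
\partial_j(x)=\sum_{i<r}\partial_j(a_i)\alpha^i+\Big(\sum_{i<r}ia_i\alpha^{i-1}\Big)\partial_j(\alpha).
\]
The first sum is controlled by the operator norm of $\partial_j$ on $\wtil{K}$, while the second is $\wtil{K}$-linear in the $a_i$ and is therefore automatically continuous on the finite-dimensional $\wtil{K}$-vector space $\wtil{K}_n$; the two together yield a finite operator norm for $\partial_j$ on $\wtil{K}_n$. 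The only real subtlety is to verify that the series defining $f$ really does truncate to a finite sum modulo each $(t,u_1,\dots,u_d)^{m'}$; after this reduction the argument is elementary. The Frobenius lift hypothesis $\varphi(t_j)=t_j^p$ fixes a particular $\varphi$ for use in later sections but plays no role in this continuity argument.
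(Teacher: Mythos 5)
Your proof is correct, and it takes a genuinely different route from the one in the paper. The paper establishes the much stronger integrality statement $\partial_j^m(\oo_{\wtil{K}})\subset m!\,\oo_{\wtil{K}}$, proved via the commutation relation $\partial_j\circ\varphi^i=p^it_j^{p^i-1}\varphi^i\circ\partial_j$, the presentation $\oo_{\wtil{K}}=\varphi^i(\oo_{\wtil{K}})[t_1,\dots,t_d]$ and Leibniz's rule; this is exactly why the Frobenius lift is in the hypothesis. That estimate makes each operator $\frac{1}{\und{n}!}\partial^{\und{n}}$ of norm $\le 1$ on $\wtil{K}$, i.e.\ a uniform bound over all $\und{n}$. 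Your observation is that such a uniform bound is unnecessary for continuity in the Fr\'echet (inverse-limit) topology: after factoring through $V_{m'}$ (which you verify correctly --- $u^{\und{n}}$ dies for $|\und{n}|\ge m'$, and $\partial_j$ drops the $(t,\und{u})$-adic filtration by at most one), $\bar f_{m'}$ is a finite $\Q_p$-linear combination of $\wtil{K}_n$-linear endomorphisms of the finite-dimensional $\wtil{K}_n$-space $V_{m'}$ composed with coefficient-wise applications of $\partial^{\und{n}}:\wtil{K}_n\to\wtil{K}_n$, $|\und{n}|<m'$; continuity of each piece suffices, with no control needed on how the operator norms of $\frac{1}{\und{n}!}\partial^{\und{n}}$ grow with $\und{n}$.

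Your boundedness argument for $\partial_j:\wtil{K}_n\to\wtil{K}_n$ via a primitive element and Leibniz is fine, and is actually a point worth flagging: the paper's displayed claim controls $\partial_j$ only on $\oo_{\wtil{K}}$, whereas the map $f$ of Lemma~4.1.4 has coefficients in $\wtil{K}_n$, where the integral estimate no longer holds (e.g.\ $\partial_j(t_j^{p^{-n}})=p^{-n}t_j^{p^{-n}-1}$ already introduces denominators growing with $n$, and $\frac{1}{m!}\partial_j^m(t_j^{p^{-n}})$ has valuation $-nm-v_p(m!)\to-\infty$). Your approach handles general $\wtil{K}_n$ directly because you only need boundedness of each single operator, not uniform integrality; that also explains why you correctly identify the Frobenius hypothesis as playing no role in your argument. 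The trade-off is that the paper's uniform estimate gives more (e.g.\ $f$ maps $\oo_{\wtil{K}}[[t,\und{u}]]$ into integral coefficients), at the price of a less elementary argument; your proof is shorter, self-contained, and does not use $\varphi$.

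Two small remarks of presentation: when you say \emph{boundedness of each $\Q_p$-linear operator $\partial_j$}, it would be cleaner to say explicitly that the coefficient-wise operator on $V_{m'}\cong\wtil{K}_n^{\oplus N}$ inherits the bound (this is automatic from your decomposition but worth a line); and $\{1,\alpha,\dots,\alpha^{r-1}\}$ need not be an $\oo_{\wtil{K}}$-basis of $\oo_{\wtil{K}_n}$, but since only boundedness up to a constant is needed, any $\wtil{K}$-basis gives an equivalent norm and the argument stands.
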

\begin{proof}
By the definition of $f$, we have only to prove the following claim: For all $m\in\N$ and $1\le j\le d$, we have
\[
\partial_j^m(\oo_{\wtil{K}})\subset m!\oo_{\wtil{K}}.
\]
We first note that by the commutativity of $d$ and $\varphi_*:\hat{\Omega}^1_{\oo_{\wtil{K}}}\to\hat{\Omega}^1_{\oo_{\wtil{K}}}$, we have
\begin{equation}\label{eq:der}
\partial_j\circ\varphi^i=p^it_j^{p^i-1}\varphi^i\circ\partial_j
\end{equation}
for all $i\in\N$ and $1\le j\le d$. We prove the claim. Fix $m$ and choose $i\in\N$ such that $v_p(m!)\le i$. Since $k_{\wtil{K}}=k_{\wtil{K}}^{p^i}[\bar{t}_1,\dots,\bar{t}_d]$, we have $\oo_{\wtil{K}}=\varphi^i(\oo_{\wtil{K}})[t_1,\dots,t_d]$ by Nakayama's lemma. By Leibniz rule, we have
\begin{equation}\label{eq:Leibniz}
\partial_j^m(\varphi^i(\lambda)t_1^{a_1}\dots t_d^{a_d})=\sum_{0\le m_0\le m}\binom{m}{m_0}\partial_j^{m_0}(\varphi^i(\lambda))t_1^{a_1}\dots\partial_j^{m-m_0}(t_j^{a_j})\dots t_d^{a_d}
\end{equation}
for $\lambda\in\oo_{\wtil{K}}$ and $a_1,\dots,a_d\in\N$. We have $\partial_j^{m_0}(\varphi^i(\lambda))\in p^i\oo_{\wtil{K}}\subset m!\oo_{\wtil{K}}$ unless $m_0=0$ by (\ref{eq:der}), and $\partial_j^m(t_j^{a_j})\in m!\oo_{\wtil{K}}$. Hence, the RHS of (\ref{eq:Leibniz}) belongs to $m!\oo_{\wtil{K}}$, which implies the claim.
\end{proof}

\subsection{Construction of $\N_{\dR}$}\label{subsec:const}
In this subsection, we will construct $\N_{\dR}(V)$, as a $(\varphi,\Gamma_K)$-module, for de Rham representations $V$. The idea of construction is similar to Berger's idea (\cite[\S~II]{Ber}), i.e., gluing a compatible family of vector bundles over $K_n[[t,u_1,\dots,u_d]]^{\nabla}$ to obtain a vector bundles over $\B^{\dagger,r}_{\rig}$.

\begin{notation}\label{notation:tacitly}
For $n\in\N$, put $r(n):=1/p^{n-1}(p-1)$. For $r\in\Q_{>0}$, let $n(r)\in\N$ be the least integer $n$ such that $r\ge r(n)$.

For each $K$, we fix $r_0$ such that $\A_K$ has enough $r_0$-units (Construction~\ref{con:oc}) and $\A_K^{\dagger,r}\cong\oo'((\pi'))^{\dagger,r/e_{K/\wtil{K}}}$ for all $r\in \Q_{>0}\cap (0,r_0)$ (Lemma~\ref{lem:ocexplicit}), where $\oo'$ is a Cohen ring of $k_{\E_K}$. In the rest of this section, let $r\in\Q_{>0}$, and when we consider $\A^{\dagger,r}_K$, $\B^{\dagger,r}_K$ and $\B^{\dagger,r}_{\rig,K}$, we tacitly assume $r\in\Q_{>0}\cap (0,r_0)$ unless otherwise is mentioned. Moreover, for $V\in\rep_{\Q_p}(G_K)$, we further choose $r_0$ sufficiently small (however, it depends on $V$) such that $\D^{\dagger,r}(V)$ admits a $\B^{\dagger,r}_K$-basis for all $r\in (0,r_0)$. Note that $\A^{\dagger,r}_K$, $\B^{\dagger,r}_K$ are PID's and $\B^{\dagger,r}_{\rig,K}$ is a B\'ezout integral domain.
\end{notation}

\begin{dfn}
Let $r>0$ and $n\in\N$ such that $n\ge n(r)$. For $x=\sum_{k\gg -\infty}p^k[x_k]\in \wtil{\B}^{\dagger,r}$, the sequence $\{\sum_{k\le N}p^k[x_k^{p^{-n}}]\}_{N\in\Z}$ converges in $\B_{\dR}^{\nabla+}$. Moreover, if we put
\[
\iota_n:\wtil{\B}^{\dagger,r}\to\B_{\dR}^{\nabla+};x\mapsto\sum_{k\gg-\infty}{p^k[x_k^{p^{-n}}]},
\]
then $\iota_n$ is a continuous ring homomorphism (see the proof of \cite[Lemme~7.2]{AB2} for details). Since $\B_{\dR}^{\nabla+}$ is Fr\'echet complete, $\iota_n$ extends to a continuous ring homomorphism
\[
\iota_n:\wtil{\B}^{\dagger,r}_{\rig}\to \B_{\dR}^{\nabla+}.
\]
We also denote by $\iota_n$ the restriction of $\iota_n$ to $\wtil{\B}^{\dagger,r}_{\rig,K}$ or $\B^{\dagger,r}_{\rig,K}$. Unless otherwise is mentioned, we also denote by $\iota_n$ the composite of $\iota_n$ and the inclusion $\B_{\dR}^{\nabla+}\subset \B_{\dR}^+$.
\end{dfn}

\begin{lem}\label{lem:unit}
For $x\in\B^{\dagger,r}_{\rig,K}$, the following are equivalent:
\[
x\in (\B^{\dagger,r}_{K})^{\times}\Leftrightarrow x\in (\B^{\dagger,r}_{\rig,K})^{\times}\Leftrightarrow x\text{ has no slopes}\Leftrightarrow  x\in (\wtil{\B}^{\dagger,r}_{K})^{\times}\Leftrightarrow x\in (\wtil{\B}^{\dagger,r}_{\rig,K})^{\times}.
\]
\end{lem}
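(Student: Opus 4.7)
The plan is to prove the five-way equivalence in two stages.

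\emph{First,} for both $\Gamma=\A_K$ and $\Gamma=\wtil{\A}_K$, \cite[Corollary~2.5.12]{Doc} (recalled in \S~\ref{sec:adeq}) identifies units of the B\'ezout analytic ring $\Gamma_{\an,r}$ with the elements having no slopes. This supplies the equivalences $(\B^{\dagger,r}_{\rig,K})^{\times}\Leftrightarrow\{\text{no slopes}\}\Leftrightarrow(\wtil{\B}^{\dagger,r}_{\rig,K})^{\times}$; the two occurrences of ``no slopes'' coincide because the partial valuations $v^{\le n}$ of Definition~2.0.2 are computed via the common embedding $\A_K\subset\wtil{\A}_K\subset W(\hat{\wtil{\E}}^{\alg})$, so the Newton polygon of $x\in\B^{\dagger,r}_{\rig,K}$ is intrinsic and does not depend on whether we view $x$ inside $\B^{\dagger,r}_{\rig,K}$ or inside $\wtil{\B}^{\dagger,r}_{\rig,K}$.

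\emph{Second,} it remains to show that any unit of a rig ring which already lies in the bounded subring is a unit there. The direction $\Leftarrow$ in both $(\B^{\dagger,r}_K)^{\times}\Leftrightarrow(\B^{\dagger,r}_{\rig,K})^{\times}$ and its tilde analogue is tautological. For the non-tilde converse I would use the description $\A^{\dagger,r}_K\cong\oo'((\pi'))^{\dagger,r/e_{K/\wtil{K}}}$ of Lemma~\ref{lem:ocexplicit} to realise $\B^{\dagger,r}_K=\A^{\dagger,r}_K[p^{-1}]$ as a PID in which $p$ is already invertible; any non-unit must therefore be divisible by some irreducible $f\in\A^{\dagger,r}_K$ coprime to $p$. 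A Weierstrass-preparation argument turns such an $f$ into a distinguished polynomial in $\pi'$ whose roots in $\overline{K}$ all have $p$-adic valuations in $(0,r/e_{K/\wtil{K}}]$, and each such valuation is a strictly positive slope of $f$, hence of $x$, in $\B^{\dagger,r}_{\rig,K}$. Contrapositively, a slope-free $x$ is already a unit of $\B^{\dagger,r}_K$. The tilde case is similar in spirit: over the perfect field $\wtil{\E}_K$, the ring $\wtil{\A}^{\dagger,r}_K=W(\wtil{\E}_K)_r$ admits the semi-unit decomposition \cite[Proposition~3.13]{mon} used in the proof of Lemma~2.0.6, and a no-slopes element is, up to a unit of $\wtil{\A}^{\dagger,r}_K$, of the form $p^N[\bar x]$ with $\bar x\in\wtil{\E}_K^{\times}$, which is then manifestly invertible inside $\wtil{\B}^{\dagger,r}_K$.

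The main obstacle is verifying in the non-tilde case that every irreducible non-unit factor in $\A^{\dagger,r}_K$ really does contribute a positive slope in $\B^{\dagger,r}_{\rig,K}$: one has to translate the algebraic datum of an irreducible factor into an honest root in the geometric annulus, and then into a Newton-polygon slope of the flavour handled by Lemmas~2.0.4--2.0.5. A more economical alternative that bypasses this bookkeeping is to prove the intersection identity $\B^{\dagger,r}_K=\B^{\dagger,r}_{\rig,K}\cap\wtil{\B}^{\dagger,r}_K$---which follows formally from $\A^{\dagger,r}_K=\A_K\cap\wtil{\A}^{\dagger,r}_K$ together with the remark that an element of $\B^{\dagger,r}_{\rig,K}$ that already lies in $\wtil{\B}^{\dagger,r}_K$ has bounded partial valuations $v^{\le n}$ and hence is forced into $\B_K$---and then to deduce the non-tilde converse from the tilde one by intersecting.
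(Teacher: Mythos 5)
Your first paragraph matches the paper's own proof: the partial valuations $v^{\le n}$ are computed via the common embedding of $\A_K$ and $\wtil{\A}_K$ into $\wtil{\A}=W(\wtil{\E})$, so the Newton polygon of $x$ is intrinsic, and the three conditions $(\B^{\dagger,r}_{\rig,K})^\times$, ``no slopes'', $(\wtil{\B}^{\dagger,r}_{\rig,K})^\times$ are equivalent by the cited corollary of Kedlaya.

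Where you diverge from the paper is in treating the two bounded-ring conditions as requiring a separate argument. In fact \cite[Corollary~2.5.12]{Doc}, in its full form, asserts not only ``unit of $\Gamma_{\an,r}$ iff no slopes'' (the part recalled in \S~\ref{sec:adeq}) but also that every unit of $\Gamma_{\an,r}$ already lies in $\Gamma_r[p^{-1}]$ and that $\Gamma_r[p^{-1}]^\times=\Gamma_r[p^{-1}]\cap\Gamma_{\an,r}^\times$. Applying this once with $\Gamma=\A_K$ and once with $\Gamma=\wtil{\A}_K$, together with the intrinsicality of slopes, already gives all five equivalences; that is the content of the paper's one-line proof, and your entire second stage is therefore re-proving part of a result you have already cited.

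Of your two re-derivations, the Weierstrass-preparation route is, as you acknowledge, not closed: the passage from an irreducible factor of $\A_K^{\dagger,r}\cong\oo'((\pi'))^{\dagger,r'}$ coprime to $p$ to a genuine segment of the Newton polygon of Definition~2.0.2 is not formal, because that polygon is built from the partial valuations $v^{\le n}$ rather than from zeros of a Laurent series, and an overconvergent Laurent series over a Cohen ring $\oo'$ with imperfect residue field has no ready-made root calculus. Your intersection route is the workable alternative, but the justification you give for $\B^{\dagger,r}_K=\B^{\dagger,r}_{\rig,K}\cap\wtil{\B}^{\dagger,r}_K$ --- that bounded partial valuations ``force'' $x$ into $\B_K$ --- conflates two things: landing in $\B_K=\A_K[p^{-1}]$ is a statement about the Cohen subring of $\wtil{\A}_K[p^{-1}]$, which is not read off from boundedness of $v^{\le n}$ alone. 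What you actually want is that the bounded elements of $\Gamma_{\an,r}$ are precisely $\Gamma_r[p^{-1}]$ (again in \S~2.5 of \cite{Doc}); given that, $x\in\B^{\dagger,r}_{\rig,K}\cap\wtil{\B}^{\dagger,r}_K$ is bounded and therefore lies in $\B^{\dagger,r}_K$. This repairs the argument, but at that point you are importing enough of \cite{Doc} that simply quoting the full Corollary~2.5.12, as the paper does, is strictly simpler.
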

\begin{proof}
Note that the slopes of $x$ as an element of $\B^{\dagger,r}_{\rig,K}$ or $\wtil{\B}^{\dagger,r}_{\rig,K}$ are the same by definition (cf. \S~\ref{sec:adeq}). Therefore, the assertion follows from \cite[Corollary~2.5.12]{Doc}.
\end{proof}

\begin{lem}\label{lem:kertheta}
Let $B$ be one of $\{\B^{\dagger,r}_K,\B^{\dagger,r}_{\rig,K},\wtil{\B}^{\dagger,r}_K,\wtil{\B}^{\dagger,r}_{\rig,K}\}$. Then, we have
\[
\ker{(\theta\circ\iota_n:B\to\mathbb{C}_p)}=\varphi^{n-1}(q)B
\]
for $n\ge n(r)$.
\end{lem}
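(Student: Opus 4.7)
The plan is to establish both inclusions, handling the four rings in parallel by first treating the perfect versions $\wtil{\B}^{\dagger,r}_K$ and $\wtil{\B}^{\dagger,r}_{\rig,K}$ and then descending to $\B^{\dagger,r}_K$ and $\B^{\dagger,r}_{\rig,K}$. Note that $\varphi^{n-1}(q)\in\B^{\dagger,r}_K$ for all the rings in question: indeed, $q=\pi/\varphi^{-1}(\pi)\in\A_K$ (one checks modulo $p$ that $q\equiv([\varepsilon^{1/p}]-1)^{p-1}$, using that in characteristic~$p$ one has $\varepsilon-1=(\varepsilon^{1/p}-1)^p$), and $\A_K$ is stable by $\varphi$.

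The first step is to verify the containment $\varphi^{n-1}(q)B\subseteq\ker(\theta\circ\iota_n)$, which reduces to $\theta\circ\iota_n(\varphi^{n-1}(q))=0$. From the defining formula $\iota_n(\sum p^k[x_k])=\sum p^k[x_k^{p^{-n}}]$ one reads off the identity $\iota_n\circ\varphi=\iota_{n-1}$ on Teichm\"uller representatives, hence on all of $\wtil{\B}^{\dagger,r}$ by continuity; iterating reduces the check to a single explicit computation of $\theta\circ\iota_1(q)$ using $q=\sum_{0\le i<p}[\varepsilon^{p^{-1}}]^i$ and the fact that $\theta$ sends $[\varepsilon^{p^{-k}}]$ to $\zeta_{p^k}$.

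For the reverse inclusion, I would first handle $\wtil{\B}^{\dagger,r}_K$. The input is Fontaine's identification $\ker(\theta|_{\wtil{\A}^+})=(q)$; transporting this through $\iota_n$ via the factorization of $\iota_n$ on the perfect side and taking Frobenius pushforwards produces $\varphi^{n-1}(q)$ as the principal generator. For $\wtil{\B}^{\dagger,r}_{\rig,K}$, the kernel is principal because this ring is a B\'ezout domain (in fact adequate, by Lemma~2.0.6), so I would use the uniqueness of slope factorizations (Lemma~2.0.5) to compare any generator with $\varphi^{n-1}(q)$ slope-by-slope and deduce they agree up to a unit. Then for $B=\B^{\dagger,r}_K$ or $\B^{\dagger,r}_{\rig,K}$, one has a priori
\[
\varphi^{n-1}(q)B \;\subseteq\; \ker(\theta\circ\iota_n)|_B \;\subseteq\; \varphi^{n-1}(q)\wtil{B}\cap B,
\]
and the remaining step is to identify the rightmost quantity with $\varphi^{n-1}(q)B$. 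This amounts to showing that $B/\varphi^{n-1}(q)B$ injects into $\wtil{B}/\varphi^{n-1}(q)\wtil{B}$, which I would argue either by faithful flatness of $\wtil{B}$ over $B$ or by direct verification in the power-series description of Lemma~\ref{lem:ocexplicit}, where the element $\varphi^{n-1}(q)$ acts as a distinguished polynomial in $\pi$.

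The main obstacle is the second step: confirming that $\varphi^{n-1}(q)$ is the \emph{full} generator (rather than just a divisor of a generator) of $\ker(\theta\circ\iota_n)$ in each ring. In the perfect bounded case this is essentially Fontaine's fundamental exact sequence; in the Fr\'echet (Robba-type) case the delicate point is that divisibility must be checked with respect to every valuation $w_s$ for $0<s\le r$ simultaneously, which is why the slope/Newton-polygon machinery of Section~\ref{sec:adeq} is the natural tool. Once the perfect case is settled, descent to $\B^{\dagger,r}_K$ and $\B^{\dagger,r}_{\rig,K}$ is routine, the only subtlety being the appearance of the ramification index $e_{\E_K/\E_{\wtil{K}}}$ when transferring between the valuations of Lemma~\ref{lem:ocexplicit}.
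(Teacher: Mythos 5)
The big picture (easy inclusion by direct computation, hard inclusion via the perfect case, descent to $\B^{\dagger,r}_K$ and $\B^{\dagger,r}_{\rig,K}$) matches the paper, but the two central steps of your argument are underspecified in a way that leaves genuine gaps. For $\wtil{\B}^{\dagger,r}_{\rig,K}$ you propose comparing a generator of the kernel with $\varphi^{n-1}(q)$ ``slope-by-slope'' using Lemma~2.0.5; but the uniqueness of slope factorizations compares two \emph{given} factorizations of the same element, and does not produce the Newton slopes of an unknown generator of $\ker(\theta\circ\iota_n)$ -- you would first need to know what that kernel is, which is what you are trying to prove. The move the paper makes, and which you are missing, is that $\wtil{\E}_K$ depends only on $\C_p$ and not on $K$ itself, so $\wtil{\B}^{\dagger,r}_{\rig,K}\cong\wtil{\B}^{\dagger,r}_{\rig,K\wtil K^{\mathrm{pf}}}$ and one may invoke Berger's perfect-residue-field result (\cite[Proposition~4.8]{Ber}) directly; similarly the case of $\wtil{\B}^{\dagger,r}_K$ is obtained by taking $H_K$-invariants of Berger's description of $\ker(\theta\circ\iota_n)$ on $\wtil{\B}^{\dagger,r}$, not by transporting Fontaine's statement on $\wtil{\A}^+$.

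The descent step also has a gap. You reduce it to $\varphi^{n-1}(q)\wtil B\cap B=\varphi^{n-1}(q)B$, to be proved ``by faithful flatness of $\wtil B$ over $B$ or by direct Weierstrass division.'' Faithful flatness of $\wtil{\B}^{\dagger,r}_{\rig,K}$ over $\B^{\dagger,r}_{\rig,K}$ is neither stated nor proved anywhere in the paper, and it is not obvious: one would have to control all the maximal ideals of these B\'ezout domains. Weierstrass division likewise does not straightforwardly apply to the Robba-type ring $\B^{\dagger,r}_{\rig,K}$. What the paper actually uses, and what you should use, is the B\'ezout property together with Lemma~\ref{lem:unit}: write $(x,\varphi^{n-1}(q))=(y)$ and $\varphi^{n-1}(q)=yy'$ in $B$; since $y$ lies in the kernel over $\wtil B$ it equals $\varphi^{n-1}(q)$ times a unit of $\wtil B$, hence $y'$ is a unit of $\wtil B$, and Lemma~\ref{lem:unit} (equivalence of the unit conditions across the five rings) promotes $y'$ to a unit of $B$, giving $x\in\varphi^{n-1}(q)B$. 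This ``descent of units'' lemma is precisely the substitute for the faithful flatness you would have liked, and leaving it out is the most significant omission in your proposal.
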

\begin{proof}
Note that since $\wtil{\E}_K$ and $\wtil{\E}_{K\wtil{K}^{\mathrm{pf}}}$ are isomorphic, the associated analytic rings $\wtil{\B}^{\dagger,r}_{\rig,K}$ and $\wtil{\B}^{\dagger,r}_{\rig,K\wtil{K}^{\mathrm{pf}}}$ are isomorphic. Hence, in the case of $B=\wtil{\B}^{\dagger,r}_{\rig,K}$, it follows from \cite[Proposition~4.8]{Ber}. By regarding $\C_p$ as the completion of an algebraic closure of $\wtil{K}^{\mathrm{pf}}$ and applying \cite[Remarque~2.14]{Ber}, we have $\ker{(\theta\circ\iota_n:\wtil{\B}^{\dagger,r}\to\mathbb{C}_p)}=\varphi^{n-1}(q)\wtil{\B}^{\dagger,r}$. Since $(\wtil{\B}^{\dagger,r})^{H_K}=\wtil{\B}^{\dagger,r}_K$ and $\varphi^{n-1}(q)\in \wtil{\B}^{\dagger,r}_K$, we obtain the assertion for $B=\wtil{\B}^{\dagger,r}_K$. We will prove the assertion for $B=\B^{\dagger,r}_{\rig,K}$. Let $x\in\ker{(\theta\circ\iota_n:\B^{\dagger,r}_{\rig,K}\to\mathbb{C}_p)}$. Since $\B^{\dagger,r}_{\rig,K}$ is a B\'ezout integral domain, we have $(x,\varphi^{n-1}(q))=(y)$ for some $y\in\B^{\dagger,r}_{\rig,K}$. Let $y'\in\B^{\dagger,r}_{\rig,K}$ such that $\varphi^{n-1}(q)=yy'$. Since $y\in\ker{(\theta\circ\iota_n:\wtil{\B}^{\dagger,r}_{\rig,K}\to\mathbb{C}_p)}=\varphi^{n-1}(q)\wtil{\B}^{\dagger,r}_{\rig,K}$, we have $y=\varphi^{n-1}(q)y''$ for some $y''\in\wtil{\B}^{\dagger,r}_{\rig,K}$, hence, $y'y''=1$. By Lemma~\ref{lem:unit}, $y'$ is a unit in $\B^{\dagger,r}_{\rig,K}$. Hence, we have $x\in\varphi^{n-1}(q)\B^{\dagger,r}_{\rig,K}$ for any $x\in\ker{(\theta\circ\iota_n:\B^{\dagger,r}_{\rig,K}\to\mathbb{C}_p)}$, which implies the assertion. For $B=\B^{\dagger,r}_K$, a similar proof works since $\B^{\dagger,r}_K$ is a PID, hence, a B\'ezout integral domain.
\end{proof}

\begin{lem}
The image of $\B^{\dagger,r}_{\rig,K}$ under $\iota_n$ is contained in $K_n[[t,u_1,\dots,u_d]]$ for $n\ge n(r)$. In particular, $\iota_n$ induces a morphism $\iota_n:\B^{\dagger,r}_{\rig,K}\to K_n[[t,u_1,\dots,u_d]]^{\nabla}$ for $n\ge n(r)$.
\end{lem}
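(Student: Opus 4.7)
The plan is to combine the closedness of $K_n[[t,u_1,\dots,u_d]]$ in $\B_{\dR}^+$ with an explicit computation of $\iota_n$ on topological generators of $\B^{\dagger,r}_K$, then pass to the Fr\'echet completion $\B^{\dagger,r}_{\rig,K}$.

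First I would reduce to proving $\iota_n(\B^{\dagger,r}_K)\subset K_n[[t,u_1,\dots,u_d]]$. Indeed, $\iota_n$ is continuous from $\wtil{\B}^{\dagger,r}_{\rig}$ to $\B_{\dR}^{\nabla+}\subset\B_{\dR}^+$, and the previous lemma shows that $K_n[[t,u_1,\dots,u_d]]$ is closed in $\B_{\dR}^+$. Since $\B^{\dagger,r}_{\rig,K}$ is by construction the Fr\'echet completion of $\B^{\dagger,r}_K$, the inclusion on $\B^{\dagger,r}_K$ automatically extends to the completion.

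Next, I would handle the base case $K=\wtil{K}$. By Lemma~\ref{lem:ocexplicit}, the Cohen ring $\A_{\wtil{K}}$ is identified with $\oo_{\wtil{K}}\{\{\pi\}\}$, with $\oo_{\wtil{K}}$ embedded in $\wtil{\A}$ through $t_j\mapsto[\wtil{t}_j]$. It then suffices to check that $\iota_n(\pi)$ and each $\iota_n([\wtil{t}_j])$ lie in $K_n[[t,u_1,\dots,u_d]]$. For the first, the standard identity $t=\log[\varepsilon]$ in $\B_{\dR}^{\nabla+}$ together with $\theta([\varepsilon^{p^{-n}}])=\zeta_{p^n}$ gives
\[
\iota_n(\pi)=[\varepsilon^{p^{-n}}]-1=\zeta_{p^n}\exp(t/p^n)-1\in K_n[[t]],
\]
where the exponential series converges in the canonical topology because its partial sums stabilize modulo each power of $t$. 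For the second, $\iota_n([\wtil{t}_j])=[\wtil{t}_j^{p^{-n}}]$ is the unique $p^n$-th root of $[\wtil{t}_j]=t_j-u_j$ in $\B_{\dR}^+$ whose $\theta$-image equals $t_j^{1/p^n}\in K_n$, so expanding
\[
[\wtil{t}_j^{p^{-n}}] = t_j^{1/p^n}\sum_{k\ge 0}\binom{1/p^n}{k}\left(-\frac{u_j}{t_j}\right)^k
\]
exhibits the image as an element of $K_n[[u_1,\dots,u_d]]$; convergence in the canonical topology is automatic since $u_j\in\ker\theta$ and modulo $(\ker\theta)^m$ only finitely many terms contribute.

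For a general finite extension $K/\wtil{K}$ I would apply Lemma~\ref{lem:ocexplicit} to realize $\A_K\cong\oo'\{\{\pi'\}\}$ with $\oo'$ a Cohen ring of $k_{\E_K}$; by Lemma~\ref{lem:hori}, $K_n[[t,u_1,\dots,u_d]]^{\nabla}$ is a complete discrete valuation ring with residue field $K_n$, hence Henselian, so the finite residue-field extension $k_{\E_K}/k_{\E_{\wtil{K}}}$ lifts uniquely to a Cohen subring of $K_n[[t,u_1,\dots,u_d]]^{\nabla}$ compatibly with $\iota_n$, and the uniformizer $\pi'$ is controlled by the same Teichm\"uller–binomial computation as above. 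The ``in particular'' clause is then automatic: $\iota_n$ takes values in $\B_{\dR}^{\nabla+}$ by construction, and $K_n[[t,u_1,\dots,u_d]]\cap\B_{\dR}^{\nabla+}=K_n[[t,u_1,\dots,u_d]]^{\nabla}$ by definition. The main obstacle I expect is the finite extension step: one must construct a Cohen ring lift of $\oo'$ inside $K_n[[t,u_1,\dots,u_d]]^{\nabla}$ that is compatible both with the embedding $\oo'\hookrightarrow\wtil{\A}_K$ and with $\iota_n$; by contrast the convergence checks in the binomial expansions are routine once one reduces modulo $(\ker\theta)^m$.
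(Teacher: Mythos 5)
Your overall structure — reduce to $\iota_n(\B^{\dagger,r}_K)\subset K_n[[t,u_1,\dots,u_d]]$, then use continuity of $\iota_n$ together with the closedness of $K_n[[t,u_1,\dots,u_d]]$ in $\B_{\dR}^+$ to pass to the Fr\'echet completion — agrees with the paper. But the paper then simply cites Andreatta--Brinon (\cite[Lemme~8.5, Proposition~8.6]{AB2}) for the key inclusion on $\B^{\dagger,r}_K$, whereas you attempt a self-contained computation on generators.

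Your calculation for $K=\wtil{K}$ is correct and is worth recording: $\iota_n(\pi)=[\varepsilon^{p^{-n}}]-1=\zeta_{p^n}\exp(t/p^n)-1\in K_n[[t]]$, and $\iota_n([\wtil{t}_j])=[\wtil{t}_j^{p^{-n}}]=t_j^{1/p^n}(1-u_j/t_j)^{1/p^n}\in K_n[[u_j]]$ via the binomial series, convergent because only finitely many terms survive modulo $(t,u_1,\dots,u_d)^m$. One small point you omit: $\A_{\wtil{K}}^{\dagger,r}$ contains negative powers of $\pi$, so one must also note that $\iota_n(\pi)\equiv\zeta_{p^n}-1\pmod{(t,u_1,\dots,u_d)}$ is a unit of $K_n[[t,u_1,\dots,u_d]]$, so that $\iota_n(\pi^{-1})$ also lands in the target.

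The genuine gap is the descent from $\wtil{K}$ to general $K$. You claim the extension $k_{\E_K}/k_{\E_{\wtil{K}}}$ lifts to a Cohen subring of $K_n[[t,u_1,\dots,u_d]]^{\nabla}$ ``by Henselianity, compatibly with $\iota_n$.'' But $K_n[[t,u_1,\dots,u_d]]^{\nabla}$ is a complete DVR with \emph{residue field $K_n$} of characteristic $0$: a mixed-characteristic Cohen ring of $k_{\E_K}$ cannot be a local subring of it, and the residue extension you want to lift has no meaning relative to the maximal ideal $(t)$. Since $p$ is a unit in $K_n[[t,u_1,\dots,u_d]]^{\nabla}$, the restriction of $\iota_n$ to any Cohen subring of $\A_K^{\dagger,r}$ is a field embedding of its fraction field, and the entire content of the lemma is to control where that embedding lands; the Cohen subring $\oo'\subset\A_K$ is produced by an abstract Hensel lift (Lemma~\ref{lem:plus}, itself citing \cite{AB}), so its generators have no Teichm\"uller description that makes $\iota_n$ computable as in the $\wtil{K}$ case. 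Making this finite-\'etale step precise is exactly what \cite[Lemme~8.5, Proposition~8.6]{AB2} supply, and the paper invokes them for precisely this reason. You did honestly flag this step as ``the main obstacle''; it is not a routine lifting argument but the heart of the lemma.
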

\begin{proof}
Since $\B^{\dagger,r}_{\rig,K}\subset\B^{\dagger,r(n)}_{\rig,K}$, we may assume $r=r(n)$. By \cite[Lemme~8.5]{AB2}, there exists a subring $\mathcal{A}_{R,(1,(p-1)p^{n-1})}$ of $\wtil{\A}$ such that $\A_K^{\dagger,r(n)}=\mathcal{A}_{R,(1,(p-1)p^{n-1})}[[\bar{\pi}]^{-1}]$ and $\iota_n(\B^{\dagger,r}_K)\subset K_n[[t,u_1,\dots,u_d]]$, which is proved in the proof of \cite[Proposition~8.6]{AB2}. Since $K_n[[r,u_1,\dots,u_d]]$ is closed in $\B_{\dR}^+$, we obtain the assertion.
\end{proof}

\begin{lem}\label{lem:surj}
For $h\in\N$ and $n\ge n(r)$, the morphism
\[
\pr_h\circ\iota_n:\B^{\dagger,r}_{\rig,K}\to K_n[[t,u_1,\dots,u_d]]^{\nabla}/t^hK_n[[t,u_1,\dots,u_d]]^{\nabla}
\]
is surjective.
\end{lem}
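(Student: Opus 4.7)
The strategy is to use Lemma~\ref{lem:hori} to identify the target $K_n[[t,u_1,\dots,u_d]]^\nabla/t^h$ with $K_n[[t]]/t^h$, a free $K_n$-module of rank $h$ with basis $1,t,\dots,t^{h-1}$. Since the image of $\iota_n$ is a subring, surjectivity modulo $t^h$ reduces to two claims:
\begin{enumerate}
\item[(a)] $\theta\circ\iota_n\colon\B^{\dagger,r}_{\rig,K}\to K_n$ is surjective;
\item[(b)] there exists $y\in\B^{\dagger,r}_{\rig,K}$ with $\theta\iota_n(y)=0$ and $\iota_n(y)\not\equiv 0 \pmod{t^2}$.
\end{enumerate}
Given (a) and (b), I would induct on $h$: the power $y^i$ has $\iota_n(y^i)\equiv c_i t^i\pmod{t^{i+1}}$ with $c_i\in K_n^\times$, so multiplying by preimages under (a) realizes every monomial $at^i$ modulo $t^{i+1}$, and hence every element of $K_n[[t]]/t^h$ by summation.

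For (a), I would study the image of the Cohen subring $\A_K^+\subset\B^{\dagger,r}_{\rig,K}$. For Teichm\"uller lifts $[x]\in\A_K^+$ of $x\in\E_K^+$, one has $\theta\iota_n([x])=x^{(n)}\in\oo_{K_n}$, whose reduction modulo $p$ coincides with Scholl's projection $\pr_n(x)\in\oo_{K_n}/p\oo_{K_n}$. Since $\pr_n$ is surjective by construction as an inverse-limit projection, and since $\theta\circ\iota_n$ is $p$-adically continuous while $\A_K^+$ is (essentially) $p$-adically complete, an iterative Hensel-type approximation lifts this to a surjection $\theta\iota_n\colon\A_K^+\twoheadrightarrow\oo_{K_n}$. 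The element $[\Pi]^{-1}\in\A_K^{\dagger,r}[p^{-1}]\subset\B^{\dagger,r}_{\rig,K}$ maps to $\pi_{K_n}^{-1}\in K_n^\times$ under $\theta\iota_n$, so after inverting we obtain surjectivity onto all of $K_n$.

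For (b), a direct computation in $\B_{\mathrm{dR}}^{\nabla+}$ using $[\varepsilon^{p^{-n}}]^{p^n}=[\varepsilon]=1+\pi$ and $\log[\varepsilon]=t$ identifies $[\varepsilon^{p^{-n}}]=\zeta_{p^n}\exp(t/p^n)$, so
\[
\iota_n(\pi)=\zeta_{p^n}\exp(t/p^n)-1\equiv(\zeta_{p^n}-1)+(\zeta_{p^n}/p^n)t\pmod{t^2}.
\]
Suppose for contradiction that (b) fails. By (a) the image $S$ of $\iota_n$ modulo $t^2$ inside $K_n\oplus K_n\cdot t$ then surjects onto $K_n$ with trivial kernel, so $S$ is the graph $\{(a,\phi(a)):a\in K_n\}$ of a function $\phi\colon K_n\to K_n$. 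The ring axioms on $S$ force $\phi$ to be a $\Z$-derivation of $K_n$, while the membership $\iota_n(\pi)\in S$ forces $\phi(\zeta_{p^n})=\zeta_{p^n}/p^n\neq 0$, contradicting the fact that every $\Z$-derivation of a characteristic zero field vanishes on elements algebraic over $\Q$.

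The main obstacle is (a): rigorously verifying the factorization of $\theta\iota_n\bmod p$ through Scholl's projection $\pr_n$ and lifting it via completeness to integral surjectivity. This requires careful control of the topology on $\A_K^+$ (Lemma~\ref{lem:plus}) and the passage from $\A_K^+$ to the larger ring $\B^{\dagger,r}_{\rig,K}$ via inversion of the uniformizer.
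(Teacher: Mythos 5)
Your overall architecture is the same as the paper's: reduce via Lemma~\ref{lem:hori} to the statement that the image of $\iota_n$ in the discrete valuation ring $K_n[[t,u_1,\dots,u_d]]^{\nabla}\cong K_n[[t]]$ contains a set of representatives of the residue field and a uniformizer, then fill in the $t$-adic layers.

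However, your step (b) is correct but far more elaborate than needed. There is no need for the derivation/contradiction argument: the element $t=\log(1+\pi)$ already lies in $\B^{\dagger,r}_{\rig,\Q_p}\subset\B^{\dagger,r}_{\rig,K}$, and $\iota_n(t)=t/p^n$, since $p^n\log[\varepsilon^{p^{-n}}]=\log[\varepsilon]=t$. Thus $\iota_n(t)$ has $t$-adic valuation exactly $1$, so $y=t$ realizes (b) on the spot. The paper uses exactly this (the reduction to a single layer, ``we may assume $h=1$'') without any further argument.

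The genuine gap is in (a). You claim ``for Teichm\"uller lifts $[x]\in\A_K^+$ of $x\in\E_K^+$, one has $\theta\iota_n([x])=x^{(n)}\in\oo_{K_n}$'', and this is incorrect on two counts. First, $\A_K^+$ is not a Witt ring of $\E_K^+$: since $\E_K^+$ is an \emph{imperfect} discrete valuation ring, there is no multiplicative Teichm\"uller section $\E_K^+\to\A_K^+$, and the lifts $[x]$ for general $x\in\E_K^+$ live only in $\wtil{\A}_K^+=W(\wtil{\E}_K^+)$, not in $\A_K^+$. Second, even granting the use of $[x]\in\wtil{\A}_K^+$, the element $x^{(n)}=\theta\iota_n([x])$ is by construction only an element of the $p$-adic completion $\widehat{\oo}_{K_\infty}$, not of $\oo_{K_n}$; it merely reduces to $\pr_n(x)$ modulo $\xi$. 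The entire content of the lemma is that $\iota_n$ restricted to the \emph{small} ring $\B^{\dagger,r}_{\rig,K}$ lands in $K_n[[t,u_1,\dots,u_d]]^{\nabla}$ and still surjects, and replacing $\B^{\dagger,r}_{\rig,K}$ by its perfectization $\wtil{\B}^{\dagger,r}_{\rig,K}$ loses precisely this. The paper instead invokes the proof of \cite[Lemme~8.2]{AB2}, which uses the explicit structure of $\A_K^+$ from \cite[Proposition~4.42]{AB} to show that $\theta\circ\iota_n:\A_K^+\to\oo_{K_n}$ is surjective modulo a power of $p$, and then upgrades this to integral surjectivity via the Noetherianity and $(p/\pi^a,p)$-adic completeness of $\A_K^+$ together with Nakayama. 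You correctly flagged (a) as the obstacle, but the Teichm\"uller-lift sketch does not circumvent the need for this structural input.
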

\begin{proof}
By $t\in K_n[[t,u_1,\dots,u_d]]^{\nabla}$ and Lemma~\ref{lem:hori}, we may assume $h=0$. Put $\theta_n:=\theta\circ\iota_n$. Let $\mathbb{A}^+_K\subset \mathbb{A}^{\dagger,r}_K$ be as in \cite[Proposition~4.42]{AB}. By the proof of \cite[Lemme~8.2]{AB2}, $\theta_n:\mathbb{A}_K^+\to\oo_{K_n}$ is surjective after taking modulo some power of $p$. Since $\mathbb{A}_K^+$ is Noetherian and $(p/\pi^a,p)$-adically Hausdorff complete, $\mathbb{A}_K^+$ is $p$-adically Hausdorff complete, which implies the surjectivity of $\theta_n:\mathbb{A}_K^+\to\oo_{K_n}$ by Nakayama's lemma.
\end{proof}

\begin{lem}\label{lem:dense}
For $n\ge n(r)$, the image of $\B^{\dagger,r}_{\rig,K}$ under $\iota_n$ is dense in $K_n[[t,u_1,\dots,u_d]]^{\nabla}$ with respect to the canonical topology.
\end{lem}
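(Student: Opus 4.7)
The plan is to reduce convergence in the canonical topology to convergence in the $t$-adic topology, and then invoke Lemma~\ref{lem:surj}. Concretely, fix $x\in K_n[[t,u_1,\dots,u_d]]^{\nabla}$ and any open neighborhood $U$ of $x$ in the canonical topology. I claim that one can find $y\in\B^{\dagger,r}_{\rig,K}$ with $\iota_n(y)\in U$.

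The key observation is that by Lemma~\ref{lem:comparisontop}, the $t$-adic topology on $K_n[[t,u_1,\dots,u_d]]^{\nabla}$ is finer than the canonical topology. Hence there exists $h\in\N$ such that $x+t^hK_n[[t,u_1,\dots,u_d]]^{\nabla}\subset U$. By Lemma~\ref{lem:surj}, the composite
\[
\pr_h\circ\iota_n:\B^{\dagger,r}_{\rig,K}\to K_n[[t,u_1,\dots,u_d]]^{\nabla}/t^hK_n[[t,u_1,\dots,u_d]]^{\nabla}
\]
is surjective, so we may choose $y\in\B^{\dagger,r}_{\rig,K}$ with $\iota_n(y)\equiv x\pmod{t^h}$. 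Then $\iota_n(y)\in x+t^hK_n[[t,u_1,\dots,u_d]]^{\nabla}\subset U$, as desired.

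Equivalently, phrased in terms of sequences: applying Lemma~\ref{lem:surj} for each $h\in\N$ produces a sequence $\{y_h\}_{h\in\N}\subset\B^{\dagger,r}_{\rig,K}$ with $\iota_n(y_h)\to x$ in the $t$-adic topology, hence in the canonical topology by Lemma~\ref{lem:comparisontop}. No additional obstacle is anticipated; the whole content of the lemma is packaged in the two preceding lemmas, and what remains is only the formal comparison of topologies.
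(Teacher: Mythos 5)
Your proof is correct and takes exactly the same route as the paper, which simply states that by Lemma~\ref{lem:comparisontop} the assertion follows from Lemma~\ref{lem:surj}; you have merely unpacked the same one-sentence argument.
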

\begin{proof}
By Lemma~\ref{lem:comparisontop}, the assertion follows from Lemma~\ref{lem:surj}.
\end{proof}

\begin{lem}[{\cite[Corollary~2.8.5, Definition~2.9.5]{Doc}, see also \cite[Proposition~1.1.1]{Bpair}}]\label{lem:Robba}
Let $B$ be one of $\{\btrigdag,\ \wtil{\B}^{\dagger,r}_{\rig},\ \brigdagk, \B^{\dagger,r}_{\rig,K}\}$ and $M$ a $B$-submodule of a finite free $B$-module. Then, the following are equivalent:
\begin{enumerate}
\item $M$ is finite free;
\item $M$ is closed;
\item $M$ is finitely generated.
\end{enumerate}
\end{lem}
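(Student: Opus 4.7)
The plan is to prove the equivalences via the chain $\mathrm{(i)}\Rightarrow\mathrm{(iii)}\Rightarrow\mathrm{(i)}\Rightarrow\mathrm{(ii)}\Rightarrow\mathrm{(iii)}$. The implication $\mathrm{(i)}\Rightarrow\mathrm{(iii)}$ is tautological.

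For $\mathrm{(iii)}\Rightarrow\mathrm{(i)}$, I would invoke the adequateness of $B$. Lemma~2.0.6 gives adequateness directly for $\wtil{\B}^{\dagger,r}_{\rig}=\Gamma_{\an,r}$, and the same slope-factorization machinery, together with the unit criterion of Lemma~\ref{lem:unit}, transfers adequateness to $\B^{\dagger,r}_{\rig,K}$ and then to the limits $\btrigdag$, $\brigdagk$. Adequateness yields the elementary divisor theorem, so any finitely generated submodule $M$ of a finite free module $N$ admits a basis $(e_i)$ of $N$ and nonzero elements $\lambda_1\mid\dots\mid\lambda_m\in B$ with $M=\bigoplus_i\lambda_i B\cdot e_i$; since $B$ is an integral domain, each summand is free of rank one, hence $M$ is finite free.

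For $\mathrm{(i)}\Rightarrow\mathrm{(ii)}$, using the same elementary divisor decomposition I would reduce to showing that each principal submodule $\lambda B\subset B$ with $\lambda\neq 0$ is closed. Because each Fr\'echet valuation $w_s$ on $B$ is multiplicative, the identity $w_s(\lambda(x_n-x_m))=w_s(\lambda)+w_s(x_n-x_m)$ shows that whenever $\lambda x_n\to y$ in $B$, the sequence $(x_n)$ is itself Cauchy and converges to some $x\in B$ with $\lambda x=y$; hence $\lambda B$ is closed.

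The main obstacle is $\mathrm{(ii)}\Rightarrow\mathrm{(iii)}$. I would proceed by induction on the rank $n$ of an ambient free module $N\cong B^n$ containing $M$. The base case $n=1$ requires showing that every closed ideal of $B$ is principal: for $\B^{\dagger,r}_{\rig,K}$ and $\wtil{\B}^{\dagger,r}_{\rig}$ this is Lazard's classical theorem (\cite{Laz}), and for the union rings $\brigdagk$ and $\btrigdag$ it follows from the bounded-radius case by representing a putative generator and arbitrary element at a common sufficiently small radius and applying Lemma~\ref{lem:unit}. For the inductive step, given closed $M\subset B^n$, I would project onto the last coordinate; the generator of the resulting (closed, principal) ideal lifts to an element of $M$, and intersecting $M$ with the kernel of the projection yields a closed submodule of $B^{n-1}$ to which the induction hypothesis applies. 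The genuinely delicate point, and the main technical obstacle, is verifying that the projection of the closed submodule $M$ remains closed in $B$ — since $B$ is not Noetherian, this requires a Mittag-Leffler/open-mapping argument in the Fr\'echet setting rather than a naive Noetherian quotient. Once this is secured, the whole scheme runs in parallel with the proof of \cite[Corollary~2.8.5]{Doc}, which treats the case $B=\btrigdag$, and extends verbatim to the remaining three rings.
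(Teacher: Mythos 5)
The statement is not proved in the paper at all; it is quoted verbatim from Kedlaya \cite[Corollary~2.8.5, Definition~2.9.5]{Doc} and Berger \cite[Proposition~1.1.1]{Bpair}, so you are attempting a proof where the paper supplies only a citation. Your steps $\mathrm{(i)}\Leftrightarrow\mathrm{(iii)}$ and $\mathrm{(i)}\Rightarrow\mathrm{(ii)}$ are essentially correct: $\mathrm{(iii)}\Rightarrow\mathrm{(i)}$ only needs B\'ezoutness (which the paper already records for both $\Gamma_{\an,r}$ and $\Gamma_{\an,\con}$, citing \cite[Theorem~2.9.6]{Doc}) together with torsion-freeness, and $\mathrm{(i)}\Rightarrow\mathrm{(ii)}$ via the elementary divisor theorem (Lemma~2.0.6) and multiplicativity of the $w_s$ is fine for the Fr\'echet rings, with the usual care needed for the LF-topology on the union rings.

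The genuine gap is exactly where you flag it, and your proposed repair does not close it. The assertion that the last-coordinate projection of a closed $B$-submodule $M\subset B^n$ has closed image is \emph{not} a soft topological fact: a continuous injection of Fr\'echet spaces can perfectly well have a closed graph and a dense, non-closed image (think $\ell^1\hookrightarrow\ell^2$), so the closed-graph/open-mapping apparatus cannot be invoked as stated — indeed the open mapping theorem would presuppose that $\pi(M)$ is Fr\'echet, i.e.\ closed, which is what you are trying to prove. What makes the statement true here is the $B$-module structure together with fine ring-theoretic properties of $\Gamma_{\an,r}$ and $\Gamma_{\an,\con}$ (Lazard/Kedlaya: closed ideals are principal; slope factorizations; the behaviour of torsion quotients $B/\lambda B$). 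That input is precisely the content of \cite[Corollary~2.8.5]{Doc}, so leaving it as a phrase ``once this is secured'' means the hardest implication has not actually been established. To be honest, the cleanest fix in the context of this paper is to do what the paper does and cite Kedlaya and Berger for $\mathrm{(ii)}\Rightarrow\mathrm{(iii)}$, rather than rederive it.
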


\begin{lem}\label{lem:elemrobba}
Let $B$ be either $\wtil{\B}^{\dagger,r}_{\rig}$ or $\B^{\dagger,r}_{\rig,K}$. If $I$ is a principal ideal of $B$, which divides $(t^h)$ for some $h\in\N$, then $I$ is generated by an element of the form $\Pi_{n\ge n(r)}(\varphi^{n-1}(q)/p)^{j_n}$ with $j_n\le h$.
\end{lem}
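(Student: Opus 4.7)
The plan is to reduce the problem to the uniqueness of slope factorizations in $B$ (Lemma~2.0.5) by first obtaining an explicit slope factorization of $t$.

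First, I would check that each $\varphi^{n-1}(q)/p$ (for $n\ge n(r)$) lies in $B$ and is pure of a single positive slope $s_n$, with $s_{n(r)}>s_{n(r)+1}>\cdots$ and $s_n\to 0$. This can be read off from the Newton polygon of $q\equiv ([\varepsilon^{1/p}]-1)^{p-1}\pmod p$ together with the fact that $\varphi$ divides slopes by $p$. Then I would exhibit the slope factorization
\[
t\;=\;u\cdot\prod_{n\ge n(r)}\frac{\varphi^{n-1}(q)}{p}
\]
with $u\in B^{\times}$. Divisibility of $t$ by each $\varphi^{n-1}(q)$ follows from Lemma~\ref{lem:kertheta} and the vanishing $\theta\circ\iota_n(t)=0$ (which holds because $\iota_n(t)=\log[\varepsilon^{p^{-n}}]$ and $\theta[\varepsilon^{p^{-n}}]=\zeta_{p^n}$ is a root of unity, whose $p$-adic logarithm is zero). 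Pairwise coprimality of the $\varphi^{n-1}(q)$ in the B\'ezout ring $B$ upgrades this to divisibility by every finite partial product, and the Fr\'echet limit together with the slope-free condition on the quotient---which is equivalent to being a unit by \cite[Corollary~2.5.12]{Doc} (cf.\ Lemma~\ref{lem:unit})---yields $u\in B^{\times}$.

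Given the factorization of $t$, the lemma follows quickly. Suppose $I=(x)$ with $x\mid t^h$, so $t^h=xy$ in $B$. Raising the factorization above to the $h$-th power exhibits a slope factorization of $t^h$ in which the factor $\varphi^{n-1}(q)/p$ has multiplicity $h$ for each $n\ge n(r)$. Since multiplicities of slopes are additive under multiplication (\cite[Corollary~3.22]{mon}), the slopes of $x$ lie in $\{s_n:n\ge n(r)\}$ with multiplicities $j_n\le h$. Applying Lemma~2.0.3 to $x$ and then comparing against the pure elements $(\varphi^{n-1}(q)/p)^{j_n}$ of the same slope $s_n$ and multiplicity $j_n$ via Lemma~2.0.5 yields $x=u'\cdot\prod_{n\ge n(r)}(\varphi^{n-1}(q)/p)^{j_n}$ for some unit $u'\in B^{\times}$, giving the desired form.

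The main obstacle is the explicit slope factorization of $t$: establishing both the purity (with multiplicity one) of each $\varphi^{n-1}(q)/p$ and the unit nature of the residual factor $u$. The latter, in the case $B=\B^{\dagger,r}_{\rig,K}$, requires descent from $\wtil{\B}^{\dagger,r}_{\rig}$, which is provided by Lemma~\ref{lem:unit}. A subsidiary point is verifying convergence of the infinite product in the Fr\'echet topology; this follows from the existence assertion of Lemma~2.0.3 applied to $t$, whose slope factorization is by definition Fr\'echet-convergent.
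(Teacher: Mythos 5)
Your overall strategy---reduce to the uniqueness of slope factorizations after first exhibiting a slope factorization of $t$---matches the paper's, and the first half of your argument is essentially a reconstruction of the factorization $t=\pi\prod_{n\ge 1}(\varphi^{n-1}(q)/p)$ that the paper simply quotes from the proof of Proposition~I.~2.2 of \cite{Ber}. That part is longer than necessary but not wrong.

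The closing step, however, has a genuine gap. Once you know $x$ admits a slope factorization $x=\prod_n x_n$ with $x_n$ pure of slope $s_n$ and multiplicity $j_n\le h$, you claim that $x_n$ agrees with $(\varphi^{n-1}(q)/p)^{j_n}$ up to a unit ``by comparing\ldots via Lemma~2.0.5.'' But Lemma~2.0.5 only compares two slope factorizations of the \emph{same} element; it does not assert that two pure elements sharing a slope and a multiplicity are associates, and in a B\'ezout domain this fails in general (two non-associate prime elements can carry the same slope data). The missing ingredient, which the paper's proof calls out explicitly, is that $\varphi^{n-1}(q)/p$ generates a \emph{prime} ideal of $B$: by Lemma~\ref{lem:kertheta}, $\varphi^{n-1}(q)B=\ker(\theta\circ\iota_n)$, and the kernel of a ring homomorphism into the field $\C_p$ is prime. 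You invoke Lemma~\ref{lem:kertheta} only to obtain divisibility of $t$ by $\varphi^{n-1}(q)$, not to extract this primality. With it the step is salvaged: uniqueness applied to $t^h=(\prod_n x_n)(\prod_n y_n)$ shows $x_ny_n$ equals $(\varphi^{n-1}(q)/p)^h$ up to a unit, and then primality of $\varphi^{n-1}(q)/p$ forces $x_n$ to equal $(\varphi^{n-1}(q)/p)^{j_n}$ up to a unit for some $j_n\le h$.
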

\begin{proof}
Note that we have a slope factorization $t=\pi\Pi_{n\ge 1}(\varphi^{n-1}(q)/p)$ in $\B^{\dagger,r}_{\rig,\Q_p}$ (see the proof of \cite[Proposition~I.~2.2]{Ber}). For $n<n(r)$, $\varphi^{n-1}(q)/p$ is a unit in $\B^{\dagger,r}_{\rig,\Q_p}$ and for $n\ge n(r)$, $\varphi^{n-1}(q)/p$ generates a prime ideal of $B$ by Lemma~\ref{lem:kertheta}. Hence, the assertion follows from the uniqueness of slope factorization (Lemma~\ref{lem:uniquefac}).
\end{proof}

\begin{lem}[The existence of a partition of unity]\label{lem:partition}
Let $n\in\N$ and $r>0$ such that $n\ge n(r)$. For $w\in\N_{>0}$, there exists $t_{n,w}\in\B^{\dagger,r}_{\rig,K}$ such that $\iota_n(t_{n,w})=1\mod{t^wK_n[[t,u_1,\dots,u_d]]^{\nabla}}$ and $\iota_m(t_{n,w})\in t^wK_m[[t,u_1,\dots,u_d]]^{\nabla}$ for $m\neq n$ such that $m\ge n(r)$.
\end{lem}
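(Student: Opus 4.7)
The construction is essentially that of Berger in the perfect residue field case, adapted using Lemma~\ref{lem:surj} in place of the density arguments there. The key building block will be a slope-factorization type quotient $y_n := t/(\varphi^{n-1}(q)/p)$, which isolates the $n$-th zero of $t$ among the family $\{\iota_m\}_{m \geq n(r)}$.

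First, recall the slope factorization $t = \pi \prod_{m \geq 1}(\varphi^{m-1}(q)/p)$ in $\wtil{\mathbb{B}}_{\rig,\mathbb{Q}_p}^{\dagger+}$ (see the proof of \cite[Proposition~I.2.2]{Ber}, cited before Lemma~\ref{lem:elemrobba}). Since $\varphi^{n-1}(q)/p$ is a factor of this Fr\'echet-convergent product, we may form
\[
y_n := \frac{t}{\varphi^{n-1}(q)/p} = \pi \prod_{\substack{m \geq 1 \\ m \neq n}} \frac{\varphi^{m-1}(q)}{p} \;\in\; \mathbb{B}_{\rig,\mathbb{Q}_p}^{\dagger,r} \subset \mathbb{B}_{\rig,K}^{\dagger,r}.
\]
I will compute the images $\iota_m(y_n)$ for each $m \geq n(r)$. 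For $m = n$, Lemma~\ref{lem:kertheta} shows $\iota_n(\varphi^{n-1}(q))$ has $t$-adic valuation exactly one in $\mathbb{B}_{\dR}^{\nabla+}$, so $\iota_n(y_n) = t/\iota_n(\varphi^{n-1}(q)/p)$ is a unit in $K_n[[t,u_1,\dots,u_d]]^\nabla$, the latter being a complete DVR with uniformizer $t$ by Lemma~\ref{lem:hori}. For $m \neq n$ with $m \geq n(r)$, the elements $\varphi^{m-1}(q)$ and $\varphi^{n-1}(q)$ generate distinct nonzero prime ideals of $\mathbb{B}_{\rig,K}^{\dagger,r}$ (by the uniqueness of slope factorization and Lemma~\ref{lem:kertheta}), so $\iota_m(\varphi^{n-1}(q)/p)$ lies outside $\ker(\theta\circ\iota_m) = \varphi^{m-1}(q)\mathbb{B}_{\rig,K}^{\dagger,r}$; thus $\iota_m(\varphi^{n-1}(q)/p)$ is a unit in $K_m[[t,u_1,\dots,u_d]]^\nabla$, and $\iota_m(y_n)$ has $t$-adic valuation exactly one. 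Consequently $\iota_m(y_n^w) \in t^w K_m[[t,u_1,\dots,u_d]]^\nabla$ for all such $m \neq n$, while $\iota_n(y_n^w)$ remains a unit in $K_n[[t,u_1,\dots,u_d]]^\nabla$.

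It remains to renormalize so that $\iota_n$ of the final element is congruent to $1$ modulo $t^w$. Since $\iota_n(y_n^w)$ is a unit in the DVR $K_n[[t,u_1,\dots,u_d]]^\nabla$, its image in the quotient $K_n[[t,u_1,\dots,u_d]]^\nabla/t^w$ is invertible; call this inverse $\bar{\zeta}$. By Lemma~\ref{lem:surj}, the composite
\[
\mathbb{B}_{\rig,K}^{\dagger,r} \xrightarrow{\iota_n} K_n[[t,u_1,\dots,u_d]]^\nabla \twoheadrightarrow K_n[[t,u_1,\dots,u_d]]^\nabla/t^w
\]
is surjective, so we may choose $z_n \in \mathbb{B}_{\rig,K}^{\dagger,r}$ with $\iota_n(z_n) \equiv \bar{\zeta} \pmod{t^w K_n[[t,u_1,\dots,u_d]]^\nabla}$. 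Set
\[
t_{n,w} := z_n \cdot y_n^w \;\in\; \mathbb{B}_{\rig,K}^{\dagger,r}.
\]
Then $\iota_n(t_{n,w}) \equiv \bar{\zeta}\cdot\iota_n(y_n^w) \equiv 1 \pmod{t^w}$ by construction; and for $m \neq n$ with $m \geq n(r)$, since $\iota_m(z_n) \in K_m[[t,u_1,\dots,u_d]]^\nabla$ and $\iota_m(y_n^w) \in t^w K_m[[t,u_1,\dots,u_d]]^\nabla$, the product lies in $t^w K_m[[t,u_1,\dots,u_d]]^\nabla$. Thus $t_{n,w}$ satisfies both required properties.

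The main technical point is the verification that $\varphi^{n-1}(q)/p$ is genuinely a unit factor of $t$ in $\mathbb{B}_{\rig,\mathbb{Q}_p}^{\dagger,r}$ (so that the quotient $y_n$ makes sense), and the corresponding analysis of the $t$-adic order of $\iota_m(y_n)$ in each fiber; both follow cleanly from Lemma~\ref{lem:kertheta} combined with the slope-factorization machinery of Section~\ref{sec:adeq}. The normalization step is then routine, relying only on the surjectivity statement of Lemma~\ref{lem:surj}.
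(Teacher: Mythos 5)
Your proof is correct. The paper's own proof is a one-line reduction: since $\B^{\dagger,r}_{\rig,\Q_p}\subset\B^{\dagger,r}_{\rig,K}$ and $\Q_p(\zeta_{p^m})[[t]]\subset K_m[[t,u_1,\dots,u_d]]^{\nabla}$, it suffices to treat $K=\Q_p$, and then the statement is exactly Berger's Lemma~I.2.1 from \cite{Ber}, which is cited. What you have written is, in effect, an unpacking of that cited lemma: the factorization $t=\pi\prod_{m\ge 1}(\varphi^{m-1}(q)/p)$, the cofactor $y_n$ isolating the $n$-th zero, and the renormalization by a preimage of the inverse unit, all mirror Berger's construction. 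The only genuine variation is that you carry out the normalization directly over $K$ via Lemma~\ref{lem:surj} rather than first reducing to $\Q_p$; this is harmless, and in fact makes the dependence on Lemma~\ref{lem:surj} (which the paper needs anyway for the gluing in Proposition~\ref{prop:const}) explicit. Two cosmetic points: the ring in which the slope factorization lives should be written $\B^{\dagger,r}_{\rig,\Q_p}$ (your $\wtil{\B}^{\dagger+}_{\rig,\Q_p}$ is not a notation the paper defines), and at the step where you assert that $\iota_n(\varphi^{n-1}(q))$ has $t$-adic valuation \emph{exactly} one, it is cleaner to phrase it as you do a sentence later — Lemma~\ref{lem:kertheta} gives valuation $\geq 1$, and the sandwich $0\le v_t(\iota_n(y_n))\le 0$ (since $\iota_n(y_n)\in K_n[[t,u_1,\dots,u_d]]^\nabla$ and $\iota_n(t)=t/p^n$ has valuation one) forces equality.
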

\begin{proof}
Since $\B^{\dagger,r}_{\rig,\Q_p}\subset\B^{\dagger,r}_{\rig,K}$ and $\Q_p(\zeta_{p^m})[[t]]\subset K_m[[t,u_1,\dots,u_d]]^{\nabla}$, we may assume $K=\Q_p$. Then, the assertion follows from \cite[Lemma~I.~2.1]{Ber}.
\end{proof}

\begin{lem}\label{lem:unique}
Let $B$ be either $\wtil{\B}^{\dagger,r}_{\rig}$ or $\B^{\dagger,r}_{\rig,K}$. For $n\ge n(r)$, denote $\iota_n:B:=\wtil{\B}^{\dagger,r}_{\rig}\to B_n:=\B_{\dR}^{\nabla+}$ in the first case and $\iota_n:B:=\B^{\dagger,r}_{\rig,K}\to B_n:=K_n[[t,u_1,\dots,u_d]]^{\nabla}$ in the second case. Let $D$ be a $\varphi$-module over $B$ of rank $d'$ and $D^{(1)}$ and $D^{(2)}$ two $B$-submodules of rank $d'$ stable by $\varphi$ of $D[t^{-1}]=B[t^{-1}]\otimes_B{D}$ such that
\begin{enumerate}
\item $D^{(1)}[t^{-1}]=D^{(2)}[t^{-1}]=D[t^{-1}]$;
\item $B_n\otimes_{\iota_n,B}D^{(1)}=B_n\otimes_{\iota_n,B}D^{(2)}$ for all $n\ge n(r)$.
\end{enumerate}
Then, we have $D^{(1)}=D^{(2)}$.
\end{lem}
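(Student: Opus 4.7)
The plan is to reduce the statement to a scalar claim about elements of $B[t^{-1}]$ and then exploit the B\'ezout structure of $B$ (Lemma~2.0.6) together with Lemma~\ref{lem:kertheta} and Lemma~\ref{lem:elemrobba}. By the symmetry of hypothesis (ii), it suffices to prove $D^{(1)}\subset D^{(2)}$. Fix a $B$-basis $e_1,\dots,e_{d'}$ of $D^{(2)}$; via $\iota_n$ this also yields a $B_n$-basis $1\otimes e_1,\dots,1\otimes e_{d'}$ of $B_n\otimes_{\iota_n,B}D^{(2)}$. For $x\in D^{(1)}\subset D[t^{-1}]$, write $x=\sum_i a_i e_i$ with $a_i\in B[t^{-1}]$. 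Hypothesis (ii) gives $\iota_n(x)\in B_n\otimes_{\iota_n,B}D^{(2)}$, and comparing coordinates yields $\iota_n(a_i)\in B_n$ for every $n\ge n(r)$ and every $i$. The lemma is therefore reduced to the following scalar claim: if $a\in B[t^{-1}]$ satisfies $\iota_n(a)\in B_n$ for all $n\ge n(r)$, then $a\in B$.

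To prove this claim, write $a=b/t^N$ with $b\in B$ and $N\ge 0$ minimal, and assume for contradiction that $N>0$. The identity $\iota_n(t)=\log[\varepsilon^{p^{-n}}]=p^{-n}t$ in $\B_{\dR}^+$ shows that $\iota_n(t)$ is a uniformizer of the complete DVR $B_n$ (Lemma~\ref{lem:hori}), so the hypothesis translates into $\iota_n(b)\in t^N B_n$ for every $n\ge n(r)$. B\'ezoutness of $B$ (Lemma~2.0.6) furnishes $d\in B$ with $(b,t^N)=(d)$; write $b=db'$ and $t^N=de$, so that $(b',e)=B$. Since $e\mid t^N$, Lemma~\ref{lem:elemrobba} gives that $(e)$ is generated by a product $\prod_{n\ge n(r)}f_n^{k_n}$ with $f_n:=\varphi^{n-1}(q)/p$ and $0\le k_n\le N$. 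If $e$ is a unit, then $d$ is associate to $t^N$, whence $b\in t^N B$, contradicting the minimality of $N$. Otherwise there exists $n_0\ge n(r)$ such that $f_{n_0}\mid e$.

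The final contradiction is obtained by applying $\iota_{n_0}$. Since $p$ is a unit in $B$, Lemma~\ref{lem:kertheta} yields $\iota_{n_0}(f_{n_0})\in tB_{n_0}$, hence $\iota_{n_0}(e)\in tB_{n_0}$. The relation
\[
\iota_{n_0}(d)\iota_{n_0}(b')=\iota_{n_0}(b)\in t^N B_{n_0}=\iota_{n_0}(d)\iota_{n_0}(e)B_{n_0},
\]
together with $\iota_{n_0}(d)\ne 0$ (because $d\mid t^N$ forces $\iota_{n_0}(d)\mid\iota_{n_0}(t^N)\ne 0$) and the integrality of $B_{n_0}$, forces $\iota_{n_0}(b')\in\iota_{n_0}(e)B_{n_0}\subset tB_{n_0}$. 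Picking a B\'ezout identity $\alpha b'+\beta e=1$ in $B$ and applying $\iota_{n_0}$ produces $1\in tB_{n_0}$, which is impossible. Hence $N=0$ and $a\in B$. The principal conceptual obstacle is to locate the right scalar reduction, after which the slope-factorization and B\'ezout machinery of \S~\ref{sec:adeq} handles both choices of $B$ uniformly; the minor delicacy is checking that $\iota_{n_0}$ does not kill the divisor $d$, which is ensured automatically by the divisibility $d\mid t^N$.
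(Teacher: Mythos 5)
Your proof is correct, and it follows a genuinely different route from the paper's. The paper reduces to the containment case $D^{(1)}\subset D^{(2)}$ by passing to $D^{(1)}+D^{(2)}$ (which is free by Lemma~\ref{lem:Robba}) and then cites Berger's Proposition~I.3.4, whose argument rests on the elementary divisor theorem over $B$ (i.e., the adequateness result Lemma~2.0.6): one chooses compatible bases with $D^{(1)}=\bigoplus \lambda_i e_i\subset\bigoplus B e_i=D^{(2)}$ and shows the $\lambda_i$ are units. Your approach instead fixes an arbitrary $B$-basis of $D^{(2)}$, expands a given $x\in D^{(1)}$ in coordinates $a_i\in B[t^{-1}]$, and isolates the purely scalar statement that $\iota_n(a)\in B_n$ for all $n\ge n(r)$ forces $a\in B$; you then settle this using only B\'ezoutness of $B$, the slope-factorization bookkeeping of Lemma~\ref{lem:elemrobba}, and the kernel computation Lemma~\ref{lem:kertheta}. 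This is somewhat more elementary (no Smith normal form, just $\gcd$), and it has a noteworthy consequence you don't quite flag: nowhere do you use the hypothesis that $D^{(1)}$ and $D^{(2)}$ are stable by $\varphi$, so your argument actually establishes the conclusion from hypotheses (i) and (ii) alone, a cleaner form of the lemma. All the individual steps check out — the coordinate extraction is legitimate because $B_n\otimes_{\iota_n,B}D^{(i)}$ embeds into $\mathrm{Frac}(B_n)\otimes_{\iota_n,B}D^{(2)}$ by freeness, $\iota_n(t)=p^{-n}t$ is indeed a uniformizer of $B_n$, the coprimality $(b',e)=B$ follows from $d=\gcd(b,t^N)$, and $\iota_{n_0}(d)\neq0$ since $d\mid t^N$ and $\iota_{n_0}(t)\neq0$.
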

\begin{proof}
Since $D^{(1)}+D^{(2)}$ is finite free by Lemma~\ref{lem:Robba} and satisfies the same condition as $D^{(2)}$, we may assume that $D^{(1)}\subset D^{(2)}$ by replacing $D^{(2)}$ by $D^{(1)}+D^{(2)}$. Then, the proof of \cite[Proposition~I.~3.4]{Ber} works by using the ingredients Lemma 2.0.13, and Lemma 4.2.10 instead of \cite[Proposition~I.~2.2]{Ber}.
\end{proof}

\begin{prop}[{cf. \cite[Th\'eor\`eme~II~1.2]{Ber}}]\label{prop:const}
Let $V\in \rep_{\dR}(\gk)$ be a de Rham representation with negative Hodge-Tate weights. Let $B$ be either $\wtil{\B}^{\dagger,r}_{\rig}$ or $\B^{\dagger,r}_{\rig,K}$. Let $B_n$ and $\iota_n:B\to B_n$ be as in Lemma~\ref{lem:unique}. Let $D_n:=(\B_{\dR}^+\otimes_K\D_{\dR}(V))^{\nabla^{\geom}=0}$ in the first case and $D_n:=(K_n[[t,u_1,\dots,u_d]]\otimes_K\D_{\dR}(V))^{\nabla^{\geom}=0}$ in the second case. Let $D:=\wtil{\B}^{\dagger,r}_{\rig}\otimes_{\Q_p} V$ in the first case and $D:=\D^{\dagger,r}_{\rig}(V)$ in the second case. Then, we have
\begin{enumerate}
\item There exists $h\in\N$ such that
\[
t^h B_n\otimes_{\iota_n,B}D \subset D_n \subset B_n\otimes_{\iota_n,B}D
\]
for all $n\ge n(r)$;
\item We define $\iota_n:D\to B_n\otimes_{\iota_n,B}D;x\mapsto 1\otimes x$ and
\[
\mathcal{N}:=\{x\in D;\iota_n(x)\in D_n\text{ for all }n\ge n(r)\}.
\]
Then, $\mathcal{N}$ is a finite free $B$-submodule of $D$, whose rank is equal to $\dim_{\Q_p}V$. Moreover, there exists a canonical isomorphism
\[
B_n\otimes_{\iota_n,B}\mathcal{N}\to D_n
\]
for all $n\ge n(r)$.
\end{enumerate}
\end{prop}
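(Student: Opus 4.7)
The strategy adapts Berger's construction from the perfect-residue-field case (\cite[Th\'eor\`eme~II.1.2]{Ber}), systematically replacing $K_n[[t]]$ by the horizontal-sections ring $K_n[[t,u_1,\dots,u_d]]^{\nabla}$ studied in \S~\ref{subsec:horizontal}. Throughout, I exploit that in both cases $B_n$ is a complete discrete valuation ring with uniformizer $t$ (Lemma~\ref{lem:hori}), and that $\iota_n(B)$ sits densely in $B_n$ (Lemma~\ref{lem:dense}).

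First I would prove (i). Since $V$ is de Rham, the comparison isomorphism $\B_{\dR}\otimes_{\Q_p}V\cong\B_{\dR}\otimes_K\D_{\dR}(V)$ identifies two $\B_{\dR}^+$-lattices in $\B_{\dR}\otimes V$; the negative Hodge-Tate weight hypothesis forces a sandwich $t^h(\B_{\dR}^+\otimes_{\Q_p}V)\subset\B_{\dR}^+\otimes_K\D_{\dR}(V)\subset\B_{\dR}^+\otimes_{\Q_p}V$ for an $h\in\N$ depending only on the Hodge-Tate weights of $V$. Pulling back via $\iota_n$ (and, in the second case, taking $\nabla^{\geom}=0$ sections while using that $\iota_n(\varphi^{n-1}(q))$ differs from $t$ by a unit of $B_n$ by Lemmas~\ref{lem:kertheta} and~\ref{lem:unit}) yields the desired inclusions $t^h B_n\otimes_{\iota_n,B}D\subset D_n\subset B_n\otimes_{\iota_n,B}D$ uniformly in $n\ge n(r)$.

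For (ii), uniqueness of $\mathcal{N}$ is Lemma~\ref{lem:unique}, so it suffices to construct a candidate. I fix a $B$-basis $\und{e}$ of $D$ and, for each $n\ge n(r)$, a $B_n$-basis $\und{f}_n$ of $D_n$, and let $P_n\in GL_{d'}(B_n[t^{-1}])$ be the transition matrices $\und{f}_n=\iota_n(\und{e})P_n$; by (i), the entries of $P_n^{\pm 1}$ lie in $t^{-h}B_n$. The task is to produce a single matrix $P\in GL_{d'}(B[t^{-1}])$ whose image under each $\iota_n$ equals $P_n$ up to right-multiplication by an element of $GL_{d'}(B_n)$. Using the density of $\iota_n(B)$ in $B_n$ to lift each $P_n\bmod t^{w}$ to $\widetilde{P}_n\in M_{d'}(B[t^{-1}])$ (for $w\gg h$), and using the partition of unity $\{t_{n,w}\}$ of Lemma~\ref{lem:partition} to decouple the local conditions, I form the Fr\'echet-convergent sum $P_w:=\sum_{n\ge n(r)}t_{n,w}\widetilde{P}_n$, which matches each $P_n$ modulo $t^{w}$. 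A successive-approximation procedure---feeding the residual error back into the same construction and invoking Lemma~\ref{lem:elemrobba} to control the principal ideals that arise---upgrades this to an exact matching. Setting $\mathcal{N}:=\und{e}\cdot P$ gives a finite free $B$-submodule of $D$ of rank $d'=\dim_{\Q_p}V$ with $B_n\otimes_{\iota_n,B}\mathcal{N}=D_n$ for every $n\ge n(r)$, and by uniqueness it coincides with $\{x\in D;\iota_n(x)\in D_n\ \forall n\ge n(r)\}$.

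The main obstacle is the patching step: infinitely many local conditions must be bundled into one global element of $B[t^{-1}]$ while preserving Fr\'echet convergence, and the approximations need to improve uniformly across all levels $n\ge n(r)$. The case $B=\wtil{\B}^{\dagger,r}_{\rig}$ is significantly easier because $B_n=\B_{\dR}^{\nabla+}$ is independent of $n$, so (ii) reduces to a single lifting problem without any genuine patching; the content of the argument is essentially the second case, where the dependence of $B_n=K_n[[t,u_1,\dots,u_d]]^{\nabla}$ on $n$ necessitates the partition-of-unity machinery together with the Gr\"obner-style convergence estimates implicit in Lemma~\ref{lem:elemrobba}.
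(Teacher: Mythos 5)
Your treatment of (i) is essentially the paper's: you obtain the sandwich of $\B_{\dR}^+$-lattices from the de~Rham comparison and the negative Hodge--Tate weight hypothesis, and then descend to $B_n\otimes_{\iota_n,B}D$. The paper makes the descent precise by tensoring with $\B_{\dR}^+$ over $B_n$ and invoking the faithful flatness of $B_n\hookrightarrow\B_{\dR}^+$ (a consequence of Lemma~\ref{lem:hori}); your phrase ``pulling back via $\iota_n$'' gestures at the same thing but should be formulated as that faithfully flat descent.

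For (ii), your route is genuinely different from the paper's, and there is a real gap. The paper's argument is short and direct: since $\mathcal{N}$ is by definition an intersection of preimages of $D_n$ under the continuous maps $\iota_n$, it is automatically a \emph{closed} $B$-submodule of $D$; by (i) it contains $t^hD$; hence Lemma~\ref{lem:Robba} gives immediately that $\mathcal{N}$ is free of the required rank. For the isomorphism $B_n\otimes_{\iota_n,B}\mathcal{N}\to D_n$, since $B_n$ is a $t$-adically complete discrete valuation ring and both sides are free $B_n$-modules of the same rank, it suffices to prove surjectivity modulo $t$; and this is done in a single step by combining the surjectivity of $\pr_{h+1}\circ\iota_n$ from Lemma~\ref{lem:surj} with one application of the partition of unity $t_{n,h+1}$ of Lemma~\ref{lem:partition}. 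No iteration and no transition matrices are needed.

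Your proposal instead tries to \emph{construct} $\mathcal{N}$ as $\und{e}\cdot P$ by patching a family of local transition matrices $P_n$ into a single $P\in GL_{d'}(B[t^{-1}])$. The ``successive-approximation procedure'' that is supposed to upgrade the approximate matching $\iota_n(P_w)\equiv P_n\bmod t^w$ to an exact one is asserted but never carried out, and this is where all the difficulty lives: you would need simultaneous control of infinitely many correction terms so that the resulting series converges in $B[t^{-1}]$, and you would need the corrections to remain in $GL_{d'}$; it is also not clear how Lemma~\ref{lem:elemrobba}, which merely classifies principal ideals dividing $(t^h)$ via slope factorizations, enters. Finally, your closing claim that the case $B=\wtil{\B}^{\dagger,r}_{\rig}$ is ``significantly easier'' because $B_n=\B_{\dR}^{\nabla+}$ is independent of $n$ is incorrect: the target ring is the same, but the embeddings $\iota_n\colon\wtil{\B}^{\dagger,r}_{\rig}\to\B_{\dR}^{\nabla+}$ are distinct for each $n$ (they incorporate $\varphi^{-n}$), so the conditions $\iota_n(x)\in D_n$ are mutually independent and the patching problem is of exactly the same nature as in the second case. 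The paper accordingly treats both cases in parallel.
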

\begin{proof}
\begin{enumerate}
\item Since the inclusion $B_n\subset\B_{\dR}^+$ is faithfully flat by Lemma~\ref{lem:hori}, we have only to prove the assertion after tensoring $\B_{\dR}^+$ over $B_n$. We have the following isomorphisms:
\[
\B_{\dR}^+\otimes_{B_n}B_n\otimes_{\iota_n,B}D\cong\B_{\dR}^+\otimes_{\iota_n,\B^{\dagger,r}}\B^{\dagger,r}\otimes_{\B^{\dagger,r}}D^{\dagger,r}\cong\B_{\dR}^+\otimes_{\iota_n,\B^{\dagger,r}}\B^{\dagger,r}\otimes_{\Q_p}V=\B_{\dR}^+\otimes_{\Q_p}V,
\]
where $D^{\dagger,r}:=\wtil{\B}^{\dagger,r}\otimes_{\Q_p} V$ in the first case and $D^{\dagger,r}:=\D^{\dagger,r}(V)$ in the second case. Since $\B_{\dR}^+\otimes_{B_n}D_n\subset\B_{\dR}^+\otimes_{\Q_p}V$ by assumption and $\B_{\dR}^+\otimes_{B_n}D_n[t^{-1}]\cong \B_{\dR}^+\otimes_K\D_{\dR}(V)[t^{-1}]=\B_{\dR}\otimes_{\Q_p}V$, there exists $h\in\N$ such that
\[
t^h\B_{\dR}^+\otimes_{\Q_p}V\subset \B_{\dR}^+\otimes_{B_n}D_n \subset\B_{\dR}^+\otimes_{\Q_p}V,
\]
which implies the assertion.
\item Since $\mathcal{N}$ is a closed $B$-submodule of $D$ containing $t^hD$, $\mathcal{N}$ is free of rank $\dim_{\Q_p}V$ by Lemma~\ref{lem:Robba}. To prove the second assertion, we have only to prove the canonical map $B_n\otimes_{\iota_n,B}\mathcal{N}\to D_n/tD_n$ is surjective for all $n\ge n(r)$ since $B_n$ is a $t$-adically complete discrete valuation ring. Fix $n$ and let $x\in D_n$. Note that $\pr_{h+1}\circ\iota_n:B\to B_n/t^{h+1}B_n$ is surjective: Indeed, when $B=\B^{\dagger,r}_{\rig,K}$, it follows from Lemma~\ref{lem:surj}. When $B=\wtil{\B}^{\dagger,r}_{\rig}$, it is reduced to the case of $h=0$ and $\pr_1\circ\iota_n=\theta\circ\iota_n:\wtil{\B}^{\dagger,r}_{\rig}\to\C_p$ is surjective since $\wtil{\B}^+\subset\wtil{\B}^{\dagger,r}_{\rig}$. Hence, there exists $y\in D$ such that $\iota_{n}(y)-x\in t^{h+1} B_n\otimes_{\iota_n,B}D\in tD_n$. We put $z:=t_{n,h+1}y\in D$, where $t_{n,h+1}$ is as in Lemma~\ref{lem:partition}. By the property of $t_{\bullet,\bullet}$, we have
\[
\iota_{n}(z)-x=(\iota_{n}(t_{n,h+1})-1)\iota_n(y)+\iota_n(y)-x\in tD_{n}
\]
and for $m\neq n$,
\[
\iota_m(z)\in t^{h+1} B_n\otimes_{\iota_n,B}D\subset tD_n.
\]
These imply $z\in\mathcal{N}$, hence, we obtain the assertion.
\end{enumerate}
\end{proof}

\begin{dfn}
Under the setting of Proposition~\ref{prop:const}, we denote $\mathcal{N}$ by $\wtil{\N}^{\dagger,r}_{\rig}(V)$ in the first case and $\N_{\dR,r}(V)$ in the second case. For a de Rham representation $V$ with arbitrary Hodge-Tate weights, we put $\wtil{\N}^{\dagger,r}_{\rig}(V):=\wtil{\N}^{\dagger,r}_{\rig}(V(-n))(n)$ and $\N_{\dR,r}(V):=\N_{\dR,r}(V(-n))(n)$ for sufficiently large $n\in\N$, which are independent of the choice of $n$. We also put $\wtil{\N}^{\dagger}_{\rig}(V):=\cup_r \wtil{\N}^{\dagger,r}_{\rig}(V)$ and $\N_{\dR,r}(V):=\cup_r \N_{\dR,r}(V)$. Note that for $0<s\le r$, the canonical map $\B^{\dagger,s}_{\rig,K}\otimes_{\B^{\dagger,r}_{\rig,K}}\N_{\dR,r}(V)\to\N_{\dR,s}(V)$ is an isomorphism by Lemma~\ref{lem:unique} and Proposition~\ref{prop:const}. Hence, the canonical morphism $\B^{\dagger}_{\rig,K}\otimes_{\B^{\dagger,r}_{\rig}}\N_{\dR,r}(V)\to\N_{\dR}(V)$ is an isomorphism, in particular, $\N_{\dR}(V)$ is a finite free $\B^{\dagger}_{\rig,K}$-module of rank $\dim_{\Q_p}V$. Since the map $\varphi:\D^{\dagger,r}_{\rig}(V)\to\D^{\dagger,r/p}_{\rig}(V)$ induces a map $\varphi:\N_{\dR,r}(V)\to\N_{\dR,r/p}(V)$ by the formula $\iota_{n+1}\circ\varphi=\iota_n$, $\N_{\dR}(V)$ is stable under the $(\varphi,\Gamma_K)$-action of $\D^{\dagger}_{\rig}(V)$. By a similar reason, $\wtil{\N}_{\rig}^{\dagger}(V)$ is free of rank $\dim_{\Q_p}V$ and is stable under the $(\varphi,G_K)$-action of $\wtil{\B}^{\dagger}_{\rig}\otimes_{\Q_p}V$. Thus, we obtain a $(\varphi,\gk)$-module $\wtil{\N}^{\dagger}_{\rig}(V)$ over $\wtil{\B}^{\dagger}_{\rig}$ and a $(\varphi,\Gamma_K)$-module $\N_{\dR}(V)$ over $\B^{\dagger}_{\rig,K}$.
\end{dfn}

\subsection{Differential action of a $p$-adic Lie group}\label{subsec:Lie}
In this subsection, we recall basic facts on the differential action of a certain $p$-adic Lie group. Throughout this subsection, let $\mathcal{G}$ be a $p$-adic Lie group, which is isomorphic to an open subgroup of $(1+2p\Z_p)\ltimes\Z_p^d$ via a continuous group homomorphism $\eta:\mathcal{G}\hookrightarrow\Z_p^{\times}\ltimes\Z_p^d$. Denote $\eta(\gamma)=(\eta_0(\gamma),\dots,\eta_d(\gamma))\in\Z_p^{\times}\ltimes\Z_p^d$ for $\gamma\in\mathcal{G}$. Let
\[
\mathcal{G}_0:=\{\gamma\in\mathcal{G};\eta_j(\gamma)=0\text{ for all }j>0\},
\]
\[
\mathcal{G}_j:=\{\gamma\in\mathcal{G};\eta_0(\gamma)=1, \eta_{i}(\gamma)=0\text{ for all positive }i\neq j\}
\]
for $1\le j\le d$.

\begin{notation}\label{notation:Lie}
Let $(R,v)$ be a $\Q_p$-Banach algebra and $M$ a finite free $R$-module endowed with $R$-valuation $v$. Assume that $\mathcal{G}$ acts on $R$ and $M$ satisfying
\begin{enumerate}
\item The $\mathcal{G}$-action on $R$ is $\Q_p$-linear and the action of $\mathcal{G}$ on $M$ is $R$-semi-linear;
\item We have $v\circ\gamma(x)=v(x)$ for all $x\in R$ and $\gamma\in\mathcal{G}$;
\item There exists an open subgroup $\mathcal{G}_o\le_o \mathcal{G}$ such that
\[
v((\gamma-1)x)\ge v(x)+v(p)
\]
for all $\gamma\in\mathcal{G}_o$ and $x\in R$.
\item For any $x\in M$, there exists an open subgroup $\mathcal{G}_x\le_o \mathcal{G}_o$ such that
\[
v((\gamma-1)x)\ge v(x)+v(p)
\]
for all $\gamma\in\mathcal{G}_x$.
\end{enumerate}
\end{notation}

\begin{construction}\label{lem:op}
Let notation be as in Notation~\ref{notation:Lie}. We extend the construction of the differential operator $\nabla_V$ in \cite[\S~5.1]{Inv} in this setting. By assumption, there exists an open subgroup $\mathcal{G}_M\le_o\mathcal{G}_o$ such that
\[
v((\gamma-1)x)\ge v(x)+v(p)
\]
for all $x\in M$ and $\gamma\in\mathcal{G}_M$. Hence, we can apply Berger's argument to the $1$-parameter subgroup $\gamma^{\Z_p}$ for $\gamma\in\mathcal{G}_M$. Thus, we can define a continuous $\Q_p$-linear map
\[
\log(\gamma):M\to M;x\mapsto\log(\gamma)(x):=\sum_{n\ge 1}(-1)^{n-1}\frac{(\gamma-1)^n}{n}x
\]
for $\gamma\in\mathcal{G}_M$. Moreover, the operators
\[
\nabla_0(x):=\frac{\log(\gamma)(x)}{\log(\eta_0(\gamma))}\text{ for }\gamma\in \mathcal{G}_M\cap\mathcal{G}_0,
\]
\[
\nabla_j(x):=\frac{\log(\gamma)(x)}{\eta_j(\gamma)}\text{ for }\gamma\in\mathcal{G}_M\cap\mathcal{G}_j
\]
for $1\le j\le d$ are independent of the choice of $\gamma$.

Assume that $N$ satisfies the conditions in Notation~\ref{notation:Lie}. Then, $M\otimes_R N$ satisfies the conditions in Notation~\ref{notation:Lie}, and we have
\[
\log(\gamma\otimes\gamma)=\log(\gamma)\otimes \id_N+\id_M\otimes\log(\gamma)\text{ for }\gamma\in\mathcal{G}_M\cap\mathcal{G}_N
\]
in $\mathrm{End}_{\Q_p}{(M\otimes_RN)}$. By putting $(M,N)=(R,R)$ or $(M,R)$, $\nabla_j:R\to R$ is a continuous derivation and $\nabla_j:M\to M$ is a continuous derivation compatible with $\nabla_j:R\to R$, that is, $\nabla_j(\lambda x)=\nabla_j(\lambda)x+\lambda\nabla_j(x)$ for $\lambda\in R$ and $x\in M$.
\end{construction}

\begin{lem}\label{lem:Lierelation}
Let notation be as in Construction~\ref{lem:op}. In $\mathrm{End}_{\Q_p}(M)$, we have
\[
[\nabla_i,\nabla_j]=-[\nabla_j,\nabla_i]=
\begin{cases}
\nabla_j&\text{ if }i=0,\ 1\le j\le d\\
0&\text{ if } 1\le i,j\le d.
\end{cases}
\]
\end{lem}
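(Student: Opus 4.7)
The plan is to translate the group-theoretic commutation relations between $\mathcal{G}_0$ and the $\mathcal{G}_j$'s into operator relations on $M$ by differentiating one-parameter subgroups, as in the classical correspondence between a $p$-adic Lie group and its Lie algebra.

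First I will fix $\gamma_0\in\mathcal{G}_M\cap\mathcal{G}_0$ and $\gamma_j\in\mathcal{G}_M\cap\mathcal{G}_j$ for $1\le j\le d$, setting $a:=\eta_0(\gamma_0)\in 1+2p\Z_p$ and $b_j:=\eta_j(\gamma_j)\in\Z_p$, and record that under the embedding $\eta$ the group law in $\Z_p^\times\ltimes\Z_p^d$ yields the relations
\[
\gamma_i\,\gamma_j=\gamma_j\,\gamma_i\quad(1\le i,j\le d),\qquad \gamma_0\,\gamma_j\,\gamma_0^{-1}=\gamma_j^{\,a}.
\]
Here the right-hand side of the last identity is unambiguously interpreted as an element of the one-parameter subgroup $\gamma_j^{\Z_p}$. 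By continuity of the action, these relations remain valid for all $p$-adic exponents: for $s,t\in\Z_p$,
\[
\gamma_i^{\,s}\gamma_j^{\,t}=\gamma_j^{\,t}\gamma_i^{\,s},\qquad \gamma_0^{\,s}\,\gamma_j^{\,t}\,\gamma_0^{\,-s}=\gamma_j^{\,a^s t}.
\]

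Next I will pass from groups to operators using the construction of $\log(\gamma)$ recalled in Construction~\ref{lem:op}: on the $1$-parameter subgroup $\gamma^{\Z_p}\subset\mathcal{G}_M$, the family $(\gamma^t)_{t\in\Z_p}$ acts on $M$ by continuous $\Q_p$-linear operators, and $\log(\gamma)$ is exactly the derivative at $t=0$ of $t\mapsto\gamma^t$. Differentiating the commuting relation $\gamma_i^{\,s}\gamma_j^{\,t}=\gamma_j^{\,t}\gamma_i^{\,s}$ first in $t$ and then in $s$ at $0$ gives $[\log(\gamma_i),\log(\gamma_j)]=0$ on $M$ for $1\le i,j\le d$, whence $[\nabla_i,\nabla_j]=0$ after dividing by $b_ib_j$. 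Differentiating the conjugation relation $\gamma_0^{\,s}\gamma_j^{\,t}\gamma_0^{\,-s}=\gamma_j^{\,a^s t}$ in $t$ at $t=0$ produces
\[
\gamma_0^{\,s}\,\log(\gamma_j)\,\gamma_0^{\,-s}=a^s\,\log(\gamma_j),
\]
and then differentiating in $s$ at $s=0$ yields
\[
[\log(\gamma_0),\log(\gamma_j)]=\log(a)\cdot\log(\gamma_j).
\]
Dividing both sides by $\log(a)\cdot b_j$ and using $\nabla_0=\log(\gamma_0)/\log(a)$, $\nabla_j=\log(\gamma_j)/b_j$ gives $[\nabla_0,\nabla_j]=\nabla_j$, as required.

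The main obstacle is the justification of differentiating the one-parameter-group identities at the level of operators on $M$. This amounts to checking (a) that for $\gamma\in\mathcal{G}_M$ the series $\log(\gamma)=\sum_{n\ge 1}(-1)^{n-1}(\gamma-1)^n/n$ converges on $M$ and agrees with $\lim_{t\to 0}(\gamma^t-1)/t$, and (b) that the differentiation and the conjugation/multiplication commute, i.e. one may genuinely differentiate the two-parameter map $(s,t)\mapsto\gamma_0^{\,s}\gamma_j^{\,t}\gamma_0^{\,-s}$ first in $t$ and then in $s$ termwise. Both points follow from the contraction hypothesis (iii)-(iv) of Notation~\ref{notation:Lie}: replacing $\mathcal{G}_M$ by a smaller open subgroup if necessary, all relevant series converge uniformly for $(s,t)$ in a neighborhood of $(0,0)$ in $\Z_p^2$, and termwise differentiation of a uniformly convergent power series in $s,t$ valued in the Banach space $\mathrm{End}_{\Q_p}(M)$ (with the operator norm) is legitimate. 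Once this analytic point is settled, the two computations above give the stated bracket relations.
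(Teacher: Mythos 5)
Your proposal is correct but follows a genuinely different route from the paper. You derive the bracket relations from the adjoint action: you record the conjugation relation $\gamma_0\gamma_j\gamma_0^{-1}=\gamma_j^{a}$ (with $a=\eta_0(\gamma_0)$) in the group $\mathcal{G}\cong\Z_p^{\times}\ltimes\Z_p^d$, pass to $\log$, and then ``differentiate'' the resulting one-parameter identity $\gamma_0^{\,s}\log(\gamma_j)\gamma_0^{\,-s}=a^{s}\log(\gamma_j)$ at $s=0$. This is the textbook Lie-group $\to$ Lie-algebra mechanism. The paper instead works entirely inside the group ring $\Q_p[\mathcal{G}]$ and uses the explicit realization of $\mathcal{G}$ as a block of $GL_{d+1}(\Z_p)$ (from \S\ref{subsec:Galois}): writing $\gamma_0-1=u_0E_{1,1}$ and $\gamma_j-1=u_jE_{1,j}$, the fact that $E_{1,j}^2=0$ and $E_{1,1}E_{1,j}=E_{1,j}$, $E_{1,j}E_{1,1}=0$ collapses the two logarithm series into the single formal identity $[\log\gamma_0,\log\gamma_j]=\log(1+u_0)\log\gamma_j$, which is then evaluated term-by-term on $x\in M$. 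The paper's route needs no $\Z_p$-exponentiation of $\gamma_0$ and stays with convergent sums already treated in Construction~\ref{lem:op}; your route is conceptually cleaner but buys this cleanliness at the cost of the analytic step you flag yourself.

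That step is where I would want more care. Making $\gamma^{s}=\sum_n\binom{s}{n}(\gamma-1)^n$ converge on $M$ for all $s\in\Z_p$ requires $v(\binom{s}{n}(\gamma-1)^n x)\to\infty$, and from hypothesis (iii)--(iv) of Notation~\ref{notation:Lie} one only gets $v((\gamma-1)^n x)\ge v(x)+n\,v(p)$ while $v(\binom{s}{n})\ge -n/(p-1)$; the margin $n(1-\tfrac{1}{p-1})v(p)$ vanishes when $p=2$, so the two-parameter map $(s,t)\mapsto\gamma_0^{\,s}\gamma_j^{\,t}\gamma_0^{\,-s}$ need not be analytic in the operator norm under the stated hypotheses. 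The cleanest way to close this gap (and make your argument $p$-independent) is to avoid non-integer exponents altogether: use the identity $\log(\gamma)=\lim_{n\to\infty}(\gamma^{p^n}-1)/p^{n}$ for $\gamma\in\mathcal{G}_M$, so that from the integer-exponent relation $\gamma_0^{m}\log(\gamma_j)\gamma_0^{-m}=a^{m}\log(\gamma_j)$ one computes
\[
[\log\gamma_0,\log\gamma_j]
=\lim_{n\to\infty}\frac{[\gamma_0^{p^n},\log\gamma_j]}{p^n}
=\lim_{n\to\infty}\frac{(a^{p^n}-1)}{p^n}\,\log(\gamma_j)\,\gamma_0^{p^n}
=\log(a)\,\log(\gamma_j),
\]
using $\gamma_0^{p^n}\to 1$ in the operator norm on $M$ and $(a^{p^n}-1)/p^n\to\log(a)$. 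With that replacement your argument is complete and rigorous; as written, the appeal to ``termwise differentiation of a uniformly convergent power series in $s,t$'' is a sketch that does not obviously follow from (iii)--(iv) alone.
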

\begin{proof}
Since $\mathcal{G}_i$ and $\mathcal{G}_j$ for $1\le i,j\le d$ are commutative, the assertion in the second case is trivial. We prove in the case of $i=0$ and $1\le j\le d$. Fix $x\in M$. We regard $\mathcal{G}$ as a subgroup of $GL_{d+1}(\Z_p)$ as in \S~\ref{subsec:Galois}. For sufficiently small $u_0,u_j\in\Z_p$, put $\gamma_0:=1+u_0E_{1,0}\in\mathcal{G}_0\cap\mathcal{G}_M$, $\gamma_j:=1+u_jE_{1,j}\in\mathcal{G}_j\cap\mathcal{G}_M$, where $E_{1,j}$ is the $(1,j+1)$-th elementary matrix in $M_{d+1}(\Z_p)$. Then, the assertion is equivalent to the equality
\[
\log(\gamma_0)\circ\log(\gamma_j)(x)-\log(\gamma_j)\circ\log(\gamma_0)(x)=\log(1+u_0)\log(\gamma_j)x.
\]
In the group ring $\Q_p[\mathcal{G}]$, we have
\begin{align*}
&\sum_{1\le i\le n}\frac{(-1)^{n-1}}{n}u_0^nu_jE_{1,j}\\
=&\sum_{1\le i\le n}\frac{(-1)^{n-1}}{n}(u_0E_{1,1})^n\sum_{1\le i\le n}\frac{(-1)^{n-1}}{n}(u_jE_{1,j})^n-\sum_{1\le i\le n}\frac{(-1)^{n-1}}{n}(u_jE_{1,j})^n\sum_{1\le i\le n}\frac{(-1)^{n-1}}{n}(u_0E_{1,1})^n.
\end{align*}
After taking the actions of both sides on $x$, the LHS converges to $\log(1+u_0)\log(\gamma_j)(x)$ and the RHS converges to $\log(\gamma_0)\circ\log(\gamma_j)(x)-\log(\gamma_j)\circ\log(\gamma_0)(x)$, which implies the assertion.
\end{proof}

In the following, we will use the Fr\'echet version of Construction~\ref{lem:op}.
\begin{construction}\label{const:diffoperator}
Let $(R,\{w_r\})$ be a Fr\'echet algebra and $M$ a finite free $R$-module endowed with $R$-valuations $\{w_r\}$. Assume that $\mathcal{G}$ acts on $R$ and $M$ and assume that the $\mathcal{G}$-actions on $(\hat{R}_r,w_r)$ and $(\hat{M}_r,w_r)$ for all $r$ satisfy the conditions in Notation~\ref{notation:Lie}, where $\hat{R}_r$ and $\hat{M}_r$ are the completions of $R$ and $M$ with respect to $w_r$. By applying Construction~\ref{lem:op} to each $\hat{R}_r$ and $\hat{M}_r$ and passing to limits, we obtain continuous derivations $\nabla_j:R\to R$ and $\nabla_j:M\to M$ for $0\le j\le d$, which are compatible with $\nabla_j:R\to R$, satisfying
\[
[\nabla_0,\nabla_j]=\nabla_j\text{ for }1\le j\le d,\ [\nabla_i,\nabla_j]=0\text{ for }1\le i,j\le d.
\]
Thus, the actions of $\nabla_0,\dots\nabla_d$ give rise to a differential action of the Lie algebra $\mathrm{Lie}(\mathcal{G})\cong\Q_p\ltimes\Q_p^d$.
\end{construction}

\subsection{Differential action and differential conductor of $\N_{\dR}$}\label{subsec:diffaction}
In \S~\ref{subsec:const}, we have constructed $\N_{\dR}(V)$ for de Rham representations $V$ as a $(\varphi,\Gamma_K)$-module. The aim of this subsection is to endow $\N_{\dR}(V)$ with a structure of $(\varphi,\nabla)$-module in the sense of Definition~\ref{dfn:phinabla} by using the results in \S~\ref{subsec:Lie}. As a consequence, we can define the differential Swan conductor of $\N_{\dR}(V)$ (Definition~\ref{dfn:diffswanndr}). Throughout this subsection, let $V$ denote a $p$-adic representation of $\gk$.

\begin{lem}\label{lem:cont}
There exists an open normal subgroup $\Gamma_K^o\le_o \Gamma_K$ and $r_K>0$ such that for all $0<r\le r_K$, there exists $c_r>0$ such that
\[
w_r((1-\gamma)x)\ge w_r(x)+c_r,\forall x\in\mathbb{B}_{K}^{\dagger,r},\forall\gamma\in \Gamma_K^o.
\]
\end{lem}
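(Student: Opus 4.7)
The strategy is to reduce to an explicit calculation on a set of topological generators of $\A_K^{\dagger,r}$, exploiting the fact that $\Gamma_K$ acts on $\wtil{\mathbb{B}}^{\dagger,r}$ by ring automorphisms that preserve the partial valuations $v_{\wtil{\E}}^{\le n}$, hence preserve $w_r$. The first step is to verify this $\Gamma_K$-invariance: since $\Gamma_K$ acts isometrically on $(\wtil{\E}, v_{\wtil{\E}})$, it preserves each $v_{\wtil{\E}}^{\le n}$ and therefore $w_r = \inf_n\{rv_{\wtil{\E}}^{\le n}+n\}$. This isometry property is the essential ingredient that makes any pointwise estimate propagate multiplicatively via the Leibniz-type identity
\[
(1-\gamma)(xy) = (1-\gamma)(x)\cdot y + \gamma(x)\cdot(1-\gamma)(y).
\]

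The second step, which is the heart of the argument, is to establish the key estimate on the generators $\pi$ and $[\wtil{t}_j]$ of $\A_{\wtil{K}}$. Using the formulas in Remark \ref{rem:aplus}(ii), for $\gamma = \gamma_a\gamma_b$ with $a \in 1+p^N\Z_p$ and $b \in (p^N\Z_p)^d$ one has
\[
(\gamma-1)(\pi) = (1+\pi)^a - (1+\pi), \qquad (\gamma-1)([\wtil{t}_j]) = \big((1+\pi)^{b_j}-1\big)[\wtil{t}_j].
\]
Expanding via the $p$-adic binomial series and estimating term by term, one shows that for $N$ large enough (depending on $r$) both differences satisfy $w_r((\gamma-1)x) \ge w_r(x) + c_r$ for some $c_r>0$. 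Choosing $\Gamma_{\wtil{K}}^o$ as the preimage under $\eta$ of a sufficiently small open neighborhood of the identity in $\Z_p^\times \ltimes \Z_p^d$ yields the estimate on generators.

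Third, I would propagate the bound from generators to polynomial combinations and then, by Fr\'echet continuity, to the full ring $\A_{\wtil{K}}^{\dagger,r}$: the Leibniz identity combined with the isometry $w_r\circ\gamma = w_r$ makes the lower bound multiplicative on monomials in $\pi^{\pm 1}$ and $[\wtil{t}_j]^{\pm 1}$; additivity then extends it to polynomial combinations with $W(k_{\wtil{K}})$-coefficients, and passing to the $w_r$-completion handles $\A_{\wtil{K}}^{\dagger,r}$. Inverting $p$ gives $\mathbb{B}_{\wtil{K}}^{\dagger,r}$. Finally, to descend from the case $\wtil{K}$ to general $K$, I would invoke the finite \'etale description of $\A_K^{\dagger,r}/\A_{\wtil{K}}^{\dagger,r}$ in Lemma \ref{lem:ocexplicit}, fix a free $\A_{\wtil{K}}^{\dagger,r}$-basis of $\A_K^{\dagger,r}$, and shrink $\Gamma_K^o$ further inside $\Gamma_{\wtil{K}}^o$ so as to permute this basis trivially; equivalence of the Fr\'echet topologies on a finite free module then transfers the bound, possibly with a shift in $c_r$.

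The principal technical obstacle is the calibration of $N = N(r)$ in step two: the quantity $w_r((1+\pi)^c-1)$ involves a competition between the $p$-adic size of $c$ and the weights contributed by $\pi^k$ in the binomial expansion, so one must verify that the infimum of $rv_{\wtil{\E}}^{\le n} + n$ over all terms is bounded below by $w_r(\pi) + c_r$ with $c_r$ strictly positive and independent of $c$ (provided $v_p(c)\ge N$). A secondary subtlety is ensuring that $c_r$ does not degenerate to $0$ as $r\to 0$; this is the reason the estimate is formulated uniformly only for $r \le r_K$ and for each such $r$ with its own constant $c_r$, rather than uniformly across all $r$.
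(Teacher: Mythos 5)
The paper's own proof is essentially a one-line citation: it observes that $\A^{\dagger,r}_K$ sits inside the auxiliary ring $\Lambda_{m,\oo_K}^{(i)}\subset\wtil{\A}^{\dagger,r}_K$ introduced on p.~82 of Andreatta--Brinon, and quotes their Proposition~4.22, which establishes exactly this contraction estimate on $\Lambda_{m,\oo_K}^{(i)}$. Since the inequality $w_r((1-\gamma)x)\ge w_r(x)+c_r$ passes to arbitrary subrings, restricting from $\Lambda_{m,\oo_K}^{(i)}$ to $\A^{\dagger,r}_K$ (and inverting $p$) gives the lemma. The point is that the paper works \emph{top-down}, from a larger ring with tractable generators, whereas you try to work \emph{bottom-up}, from the elements $\pi$ and $[\wtil{t}_j]$ alone.

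The bottom-up approach has a genuine gap in your third step. Propagating the estimate from $\{\pi,[\wtil{t}_j]\}$ via Leibniz to polynomial combinations is fine, and extending to the $w_r$-closure is fine; the problem is that the ring you thereby cover is not $\A_{\wtil{K}}^{\dagger,r}$. Recall (Lemma~\ref{lem:ocexplicit}) that $\A_{\wtil{K}}\cong\oo\{\{\pi\}\}$ where $\oo$ is a Cohen ring of $k_{\E_{\wtil{K}}}$, and $k_{\E_{\wtil{K}}}\cong k_{\wtil{K}}$ is an imperfect field which is typically infinite over its prime field. Your candidate dense subring (Laurent polynomials in $\pi$ and $[\wtil{t}_j]$ with coefficients in some ``small'' ring such as $W(k_{\mathcal{V}})$, on which $\Gamma_{\wtil{K}}$ acts trivially) only reduces mod $p$ to $k_{\mathcal{V}}[\bar\pi^{\pm 1},\bar{\wtil{t}}_j]$, which does not approximate all of $\E_{\wtil{K}}=k_{\E_{\wtil{K}}}((\bar\pi))$, even modulo an arbitrary power of $\bar\pi$: for instance, an arbitrary element of $k_{\E_{\wtil{K}}}$ not in $k_{\mathcal{V}}[\bar{\wtil{t}}_j]$ lifts to $\oo\subset\A_{\wtil{K}}^{\dagger,r}$ but is never $w_r$-close to your polynomial ring. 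Thus the $w_r$-density you invoke is false, and the propagation stalls before reaching all of $\A_{\wtil{K}}^{\dagger,r}$; you would still need to check the estimate on lifts of general elements of $k_{\E_{\wtil{K}}}$, which is precisely where the explicit calculation becomes hard. The role of $\Lambda_{m,\oo_K}^{(i)}$ in the paper's proof is to provide a ring \emph{containing} $\A^{\dagger,r}_K$ that nevertheless has an explicit, computable description in terms of the variables $\pi$, $[\wtil{t}_j]$ (more precisely $u_j$) with a controllable $\Gamma_K$-action, so the contraction estimate can be checked term by term and then restricted. Your final descent step (from $\wtil{K}$ to general $K$ via a finite free module over $\B^{\dagger,r}_{\wtil K}$) is correct, but it is the propagation over $\A_{\wtil{K}}^{\dagger,r}$ that does not go through as stated.
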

\begin{proof}
We may assume $x\in\A^{\dagger,r}_K$. Recall that the ring $\Lambda_{m,\oo_K}^{(i)}$ for $m\in\N$ (\cite[p.~82]{AB}) is a subring of $\wtil{\A}^{\dagger,r}_K$ containing $\A^{\dagger,r}_K$. Hence, we have only to prove a similar assertion for $\Lambda_{m,\oo_K}^{(i)}$. Then, the assertion follows from \cite[Prospotion~4.22]{AB} if we define $\Gamma_K^o$ as the closed subgroup of $\Gamma_K$ topologically generated by $\{\gamma_j^{p^{m}};0\le j\le d\}$ for sufficiently large $m$.
\end{proof}

By shrinking $\Gamma_K^o$, if necessary, we may assume that $\Gamma_K^o$ is an open subgroup of $(1+2p\Z_p)\ltimes\Z_p^d$ as in \S~\ref{subsec:Galois}. In the rest of this paper, we assume that $r_0$ in Notation~\ref{notation:tacitly} is sufficiently small such that $r_0\le r_K$.

\begin{lem}\label{lem:cont2}
For $x\in\wtil{\B}^{\dagger,r}$ and $c>0$, there exists an open subgroup $U_{x,c}\le_o\gk$ such that
\[
w_r((g-1)x)\ge c\text{ for all }g\in U_{x,c}.
\]
\end{lem}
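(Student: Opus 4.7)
The plan is to deduce this pointwise continuity of the $\gk$-action on $\wtil{\B}^{\dagger,r}$ (for the $w_r$-valuation topology) from the uniform continuity of Lemma~\ref{lem:cont}, via a density-and-reduction argument that approximates $x$ by an element of $\wtil{\A}_L^{\dagger,r}$ for a suitable finite extension $L/K$.

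First, by multiplying $x$ by a power of $p$ (which is $\gk$-fixed and satisfies $w_r(p^Nx)=N+w_r(x)$), I would reduce to the case $x\in\wtil{\A}^{\dagger,r}$, replacing $c$ by $c+N$. Next, I would construct an element $x'\in\wtil{\A}_L^{\dagger,r}$ for some finite extension $L/K$ with $w_r(x-x')\ge c$. Writing $x=\sum_{k\ge 0}p^k[x_k]$ with $x_k\in\wtil{\E}$, this uses two ingredients: (a) the density of $\cup_L\wtil{\E}_L$ in $\wtil{\E}$ for the $v_{\wtil{\E}}$-topology, a consequence of the continuity of the $\gk$-action on $\wtil{\E}$ inherited from its action on $\C_p$; and (b) a Witt-vector estimate showing that if $v_{\wtil{\E}}(y-y')$ is large, then $w_r([y]-[y'])$ is large. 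Concretely, the Witt coordinates of $[y]-[y']$ are polynomials in $y,y'$ that vanish when $y=y'$, hence are divisible by $y-y'$; passing this through the Teichm\"uller expansion bounds $w_r([y]-[y'])$ below by an increasing function of $v_{\wtil{\E}}(y-y')$. With these, finitely many leading coefficients $x_k$ are replaced by approximations $y_k\in\wtil{\E}_L$ for a common $L$, and a careful analysis of the tail produces $x'=\sum p^k[y_k]\in\wtil{\A}_L^{\dagger,r}$ with $w_r(x-x')\ge c$.

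Then, a tilded analog of Lemma~\ref{lem:cont}, provable by the same arguments from \cite[Proposition~4.22]{AB} applied to the ring $\Lambda_{m,\oo_L}^{(i)}\supset\wtil{\A}_L^{\dagger,r}$ and the $p$-adic Lie group $\Gamma_L$, furnishes an open subgroup $\Gamma_L^o\le_o\Gamma_L$ such that $w_r((\gamma-1)x')\ge c$ for all $\gamma\in\Gamma_L^o$. Pulling back along the projection $G_L\twoheadrightarrow\Gamma_L$ produces an open subgroup $U\le_o G_L\le_o\gk$; since $x'$ is $H_L$-fixed, the $\gk$-invariance of $w_r$ combines with $w_r((g-1)x')\ge c$ via
\[
w_r((g-1)x)\ge \min\bigl(w_r((g-1)x'),\ w_r((g-1)(x-x'))\bigr)\ge \min(c, w_r(x-x'))\ge c
\]
for all $g\in U$, which is the desired estimate.

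The main obstacle is the second step: establishing the $w_r$-approximation $w_r(x-x')\ge c$. The difficulty is that a naive truncation of the Teichm\"uller series of $x\in\wtil{\A}^{\dagger,r}$ does not suffice, since the $w_r$-value of the tail $\sum_{k>N}p^k[x_k]$ need not grow with $N$ (only the crude bound $w_r\ge w_r(x)$ is guaranteed); the required control must instead come through the Witt-vector estimates of step two, which translate the density of $\cup_L\wtil{\E}_L$ in $\wtil{\E}$ into the density of $\cup_L\wtil{\A}_L^{\dagger,r}$ in $\wtil{\A}^{\dagger,r}$ for the single valuation $w_r$.
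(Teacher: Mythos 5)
Your overall skeleton --- reduce from $x$ to an approximant $x'$ that sees only a finite piece of the Galois action, then invoke an already-available continuity estimate --- is reasonable, and your Witt-vector estimate in step~(b) is essentially the same computation the paper performs. But the crucial middle step, the ``tilded analog of Lemma~\ref{lem:cont}'' for $\wtil{\A}_L^{\dagger,r}$, is false, and this is exactly the gap that Lemma~\ref{lem:cont2} exists to fill.

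First, a factual slip: you write $\Lambda_{m,\oo_L}^{(i)}\supset\wtil{\A}_L^{\dagger,r}$, but the inclusion goes the other way. The paper's proof of Lemma~\ref{lem:cont} uses that $\Lambda_{m,\oo_K}^{(i)}$ is a \emph{subring} of $\wtil{\A}_K^{\dagger,r}$ containing $\A_K^{\dagger,r}$; the uniform estimate of \cite[Proposition~4.22]{AB} lives on this small $\Lambda$-subring, not on all of $\wtil{\A}_L^{\dagger,r}$. More seriously, no uniform estimate of the form $w_r((\gamma-1)x)\ge w_r(x)+c_r$ valid for all $x\in\wtil{\A}_L^{\dagger,r}$ and all $\gamma$ in a fixed open subgroup of $\Gamma_L$ can hold, because $\wtil{\E}_L$ is perfect and contains elements whose Galois orbit is arbitrarily ``shallow.'' Concretely, take $x_m:=[\varepsilon^{1/p^m}]\in\wtil{\A}_{\wtil{K}}^{\dagger,r}$, a unit with $w_r(x_m)=0$. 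For any fixed $\gamma_a$ with $a\neq 1$ one has $(\gamma_a-1)x_m=[\varepsilon^{1/p^m}]\bigl([\varepsilon^{(a-1)/p^m}]-1\bigr)$, so $w_r((\gamma_a-1)x_m)\le r\,v_{\wtil{\E}}\bigl(\varepsilon^{(a-1)/p^m}-1\bigr)$, and the right-hand side tends to $0$ as $m\to\infty$. Thus the gap $w_r((\gamma_a-1)x_m)-w_r(x_m)$ can be made as small as you like for $\gamma_a$ in any prescribed open subgroup, so no positive uniform constant $c_r$ exists. Had such a uniform estimate held on $\wtil{\A}_L^{\dagger,r}$, Lemma~\ref{lem:cont2} would follow trivially and there would be nothing to prove. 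Consequently your density-and-reduction plan cannot outsource the work to Lemma~\ref{lem:cont}: even after approximating $x$ by $x'\in\wtil{\A}_L^{\dagger,r}$, you are left with exactly the same pointwise-continuity problem for $x'$. (Approximating instead into $\A_L^{\dagger,r}$, where the uniform estimate \emph{does} hold, would require $\cup_L\E_L$ to be dense in $\wtil{\E}$, which fails because $\wtil{\E}$ is perfect while $\cup_L\E_L$ is not.)

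The paper instead proves the pointwise statement directly, without any appeal to Lemma~\ref{lem:cont}. It first reduces to $x=[\bar x]$: since $w_r(p^k[x_k])\ge c$ for all $k\ge N$ once $N$ is large (by the defining growth condition on $\wtil{\A}^{\dagger,r}$), only finitely many Teichm\"uller terms need individual treatment. For a single term $[\bar x]$, it writes $(g-1)[\bar x]=[(g-1)\bar x]+\sum_{k\ge1}p^k[x_k]$ and uses homogeneity of the Witt addition polynomials $S_m$ to show that $x_k^{p^k}/\bar x$ is a $\Z$-coefficient polynomial without constant term in $(g-1)\bar x/\bar x$, giving $v_{\wtil{\E}}^{\le n}((g-1)[\bar x])\ge p^{-n}\,v_{\wtil{\E}}((g-1)\bar x)$; pointwise continuity of $G_K$ on $\wtil{\E}$ (the ingredient you correctly identify in step~(a)) then makes $v_{\wtil{\E}}((g-1)\bar x)$ as large as needed, forcing $w_r((g-1)[\bar x])\ge c$. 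So the Witt-polynomial estimate you sketch in step~(b) is the right engine --- but it should be applied directly to $(g-1)[\bar x]$, not used to set up a reduction to a uniform estimate that does not exist.
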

\begin{proof}
We may assume that $x$ is of the form $[\bar{x}]$ with $\bar{x}\in\wtil{\E}$. Indeed, if we write $x=\sum_{k\gg -\infty}p^k[x_k]$ with $x_k\in\wtil{\E}$, then, by definition, there exists $N$ such that $w_r(p^k[x_k])\ge c$ for all $k\ge N$. We choose $U_{x,c}$ such that $w_r((g-1)(p^k[x_k]))\ge c$ for all $k\le N$ and all $g\in U_{x,c}$. Then, $U_{x,c}$ satisfies the condition.

Let $x=[\bar{x}]$ with $\bar{x}\in \wtil{\E}^{\times}$. Since the action of $\gk$ on $\wtil{\E}$ is continuous, there exists $U_{x,c}\le_o \gk$ such that $v_{\wtil{\E}}((g-1)\bar{x})\ge p^{\lfloor c\rfloor}c/r\ (>0)$ for all $g\in U_{x,c}$. We prove that $U_{x,c}$ satisfies the desired condition. We can write
\[
(g-1)[\bar{x}]=[(g-1)\bar{x}]+\sum_{k\ge 1}p^k[x_k]
\]
for some $x_k\in\wtil{\E}$. Since
\[
[\bar{x}]\left(\left[\frac{(g-1)\bar{x}}{\bar{x}}\right]+1\right)=(g(\bar{x}),-x_1^p,-x_2^{p^2},\dots),
\]
$x_k^{p^k}/\bar{x}$ is written as a $\Z$-coefficient polynomial without constant term of $(g-1)\bar{x}/\bar{x}$. Indeed, let $S_m\in\Z[X_0,\dots,X_m,Y_0,\dots,Y_m]$ for $m\in\N$ be a family of polynomials defining the additive law of the ring of Witt vectors (\cite[$n^o 3$, \S~1, IX]{Bou9}). Recall that $S_m$ is homogeneous of degree $p^m$, where $\deg(X_i)=\deg(Y_i)=p^i$. Since $S_0=X_0+Y_0$ and $\sum_{0\le i\le m}p^iS_i^{p^{m-i}}=\sum_{0\le i\le m}p^iX_i^{p^{m-i}}+\sum_{0\le i\le m}p^iY_i^{p^{m-i}}$, the coefficients of $X_0^{p^m}$ and $Y_0^{p^m}$ in $S_m$ for $m\ge 1$ are equal to zero, which implies the assertion. Hence, for $n\in\N$, we have
\[
v_{\wtil{\E}}^{\le n}((g-1)[\bar{x}])=\inf_{1\le k\le n}{\left\{v_{\wtil{\E}}((g-1)\bar{x}),v_{\wtil{\E}}(x_k)\right\}}\ge\inf_{1\le k\le n}{\left\{v_{\wtil{\E}}((g-1)\bar{x}),\frac{1}{p^k}v_{\wtil{\E}}((g-1)\bar{x})\right\}}=\frac{1}{p^n}v_{\wtil{\E}}((g-1)\bar{x}).
\]
Note that $v_{\wtil{\E}}^{\le n}((g-1)[\bar{x}])=\infty$ for $n\in\Z_{<0}$. Hence, $w_r((g-1)[\bar{x}])=\inf_{n\in\N}(rv_{\wtil{\E}}^{\le n}((g-1)[\bar{x}])+n)\ge\inf{(r\cdot \frac{1}{p^{\lfloor c\rfloor}}v_{\wtil{\E}}((g-1)\bar{x}),\lfloor c\rfloor)}\ge c$, which implies the assertion.
\end{proof}

\begin{lem}\label{lem:cont3}
Let $\{e_i\}$ be a $\B^{\dagger,r}_K$-basis of $\D^{\dagger,r}(V)$. We endow $\D^{\dagger,r}_{\rig}(V)$ with valuations $\{w_s\}_{0<s\le r}$ compatible with $\{w_s\}_{0<s\le r}$ associated to $\{e_i\}$. Then, the actions of $\Gamma_K^o$ on $\B^{\dagger,r}_{\rig,K}$ and $\D^{\dagger,r}_{\rig}(V)$ satisfy the conditions in Notation~\ref{notation:Lie}.
\end{lem}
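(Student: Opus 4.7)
The strategy is to verify conditions (i)--(iv) of Notation~\ref{notation:Lie} on each $w_s$-completion ($0<s\le r$), so that Construction~\ref{const:diffoperator} applies. Condition (i) is immediate from the $(\varphi,\Gamma_K)$-module structure on $\D^{\dagger,r}(V)$ provided by Theorem~\ref{thm:oc}. Condition (ii) for $R=\B^{\dagger,r}_{\rig,K}$ holds because $\gk$ preserves $v_{\wtil{\E}}$ on $\wtil{\E}$, hence every partial valuation $v^{\le n}$ on $W(\wtil{\E})$ and thus each $w_s$; this invariance carries over to the $w_s$-completion of $\B^{\dagger,r}_{\rig,K}$ by continuity.

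For condition (iii), Lemma~\ref{lem:cont} produces an open normal subgroup $\Gamma_K^o\le_o\Gamma_K$ and constants $c_s>0$ with $w_s((\gamma-1)y)\ge w_s(y)+c_s$ for $\gamma\in\Gamma_K^o$ and $y\in \B^{\dagger,s}_K$, and continuity extends the bound to the $w_s$-completion. To upgrade $c_s$ to $v(p)=1$, I would replace $\Gamma_K^o$ by $(\Gamma_K^o)^{p^N}$ and expand
\[
\gamma^{p^N}-1=\sum_{i\ge 1}\binom{p^N}{i}(\gamma-1)^i;
\]
combining the iterated estimate $w_s((\gamma-1)^iy)\ge w_s(y)+ic_s$ with $v_p\bigl(\binom{p^N}{i}\bigr)=N-v_p(i)$, every summand carries a shift of at least $1$ once $N$ is large enough (depending only on $c_s$).

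Condition (iv) is the main content. For $x=\sum_i a_ie_i\in\D^{\dagger,r}_{\rig}(V)$, the decomposition
\[
(\gamma-1)x=\sum_i\gamma(a_i)\bigl(\gamma(e_i)-e_i\bigr)+\sum_i\bigl(\gamma(a_i)-a_i\bigr)e_i,
\]
together with (ii), (iii), and the normalization $w_s(e_i)=0$, reduces matters to bounding $w_s(\gamma(e_i)-e_i)$ for each of the finitely many basis vectors. Writing each $e_i$ as a finite sum in a $\gk$-stable $\Z_p$-lattice of $V$ with coefficients in $\B^{\dagger,r}\subset\wtil{\B}^{\dagger,r}$, the continuity of $\gamma\mapsto\gamma(e_i)$ with respect to $w_s$ follows from Lemma~\ref{lem:cont2} applied to the coefficients together with the $p$-adic continuity of $\gk$ acting on the lattice. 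Intersecting the resulting open subgroups (one per $e_i$) with the subgroup furnished by (iii) produces a common open $\Gamma_x\le_o\Gamma_K^o$, in fact independent of $x$, on which $w_s((\gamma-1)x)\ge w_s(x)+v(p)$.

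The principal technical obstacle is this last continuity transfer: Lemma~\ref{lem:cont2} is formulated on $\wtil{\B}^{\dagger,r}$, whereas $w_s$ on $\D^{\dagger,r}_{\rig}(V)$ is defined through the chosen $\B^{\dagger,r}_K$-basis $\{e_i\}$. The bridge is the observation that $\D^{\dagger,r}(V)\hookrightarrow \B^{\dagger,r}\otimes_{\Q_p}V$ is continuous and that any two $R$-valuations on a finite free module induce the same topology, so the basis-dependent $w_s$ and the valuation inherited from a $\Q_p$-basis of $V$ define equivalent topologies on $\D^{\dagger,r}_{\rig}(V)$, allowing the continuity statement to be transported back to the valuation of interest.
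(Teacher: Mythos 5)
Your proof is correct and follows essentially the same route as the paper's: conditions (i), (ii) are immediate, condition (iii) comes from Lemma~\ref{lem:cont} together with the $p^N$-th power trick (the paper states this tersely with $\gamma^p-1=\sum_{1\le i\le p}\binom{p}{i}(\gamma-1)^i$, implicitly iterating, while you spell out the iteration via $(\Gamma_K^o)^{p^N}$), and condition (iv) is reduced to estimating $(\gamma-1)$ on a generating set, then transferred to the $\wtil{\B}^{\dagger,r}$-basis $\{1\otimes f_i\}$ coming from a $\gk$-stable lattice $T$ of $V$ where Lemma~\ref{lem:cont2} plus the $p$-adic continuity of $\gk$ on $T$ apply, the bridge being that any two bases of a finite free module give equivalent valuations. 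The only cosmetic difference is that you first split $(\gamma-1)x=\sum_i\gamma(a_i)(\gamma(e_i)-e_i)+\sum_i(\gamma(a_i)-a_i)e_i$ over the $\{e_i\}$ before transferring the $e_i$ estimate to the lattice basis, whereas the paper reduces to $x\in\D^{\dagger,r}(V)$, passes directly to $\wtil{\B}^{\dagger,r}\otimes_{\Q_p}V$ with the $\{1\otimes f_i\}$ basis, and then uses the analogous splitting $(g-1)(\lambda\otimes v)=(g-1)(\lambda)\otimes g(v)+\lambda\otimes (g-1)v$ there; your version has the pleasant side effect of exhibiting $\Gamma_x$ as independent of $x$ (which is not required by condition (iv), but is correct).
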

\begin{proof}
The conditions~(i) and (ii) follow by definition. The condition~(iii) follows from the formula $\gamma^p-1=\sum_{1\le i\le p}\binom{p}{i}(\gamma-1)^i$ and Lemma~\ref{lem:cont}. To prove the condition~(iv), we may assume $x\in\D^{\dagger,r}(V)$. We choose a lattice $T$ of $V$ stable by $\gk$. Let $\{f_i\}$ be a basis of $T$ and we endow $\wtil{\B}^{\dagger,r}\otimes_{\Q_p} V$ with valuation $\{w'_s\}_{0<s\le r}$ compatible with $\{w_s\}_{0<s\le r}$ associated to the $\wtil{\B}^{\dagger,r}$-basis $\{1\otimes f_i\}$. By a canonical isomorphism $\B^{\dagger,r}\otimes_{\B^{\dagger,r}_K}\D^{\dagger,r}(V)\cong\B^{\dagger,r}\otimes_{\Q_p} V$ by Theorem~\ref{thm:oc}, we regard $\{1\otimes e_i\}$ as a $\wtil{\B}^{\dagger,r}$-basis of $\B^{\dagger,r}\otimes_{\Q_p} V$. Then, $w_s$ is equivalent to $w'_s$, therefore, we have only to prove that for any $x\in\wtil{\B}^{\dagger,r}\otimes_{\Q_p} V$ and $0<s\le r$, there exists an open subgroup $G_{K,s,x}^o\le_o\gk$ such that $w'_s((g-1)x)\ge w'_s(x)+w'_s(p)$ for all $g\in G_{K,s,x}^o$. We may assume that $x$ is of the form $\lambda\otimes v$ for $\lambda\in\wtil{\B}^{\dagger,r}$ and $v\in T$. Since the action of $\gk$ on $T$ is continuous, there exists an open subgroup $U\le_o \gk$ such that $U$ acts on $T/pT$ trivially. Applying Lemma~\ref{lem:cont2} by regarding $\lambda\in\wtil{\B}^{\dagger,s}$, there exists an open subgroup $U'\le_o\gk$ such that $w_s((g-1)\lambda)\ge w_s(\lambda)+w_s(p)$ for all $g\in U'$. If we put $G_{K,s,x}^o:=U\cap U'$, then the assertion follows from the formula
\[
(g-1)(\lambda\otimes v)=(g-1)(\lambda)\otimes g(v)+\lambda\otimes (g-1)v.
\]
\end{proof}

\begin{dfn}
By Lemma~\ref{lem:cont3}, we can apply Construction~\ref{const:diffoperator} to $\mathcal{G}=\Gamma_K$, $R=\B^{\dagger,r}_{\rig,K}$ and $M=\D^{\dagger,r}_{\rig}(V)$. Thus, we obtain continuous differentials operators $\nabla_j$ on $\D^{\dagger,r}_{\rig}(V)$ for $0\le j\le d$. The operator $\nabla_j$ induces a continuous differential operator on $\D^{\dagger}_{\rig}(V)$, which is denoted by $\nabla_j$ again. Since the actions of $\Gamma_K$ and $\varphi$ are commutative, $\nabla_j$ commutes with $\varphi$ by definition.
\end{dfn}

For a while, let $V=\Q_p$ and we regard $\D^{\dagger,r}_{\rig}(\Q_p)$ as $\B^{\dagger,r}_{\rig,K}$. Then, $\nabla_j$ can be regarded as a continuous derivation on $\B^{\dagger,r}_{\rig,K}$. In the following, we will describe this derivation explicitly.

\begin{construction}\label{const:derdR}
As in \cite[Propostion~4.3]{AB2}, the action of $\Gamma_{K}$ on $K_n[[t,u_1,\dots,u_d]]$ induces $K_n$-linear differentials
\[
\wtil{\nabla}_0:=\frac{\log(\gamma_0)}{\log(\eta_0(\gamma_0))}=t(1+\pi)\frac{\partial}{\partial\pi},
\]
\[
\wtil{\nabla}_j:=\frac{\log(\gamma_j)}{\eta_j(\gamma_j)}=-t[\wtil{t}_j]\frac{\partial}{\partial u_j}\text{ for }1\le j\le d
\]
for any sufficiently small $\gamma_0\in\Gamma_{K,0}$ and $\gamma_j\in\Gamma_{K,j}$. Note that these are continuous with respect to the canonical topology. Since the action of $\Gamma_{K}$ commutes with $\nabla^{\geom}$ by definition, $\wtil{\nabla}_j$ acts on $K_n[[t,u_1,\dots,u_d]]^{\nabla}$.
\end{construction}

We assume $K=\wtil{K}$ for a while. By an isomorphism $\A^{\dagger,r}_{K}\cong\oo((\pi))^{\dagger,r}$, we have derivations on $\A_{K}^{\dagger,r}$ (see \S~\ref{subsec:diffconductor})
\[
\partial_0:=\frac{\partial}{\partial\pi},\ \partial_1:=\frac{\partial}{\partial[\wtil{t}_1]},\dots,\ \partial_d:=\frac{\partial}{\partial[\wtil{t}_d]},
\]
which are continuous with respect the Fr\'echet topology defined by $\{w_s\}_{0<s\le r}$. By passing to completion, we obtain continuous derivations $\partial_j:\B^{\dagger,r}_{\rig,K}\to \B^{\dagger,r}_{\rig,K}$ for $0\le j\le d$. The derivation $\partial_j$ also extends to a derivation $\partial_j:\B^{\dagger}_{\rig,K}\to \B^{\dagger}_{\rig,K}$. By Lemma~\ref{lem:dense}, we may regard $\B^{\dagger,r}_{\rig,K}$ as a dense subring of $K_n[[t,u_1,\dots,u_d]]^{\nabla}$ by $\iota_n$. Hence, we can extend any continuous derivation $\partial$ on $\B^{\dagger,r}_{\rig,K}$ to a continuous derivation on $K_n[[t,u_1,\dots,u_d]]^{\nabla}$, which is denoted by $\iota_n(\partial)$. Note that we have a formula
\begin{equation}\label{eq:diff}
\iota_n(\partial)(\iota_n(x))=\iota_n(\partial(x))\text{ for }x\in\B^{\dagger,r}_{\rig,K}.
\end{equation}

\begin{lem}\label{lem:description}
For $n\ge n(r)$, we have
\[
\iota_n(t(1+\pi)\partial_0)=\wtil{\nabla}_0,\ \iota_n(t[\wtil{t}_j]\partial_j)=\wtil{\nabla}_j\text{ for }1\le j\le d.
\]
\end{lem}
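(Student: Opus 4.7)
The plan is a density-plus-generator argument. First, both $\wtil{\nabla}_j$ ($0\le j\le d$) and $\iota_n$ applied to $t(1+\pi)\partial_0$ (resp.\ $t[\wtil{t}_j]\partial_j$) extend uniquely to continuous derivations on the closed subring $K_n[[t,u_1,\dots,u_d]]^{\nabla}$ of $\B_{\dR}^{\nabla+}$ equipped with the canonical topology, and by Lemma~\ref{lem:dense} the image $\iota_n(\B^{\dagger,r}_{\rig,K})$ is dense in this subring. It therefore suffices to verify the claimed identities after evaluating on $\iota_n(x)$ for $x\in\B^{\dagger,r}_{\rig,K}$; by the very definition of $\iota_n(\partial)$ as a continuous extension, the left-hand side then becomes $\iota_n(t(1+\pi)\partial_0(x))$ (resp.\ $\iota_n(t[\wtil{t}_j]\partial_j(x))$).

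The next step transfers the right-hand side to $\B^{\dagger,r}_{\rig,K}$. Since $\iota_n$ is a continuous $\gk$-equivariant ring homomorphism, it intertwines the $\Gamma_K$-action on source and target, and, by continuity together with the uniform convergence of $\log(\gamma)=\sum_{k\ge 1}(-1)^{k-1}(\gamma-1)^k/k$ for $\gamma\in\Gamma_K$ sufficiently close to the identity, also intertwines $\log(\gamma)$. Picking $\gamma_0\in\Gamma_{K,0}$ and $\gamma_j\in\Gamma_{K,j}$ small enough that the series converges on both $\B^{\dagger,r}_{\rig,K}$ and $K_n[[t,u_1,\dots,u_d]]^{\nabla}$, Construction~\ref{lem:op} produces the analogous operators $\wtil{\nabla}_0^{\B}:=\log(\gamma_0)/\log(\eta_0(\gamma_0))$ and $\wtil{\nabla}_j^{\B}:=\log(\gamma_j)/\eta_j(\gamma_j)$ on $\B^{\dagger,r}_{\rig,K}$, which are continuous $\Q_p$-linear derivations, and the intertwining yields $\wtil{\nabla}_0\circ\iota_n=\iota_n\circ\wtil{\nabla}_0^{\B}$ and $\wtil{\nabla}_j\circ\iota_n=\iota_n\circ\wtil{\nabla}_j^{\B}$. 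Combined with the first paragraph, the problem reduces to the two identities $\wtil{\nabla}_0^{\B}=t(1+\pi)\partial_0$ and $\wtil{\nabla}_j^{\B}=t[\wtil{t}_j]\partial_j$ as continuous derivations on $\B^{\dagger,r}_{\rig,K}$.

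These last identities follow by evaluating on a set of topological generators. By Lemma~\ref{lem:ocdiff}, $\Omega^1_{\A^{\dagger,r}_K}$ is free with basis $d\pi,\,d[\wtil{t}_1],\dots,d[\wtil{t}_d]$, so any continuous derivation $\partial$ on $\B^{\dagger,r}_{\rig,K}$ admits the decomposition $\partial=\partial(\pi)\partial_0+\sum_{j=1}^d\partial([\wtil{t}_j])\partial_j$. An explicit differentiation of the $1$-parameter subgroups $\gamma_0^{\Z_p}$ and $\gamma_j^{\Z_p}$, using $\gamma_0(\pi)=(1+\pi)^{\eta_0(\gamma_0)}-1$, $\gamma_0([\wtil{t}_j])=[\wtil{t}_j]$, $\gamma_j(\pi)=\pi$, and $\gamma_j([\wtil{t}_i])=(1+\pi)^{\delta_{ij}\eta_j(\gamma_j)}[\wtil{t}_i]$, yields
$\wtil{\nabla}_0^{\B}(\pi)=t(1+\pi)$, $\wtil{\nabla}_0^{\B}([\wtil{t}_j])=0$, $\wtil{\nabla}_j^{\B}(\pi)=0$, and $\wtil{\nabla}_j^{\B}([\wtil{t}_i])=\delta_{ij}\,t[\wtil{t}_j]$. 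These agree with the values of $t(1+\pi)\partial_0$ and $t[\wtil{t}_j]\partial_j$ on the same generators, which together with the decomposition above yields the sought-after identities.

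The main technical difficulty is the intertwining step, i.e.\ justifying that $\iota_n$ commutes with $\log(\gamma)$: one must reconcile the Fr\'echet topology on $\B^{\dagger,r}_{\rig,K}$ with the canonical topology on $K_n[[t,u_1,\dots,u_d]]^{\nabla}$ inherited from $\B_{\dR}^+$, and control the convergence of $\sum_{k\ge 1}(-1)^{k-1}(\gamma-1)^k/k$ uniformly enough to pass $\iota_n$ through term by term. Everything else is formal once this commutation and the freeness of $\Omega^1_{\A^{\dagger,r}_K}$ are in hand.
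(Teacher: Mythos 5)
Your proposal takes a genuinely different route from the paper. The paper works entirely on the side of $K_n[[t,u_1,\dots,u_d]]^{\nabla}$: it sets $\delta_j:=\iota_n(t[\wtil{t}_j]\partial_j)-\wtil{\nabla}_j$, uses the map $f:K_n[[t,u_1,\dots,u_d]]\to K_n[[t,u_1,\dots,u_d]]^{\nabla}$ from Lemma~\ref{lem:hori}, notes that $f(K_n[t])$ is dense, and invokes Lemma~\ref{cor:kahler} applied to the complete discrete valuation field $K_n$ to conclude that $\delta_j\circ f|_{K_n}$ is determined by its values on $t_1,\dots,t_d$. That reduces everything to checking $\delta_j(t)=\delta_j([\wtil{t}_i])=0$, which is a direct calculation. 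You instead transport the whole question to $\B^{\dagger,r}_{\rig,K}$, reducing the lemma to the identity $\wtil{\nabla}_j^{\B}=t[\wtil{t}_j]\partial_j$ (equivalently $\nabla_j=t[\wtil{t}_j]\partial_j$) there; but note that the paper proves that identity as a consequence of Lemma~\ref{lem:description} in Corollary~\ref{lem:welldef}, so you are inverting the logical order, and you must therefore supply an independent argument for it.

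This is where your third paragraph has a genuine gap. You write that since $\Omega^1_{\A^{\dagger,r}_K}$ is free with basis $d\pi,d[\wtil{t}_1],\dots,d[\wtil{t}_d]$, ``any continuous derivation $\partial$ on $\B^{\dagger,r}_{\rig,K}$ admits the decomposition $\partial=\partial(\pi)\partial_0+\sum_j\partial([\wtil{t}_j])\partial_j$.'' Freeness of $\Omega^1_{\A^{\dagger,r}_K}$ (Lemma~\ref{lem:ocdiff}) tells you that $\Hom(\Omega^1_{\A^{\dagger,r}_K},-)$ has the dual basis $\partial_0,\dots,\partial_d$, but it does \emph{not} by itself say that every continuous derivation factors through $d:\B^{\dagger,r}_{\rig,K}\to\Omega^1_{\B^{\dagger,r}_{\rig,K}}$; the paper never asserts a universal property of $\Omega^1$ with respect to continuous derivations for overconvergent rings, only for complete discrete valuation fields (Lemma~\ref{cor:kahler}). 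Concretely, $\B^{\dagger,r}_{\rig,K}$ is the Fr\'echet completion of $\oo((\pi))^{\dagger,r}[p^{-1}]$ where $\oo$ is a Cohen ring of the (imperfect) residue field $k_{\E_K}$, and the subring $\Q_p[\pi,\pi^{-1},[\wtil{t}_j],[\wtil{t}_j]^{-1}]$ is not dense in it; to kill a continuous derivation that vanishes on $\pi$ and the $[\wtil{t}_j]$ you still need an argument that it vanishes on the whole Cohen ring $\oo$, which again amounts to the CDVF universal property Lemma~\ref{cor:kahler} applied to $\mathrm{Frac}(\oo)$ together with a density argument — precisely the machinery the paper sets up over $K_n$ and which your proposal silently assumes. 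Filling this hole would essentially recreate the paper's argument, so the detour through the overconvergent ring does not actually yield a shortcut; the other steps (density and the $\iota_n$--$\log(\gamma)$ intertwining, which you flag yourself) are sound but are not where the difficulty lies.
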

\begin{proof}
Let $1\le j\le d$ and put $\delta_0:=\iota_n(t(1+\pi)\partial_0)-\wtil{\nabla}_0$ and $\delta_j:=\iota_n(t[\wtil{t}_j]\partial_j)-\wtil{\nabla}_j$. Let $f:K_n[[t,u_1,\dots,u_d]]\to K_n[[t,u_1,\dots,u_d]]^{\nabla}$ be the map defined in the proof of Lemma~\ref{lem:hori}, which is continuous by Lemma~\ref{lem:contmap}. Since $f$ induces a surjection on the residue fields by definition, $f(K_n[t])$ is a dense subring of $K_n[[t,u_1,\dots,u_d]]^{\nabla}$ by Lemma~\ref{lem:hori} and Lemma~\ref{lem:comparisontop}. Hence, we have only to prove $\delta_0\circ f(K_n[t])=\delta_j\circ f(K_n[t])=0$. We regard $\delta_0\circ f|_{K_n},\ \delta_j\circ f|_{K_n}\in \Der_{\cont}(K_n,K_n[[t,u_1,\dots,u_d]]^{\nabla})$, which is isomorphic to $\Hom_{K_n}(\hat{\Omega}^1_{K_n},K_n[[t,u_1,\dots,u_d]]^{\nabla})$ by Lemma~\ref{cor:kahler}. Since $\hat{\Omega}^1_{K_n}\cong K_n\otimes_K\hat{\Omega}^1_K$ has a $K_n$-basis $\{dt_i;1\le i\le d\}$ and we have $f(t)=t$ and $f(t_i)=[\wtil{t}_i]$ by definition, we have only to prove $\delta_0(t)=\delta_j(t)=0$ and $\delta_0([\wtil{t}_i])=\delta_j([\wtil{t}_i])=0$ for all $1\le i\le d$. By using the formula (\ref{eq:diff}),
\[
\iota_n(t(1+\pi)\partial_0)(t)=t=\wtil{\nabla}_0(t),\ \iota_n(t(1+\pi)\partial_0)[\wtil{t}_i]=0,
\]
\[
\iota_n(t[\wtil{t}_j]\partial_j)(t)=0=\wtil{\nabla}_j(t),\ \iota_n(t[\wtil{t}_j]\partial_j)[\wtil{t}_i]=\delta_{ij}t[\wtil{t}_j]
\]
for all $1\le i\le d$. Since $(\partial/\partial u_j)[\wtil{t}_i]=-(\partial/\partial u_j)u_i=-\delta_{ij}$ for all $1\le i\le d$, we obtain the assertion.
\end{proof}

In the rest of this section, we drop the assumptions $K=\wtil{K}$ and $V=\Q_p$.

\begin{cor}\label{lem:welldef}
The derivation
\[
d':\B^{\dagger}_{\rig,K}\to\Omega^1_{\B^{\dagger}_{\rig,K}};x\mapsto\nabla_0(x)\frac{1}{t(1+\pi)}d\pi+\sum_{1\le j\le d}\nabla_j(x)\frac{1}{t}d[\wtil{t}_j]
\]
coincides with the canonical derivation $d:\B^{\dagger}_{\rig,K}\to\Omega^1_{\B^{\dagger}_{\rig,K}}$.
\end{cor}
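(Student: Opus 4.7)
The plan is to prove $d=d'$ by showing that both maps are continuous $\Q_p$-linear derivations from $\B^{\dagger}_{\rig,K}$ to $\Omega^1_{\B^{\dagger}_{\rig,K}}$ and then verifying that they agree on a topologically generating set of $\B^{\dagger}_{\rig,K}$. Continuity of $d$ is immediate from its construction via Lemma~\ref{lem:ocdiff}, applied after the identification of Lemma~\ref{lem:ocexplicit}. Continuity of $d'$ follows because each $\nabla_j$ is built from the continuous $\Gamma_K$-action through Construction~\ref{const:diffoperator} (see Lemma~\ref{lem:cont3}), and because multiplication by $1/t$ and $1/(t(1+\pi))$ is continuous in the Fr\'echet topology of $\B^{\dagger}_{\rig,K}$.

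I would first reduce to the case $K=\wtil{K}$ using the finite flat extension $\wtil{K}\to K$ from Assumption~\ref{ass:relative}. By Corollary~\ref{lem:bcomega}, there is a natural isomorphism $\Omega^1_{\B^{\dagger}_{\rig,K}}\cong\B^{\dagger}_{\rig,K}\otimes_{\B^{\dagger}_{\rig,\wtil{K}}}\Omega^1_{\B^{\dagger}_{\rig,\wtil{K}}}$, and the canonical $d$ is compatible with this base change by the Leibniz rule. Since $\Gamma_K$ is an open subgroup of $\Gamma_{\wtil{K}}$ with the same underlying $p$-adic Lie algebra, the operators $\nabla_j$ produced by Construction~\ref{const:diffoperator} are intrinsic to the Lie algebra element and therefore restrict correctly to the subring $\B^{\dagger}_{\rig,\wtil{K}}$. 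Thus it suffices to verify $d=d'$ on $\B^{\dagger}_{\rig,\wtil{K}}$ and then propagate the identity across the finite \'etale extension using the derivation property.

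In the case $K=\wtil{K}$, I would invoke Lemma~\ref{lem:description}, which gives the identities $\iota_n(t(1+\pi)\partial_0)=\wtil{\nabla}_0$ and $\iota_n(t[\wtil{t}_j]\partial_j)=\wtil{\nabla}_j$ as operators on $K_n[[t,u_1,\dots,u_d]]^{\nabla}$. Combining this with the $\Gamma_{\wtil{K}}$-equivariance of $\iota_n$, which yields $\iota_n\circ\nabla_j=\wtil{\nabla}_j\circ\iota_n$, and with the injectivity of $\iota_n$ (for $n\geq n(r)$, by Lemma~\ref{lem:kertheta}), gives the operator identities $\nabla_0=t(1+\pi)\partial_0$ and $\nabla_j=t[\wtil{t}_j]\partial_j$ as continuous derivations on $\B^{\dagger}_{\rig,\wtil{K}}$. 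Substituting these into the formula defining $d'$ and comparing against the canonical expansion $d(x)=\partial_0(x)d\pi+\sum_j\partial_j(x)d[\wtil{t}_j]$ in the basis $d\pi,d[\wtil{t}_1],\dots,d[\wtil{t}_d]$ yields the required identity.

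The main obstacle is the base-change step: one must carefully justify that the operators $\nabla_j$ constructed on $\B^{\dagger}_{\rig,K}$ really do restrict to their counterparts on $\B^{\dagger}_{\rig,\wtil{K}}$, using the openness of $\Gamma_K$ in $\Gamma_{\wtil{K}}$ and the independence of Construction~\ref{lem:op} from the particular choice of group element $\gamma$ in a given Lie algebra direction. Once this functoriality is in place, the verification in the case $K=\wtil{K}$ is a direct substitution using Lemma~\ref{lem:description}, and the $\Gamma_{\wtil{K}}$-equivariance together with the injectivity of $\iota_n$ used in the derivation are routine consequences of their definitions.
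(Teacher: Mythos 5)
Your proposal is correct and takes essentially the same route as the paper: reduce to the case $K=\wtil{K}$ via the finite \'etale map $\B^{\dagger}_{\rig,\wtil{K}}\to\B^{\dagger}_{\rig,K}$, then combine the equivariance $\iota_n\circ\nabla_j=\wtil{\nabla}_j\circ\iota_n$ with Lemma~\ref{lem:description} and the injectivity of $\iota_n$ to obtain the operator identities $\nabla_0=t(1+\pi)\partial_0$ and $\nabla_j=t[\wtil{t}_j]\partial_j$. One small caveat: you cite Lemma~\ref{lem:kertheta} for the injectivity of $\iota_n$, but that lemma only computes the kernel of $\theta\circ\iota_n$; injectivity of $\iota_n$ itself requires a separate (standard) argument, and the paper likewise simply asserts it without a local citation, so this does not affect the substance of the argument.
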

\begin{proof}
Since the canonical map $\B^{\dagger}_{\rig,\wtil{K}}\to\B^{\dagger}_{\rig,K}$ is finite \'etale by \cite[Proposition~2.4.10]{Doc}, we can reduce to the case $K=\wtil{K}$. Let notation be as in Lemma~\ref{lem:description}. Obviously, $\nabla_j$ extends to $\wtil{\nabla}_j$ by passing to completion. Since $\iota_n$ is injective, we have
\[
\nabla_0=t(1+\pi)\partial_0,\ \nabla_j=t[\wtil{t}_j]\partial_j\text{ for }1\le j\le d.
\]
as derivations of $\B^{\dagger,r}_{\rig,K}$ by Lemma~\ref{lem:description}, which implies the assertion.
\end{proof}

\begin{lem}\label{lem:ndr}
Let $V\in\rep_{\dR}(G_K)$.
\begin{enumerate}
\item We have $\nabla_j(\N_{\dR}(V))\subset t\N_{\dR}(V)$ for all $0\le j\le d$; We put $\nabla'_j:=t^{-1}\nabla_j$, which is a continuous differential operator on $\N_{\dR}(V)$.
\item We have
\[
[\nabla'_i,\nabla'_j]=0
\]
for all $0\le i,j\le d$.
\item We have
\[
\nabla'_j\circ \varphi=p\varphi\circ\nabla'_j
\]
for all $0\le j\le d$.
\end{enumerate}
\end{lem}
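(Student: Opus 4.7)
The three parts together equip $\N_{\dR}(V)$ with the structure of a $(\varphi,\nabla)$-module in the sense of Definition~\ref{dfn:phinabla}, where the connection takes the form $\nabla(m) = \sum_{j=0}^d \nabla'_j(m) \otimes \omega_j$ with $\omega_0 = d\pi/(1+\pi)$ and $\omega_j = d[\wtil{t}_j]/[\wtil{t}_j]$ for $1 \le j \le d$; each $\omega_j$ is closed and satisfies $\varphi(\omega_j) = p\,\omega_j$, which is precisely why (ii) encodes integrability $\nabla^2 = 0$ and (iii) encodes the $\varphi$-compatibility. Part (iii) is essentially immediate: the $\Gamma_K$-action and $\varphi$-action on $\D^{\dagger}_{\rig}(V)$ commute, hence so do $\nabla_j$ (arising as $\log(\gamma)$ divided by a scalar) and $\varphi$, and combined with $\varphi(t) = \varphi(\log[\varepsilon]) = \log[\varepsilon]^p = pt$ one computes
\[
\nabla'_j \circ \varphi(m) \;=\; t^{-1}\nabla_j(\varphi(m)) \;=\; t^{-1}\varphi(\nabla_j(m)) \;=\; t^{-1}\varphi(t)\,\varphi(\nabla'_j(m)) \;=\; p\,\varphi(\nabla'_j(m)).
\]

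For (ii), Corollary~\ref{lem:welldef} together with Lemma~\ref{lem:description} identifies $\nabla_0$ with $t(1+\pi)\,\partial/\partial\pi$ and $\nabla_j$ (for $1 \le j \le d$) with $t[\wtil{t}_j]\,\partial/\partial[\wtil{t}_j]$ as continuous derivations of $\B^{\dagger}_{\rig,K}$. In particular $\nabla_0(t) = t$ (using $t = \log(1+\pi)$) and $\nabla_j(t) = 0$ for $1 \le j \le d$. Combined with the Lie bracket relations of Lemma~\ref{lem:Lierelation}, the Leibniz expansion
\[
[\nabla'_i,\nabla'_j] \;=\; t^{-2}[\nabla_i,\nabla_j] \;-\; t^{-3}\bigl(\nabla_i(t)\,\nabla_j - \nabla_j(t)\,\nabla_i\bigr)
\]
vanishes both in the mixed case $(0,j)$ with $j \ge 1$ (the two terms cancel via $[\nabla_0,\nabla_j]=\nabla_j$ and $\nabla_0(t)=t$) and in the horizontal case $1 \le i,j \le d$ (all terms vanish).

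For (i), reduce first to the case of nonpositive Hodge--Tate weights by a Tate twist, so that $D_n := (K_n[[t,u_1,\ldots,u_d]] \otimes_K \D_{\dR}(V))^{\nabla^{\geom}=0}$ sits inside $B_n \otimes_{\iota_n,B} D$, where $D = \D^{\dagger,r}_{\rig}(V)$, $B = \B^{\dagger,r}_{\rig,K}$, and $B_n = K_n[[t,u_1,\ldots,u_d]]^{\nabla}$. The Galois-equivariance of $\iota_n$ yields $\iota_n \circ \nabla_j = \wtil{\nabla}_j \circ \iota_n$, so it suffices to show $\wtil{\nabla}_j(D_n) \subset tD_n$ for each $n \ge n(r)$. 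Since $\D_{\dR}(V)$ is fixed pointwise by $\Gamma_K$ (being $G_K$-invariants), $\wtil{\nabla}_j$ acts on $D_n$ through the coefficient ring $K_n[[t,u_1,\ldots,u_d]]$ alone; the explicit formulas $\wtil{\nabla}_0 = t(1+\pi)\,\partial/\partial\pi$ and $\wtil{\nabla}_j = -t[\wtil{t}_j]\,\partial/\partial u_j$ of Construction~\ref{const:derdR} manifestly carry a factor of $t$, so $\wtil{\nabla}_j(K_n[[t,u_1,\ldots,u_d]]) \subset t\cdot K_n[[t,u_1,\ldots,u_d]]$, whence $\wtil{\nabla}_j(D_n) \subset tD_n$ and therefore $\iota_n(\nabla_j(x)) \in tD_n$ for every $x \in \N_{\dR,r}(V)$ and every $n \ge n(r)$.

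The main obstacle is upgrading these fiberwise divisibilities into the global statement $\nabla_j(x) \in t\,\N_{\dR,r}(V)$. Using the slope factorization $t = \pi\prod_{n \ge 1}(\varphi^{n-1}(q)/p)$ in $\B^{\dagger,r}_{\rig,K}$ (Lemma~\ref{lem:elemrobba}) together with the identification $\B^{\dagger,r}_{\rig,K}/\varphi^{n-1}(q)\B^{\dagger,r}_{\rig,K} \cong \C_p$ via $\theta\circ\iota_n$ (Lemma~\ref{lem:kertheta}), the containment $\iota_n(\nabla_j(x)) \in tD_n$ translates into divisibility of $\nabla_j(x)$ by the prime element $\varphi^{n-1}(q)$ in the finite free $\B^{\dagger,r}_{\rig,K}$-module $D$ for each $n \ge n(r)$. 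Combining these (the slope factors with $n < n(r)$ being already units) produces $t \mid \nabla_j(x)$ in $D$, and then $\iota_n(t) = t/p^n$ being a unit multiple of $t$ in $B_n$ forces $\iota_n(\nabla_j(x)/t) \in D_n$ for all $n \ge n(r)$, so $\nabla_j(x)/t \in \N_{\dR,r}(V)$ and (i) follows.
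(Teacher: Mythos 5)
Your proof follows the paper's argument in all three parts. For (iii) the computation using $\varphi(t)=pt$ and the commutation of $\nabla_j$ with $\varphi$ is exactly as in the paper; for (ii) the Leibniz expansion of $[\nabla'_i,\nabla'_j]$ together with Lemma~\ref{lem:Lierelation}, $\nabla_0(t)=t$ and $\nabla_j(t)=0$ for $1\le j\le d$ is the paper's (compressed) calculation; and for (i) the Tate-twist reduction, the $\Gamma_K$-equivariance of $\iota_n$, the triviality of the $\Gamma_K$-action on $\D_{\dR}(V)$, and the explicit $t$-factor in $\wtil{\nabla}_j$ all match the paper's proof. Your final paragraph in (i), upgrading the fiberwise inclusions $\iota_n(\nabla_j(x))\in tD_n$ for all $n\ge n(r)$ to the global divisibility $t\mid\nabla_j(x)$ in $\D^{\dagger,r}_{\rig}(V)$ via the slope factorization of $t$ (Lemma~\ref{lem:elemrobba} and the uniqueness in Lemma~2.0.5), usefully makes explicit a step the paper compresses into the phrase ``we have only to prove that $\iota_n(\nabla_j(x))\in tD_n$''. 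One small correction: $\B^{\dagger,r}_{\rig,K}/\varphi^{n-1}(q)\B^{\dagger,r}_{\rig,K}\cong K_n$ by Lemma~\ref{lem:surj}, not $\C_p$ (the image of $\theta\circ\iota_n$ is all of $\C_p$ only for the ambient ring $\wtil{\B}^{\dagger,r}_{\rig}$); this does not affect your argument, which needs only that $(\varphi^{n-1}(q))$ is a prime ideal.
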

\begin{proof}
\begin{enumerate}
\item By Tate twist, we may assume that the Hodge-Tate weights of $V$ are sufficiently small. Let notation be as in Construction~\ref{const:derdR} and Proposition~\ref{prop:const} (with $B=\B_{\rig,K}^{\dagger,r}$). By regarding $t\N_{\dR,r}(V)$ and $t\D_{\dR}(V)$ as $\N_{\dR,r}(V(1))$ and $\D_{\dR}(V(1))$, we have only to prove that $\iota_n(\nabla_j(x))\in tD_n$ for all $n\ge n(r)$ and $x\in\N_{\dR,r}(V)$. For sufficiently small $\gamma_j\in\Gamma_{K,j}$, we have $\iota_n\circ\log(\gamma_j)(x)=\log(\gamma_j)(\iota_n(x))$ and $\iota_n(x)\in D_n\subset B_n\otimes_K\D_{\dR}(V)$. Since $\Gamma_K$ acts on $\D_{\dR}(V)$ trivially, $\log(\gamma_j)$ acts on $B_n\otimes_K\D_{\dR}(V)$ as $\log(\gamma_j)\otimes 1$. Since $\log(\gamma_j)(B_n)\subset tB_n$ (see Construction~\ref{const:derdR}), we have $\iota_n\circ\log(\gamma_j)(x)\in (B_n\otimes_K\D_{\dR}(V(1)))^{\nabla^{\geom}=0}=tD_n$, which implies the assertion.
\item It follows from a straightforward calculation using Lemma~\ref{lem:Lierelation}, $\nabla_0(t)=t$, and $\nabla_i(t)=\nabla_j(t)=0$.
\item Since $\nabla_j$ commutes with $\varphi$, we have $t\nabla'_j\circ\varphi=\nabla_j\circ\varphi=\varphi\circ\nabla_j=\varphi(t)\varphi\circ\nabla'_j=pt\varphi\circ\nabla'_j$. By dividing by $t$, we obtain the assertion since $\N_{\dR}(V)$ is torsion free.
\end{enumerate}
\end{proof}

\begin{dfn}\label{dfn:diffswanndr}
Let notation be as in Lemma~\ref{lem:ndr}. For $V\in \rep_{\dR}(\gk)$, we put
\[
\nabla:\N_{\dR}(V)\to\N_{\dR}(V)\otimes_{\B_{\rig,K}^{\dagger}}\Omega^1_{\B_{\rig,K}^{\dagger}};x\mapsto\nabla'_0(x)\otimes \frac{1}{1+\pi}d\pi+\sum_{1\le j\le d}\nabla'_j(x)\otimes d[\wtil{t}_j],
\]
which defines a $\nabla$-structure on $\N_{\dR}(V)$ by Corollary~\ref{lem:welldef}. Moreover, this $\nabla$-structure is compatible with the $\varphi$-structure on $\N_{\dR}(V)$ by Lemma~\ref{lem:ndr}~(iii) and $\varphi((1+\pi)^{-1}d\pi)=p(1+\pi)^{-1}d\pi$ and $\varphi(d[\wtil{t}_j])=pd[\wtil{t}_j]$. Thus, $\N_{\dR}(V)$ is endowed with $(\varphi,\nabla)$-module structure and we obtain the differential Swan conductor $\sw^{\nabla}(\N_{\dR}(V))$ of $\N_{\dR}(V)$. Note that the slope filtration of $\N_{\dR}(V)$ as a $(\varphi,\nabla)$-module (Theorem~\ref{thm:slopefil}) is $\Gamma_K$-stable by the commutativity of $\Gamma_K$- and $\varphi$-actions, and the uniqueness of the slope filtration (\cite[Theorem~6.4.1]{Sw}).
\end{dfn}

\subsection{Comparison of pure objects}\label{subsec:pure}
In this subsection, we will study ``pure'' objects in various categories.

\begin{notation}
Let $G$ be a topological group and $R$ a topological ring on which $G$ acts. Let $\phi:R\to R$ be a continuous ring homomorphism commuting with the action of $G$. A $(\phi,G)$-module over $R$ is a finite free $R$-module with continuous and semi-linear action of $G$ and a semi-linear endomorphism $\phi$, which are commutative. We denote the category of $(\phi,G)$-modules over $R$ by $\Mod_R(\phi,G)$. The morphisms in $\Mod_R(\phi,G)$ consist of $R$-linear maps commuting with $\phi$ and $G$.
\end{notation}

\begin{dfn}[{\cite[Definition~3.2.1]{Bpair}}]
Let $h\ge 1$ and $a\in\Z$ be relatively prime integers. Let $\rep_{a,h}(\gk)$ be the category, whose objects are $V_{a,h}\in\rep_{\Q_{p^h}}(G_K)$ endowed with a semi-linear Frobenius action $\varphi:V_{a,h}\to V_{a,h}$ such that $\varphi^h=p^a$, commuting with the $\gk$-action. The morphisms of this category are $\Q_{p^h}$-linear maps, commuting with $(\varphi,G_K)$-actions. Note that when $h=1$ and $a=0$, $\rep_{a,h}(\gk)=\rep_{\Q_p}(\gk)$.

Let $s:=a/h\in\Q$. We denote by $D_{[s]}$ the $\Q_p$-vector space $\oplus_{1\le i\le h}\Q_pe_i$ endowed with trivial $\gk$-action and $\varphi$-actions by $\varphi(e_i):=e_{i+1}$ if $i\neq h$ and $\varphi(e_h):=p^ae_1$. Then, $\Q_{p^h}\otimes_{\Q_p}D_{[s]}$ belongs to $\rep_{a,h}(\gk)$.
\end{dfn}

\begin{dfn}
For $s\in\Q$, we define
\[
\Mod_{\btrigdag}^s(\varphi,\gk),\ \Mod_{\brigdagk}^s(\varphi,\Gamma_K),\ \Mod_{\btdag}^s(\varphi,\gk),\ \Mod_{\bdagk}^s(\varphi,\Gamma_K)
\]
be the full subcategories of $\Mod_{\btrigdag}(\varphi,\gk)$, $\Mod_{\brigdagk}(\varphi,\Gamma_K)$, $\Mod_{\btdag}^s(\varphi,\gk)$ and $\Mod_{\bdagk}^s(\varphi,\Gamma_K)$, whose objects are pure of slope $s$ as $\varphi$-modules.
\end{dfn}

\begin{lem}\label{lem:plusdagger}
\begin{enumerate}
\item For any $r>0$, there exists a canonical injection
\[
\wtil{\B}^{\nabla+}_{\rig}\to\wtil{\B}^{\dagger,r}_{\rig},
\]
which is $(\varphi,\gk)$-equivariant; In the following, we regard $\wtil{\B}^{\nabla+}_{\rig}$ as a subring of $\wtil{\B}^{\dagger,r}_{\rig}$ and we endow $\wtil{\B}^{\nabla+}_{\rig}$ with a Fr\'echet topology induced by the family of valuations $\{w_r\}_{r>0}$.
\item For $h\in\N_{>0}$,
\[
(\wtil{\B}^{\nabla+}_{\rig})^{\varphi^h=1}=(\wtil{\B}^{\dagger,r}_{\rig})^{\varphi^h=1}=\Q_{p^h}.
\]
\end{enumerate} 
\end{lem}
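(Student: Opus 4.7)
The plan proceeds in two steps, invoking standard $p$-adic Hodge theory machinery for part (i) and a Teichm\"uller coefficient analysis for part (ii).

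For part (i), I would construct the embedding by exploiting the divided power structure of $\B^+_{\cris,\C_p/\Q_p}$. An element of $\wtil{\B}^{\nabla+}_{\rig}=\cap_n\varphi^n(\B^+_{\cris,\C_p/\Q_p})$ is a $p$-adic limit of finite sums $\sum_i a_i q^{[m_i]}$ with $a_i\in\wtil{\A}^+[p^{-1}]$. The crucial point is that for any fixed $r>0$ one has $w_r(\varphi^n(q))=rp^n$ (coming from the reduction mod $p$ of $q$ being an element of $\wtil{\E}$ of valuation $1$), so for $n$ large enough this exceeds $1/(p-1)$; combined with $v_p(m!)\le m/(p-1)$, this forces $w_r(\varphi^n(q)^{[m]})\to\infty$ as $m\to\infty$. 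Hence after applying $\varphi^n$ to a defining Cauchy sequence, the sequence converges in the Fr\'echet topology of $\wtil{\B}^{\dagger,r}_{\rig}$, and the intersection definition of $\wtil{\B}^{\nabla+}_{\rig}$ ensures the resulting map is canonical. The $(\varphi,\gk)$-equivariance and injectivity are then automatic: equivariance from Witt-vector functoriality, and injectivity from the compatibility of Teichm\"uller coefficients via the shared ambient ring $\B_{\dR}^{\nabla+}$ through the maps $\iota_n$ constructed in \S~\ref{subsec:const}.

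For part (ii), both inclusions $\Q_{p^h}\subset(\wtil{\B}^{\nabla+}_{\rig})^{\varphi^h=1}$ and $\Q_{p^h}\subset(\wtil{\B}^{\dagger,r}_{\rig})^{\varphi^h=1}$ are immediate, since Teichm\"uller lifts of $\mathbb{F}_{p^h}\subset\wtil{\E}$ generate $W(\mathbb{F}_{p^h})[p^{-1}]=\Q_{p^h}$, which sits naturally in both rings and is pointwise fixed by $\varphi^h$. For the reverse inclusion, thanks to (i) it suffices to work in $\wtil{\B}^{\dagger,r}_{\rig}$. Here I would appeal to the unique Teichm\"uller expansion $x=\sum_{k\in\Z}p^k[x_k]$ with $x_k\in\wtil{\E}$, available for every element of the Fr\'echet completion because $\wtil{\E}$ is perfect and the expansion passes through the completion; convergence translates to $w_s(p^k[x_k])=k+s\cdot v_{\wtil{\E}}(x_k)\to\infty$ as $|k|\to\infty$ for every $0<s\le r$. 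If $\varphi^h(x)=x$, Witt-vector functoriality gives $\sum p^k[x_k^{p^h}]=\sum p^k[x_k]$, so uniqueness forces $x_k^{p^h}=x_k$ and hence $x_k\in\mathbb{F}_{p^h}$. Since a nonzero $x_k\in\mathbb{F}_{p^h}$ has $v_{\wtil{\E}}(x_k)=0$, convergence as $k\to-\infty$ demands $x_k=0$ for all $k<0$, so $x\in W(\mathbb{F}_{p^h})[p^{-1}]=\Q_{p^h}$.

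The most delicate part of the argument is the construction in (i): one has to handle the Fr\'echet completion carefully and verify uniformly in $r$ that enough Frobenius iterations make divided powers converge, which is standard in the theory of Fontaine's rings but requires careful bookkeeping with the valuations $w_s$ for varying $s\in(0,r]$. Once (i) is in place, part (ii) reduces cleanly to the overconvergent Witt vector formalism of Construction~\ref{con:oc}, with no serious technical obstacle.
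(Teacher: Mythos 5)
The paper's own proof is a two-line reduction: since $\wtil{\B}^{\nabla+}_{\rig}$ and $\wtil{\B}^{\dagger,r}_{\rig}$ depend only on $\C_p$, one passes to $K^{\mathrm{pf}}$ and then cites Berger \cite[Exemple~2.8~(2), D\'efinition~2.16]{Inv} for (i) and Colmez/Berger for (ii). Your part (ii), once (i) is granted, does in spirit what the references do and is fine — though note that the existence and uniqueness of a Teichm\"uller expansion $x=\sum_{k\in\Z}p^k[x_k]$ for a general element of the Fr\'echet completion $\wtil{\B}^{\dagger,r}_{\rig}$ (as opposed to $W(\wtil{\E})_r[p^{-1}]$) is itself a nontrivial point that must be cited or argued, not simply asserted.

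The genuine gap is in part (i), in the central valuation estimate. You claim $w_r(\varphi^n(q))=rp^n$, but this only uses the Teichm\"uller coefficient $x_0=\overline{\varphi^n(q)}$, whose valuation is indeed $p^n$; it ignores the coefficient of $p$. Since $q$ generates $\ker\theta$, for $n\ge 1$ one has $\theta(\varphi^n(q))=p\neq 0$, and because $v_p(\theta([x_0]))=p^n>1$, the equation $\sum_k p^k x_k^{(0)}=p$ forces $v_p(x_1^{(0)})=0$, i.e.\ $v_{\wtil{\E}}(x_1)=0$. Hence $v_{\wtil{\E}}^{\le 1}(\varphi^n(q))=0$ and
\[
w_r(\varphi^n(q))=\inf_k\bigl(rv_{\wtil{\E}}^{\le k}(\varphi^n(q))+k\bigr)\le r\cdot 0+1=1,
\]
so $w_r(\varphi^n(q))=\min(rp^n,1)$ and does not tend to $\infty$ with $n$. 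For $p=2$ this never exceeds $1/(p-1)=1$, so the bound $mw_r(\varphi^n(q))-v_p(m!)$ is not even eventually nonnegative along $m=p^j$. Moreover, the Fr\'echet topology of $\wtil{\B}^{\dagger,r}_{\rig}$ is given by \emph{all} $\{w_s\}_{0<s\le r}$, and $w_s(\varphi^n(q))\le sp^n\to 0$ as $s\to 0$, so even for $p>2$ the divided powers $\varphi^n(q)^{[m]}$ do not converge uniformly in the Fr\'echet sense by this mechanism. In short, your proposed construction of the embedding breaks down. Berger's actual argument (the cited passages of \cite{Inv}) avoids the PD description of $\B^+_{\cris}$ as the input and instead characterizes $\wtil{\B}^{\nabla+}_{\rig}$ as the Fr\'echet completion of $\wtil{\B}^+=\wtil{\A}^+[p^{-1}]$ with respect to the family $\{w_r\}_{r>0}$, from which the inclusion into each $\wtil{\B}^{\dagger,r}_{\rig}$ is immediate; the nontrivial content is matching that completion with $\cap_n\varphi^n(\B^+_{\cris})$, and that is a different and more delicate computation than the one you sketch.
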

\begin{proof}
By definition, $\wtil{\B}^{\nabla+}_{\rig}$ and $\wtil{\B}^{\dagger,r}_{\rig}$ depend only on $\C_p$ and do not depend on $K$. By regarding $\C_p$ as the $p$-adic completion of the algebraic closure of $K^{\mathrm{pf}}$, we can reduce to the perfect residue field case; The assertion (i) follows from (see \cite[Exemple~2.8~(2), Definition~2.16]{Inv}). The assertion (ii) for $\wtil{\B}^{\nabla+}_{\rig}$ is due to Colmez (\cite[Lemma~6.2]{Ohk}), and (ii) for $\wtil{\B}^{\dagger,r}_{\rig}$ is a consequence of \cite[Proposition~3.2]{Inv}.
\end{proof}

\begin{dfn}
For $s\in\Q$, an object $M\in\Mod_{\brig}(\varphi,\gk)$ is said to be pure of slope $s$ if $M$ is isomorphic to $(\brig\otimes_{\Q_p}D_{[s]})^m$ as a $\varphi$-module for some $m\in\N$. Denote by $\Mod^s_{\brig}(\varphi,\gk)$ the category of $(\varphi,\gk)$-modules over $\brig$, which are pure of slope $s$.
\end{dfn}

\begin{lem}\label{lem:diffpure}
Let notation be as in Notation~\ref{notation:ocpower} and Notation~\ref{dfn:phinabla}. For $s\in\Q$, the following forgetful functor is fully faithful:
\[
\Mod^s_{\mathcal{R}}(\varphi,\nabla)\to\Mod^s_{\mathcal{R}}(\varphi).
\]
\end{lem}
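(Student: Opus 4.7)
Faithfulness of the forgetful functor is trivial, so I must show fullness: every $\varphi$-equivariant $\mathcal{R}$-linear morphism $f \colon M \to N$ between two pure-of-slope-$s$ $(\varphi,\nabla)$-modules automatically commutes with $\nabla$. Forming the internal Hom $P := \underline{\Hom}_{\mathcal{R}}(M,N)$ with its canonical $(\varphi,\nabla)$-structure, the underlying $\varphi$-module is pure of slope $s - s = 0$; the morphism $f$ corresponds to an element of $P^{\varphi = 1}$, and it commutes with $\nabla$ if and only if this element lies in $P^{\nabla = 0}$. Thus the lemma reduces to the claim that for every $(\varphi,\nabla)$-module $P$ over $\mathcal{R}$ which is pure of slope $0$ as a $\varphi$-module, one has $P^{\varphi = 1} \subseteq P^{\nabla = 0}$.

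Fix $v \in P$ with $\varphi(v) = v$. The compatibility of $\varphi$ and $\nabla$ built into Definition~\ref{dfn:phinabla} forces $\nabla(v) = \nabla(\varphi(v)) = (\varphi \otimes d\varphi)(\nabla(v))$, so $\nabla(v)$ is fixed by the tensor-product Frobenius on $P \otimes_{\mathcal{R}} \Omega^1_{\mathcal{R}}$. Next I claim that $\Omega^1_{\mathcal{R}}$ is pure of slope $1$ as a $\varphi$-module: with respect to the basis $d\pi, d[\wtil{t}_1], \dots, d[\wtil{t}_d]$,
\[
\varphi(d\pi) = p(1+\pi)^{p-1}\, d\pi \qquad \text{and} \qquad \varphi(d[\wtil{t}_j]) = p\,[\wtil{t}_j]^{p-1}\, d[\wtil{t}_j],
\]
and the scalars $p(1+\pi)^{p-1}$ and $p\,[\wtil{t}_j]^{p-1}$ are each $p$ times a unit of $\mathcal{R}$, so every one-dimensional summand is pure of slope $1$. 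By additivity of slopes under tensor products, $P \otimes_{\mathcal{R}} \Omega^1_{\mathcal{R}}$ is pure of slope $0 + 1 = 1$.

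Finally, a pure $\varphi$-module over $\mathcal{R}$ of strictly positive slope has no nonzero $\varphi$-fixed vectors: after base change to $\btrigdag$, the Dieudonn\'e--Manin decomposition recalled in \S~\ref{subsec:ocnotation} exhibits any pure slope-$s'$ summand as generated by elements $e$ with $\varphi^h(e) = p^{hs'}e$ for some $h \ge 1$, so $\varphi$-invariance forces $s' = 0$. Applying this to $P \otimes_{\mathcal{R}} \Omega^1_{\mathcal{R}}$ gives $\nabla(v) = 0$, as desired. The main obstacle is the clean invocation of slope theory---additivity of slopes under tensor products, vanishing of $\varphi$-invariants in positive-slope modules, and the descent from $\btrigdag$ back to $\mathcal{R}$---but these are all standard consequences of Kedlaya's classification summarised in \S~\ref{subsec:ocnotation}, so I do not expect genuine difficulty. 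Everything else reduces to the explicit computation of $\varphi$ on the generators of $\Omega^1_{\mathcal{R}}$ and the elementary observation that $\underline{\Hom}$ of two pure-of-slope-$s$ objects is pure of slope $0$.
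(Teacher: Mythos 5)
Your proof takes a genuinely different route from the paper's. Both begin by passing to the internal $\Hom$, reducing to a statement about slope-$0$ objects $P$: for $v\in P^{\varphi=1}$ one must show $\nabla(v)=0$. You then argue directly over $\mathcal{R}$ — since $\nabla(v)$ is a $\varphi$-fixed element of $P\otimes_{\mathcal{R}}\Omega^1_{\mathcal{R}}$ and $\Omega^1_{\mathcal{R}}$ has positive slope, it must vanish. The paper instead runs a d\'evissage through $\Gamma[p^{-1}]$ and $\Gamma^{\dagger}[p^{-1}]$, invoking Kedlaya's base-change equivalences and, as a black box, the \'etale case of the full faithfulness over $\Gamma[p^{-1}]$ (\cite[Proposition~3.2.8]{Sw}); no statement about the slope of $\Omega^1_{\mathcal{R}}$ appears there at all. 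Your version is more explicit and self-contained at the level of the Robba ring, while the paper's keeps the argument formal and delegates the $\varphi$--$\nabla$ compatibility to Kedlaya's \'etale result.

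There is, however, a real gap in the explicit step on $\Omega^1_{\mathcal{R}}$. The lemma is stated for an arbitrary Frobenius lift $\varphi$ with $\varphi(\Gamma^{\dagger})\subset\Gamma^{\dagger}$ as in Notation~\ref{notation:sw1}, not only the ``cyclotomic'' one $\varphi(\pi)=(1+\pi)^p-1$, $\varphi([\wtil{t}_j])=[\wtil{t}_j]^p$ for which your diagonal formulas hold. For a general lift one knows only $\varphi(S)\equiv S^p$, $\varphi(t_j)\equiv t_j^p\pmod p$; writing $\varphi(S)=S^p+pg_0$, $\varphi(t_j)=t_j^p+pg_j$, the matrix of $\varphi$ on $\Omega^1_{\Gamma^{\dagger}}$ in the basis $dS,dt_1,\dots,dt_d$ equals $p$ times a full matrix $A$ with off-diagonal entries $\partial g_i/\partial x_j$, not a diagonal one. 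That $\varphi$ is divisible by $p$ on $\Omega^1_{\Gamma^{\dagger}}$ is automatic (differentials of $p$-th powers vanish mod $p$), but ``pure of slope exactly $1$'' needs $A\in GL_{d+1}(\Gamma^{\dagger})$, and that is not a one-line observation for arbitrary $g_i$. Two ways to repair this: (a) show $\det A$ is a unit of $\Gamma^{\dagger}$ by identifying $(p^{d+1}\det A)$ with the K\"ahler different of the finite flat extension $\Gamma^{\dagger}/\varphi(\Gamma^{\dagger})$ and computing the latter from the minimal polynomials of $S,t_j$; or (b) bypass purity, since the vanishing you need uses only that $\varphi$ carries the lattice $P_0\otimes\Omega^1_{\Gamma^{\dagger}}$ into $p$ times itself — then the relation $w_r(\varphi(x))=w_{pr}(x)$ forces $w_{r_0}$ of any nonzero $\varphi$-fixed vector to be $-\infty$, a contradiction. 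With either repair, your argument goes through.
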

\begin{proof}
We consider the following commutative diagram
\[\xymatrix{
\Mod^{s}_{\Gamma[p^{-1}]}(\varphi,\nabla)\ar[r]^{\alpha_1}&\Mod^{s}_{\Gamma[p^{-1}]}(\varphi)\\
\Mod^{s}_{\Gamma^{\dagger}[p^{-1}]}(\varphi,\nabla)\ar[r]^{\alpha_2}\ar[u]^{\beta_1}\ar[d]_{\beta_2}&\Mod^{s}_{\Gamma^{\dagger}[p^{-1}]}(\varphi)\ar[d]^{\gamma_2}\ar[u]_{\gamma_1}\\
\Mod^{s}_{\mathcal{R}}(\varphi,\nabla)\ar[r]^{\alpha_3}&\Mod^{s}_{\mathcal{R}}(\varphi),
}\]
where $\alpha_{\bullet}$ is a forgetful functor, $\beta_{\bullet}$ and $\gamma_{\bullet}$ are base change functors. We first note that $\gamma_1$ (resp. $\gamma_2$) is fully faithful (resp. an equivalence) by \cite[Theorem~6.3.3~(a)]{Doc} (resp. \cite[Theorem~6.3.3~(b)]{Doc}). Let $M,N\in\Mod^s_{\Gamma^{\dagger}[p^{-1}]}(\varphi,\nabla)$ and denote by $\hat{M},\hat{N}$ the base changes of $M,N$ by the canonical map $\Gamma^{\dagger}[p^{-1}]\to\Gamma[p^{-1}]$. Then, we have
\[
\Hom_{\Mod^s_{\Gamma^{\dagger}[p^{-1}]}(\varphi,\nabla)}(M,N)=\Hom_{\Gamma^{\dagger}[p^{-1}]}(M,N)^{\varphi=1,\nabla=0}=\Hom_{\Gamma[p^{-1}]}(\hat{M},\hat{N})^{\varphi=1,\nabla=0},
\]
where the first equality follows by definition and the second equality follows from the fully faithfulness of $\gamma_1$. Therefore, $\beta_1$ is fully faithful. For the same reason, the fully faithfulness of $\gamma_2$ implies that of $\beta_2$. Note that $\alpha_1$ is an equivalence in the \'etale case, i.e., $s=0$ (\cite[Proposition~3.2.8]{Sw}). Let $M,N\in\Mod^s_{\Gamma[p^{-1}]}(\varphi,\nabla)$. Since $\Hom_{\Gamma[p^{-1}]}(M,N)\cong M\spcheck\otimes_{\Gamma[p^{-1}]}N$ can be regarded as an \'etale $(\varphi,\nabla)$-module over $\Gamma[p^{-1}]$, where $M\spcheck$ denotes the dual of $M$, we have
\[
\Hom_{\Mod^s_{\Gamma[p^{-1}]}(\varphi,\nabla)}(M,N)=\Hom_{\Gamma[p^{-1}]}(M,N)^{\varphi=1,\nabla=0}=\Hom_{\Gamma[p^{-1}]}(M,N)^{\varphi=1}=\Hom_{\Mod_{\Gamma[p^{-1}]}^s(\varphi)}(M,N),
\]
where the first and third equalities follow by definition and the second equality follows from the fully faithfulness of $\alpha_1$ in the \'etale case. Therefore, $\alpha_1$ is an equivalence. Since $\alpha_1$, $\beta_1$ and $\gamma_1$ are fully faithful, so is $\alpha_2$. Since $\alpha_2$, $\beta_2$ and $\gamma_2$ are fully faithful, so is $\alpha_3$.
\end{proof}

\begin{lem}\label{lem:pure}
Let $s\in\Q$ and let $h\in\N_{\ge 1},\ a\in\Z$ be the relatively prime integers such that $s=a/h$.
\begin{enumerate}
\item There exist equivalences of categories
\begin{align*}
\wtil{\D}_{\rig}^{\nabla+}&:\rep_{a,h}(\gk)\to \Mod^s_{\brig}(\varphi,\gk);V_{a,h}\mapsto\brig\otimes_{\Q_{p^h}}V_{a,h},\\
\wtil{\D}_{\rig}^{\dagger}&:\rep_{a,h}(\gk)\to \Mod^s_{\btrigdag}(\varphi,\gk);V_{a,h}\mapsto \btrigdag\otimes_{\Q_{p^h}}V_{a,h},\\
\mathbb{D}_{\rig}^{\dagger}&:\rep_{a,h}(\gk)\to \Mod^s_{\brigdagk}(\varphi,\Gamma_K);V_{a,h}\mapsto \brigdagk\otimes_{\bdagk}(\bdag\otimes_{\Q_{p^h}}V_{a,h})^{H_K},\\
\widetilde{\mathbb{D}}^{\dagger}&:\rep_{a,h}(\gk)\to \Mod^s_{\btdag}(\varphi,\gk);V_{a,h}\mapsto \btdag\otimes_{\Q_{p^h}}V_{a,h},\\
\mathbb{D}^{\dagger}&:\rep_{a,h}(\gk)\to \Mod^s_{\bdagk}(\varphi,\Gamma_K);V_{a,h}\mapsto (\bdag\otimes_{\Q_{p^h}}V_{a,h})^{H_K}.
\end{align*}
More precisely, quasi-inverses of $\widetilde{\mathbb{D}}_{\rig}^{\nabla+}$, $\widetilde{\mathbb{D}}_{\rig}^{\dagger}$ and $\widetilde{\mathbb{D}}^{\dagger}$ are given by $M\mapsto M^{\varphi^h=p^a}$.
\item We denote by $\alpha_i$ for $1\le i\le 5$ the following canonical morphisms of rings:
\[\xymatrix{
\bdagk\ar[r]^{\alpha_1}\ar[d]^{\alpha_2}&\brigdagk\ar[d]^{\alpha_4}&\\
\brigdag\ar[r]^{\alpha_3}&\btrigdag&\brig\ar[l]_{\alpha_5},
}\]
where the left square is commutative. Then, $\alpha_i$'s induce the following base change functors $\alpha_{\bullet}^*$:
\[\xymatrix{
\Mod^s_{\bdagk}(\varphi,\Gamma_K)\ar[r]^{\alpha_1^*}\ar[d]^{\alpha_2^*}&\Mod^s_{\brigdagk}(\varphi,\Gamma_K)\ar[d]^{\alpha_4^*}&\\
\Mod^s_{\btdag}(\varphi,\gk)\ar[r]^{\alpha_3^*}&\Mod^s_{\btrigdag}(\varphi,\gk)&\Mod^s_{\brig}(\varphi,\gk)\ar[l]_{\alpha_5^*},
}\]
where the left square is commutative. Moreover, the functors $\alpha_{\bullet}^*$'s are compatible with the functor defined in (i) , i.e., $\alpha_1^*\circ\mathbb{D}^{\dagger}=\mathbb{D}^{\dagger}_{\rig}$ etc. In particular, $\alpha_{\bullet}^*$'s are equivalences.
\end{enumerate}
\end{lem}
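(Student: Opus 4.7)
The plan is to treat the three ``tilde'' equivalences in (i) via the explicit quasi-inverse $M\mapsto M^{\varphi^h=p^a}$, to obtain the two non-tilde equivalences by $H_K$-descent, and to deduce (ii) from the tautological compatibility of the functors with each $\alpha_i$.

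For $R\in\{\brig,\btrigdag,\btdag\}$ and $V_{a,h}\in\rep_{a,h}(\gk)$, the composition
\[
V_{a,h}\longmapsto R\otimes_{\Q_{p^h}}V_{a,h}\longmapsto (R\otimes_{\Q_{p^h}}V_{a,h})^{\varphi^h=p^a}
\]
is naturally the identity, because $\varphi^h$ acts as $p^a$ on $V_{a,h}$ and $R^{\varphi^h=1}=\Q_{p^h}$ by Lemma~\ref{lem:plusdagger} (the analogue for $\btdag$ being obtained by intersecting the statement for $\btrigdag$ with $\btdag$). In the converse direction, given a pure slope-$s$ module $M\in\Mod_R^s(\varphi,\gk)$, the main step is to show that the evaluation map $R\otimes_{\Q_{p^h}}M^{\varphi^h=p^a}\to M$ is an isomorphism and that $M^{\varphi^h=p^a}$ is $\Q_{p^h}$-free of rank $\mathrm{rank}_R(M)$ with continuous $\gk$-action. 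For $R=\brig$ this is direct from the defining identification $M\cong(\brig\otimes_{\Q_p}D_{[s]})^m$, since $D_{[s]}^{\varphi^h=p^a}=\Q_{p^h}\otimes_{\Q_p}D_{[s]}$ is $\Q_{p^h}$-free of rank $h$. For $R=\btrigdag$ or $\btdag$, where purity is defined via the slope multiset (Construction~\ref{con:oc}), I would reduce to the $\brig$ case by invoking Kedlaya's Dieudonné--Manin decomposition after base changing to $W(\widehat{\wtil{\E}^{\alg}})$ and then descending the resulting $\Q_{p^h}$-structure back to $R$ using $R^{\varphi^h=1}=\Q_{p^h}$; continuity of the $\gk$-action on $M^{\varphi^h=p^a}$ transfers from $M$.

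For the non-tilde functors $\mathbb{D}^\dagger$ and $\mathbb{D}^\dagger_{\rig}$, the étale case ($s=0$) is exactly Theorems~\ref{thm:phigamma} and \ref{thm:oc}, and the general pure-slope case reduces to the étale case by tensoring with $N=(R\otimes_{\Q_p}D_{[s]})^{\vee}$, which is pure of slope $-s$, so that $M\otimes_R N$ is étale. Part (ii) is then essentially formal: the commutativity of the left square follows from the commutativity of the corresponding ring inclusions, while the intertwining $\alpha_i^*\circ F=F'$ is immediate from the definitions (the tensor products are manifestly functorial in $R$, and taking $H_K$-invariants commutes with extending scalars along each $\alpha_i$, which is $H_K$-equivariant). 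Combined with (i), this forces each $\alpha_i^*$ to be an equivalence of categories. The main obstacle in the entire argument will be the dimension count for $M^{\varphi^h=p^a}$ in the case $s\neq 0$ over $\btrigdag$ or $\btdag$; this is where Kedlaya's slope theory (Theorem~\ref{thm:slopefil}) and the analysis of the ``tilde'' rings really intervene.
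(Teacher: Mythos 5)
Your treatment of the three ``tilde'' functors follows essentially the same route as the paper: the identification $R^{\varphi^h=1}=\Q_{p^h}$ from Lemma~\ref{lem:plusdagger}(ii) (together with your intersection observation for $\btdag$, which is a valid small addition the paper leaves implicit) gives $\mathcal{V}\circ\mathcal{D}\simeq\id$, and Kedlaya's Dieudonn\'e--Manin decompositions over $\btrigdag$ and $\btdag$ give $\mathcal{D}\circ\mathcal{V}\simeq\id$ (your extra base change to $W(\widehat{\wtil{\E}^{\alg}})$ is unnecessary since $\wtil{\E}$ is already algebraically closed in this paper's convention, but harmless).

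There is, however, a genuine gap in your reduction for the non-tilde functors $\D^\dagger$ and $\D^\dagger_{\rig}$. Tensoring a pure slope-$s$ module $M$ of rank $m$ with $N=(R\otimes_{\Q_p}D_{[s]})^{\vee}$ produces an \'etale module of rank $mh$, since $D_{[s]}$ has $\Q_p$-dimension $h$. The functor $M\mapsto M\otimes_R N$ therefore changes rank, and is not an equivalence; you cannot directly pull back an equivalence with $\rep_{a,h}(\gk)$ from the \'etale picture by tensoring this way, and there is no rank-one $\Q_p$-twist available once $h>1$. The paper's argument instead reduces to the \'etale case at the level of $\varphi^h$-modules: it uses \cite[Proposition~6.3.5]{Doc} to produce an $\A^{\dagger}_K$-lattice on which $p^{-a}\varphi^h$ permutes a basis, and replaces $M$ by $M'$, the same module with $\varphi^h$-action rescaled by $p^{-a}$. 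This modification preserves rank and yields an honest \'etale $(\varphi^h,\Gamma_K)$-module in the sense of Theorem~\ref{thm:oc} (which is stated for arbitrary $h$, a fact your ``$s=0$ is Theorems \ref{thm:phigamma} and \ref{thm:oc}'' reading elides since $s=0$ forces $h=1$). Your sketch would need to be replaced by this rank-preserving $\varphi^h$-twist. Finally, in part (ii) the formal compatibility claims are fine, but you should note that the well-definedness of each $\alpha_i^*$ already requires checking that base change preserves purity, which is not automatic from the ring maps alone; the paper verifies this via \cite[Theorem~6.3.3(b), Proposition~4.5.10, Definition~4.6.1]{Doc}.
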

\begin{proof}
\begin{enumerate}
\item We prove the assertion for $\wtil{\D}_{\rig}^{\nabla+}$. Let $\mathcal{D}:=\wtil{\D}_{\rig}^{\nabla+}$ and $\mathcal{V}$ the converse direction functor as above. Let $V\in\rep_{a,h}(G_K)$. Then, there exists a functorial morphism $V\to\mathcal{V}\circ\mathcal{D}(V)$, which is bijective by Lemma~\ref{lem:plusdagger}~(ii). Hence, we have a natural equivalence $\mathcal{V}\circ \mathcal{D}\simeq\id$. Let $M\in \Mod^s_{\brig}(\varphi,\gk)$. Then, there exists a functorial morphism $\mathcal{D}\circ\mathcal{V}(M)\to M$, which is bijective by an isomorphism $M\cong (\brig\otimes_{\Q_p}D_{[s]})^m$ as $\varphi$-modules and Lemma~\ref{lem:plusdagger}~(ii). Hence, we have a natural equivalence $\mathcal{D}\circ\mathcal{V}\simeq\id$.

The assertions for $\wtil{\D}_{\rig}^{\dagger}$ and $\wtil{\D}^{\dagger}$ follow similarly: Instead of using an isomorphism $M\cong (\brig\otimes_{\Q_p}D_{[s]})^m$, we use Kedlaya's Dieudonn\'e-Manin decomposition theorems over $\wtil{\B}_{\rig}^{\dagger}$ (\cite[Proposition~4.5.3, 4.5.10 and Definition~4.6.1]{Doc}) and $\wtil{\B}^{\dagger}$ (\cite[Theorem~6.3.3~(b)]{Doc}), which assert that any object $M\in\Mod^s_{\wtil{\B}^{\dagger}_{\rig}}(\varphi)$ (resp. $\Mod^s_{\wtil{\B}^{\dagger}}(\varphi)$) is isomorphic to a direct sum of $\wtil{\B}^{\dagger}_{\rig}\otimes_{\Q_p}D_{[s]}$ (resp. $\wtil{\B}^{\dagger}\otimes_{\Q_p}D_{[s]}$).

We prove the assertion for $\D^{\dagger}$. For $M\in \Mod^s_{\bdagk}(\varphi,\Gamma_K)$, let $\mathcal{V}(M):=(\bdag\otimes_{\bdagk}M)^{\varphi^h=p^a}$. We will check that $\mathcal{V}$ gives a quasi-inverse of $\D^{\dagger}$. Let $V_{a,h}\in\rep_{a,h}(\gk)$. By forgetting the action of $\varphi$ on $V_{a,h}$ and applying Theorem~\ref{thm:oc} to $V=V_{a,h}$, we obtain a canonical bijection $\B^{\dagger}\otimes_{\B^{\dagger}_K}\D^{\dagger}(V_{a,h})\to\B^{\dagger}\otimes_{\Q_{p^h}}V_{a,h}$. Since this map is $\varphi$-equivariant, we have canonical isomorphisms $\mathcal{V}\circ \D^{\dagger}(V_{a,h})\cong (\B^{\dagger})^{\varphi^h=1}\otimes_{\Q_{p^h}}V_{a,h}\cong V_{a,h}$ by Lemma~\ref{lem:plusdagger}~(ii). Thus, we obtain a natural equivalence $\mathcal{V}\circ \D^{\dagger}\simeq \id$. We prove $\D^{\dagger}\circ \mathcal{V}\simeq\id$. Let $M\in \Mod^s_{\bdagk}(\varphi,\Gamma_K)$. By \cite[Proposition~6.3.5]{Doc}, there exists a $\mathbb{A}^{\dagger}_K$-lattice $N$ of $M$ such that $p^{-a}\varphi^h$ maps some basis of $N$ to another basis of $N$. Let $M'$ denote $M$ with $\varphi^h$-action given by $x\mapsto p^{-a}\varphi^h(x)$ and the same $\Gamma_K$-action as $M$. By the existence of the above lattice $N$, we have $M'\in \Mod^{\et}_{\bdagk}(\varphi^h,\Gamma_K)$. Since we have $\gk$-equivariant isomorphisms $\mathcal{V}(M)=(\bdag\otimes_{\bdagk}M)^{\varphi^h=p^a}\cong(\bdag\otimes_{\bdagk}M')^{\varphi^h=1}=\V(M')$, the assertion follows from the \'etale case (Theorem~\ref{thm:oc}).

Finally, we prove the assertion for $\D^{\dagger}_{\rig}$. By the base change equivalence (\cite[Theorem~6.3.3~(b)]{Doc})
\[
\alpha_1^*:\Mod^s_{\bdagk}(\varphi)\to \Mod^s_{\brigdagk}(\varphi),
\]
we also have the base change equivalence $\alpha_1^*:\Mod^s_{\B_{K}^{\dagger}}(\varphi,\Gamma_K)\to \Mod^s_{\brigdagk}(\varphi,\Gamma_K)$. Hence, the assertion follows from the case of $\D^{\dagger}$.
\item To check that the well-definedness of $\alpha_{\bullet}^*$'s, we have only to prove that pure objects are preserved by base change. For $\alpha_1$ and $\alpha_3$, it follows from \cite[Theorem~6.3.3~(b)]{Doc}. For $\alpha_2$, $\alpha_4$, it follows by definition: Precisely $M\in \Mod_{\bdagk}(\varphi)$ (resp. $\Mod_{\brigdagk}(\varphi)$) is pure if $\wtil{\B}^{\dagger}\otimes_{\bdagk}M$ (resp. $\btrigdag\otimes_{\brigdagk}M$) is pure by \cite[Definition~4.6.1 and 6.3.1]{Doc}. For $\alpha_5$, it follows from \cite[Proposition~4.5.10 and Definition~4.6.1]{Doc}.

The commutativity of the diagram is trivial. The compatibility follows by definition.
\end{enumerate}
\end{proof}

\subsection{Swan conductor for de Rham representations}\label{subsec:swan}
In this subsection, we will define Swan conductor of de Rham representations. In this subsection, Assumption~\ref{ass:relative} is not necessary since we do not use the results of \cite{AB}.

We first recall the canonical slope filtration associated to a Dieudonn\'e-Manin decomposition.
\begin{dfn}[{\cite[Remarque~3.3]{Col}}]\label{dfn:slopefil}
A $\varphi$-module $M$ over $\brig$ is a finite free $\brig$-module with semi-linear $\varphi$-action. A $\varphi$-module $M$ over $\brig$ admits a Dieudonn\'e-Manin decomposition if there exists an isomorphism $f:M\cong\oplus_{1\le i\le m}\brig\otimes_{\Q_p}D_{[s_i]}$ of $\varphi$-modules over $\brig$ with $s_1\le \dots\le s_m\in\Q$. We define the slope multiset of $M$ as the multiset of cardinality $\mathrm{rank}(M)$, consisting of $s_i$ included with multiplicity $\dim_{\Q_p}D_{[s_i]}$. Let $s'_1<\dots<s'_{r'}$ be the distinct elements in the slope multiset of $M$. Then, we define $\fil^0_f(M):=0$ and $\fil^i_f(M):=f^{-1}(\oplus_{j;s_j\le s'_i}\brig\otimes_{\Q_p}D_{[s_j]})$ for $1\le i\le r'$. Note that the filtration and the slope multiset are independent of the choice of $f$ above.
\end{dfn}

\begin{dfn}\label{dfn:swan}
Let $V\in \rep_{\dR}(\gk)$. First, we assume that the Hodge-Tate weights of $V$ are negative. By assumption, we have $\D_{\dR}(V)=(\B_{\dR}^+\otimes_{\Q_p}V)^{\gk}$. As in \cite[Proposition~5.3]{Ohk}, we define
\[
\nrig(V):=\{x\in\brig\otimes_{\Q_p}V;\iota_n(x)\in (\B_{\dR}^+\otimes_K \D_{\dR}(V))^{\nabla^{\geom}=0}\text{ for all }n\in\Z\},
\]
where $\iota_n:\brig\otimes_{\Q_p}V\to\B_{\dR}^+\otimes_{\Q_p}V;x\otimes v\mapsto \varphi^{-n}(x)\otimes v$. Since $\nrig(V)$ admits a Dieudonn\'e-Manin decomposition due to Colmez (\cite[Proposition~6.2]{Ohk}), $\nrig(V)$ is endowed with a canonical slope filtration $\fil^{\bullet}(\nrig(V))$ of $\varphi$-modules by Definition~\ref{dfn:slopefil}. Let $s_1<\dots<s_r$ be the distinct elements in the slope multiset of $\nrig(V)$. Write $s_i=a_i/h_i$ with $a_i\in\Z$, $h_i\in\N_{>0}$ relatively prime. By the uniqueness of slope filtrations, $\fil^i$ is $\gk$-stable and the graded piece $\gr^i(\nrig(V))$ belongs to $\Mod_{\brig}^{s_i}(\varphi,\gk)$. Hence, by Lemma~\ref{lem:pure}, there exists a unique $\mathcal{V}_i\in \rep_{a_i,h_i}(\gk)$, up to isomorphisms, such that $\gr^i(\nrig(V))\cong \brig\otimes_{\Q_{p^{h_i}}}\mathcal{V}_i$. It is proved in Step~1 of the proof of Main Theorem in \cite{Ohk} that the inertia $I_K$ acts on $\mathcal{V}_i$ via a finite quotient, i.e., $\mathcal{V}_i\in\rep_{\Q_{p^{h_i}}}^{\fg}(G_K)$ (in the reference, $\fil^i$ and $\mathcal{V}_i$ are denoted by $\mathcal{M}_i$ and $W_i$). Hence, we can define
\[
\sw(V):=\sum_i{\sw^{\AS}(\mathcal{V}_i)}.
\]
In the general Hodge-Tate weights case, we define $\nrig(V):=\nrig(V(-n))(n)$ and $\sw(V):=\sw(V(-n))$ for sufficiently large $n$. The definition is independent of the choice of $n$ since the above construction is compatible with Tate twist.
\end{dfn}

\begin{rem}
As in \cite{swan}, we should consider an appropriate contribution of ``monodromy action'' to define Artin conductor. To avoid complication, we do not define Artin conductor for de Rham representations in this paper.
\end{rem}

The lemma below easily follows from Hilbert~90.

\begin{lem}\label{lem:sw}
Let $V\in \rep_{\dR}(\gk)$.
\begin{enumerate}
\item If $L$ is the $p$-adic completion of an unramified extension of $K$, then we have $\sw(V|_L)=\sw(V)$.
\item Assume $V\in\rep_{\Q_p}^f(\gk)$. Then, we have $\sw(V)=\sw^{\AS}(V)$.
\end{enumerate}
\end{lem}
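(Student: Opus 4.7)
The plan is to handle the two parts separately, both reducing to the functoriality of $\nrig$ combined with the equivalence of Lemma~\ref{lem:pure}~(i).

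For part (i), I would first note that if $L$ is the $p$-adic completion of an unramified extension of $K$, then $k_L/k_K$ is separable, so the fixed lift $\{t_j\}$ of a $p$-basis of $k_K$ remains a lift of a $p$-basis of $k_L$. Consequently $\B^+_{\dR,\C_p/L}$ coincides with $\B^+_{\dR,\C_p/K}$ and the connection $\nabla^{\geom}$ is unchanged under the base change. A standard Hilbert~90 (or faithfully flat Galois descent) argument along $L/K$ will yield $\D_{\dR,L}(V|_L)=L\otimes_K\D_{\dR,K}(V)$; substituting into the defining condition of $\nrig$ shows that $\nrig(V|_L)=\nrig(V)$ as $\varphi$-submodules of $\brig\otimes_{\Q_p}V$, so the canonical slope filtration of Definition~\ref{dfn:slopefil} and its graded pieces agree. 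Through Lemma~\ref{lem:pure}~(i) the attached representations will then satisfy $\mathcal{V}_i(V|_L)\cong\mathcal{V}_i(V)|_L$, and the identity $\sw(V|_L)=\sw(V)$ follows by the compatibility of $\sw^{\AS}$ with unramified base change recalled in \S~\ref{subsec:AS}.

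For part (ii), let $V\in\rep^f_{\Q_p}(\gk)$. The first observation is that since $\gk$ acts through a finite quotient, $V$ becomes trivial over some finite Galois extension $K'/K$; hence $\C_p\otimes_{\Q_p}V\cong\C_p^{\dim V}$ as $G_{K'}$-representations, and therefore all Hodge--Tate weights of $V$ vanish. I would then use this to deduce that the de Rham comparison restricts to an isomorphism $\B_{\dR}^+\otimes_K\D_{\dR}(V)\xrightarrow{\sim}\B_{\dR}^+\otimes_{\Q_p}V$ compatible with $\nabla^{\geom}$, and that Hilbert~90 applied to the trivial $V|_{G_{K'}}$ followed by Galois descent along $G_{K'/K}$ identifies the $\nabla^{\geom}$-horizontal part with $\B_{\dR}^{\nabla+}\otimes_{\Q_p}V$. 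Since $\iota_n(\brig)\subset\brig\subset\B_{\dR}^{\nabla+}$, the defining condition of $\nrig(V)$ will then be automatic, yielding $\nrig(V)=\brig\otimes_{\Q_p}V$ as $(\varphi,\gk)$-modules. This module is pure of slope~$0$, its canonical slope filtration collapses to a single piece, and Lemma~\ref{lem:pure}~(i) identifies $\mathcal{V}_1$ with $V$, giving $\sw(V)=\sw^{\AS}(V)$.

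The main obstacle will be the descent step in (ii): confirming that for finite $V$ the identification $\B_{\dR}^+\otimes_K\D_{\dR}(V)\cong\B_{\dR}^+\otimes_{\Q_p}V$ carries $\nabla^{\geom}$-horizontals onto $\B_{\dR}^{\nabla+}\otimes_{\Q_p}V$. This is precisely where the Hilbert~90 hint preceding the lemma enters: the trivial case is immediate, and descent along $G_{K'/K}$ reassembles the statement. Part (i) will then amount to routine bookkeeping of the compatibility of the period rings, the connection, and $\D_{\dR}$ with unramified base change.
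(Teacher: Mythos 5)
Your proof is correct and follows the same route the paper indicates (the paper only remarks that the lemma "easily follows from Hilbert~90", giving no details, and both of your parts do indeed hinge on a Hilbert~90 descent). A few points worth tightening, though none is a real gap. In part~(i), your identification $\B^+_{\dR,\C_p/L}=\B^+_{\dR,\C_p/K}$ for $L$ a possibly infinite unramified completion goes slightly beyond the invariance-under-finite-extensions the paper records explicitly; it does hold here because $\oo_L/\oo_K$ is formally \'etale (equivalently, via the explicit description $\B^+_{\dR,\C_p/K}\cong\B^{\nabla+}_{\dR}[[u_1,\dots,u_d]]$ with $u_j=t_j-[\wtil{t}_j]$, which depends only on the chosen lift $\{t_j\}$), but it deserves to be said. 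In part~(ii), the phrase "Hilbert~90 applied to the trivial $V|_{G_{K'}}$" is doing two jobs at once; what you actually need is the chain $\D_{\dR}(V)=(K'\otimes_{\Q_p}V)^{G_{K'/K}}$ (by the vanishing of all Hodge--Tate weights) together with the Hilbert~90 isomorphism $K'\otimes_K\D_{\dR}(V)\cong K'\otimes_{\Q_p}V$, which then yields $\B^+_{\dR}\otimes_K\D_{\dR}(V)=\B^+_{\dR}\otimes_{\Q_p}V$ and hence $(\B^+_{\dR}\otimes_K\D_{\dR}(V))^{\nabla^{\geom}=0}=\B^{\nabla+}_{\dR}\otimes_{\Q_p}V$; spelling this out makes the "automatic" defining condition for $\nrig(V)$ transparent. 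Finally, the step $\iota_n(\brig\otimes V)\subset\B^{\nabla+}_{\dR}\otimes V$ uses that $\varphi$ is bijective on $\wtil{\B}^{\nabla+}_{\rig}$, which is used silently throughout the paper but is worth citing.
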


Though the following result will not be used in the proof of Main Theorem, we remark that when $k_K$ is perfect, our definition is compatible with the classical definition.

\begin{lem}[Compatibility of usual Swan conductor in the perfect residue field case]\label{lem:swpf}
Assume that $k_K$ is perfect. Then, we have $\sw(V)=\sw(\D_{\pst}(V))$ (see \cite[0.4]{swan} for the definition of $\D_{\pst}$).
\end{lem}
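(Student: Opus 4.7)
The plan is to identify the pure-slope pieces appearing in Definition~4.2.16 with the Newton-pure pieces of $\D_{\pst}(V)$, and then invoke the standard additivity of classical Swan conductor along the Newton filtration.

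First, I would reduce to the case of negative Hodge-Tate weights by Tate twisting. Both invariants are insensitive to Tate twists: for $\sw(V)$ this is immediate from Definition~4.2.16; for $\sw(\D_{\pst}(V))$, the underlying Weil--Deligne representation of $\D_{\pst}(\Q_p(n))$ has trivial inertia action, so tensoring with $\Q_p(n)$ does not change the Swan conductor of the Weil--Deligne representation. Under this reduction, $\nrig(V)$ is defined as the horizontal sections appearing in Definition~4.2.16 and admits a Colmez Dieudonn\'e--Manin decomposition, with pure pieces $\gr^i(\nrig(V))\cong \brig\otimes_{\Q_{p^{h_i}}}\mathcal{V}_i$ for $\mathcal{V}_i\in\rep^{\fg}_{\Q_{p^{h_i}}}(G_K)$.

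Second, I would invoke Berger's comparison in the perfect residue field case: there is a natural $(\varphi,G_K)$-equivariant identification of $\nrig(V)$, after extending scalars to $\wtil{\B}_{\log}^{\nabla+}$, with $\wtil{\B}_{\log}^{\nabla+}\otimes_{K_0^{\ur}} \D_{\pst}(V)$. Under this identification, the Dieudonn\'e--Manin decomposition of $\nrig(V)$ matches the Newton (slope) decomposition of the $\varphi$-module underlying $\D_{\pst}(V)$, and the $(\varphi^{h_i},G_K)$-action on each pure piece $\gr^i(\nrig(V))$ is identified, via Lemma~4.2.14(i), with the $(\varphi^{h_i},G_K)$-representation $\mathcal{V}_i$ obtained from the slope-$s_i$ part of $\D_{\pst}(V)$ by normalizing $\varphi^{h_i}$ to $p^{a_i}$. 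Fundamentally, this is a consequence of the fact that taking $\varphi$-pure pieces commutes with scalar extension from $K_0^{\ur}$ to $\brig$ in the perfect residue field case.

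Third, I would conclude by the additivity of classical Swan conductor along the slope filtration of $\D_{\pst}(V)$: the classical $\sw(\D_{\pst}(V))$ depends only on the underlying inertia representation (which factors through a finite quotient), and it is additive in short exact sequences of such. Applied to the Newton filtration of $\D_{\pst}(V)$ and combined with Lemma~4.2.17(ii) which gives $\sw^{\AS}(\mathcal{V}_i)=\sw(\mathcal{V}_i)$ for finite-monodromy $\mathcal{V}_i$, this yields
\[
\sw(\D_{\pst}(V))=\sum_i\sw(\mathcal{V}_i|_{I_K})=\sum_i\sw^{\AS}(\mathcal{V}_i)=\sw(V),
\]
the last equality being Definition~4.2.16. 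The main obstacle is the second step: precisely matching Colmez's Dieudonn\'e--Manin decomposition of $\nrig(V)$ over $\brig$ with the Newton filtration of the $\varphi$-module $\D_{\pst}(V)$ over $K_0^{\ur}$, compatibly with the $G_K$-actions on both sides. This is where one must either cite Berger's results or unwind the horizontal-section construction to reduce to the standard compatibility of slope filtrations under faithfully flat base change.
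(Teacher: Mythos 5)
Your strategy---filter $\D_{\pst}(V)$ compatibly with the slope filtration of $\nrig(V)$, match graded pieces with the $\mathcal{V}_i$, then conclude by additivity of Swan conductor---is the correct high-level idea and agrees with the paper's. However, the middle step is executed differently, and the difference is worth noting. You propose to cite a full $(\varphi,G_K)$-equivariant identification of $\nrig(V)\otimes\wtil{\B}_{\log}^{\nabla+}$ with $\wtil{\B}_{\log}^{\nabla+}\otimes_{K_0^{\ur}}\D_{\pst}(V)$ as ``Berger's comparison,'' and you honestly flag this as the main obstacle. Such an identification does exist morally, but it is not an off-the-shelf citation in the form you need: $\D_{\pst}(V)$ also carries a monodromy operator $N$, so one has to be careful about which structures the isomorphism preserves and in which scalar extension it lives (one must at least invert $t$ and work with $\B_{\st}$-like rings). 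The paper sidesteps this issue entirely. After reducing to $k_K$ algebraically closed and negative Hodge--Tate weights, it chooses a finite Galois $L/K$ (by the $p$-adic monodromy theorem) such that $\D_{\st,L}(V)$ has full rank and $G_L$ acts trivially on each $\mathcal{V}_i$, then defines a filtration $D_i:=\bigl(\B_{\st}\otimes_{\wtil{\B}_{\rig}^{\nabla+}}\fil^i(\wtil{\N}_{\rig}^{\nabla+}(V))\bigr)^{G_L}$ directly on $\D_{\st,L}(V)$. The graded pieces admit natural injections into $W(k_L)[p^{-1}]\otimes_{\Q_{p^{h_i}}}\mathcal{V}_i$ which are forced to be isomorphisms by a dimension count, precisely because $\dim_{K_0}\D_{\st,L}(V)=\dim_{\Q_p}V$. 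So instead of invoking and then unpacking a comparison theorem, the paper builds the needed filtration ad hoc and lets the $p$-adic monodromy theorem do the dimension bookkeeping; this is a genuinely lighter argument. Your plan would also work, but you should be aware that what you call a citation actually requires a nontrivial verification of $(\varphi, N, G_K)$-equivariance, which is why the paper avoids it.
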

\begin{proof}
Let notation be as in Definition~\ref{dfn:swan}. By Tate twist, we may assume that all Hodge-Tate of $V$ are negative. By $\sw(\D_{\pst}(V))=\sw(\D_{\pst}(V|_{K^{\ur}}))$ and Lemma~\ref{lem:sw}~(i), we may assume that $k_K$ is algebraically closed by replacing $K$ by $K^{\ur}$. Since $\B_{\dR}^+\otimes_K\D_{\dR}(V)$ is a lattice of $\B_{\dR}^+\otimes_{\Q_p}V$, we may identify $\wtil{\N}_{\rig}^{\nabla+}(V)[t^{-1}]$ with $\wtil{\B}_{\rig}^{\nabla+}\otimes_{\Q_p}V[t^{-1}]$. By the $p$-adic monodromy theorem, there exists a finite Galois extension $L/K$ such that $\D_{\st,L}(V):=(\B_{\st}\otimes_{\Q_p}V)^{G_L}$ has dimension $\dim_{\Q_p}V$. Moreover, we may assume that $G_L$ acts on each $\mathcal{V}_i$ trivially. Put $D_i:=(\B_{\st}\otimes_{\wtil{\B}_{\rig}^{\nabla+}}\fil^i(\wtil{\N}_{\rig}^{\nabla+}(V)))^{G_L}$, which forms an increasing filtration of $\D_{\st,L}(V)$. Then, we have canonical morphisms
\[
D_i/D_{i+1}\hookrightarrow (\B_{\st}\otimes_{\wtil{\B}_{\rig}^{\nabla+}}\gr^i(\wtil{\N}_{\rig}^{\nabla+}(V)))^{G_L}\cong (\B_{\st}\otimes_{\Q_{p^{h_i}}}\mathcal{V}_i)^{G_L}\cong W(k_L)[p^{-1}]\otimes_{\Q_{p^{h_i}}}\mathcal{V}_i,
\]
where the first injection is an isomorphism by counting dimensions. By the additivity of Swan conductor, we have $\sw(\D_{\pst}(V))=\sw(\D_{\st,L}(V))=\sum_i\sw(D_i/D_{i+1})=\sum_i\sw(\mathcal{V}_i)=\sw(V)$.
\end{proof}

\subsection{Main Theorem}\label{subsec:main}
The aim of this subsection is to prove the following theorem, which generalizes Marmora's formula in Remark~\ref{rem:Mar}:

\begin{thm}[Main Theorem]\label{thm:main}
Let $V$ be a de Rham representation of $\gk$. Then, the sequence $\{\sw(V|_{{K_n}})\}_{n>0}$ is eventually stationary and we have
\[
\sw^{\nabla}(\N_{\dR}(V))=\lim_n{\sw(V|_{K_n})}.
\]
\end{thm}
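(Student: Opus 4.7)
The plan is to run a dévissage along the slope filtration of $\N_{\dR}(V)$ to reduce to pure-slope pieces, each of which corresponds via Lemma~\ref{lem:pure} to a representation of $\gk$ with finite local monodromy; then convert Kedlaya's differential Swan conductor of such a piece into an Abbes--Saito Swan conductor over the field of norms using Xiao's Hasse--Arf Theorem (Theorem~\ref{thm:Xiao}); and finally transfer back to the tower $\{K_n\}$ via Lemma~\ref{lem:normrep}.

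First, by Theorem~\ref{thm:slopefil} the $(\varphi,\nabla)$-module $\N_{\dR}(V)$ carries a canonical slope filtration $0=\fil^0\subset\dots\subset\fil^r=\N_{\dR}(V)$, which is $\Gamma_K$-stable by Definition~\ref{dfn:diffswanndr}; each graded piece $\gr^i(\N_{\dR}(V))$ is a pure $(\varphi,\nabla,\Gamma_K)$-module over $\brigdagk$ of some slope $s_i=a_i/h_i$. Combining the equivalence $\mathbb{D}_{\rig}^{\dagger}:\rep_{a_i,h_i}(\gk)\xrightarrow{\sim}\Mod^{s_i}_{\brigdagk}(\varphi,\Gamma_K)$ of Lemma~\ref{lem:pure}(i) with the fully faithfulness of the forgetful functor from pure $(\varphi,\nabla)$-modules to pure $\varphi$-modules (Lemma~\ref{lem:diffpure}), one obtains
\[
\gr^i(\N_{\dR}(V))\cong\mathbb{D}_{\rig}^{\dagger}(\mathcal{V}_i)
\]
as $(\varphi,\nabla,\Gamma_K)$-modules, for a unique $\mathcal{V}_i\in\rep_{a_i,h_i}(\gk)$. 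Applying the base change equivalence $\alpha_4^*$ of Lemma~\ref{lem:pure}(ii) and comparing with $\wtil{\N}_{\rig}^{\dagger}(V)\cong \btrigdag\otimes_{\brigdagk}\N_{\dR}(V)$, these $\mathcal{V}_i$ coincide with the ones extracted from $\wtil{\N}_{\rig}^{\nabla+}(V)$ in Definition~\ref{dfn:swan}; in particular, $\mathcal{V}_i\in\rep_{\Q_{p^{h_i}}}^{\fg}(\gk)$.

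Next, by additivity of the differential Swan conductor (Lemma~\ref{lem:diffadd}) applied to the slope filtration,
\[
\sw^{\nabla}(\N_{\dR}(V))=\sum_i\sw^{\nabla}\bigl(\mathbb{D}_{\rig}^{\dagger}(\mathcal{V}_i)\bigr).
\]
To evaluate each term, I would identify $\mathbb{D}_{\rig}^{\dagger}(\mathcal{V}_i)$ with $D^{\dagger}_{\rig}(\mathcal{V}_i|_{G_{\E_K}})$ of Construction~\ref{const:dswan}, where $\mathcal{V}_i|_{G_{\E_K}}\in\rep_{\Q_{p^{h_i}}}^{\fg}(G_{\E_K})$ via Scholl's isomorphism $H_K=G_{K_\infty}\cong G_{\E_K}$ (Theorem~\ref{thm:sch}): both are computed from $(\bdag\otimes_{\Q_{p^{h_i}}}\mathcal{V}_i)^{H_K}$ after base change to $\brigdagk$ as $\varphi$-modules, and Lemma~\ref{lem:diffpure} forces the two $\nabla$-structures to agree. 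Xiao's Hasse--Arf Theorem (Theorem~\ref{thm:Xiao}) then yields
\[
\sw^{\nabla}(\mathbb{D}_{\rig}^{\dagger}(\mathcal{V}_i))=\sw^{\AS}(\mathcal{V}_i|_{G_{\E_K}}),
\]
while Lemma~\ref{lem:normrep}(ii) shows that $\{\sw^{\AS}(\mathcal{V}_i|_{K_n})\}_n$ is eventually constant with limit $\sw^{\AS}(\mathcal{V}_i|_{G_{\E_K}})$. Summing over $i$ and using that the $\mathcal{V}_i$ attached to $V|_{K_n}$ in Definition~\ref{dfn:swan} are just the restrictions $\mathcal{V}_i|_{K_n}$ (since the slope filtration of $\wtil{\N}_{\rig}^{\nabla+}(V)$ is automatically $G_{K_n}$-stable), one concludes both the stationarity of $\{\sw(V|_{K_n})\}_n$ and the desired equality.

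The main obstacle is the displayed identification $\gr^i(\N_{\dR}(V))\cong\mathbb{D}_{\rig}^{\dagger}(\mathcal{V}_i)$ for the \emph{same} $\mathcal{V}_i$ that enters Definition~\ref{dfn:swan}: one must trace the canonical slope filtration through the chain of equivalences $\mathbb{D}^{\dagger}\simeq\mathbb{D}_{\rig}^{\dagger}\simeq\widetilde{\mathbb{D}}^{\dagger}\simeq\wtil{\mathbb{D}}_{\rig}^{\dagger}\simeq\widetilde{\mathbb{D}}_{\rig}^{\nabla+}$ of Lemma~\ref{lem:pure}, verifying at each step that the filtration is preserved in a $(\varphi,\Gamma_K)$-compatible manner. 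Once this matching is established, the main theorem is a formal consequence of additivity, Theorem~\ref{thm:Xiao}, and Lemma~\ref{lem:normrep}.
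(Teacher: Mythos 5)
Your overall strategy matches the paper's: dévissage along the slope filtration of $\N_{\dR}(V)$, identification of the graded pieces with $D^{\dagger}_{\rig}(\mathcal{V}_i|_{\E_K})$ via Lemma~\ref{lem:pure} and Lemma~\ref{lem:diffpure}, additivity (Lemma~\ref{lem:diffadd}), Xiao's comparison (Theorem~\ref{thm:Xiao}), and finally Lemma~\ref{lem:normrep}(ii). You also correctly put your finger on the one non-formal step, which is matching the $\mathcal{V}_i$ extracted from $\N_{\dR}(V)$ with the $\mathcal{V}_i$ appearing in Definition~\ref{dfn:swan} (extracted from $\wtil{\N}_{\rig}^{\nabla+}(V)$).

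However, your proposed resolution of that step is not the right tool. The base change equivalences $\alpha_{\bullet}^*$ of Lemma~\ref{lem:pure}(ii) are equivalences between \emph{abstract} categories of pure $(\varphi,\gk)$- or $(\varphi,\Gamma_K)$-modules over the various rings; they cannot, by themselves, tell you that the two concretely constructed submodules
\[
\btrigdag\otimes_{\brigdagk}\N_{\dR}(V)\quad\text{and}\quad\btrigdag\otimes_{\brig}\wtil{\N}_{\rig}^{\nabla+}(V)
\]
of $\btrigdag\otimes_{\Q_p}V$ coincide (or are even $G_K$-equivariantly isomorphic); without that, the graded pieces need not correspond to the same representations $\mathcal{V}_i$. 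And the isomorphism $\wtil{\N}_{\rig}^{\dagger}(V)\cong\btrigdag\otimes_{\brigdagk}\N_{\dR}(V)$ you invoke is exactly the hard content, not a free input: it is the remark following Lemma~\ref{lem:diffgr}, whose proof is Lemma~\ref{lem:diffgr}(i). The paper closes this gap by applying the uniqueness result Lemma~\ref{lem:unique} (with $B=\wtil{\B}^{\dagger,r}_{\rig}$): both modules generate the same thing after inverting $t$, and both interpolate $(\B_{\dR}^+\otimes_K\D_{\dR}(V))^{\nabla^{\geom}=0}$ at all $\iota_n$ — the former via Proposition~\ref{prop:const}(ii), the latter via the $\wtil{\N}_{\rig}^{\nabla+}$ comparison results from [Ohk, Prop.~5.3--5.4] — so they are equal, and the canonicity of the slope filtration then forces the filtrations to agree. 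Replace your chain-of-equivalences suggestion with a citation of Lemma~\ref{lem:diffgr}(i) (or reprove it via Lemma~\ref{lem:unique}) and the rest of your argument goes through as in the paper.
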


\begin{rem}\label{rem:Mar}
When $k_K$ is perfect, we explain that our formula coincides with the following Marmora's formula (\cite[Th\'eor\`eme~1.1]{Mar}):
\[
\mathrm{Irr}(\N_{\dR}(V))=\lim_{n\to\infty}\sw(\D_{\mathrm{pst}}(V|_{K_n})).
\]
Here, the LHS means the irregularity of $\N_{\dR}(V)$ regarded as a $p$-adic differential equation. By Lemma~\ref{lem:swpf}, the RHS is equal to the RHS in Main Theorem~\ref{thm:main}. Therefore, we have only to prove $\mathrm{Irr}(D)=\sw^{\nabla}(D)$ for a $(\varphi,\nabla)$-module $D$ over the Robba ring. Since $D$ is endowed with a slope filtration and both irregularity and the differential Swan conductor are additive, we may assume that $D$ is \'etale by d\'evissage. Let $V$ be the corresponding $p$-adic representation of finite local monodromy. Then, the differential Swan conductor $\sw^{\nabla}(D)$ coincides with the usual Swan conductor of $V$ (\cite[Proposition~3.5.5]{Sw}). On the other hand, $\mathrm{Irr}(D)$ coincides with the usual Swan conductor of $V$ (\cite[Theorem~7.2.2]{Tsu}), which implies the assertion.
\end{rem}

We will deduce Main Theorem~\ref{thm:main} from Lemma~\ref{lem:normrep}~(ii) by d\'evissage. In the following, we use the notation as in Definition~\ref{dfn:swan}.
\begin{lem}\label{lem:diffgr}
Let $V$ be a de Rham representation of $\gk$ with Hodge-Tate weights $\le 0$.
\begin{enumerate}
\item The $(\varphi,\gk)$-modules
\[
\wtil{\B}_{\rig}^{\dagger}\otimes_{\B^{\dagger}_{\rig,K}}\ndr(V),\ \wtil{\B}_{\rig}^{\dagger}\otimes_{\wtil{\B}^{\nabla+}_{\rig}}\nrig(V)
\]
coincide with each other in $\wtil{\B}_{\rig}^{\dagger}\otimes_{\Q_p}V$. Moreover, the two filtrations induced by the slope filtrations of $\N_{\dR}(V)$ and $\wtil{\N}_{\rig}^{\nabla+}(V)$ also coincide with each other.
\item Let notation be as in Construction~\ref{const:dswan}. Then, there exists a canonical isomorphism
\[
\gr^i(\N_{\dR}(V))\cong D^{\dagger}_{\rig}(\mathcal{V}_i|_{\E_K})
\]
as $(\varphi,\nabla)$-modules over $\B^{\dagger}_{\rig,K}$.
\end{enumerate}
\end{lem}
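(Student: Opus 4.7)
The plan for (i) is to apply Lemma~\ref{lem:unique} with $B = \wtil{\B}^{\dagger,r}_{\rig}$ and $D = \wtil{\B}^{\dagger,r}_{\rig} \otimes_{\Q_p} V$ to three $\varphi$-stable submodules of $D[t^{-1}]$ of rank $\dim_{\Q_p} V$: the module $D^{(1)} := \wtil{\N}^{\dagger,r}_{\rig}(V)$ of Proposition~\ref{prop:const}, together with $D^{(2)} := \wtil{\B}^{\dagger,r}_{\rig} \otimes_{\B^{\dagger,r}_{\rig,K}} \N_{\dR,r}(V)$ and $D^{(3)} := \wtil{\B}^{\dagger,r}_{\rig} \otimes_{\wtil{\B}^{\nabla+}_{\rig}} \nrig(V)$. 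Condition~(i) of Lemma~\ref{lem:unique} holds for $D^{(1)}$ and $D^{(2)}$ by Proposition~\ref{prop:const}(i), and for $D^{(3)}$ by the analogous property of Colmez's construction. To verify condition~(ii), one must show $B_n \otimes_{\iota_n, B} D^{(j)}$ coincides with the horizontal-section space $(\B^+_{\dR} \otimes_K \D_{\dR}(V))^{\nabla^{\geom}=0}$ for every $n \ge n(r)$ and $j = 1, 2, 3$.

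For $D^{(1)}$ this is Proposition~\ref{prop:const}(ii) in the first case. For $D^{(2)}$, Proposition~\ref{prop:const}(ii) in the second case identifies the image with $(K_n[[t, u_1, \dots, u_d]] \otimes_K \D_{\dR}(V))^{\nabla^{\geom}=0}$, and extending scalars from the complete DVR $K_n[[t, u_1, \dots, u_d]]^{\nabla} \cong K_n[[t]]$ (Lemma~\ref{lem:hori}) to $\B^{\nabla+}_{\dR}$ yields the desired identification by a rank count. For $D^{(3)}$, one observes that the $\iota_n$ of Definition~\ref{dfn:swan} (namely $\varphi^{-n}$ on $\wtil{\B}^{\nabla+}_{\rig}$) agrees with the $\iota_n$ of Proposition~\ref{prop:const} on the common subring $\wtil{\B}^{\nabla+}_{\rig} \hookrightarrow \wtil{\B}^{\dagger,r}_{\rig}$ from Lemma~\ref{lem:plusdagger}(i), since both act by $[x] \mapsto [x^{p^{-n}}]$ on Teichm\"uller lifts; the equality then follows from Colmez's theorem (\cite[Proposition~6.2]{Ohk}). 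Taking the union over $r$ gives the first assertion of (i), and the filtration compatibility follows by the uniqueness of slope filtrations applied to the common module $\wtil{\N}^{\dagger}_{\rig}(V)$: both filtrations base-changed from $\N_{\dR}(V)$ and $\nrig(V)$ have pure graded pieces with matching slope multisets and hence coincide with the canonical slope filtration of $\wtil{\N}^{\dagger}_{\rig}(V)$ as a $\varphi$-module over $\wtil{\B}^{\dagger}_{\rig}$.

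For (ii), Definition~\ref{dfn:diffswanndr} exhibits $\gr^i(\N_{\dR}(V))$ as a pure $(\varphi, \Gamma_K)$-module of slope $s_i = a_i/h_i$, which corresponds via the equivalence $\mathbb{D}^{\dagger}_{\rig}$ of Lemma~\ref{lem:pure}(i),(ii) to some $\mathcal{V}'_i \in \rep_{a_i, h_i}(\gk)$. Base-changing to $\wtil{\B}^{\dagger}_{\rig}$ and combining part (i) with the equivalences $\alpha_4^*$ and $\alpha_5^*$ of Lemma~\ref{lem:pure}(ii) identifies $\mathcal{V}'_i$ with the $\mathcal{V}_i$ of Definition~\ref{dfn:swan}. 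Next, $\mathbb{D}^{\dagger}_{\rig}(\mathcal{V}_i)$ and Kedlaya's $D^{\dagger}_{\rig}(\mathcal{V}_i|_{\E_K})$ coincide as underlying $\varphi$-modules via the identification $H_K \cong G_{\E_K}$ from Theorem~\ref{thm:sch} together with Lemma~\ref{lem:ocexplicit}; by the full faithfulness of the forgetful functor $\Mod^{s_i}_{\mathcal{R}}(\varphi, \nabla) \to \Mod^{s_i}_{\mathcal{R}}(\varphi)$ (Lemma~\ref{lem:diffpure}), the two a priori distinct $\nabla$-structures on this common $\varphi$-module --- one inherited from the $\Gamma_K$-action via Construction~\ref{const:diffoperator} (using Lemma~\ref{lem:welldef} to identify $\nabla_j$ on $\B^{\dagger}_{\rig,K}$ with the canonical derivation), the other given by Kedlaya's $\nabla(\lambda \otimes y) = d\lambda \otimes y$ --- automatically agree. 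The main obstacle throughout is the careful bookkeeping in part~(i) showing that the $\iota_n$-image of each $D^{(j)}$ is the common horizontal-section space; this rests on the coincidence of the two natural $\iota_n$ maps on Teichm\"uller lifts via Lemma~\ref{lem:plusdagger}(i), and on the extension of horizontal sections across the complete DVR inclusion $K_n[[t]] \hookrightarrow \B^{\nabla+}_{\dR}$.
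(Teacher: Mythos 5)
Your overall strategy matches the paper's: part (i) is proved by applying Lemma~\ref{lem:unique} with $B=\wtil{\B}^{\dagger,r}_{\rig}$ to the relevant $\varphi$-stable lattices and deriving the filtration compatibility from uniqueness of slope filtrations, and part (ii) proceeds through the purity equivalences of Lemma~\ref{lem:pure} followed by Lemma~\ref{lem:diffpure} to upgrade the $\varphi$-module isomorphism to a $(\varphi,\nabla)$-module isomorphism. However, there is a real gap in your verification of condition~(ii) of Lemma~\ref{lem:unique} for $D^{(2)}$. You claim that passing from the base $K_n[[t,u_1,\dots,u_d]]^{\nabla}$ to $\B^{\nabla+}_{\dR}$ "yields the desired identification by a rank count," but a rank count alone cannot show that a free $\B^{\nabla+}_{\dR}$-submodule of full rank equals the ambient free module (compare $t\B^{\nabla+}_{\dR}\subset\B^{\nabla+}_{\dR}$). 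The paper instead tensors everything up to $\B^+_{\dR}$, identifies both sides with $\B^+_{\dR}\otimes_K\D_{\dR}(V)$ — for $\nrig(V)$ via \cite[Propositions~5.3(ii) and~5.4]{Ohk}, not via \cite[Proposition~6.2]{Ohk} as you cite (that is Dieudonn\'e-Manin) — and then invokes faithful flatness of $\B^{\nabla+}_{\dR}\to\B^+_{\dR}$ to descend the equality. Your observation that the two $\iota_n$'s agree on Teichm\"uller lifts is correct and relevant, and your introduction of the auxiliary module $D^{(1)}=\wtil{\N}^{\dagger,r}_{\rig}(V)$ as an intermediary is harmless (the paper compares the other two modules directly and only records the common identification with $\wtil{\N}^{\dagger}_{\rig}(V)$ in a remark), but the rank-count step needs to be replaced by the faithful-flatness argument to actually close the proof of condition~(ii).
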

\begin{proof}
\begin{enumerate}
\item We prove the first assertion. By Lemma~\ref{lem:unique} (with $B=\wtil{\B}^{\dagger,r}_{\rig}$), we have only to prove that $D^{(1)}:=\wtil{\B}_{\rig}^{\dagger,r}\otimes_{\B^{\dagger,r}_{\rig,K}}\N_{\dR,r}(V)$, $D^{(2)}:=\wtil{\B}_{\rig}^{\dagger,r}\otimes_{\wtil{\B}^{\nabla+}_{\rig}}\nrig(V)$ and $D:=\wtil{\B}_{\rig}^{\dagger,r}\otimes_{\Q_p}V$ satisfy the conditions in the lemma. We have $\N_{\dR,r}(V)[t^{-1}]=\D^{\dagger,r}_{\rig}(V)[t^{-1}]$ by definition and
\[
\wtil{\B}^{\dagger,r}_{\rig}\otimes_{\B^{\dagger,r}_{\rig,K}}\D^{\dagger,r}_{\rig}(V)\cong\wtil{\B}^{\dagger,r}_{\rig}\otimes_{\B^{\dagger,r}}\B^{\dagger,r}\otimes_{\B^{\dagger,r}}\D^{\dagger,r}(V)\cong\wtil{\B}^{\dagger,r}_{\rig}\otimes_{\B^{\dagger,r}}\B^{\dagger,r}\otimes_{\Q_p}V\cong\wtil{\B}^{\dagger,r}_{\rig}\otimes_{\Q_p}V.
\]
Since we have $\wtil{\N}_{\rig}^{\nabla+}(V)[t^{-1}]=\wtil{\B}_{\rig}^{\nabla+}[t^{-1}]\otimes_{\Q_p}V$ by definition, we have a canonical isomorphism $\wtil{\B}_{\rig}^{\dagger,r}\otimes_{\wtil{\B}^{\nabla+}_{\rig}}\nrig(V)[t^{-1}]\cong\wtil{\B}^{\dagger,r}_{\rig}[t^{-1}]\otimes_{\Q_p}V$, which implies the condition (i). By Proposition~\ref{prop:const}~(ii), we have a canonical isomorphism $\B_{\dR}^+\otimes_{\iota_n,\B^{\dagger,r}_{\rig,K}}\N_{\dR,r}(V)\cong\B_{\dR}^+\otimes_K\D_{\dR}(V)$. On the other hand, there exist canonical isomorphisms
\[
\B_{\dR}^+\otimes_{\wtil{\B}_{\rig}^{\nabla+}}\wtil{\N}^{\nabla+}_{\rig}(V)\cong\B_{\dR}^+\otimes_{\B_{\dR}^{\nabla+}}(\B_{\dR}^+\otimes_K\D_{\dR}(V))^{\nabla^{\geom}=0}\cong\B_{\dR}^+\otimes_K\D_{\dR}(V),
\]
where the first isomorphism follows from \cite[Proposition~5.3~(ii)]{Ohk} and the second isomorphism follows from \cite[Proposition~5.4]{Ohk}. Since the canonical map $\B_{\dR}^{\nabla+}\to\B_{\dR}^+$ is faithfully flat, the condition~(ii) is verified. The second assertion follows from the uniqueness of the slope filtration (\cite[Theorem~6.4.1]{Doc}).
\item By (i), there exists canonical isomorphisms
\[
\wtil{\B}^{\dagger}_{\rig}\otimes_{\B^{\dagger}_{\rig,K}}\gr^i(\N_{\dR}(V))\cong\wtil{\B}^{\dagger}_{\rig}\otimes_{\wtil{\B}^{\nabla+}_{\rig}}\gr^i(\wtil{\N}_{\rig}^{\nabla+}(V))\cong\wtil{\B}^{\dagger}_{\rig}\otimes_{\Q_{p^{h_i}}}\mathcal{V}_i
\]
as $(\varphi,\gk)$-modules. By Lemma~\ref{lem:pure}, we obtain a canonical isomorphism $\gr^i(\N_{\dR}(V))\cong\D^{\dagger}_{\rig}(\mathcal{V}_i)$ as $(\varphi,\Gamma_K)$-modules. Since $\mathcal{V}_i$ is of finite local monodromy, so is $\mathcal{V}_i|_{\E_K}$. Hence, $\dim_{\B^{\dagger}_K}D^{\dagger}(\mathcal{V}_i|_{\E_K})=\dim_{\Q_{p^h}}\mathcal{V}_i$, in particular, a canonical injection $D^{\dagger}(\mathcal{V}_i|_{\E_K})\hookrightarrow (\B^{\dagger}\otimes_{\Q_{p^{h_i}}}\mathcal{V}_i)^{H_K}$ is an isomorphism. Therefore, we have canonical isomorphisms $D^{\dagger}_{\rig}(\mathcal{V}_i|_{\E_K})\cong\D^{\dagger}_{\rig}(\mathcal{V}_i)\cong\gr^i(\N_{\dR}(V))$ as (pure) $\varphi$-modules over $\B^{\dagger}_{\rig,K}$, hence, the assertion follows from Lemma~\ref{lem:diffpure}.
\end{enumerate}
\end{proof}

\begin{rem}
One can prove that there exist canonical isomorphisms
\[
\wtil{\B}^{\dagger}_{\rig}\otimes_{\B^{\dagger}_{\rig,K}}\N_{\dR}(V)\cong\wtil{\B}^{\dagger}_{\rig}\otimes_{\wtil{\B}^{\nabla+}_{\rig}}\wtil{\N}^{\nabla+}_{\rig}(V)\cong\N^{\dagger}_{\rig}(V).
\]
\end{rem}

\begin{lem}\label{lem:swangr}
We have
\[
\sw^{\nabla}(\ndr(V))=\sum_{1\le i\le r}\sw^{\AS}(\mathcal{V}_i|_{\E_K}).
\]
\end{lem}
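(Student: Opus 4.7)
The plan is to combine the slope filtration on $\N_{\dR}(V)$ constructed in Definition~\ref{dfn:diffswanndr} with the identification of its graded pieces provided by Lemma~\ref{lem:diffgr}, and then invoke Xiao's Hasse--Arf theorem (Theorem~\ref{thm:Xiao}) on each graded piece.

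First I would reduce to the case where all Hodge--Tate weights of $V$ are $\le 0$. Both $\sw^{\nabla}(\N_{\dR}(V))$ and the RHS are unchanged under Tate twist: the functor $\N_{\dR}$ and the decomposition $\nrig(V)=\bigoplus \gr^i$ are defined via Tate twist for general weights in Definition~\ref{dfn:swan}, the cyclotomic character contributes a slope shift that cancels once we pass to the multiset $\{\mathcal{V}_i\}$, and twisting by a rank-one object leaves the differential Swan conductor invariant (because it only shifts the $\varphi$-action, not $\nabla$, up to a harmless rescaling already built into $\nabla'_j=t^{-1}\nabla_j$). Hence I may assume the weights are $\le 0$, so that Lemma~\ref{lem:diffgr} applies.

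Next I would consider the canonical slope filtration $0=\fil^0(\N_{\dR}(V))\subset\cdots\subset\fil^r(\N_{\dR}(V))=\N_{\dR}(V)$ as a $(\varphi,\nabla)$-module given by Theorem~\ref{thm:slopefil}. By Lemma~\ref{lem:diffgr}(i) this filtration becomes, after base change to $\wtil{\B}^{\dagger}_{\rig}$, the slope filtration of $\wtil{\N}^{\nabla+}_{\rig}(V)$, and therefore its graded pieces correspond, via Lemma~\ref{lem:diffgr}(ii), to $D^{\dagger}_{\rig}(\mathcal{V}_i|_{\E_K})$ as $(\varphi,\nabla)$-modules over $\B^{\dagger}_{\rig,K}$. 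Applying the additivity of the differential Swan conductor (Lemma~\ref{lem:diffadd}) to the short exact sequences associated with the filtration, I obtain
\[
\sw^{\nabla}(\N_{\dR}(V))=\sum_{1\le i\le r}\sw^{\nabla}\bigl(\gr^i(\N_{\dR}(V))\bigr)=\sum_{1\le i\le r}\sw^{\nabla}\bigl(D^{\dagger}_{\rig}(\mathcal{V}_i|_{\E_K})\bigr).
\]
Finally, since each $\mathcal{V}_i$ is of finite local monodromy (by the argument recalled in Definition~\ref{dfn:swan} from \cite{Ohk}), so is its restriction $\mathcal{V}_i|_{\E_K}$ to the field of norms. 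Xiao's Hasse--Arf theorem (Theorem~\ref{thm:Xiao}) then gives $\sw^{\nabla}(D^{\dagger}_{\rig}(\mathcal{V}_i|_{\E_K}))=\sw^{\AS}(\mathcal{V}_i|_{\E_K})$, and summing yields the claimed formula.

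The only non-routine point is justifying that the slope filtration as a $(\varphi,\nabla)$-module on $\N_{\dR}(V)$ really has graded pieces isomorphic to $D^{\dagger}_{\rig}(\mathcal{V}_i|_{\E_K})$ as $(\varphi,\nabla)$-modules (not merely as $\varphi$-modules), but this is precisely the content of Lemma~\ref{lem:diffgr}(ii), whose proof in turn relies on the fully faithfulness of the forgetful functor $\Mod^{s_i}_{\mathcal{R}}(\varphi,\nabla)\to\Mod^{s_i}_{\mathcal{R}}(\varphi)$ from Lemma~\ref{lem:diffpure}. With that in hand the proof is just a bookkeeping assembly of the above three ingredients.
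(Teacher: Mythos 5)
Your proposal is correct and follows essentially the same route as the paper: reduce to graded pieces via additivity of the differential Swan conductor (Lemma~\ref{lem:diffadd}), identify each $\gr^i(\N_{\dR}(V))$ with $D^{\dagger}_{\rig}(\mathcal{V}_i|_{\E_K})$ as a $(\varphi,\nabla)$-module via Lemma~\ref{lem:diffgr}(ii), then apply Xiao's Hasse--Arf theorem (Theorem~\ref{thm:Xiao}). The only cosmetic difference is that you spell out the Tate-twist reduction to negative Hodge--Tate weights, which the paper leaves implicit because Definition~\ref{dfn:swan} and Lemma~\ref{lem:diffgr} are already set up under that hypothesis.
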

\begin{proof}
We have
\[
\sw^{\nabla}(\ndr(V))=\sum_{1\le i\le r}{\sw^{\nabla}(\gr^i{(\ndr(V))})}=\sum_{1\le i\le r}\sw^{\nabla}(D^{\dagger}_{\rig}(\mathcal{V}_i|_{\E_K}))=\sum_{1\le i\le r}\sw^{\AS}(\mathcal{V}_i|_{\E_K}),
\]
where the first equality follows from the additivity of the differential Swan conductor (Lemma~\ref{lem:diffadd}), the second one follows from Lemma~\ref{lem:diffgr}~(ii), and the third one follows from Xiao's comparison theorem (Theorem~\ref{thm:Xiao}).
\end{proof}

\begin{proof}[Proof of Main Theorem~\ref{thm:main}]
By Lemma~\ref{lem:swangr} and the definition of Swan conductor (Definition~\ref{dfn:swan}), we have only to prove $\sw^{\AS}(\mathcal{V}_i|_{\E_K})=\sw^{\AS}(\mathcal{V}_i|_{K_n})$ for all sufficiently large $n$, which follows from Lemma~\ref{lem:normrep}~(ii).
\end{proof}

\section{Appendix: List of notation}
The following is a list of notation in order defined.

\begin{itemize}
\item[\S~\ref{subsec:der}]: $\hat{\Omega}^1_K$, $\partial_j$, $\partial/\partial t_j$.
\item[\S~\ref{subsec:Galois}]: $\wtil{K}_n$, $\wtil{K}_{\infty}$, $\Gamma_{\wtil{K}}$, $H_{\wtil{K}}$, $\gamma_a$, $\gamma_b$, $\eta=(\eta_0,\dots,\eta_d)$, $\mathfrak{g}$, $L_n$, $L_{\infty}$, $\Gamma_{L}$, $H_{L}$, $\Gamma_{L,j}$.
\item[\S~\ref{subsec:periods}]: $\wtil{\E}^{(+)}$, $v_{\wtil{\E}}$, $\wtil{\A}^{(+)}$, $\wtil{\B}^{(+)}$, $\varepsilon$, $\wtil{t}_j$, $\pi$, $q$, $\A_{\inf}$, $\B_{\dR}^{(+)}$, $u_j$, $t$, $\D_{\dR}(\,\cdot\,)$, $\nabla^{\geom}$, $\B_{\dR}^{\nabla(+)}$, $\A_{\cris}$, $\B_{\cris}$, $\wtil{\B}^{\nabla+}_{\rig}$.
\item[\S~\ref{subsec:AS}]: $as_{L/K,Z}^a$, $\mathcal{F}^a(L)$, $b(L/K)$, $as_{L/K,Z,P}^a$, $\mathcal{F}_{\log}^a(L)$, $b_{\log}(L/K)$, $\art^{\AS}(\,\cdot\,)$, $\sw^{\AS}(\,\cdot\,)$.
\item[\S~\ref{subsec:ocnotation}]: $v^{\le n}$, $w_r$, $W(E)_r$, $W_{\con}(E)$, $\Gamma_r$, $\Gamma_{\con}$, $\Gamma_{\an,r}$, $\Gamma_{\an,\con}$, $\oo\{\{S\}\}$, $\oo((S))^{\dagger,r}$, $\oo((S))^{\dagger}$, $\mathcal{R}$, $\Mod_{\bullet}(\sigma)$, $\Mod_{\bullet}^{\et}(\sigma)$, $\Mod_{\bullet}^s(\sigma)$.
\item[\S~\ref{subsec:diffconductor}]: $\Omega^1_{R}$, $\Omega^1_{\mathcal{R}}$, $d:\mathcal{R}\to\Omega^1_{\mathcal{R}}$, $\Mod^s_{\bullet}(\varphi^h,\nabla)$ $D$, $D^{\dagger}$, $\sw^{\nabla}(\,\cdot\,)$.
\item[\S~\ref{subsec:scholl}]: $X^{(+)}_{\mathfrak{K}}=X^{(+)}(\mathfrak{K},\xi,n_0)$.
\item[\S~\ref{subsec:phigamma}]: $\E_L^{(+)}$, $\wtil{\E}^{(+)}_K$, $\wtil{\A}^{(+)}_L$, $\wtil{\B}_L$, $\A$, $\B_L$, $\B$, $\Mod^{\et}_{\B_L}(\varphi^h,\Gamma_L)$, $\D(\,\cdot\,)$, $\V(\,\cdot\,)$.
\item[\S~\ref{subsec:oc}]: $\wtil{\A}^{\dagger,r}$, $\wtil{\A}^{\dagger}$, $\wtil{\B}^{\dagger,r}$, $\wtil{\B}^{\dagger}$, $\wtil{\B}^{\dagger,r}_{\rig}$, $\wtil{\B}^{\dagger}_{\rig}$, $\A^{\dagger,r}$, $\A^{\dagger}$, $\B^{\dagger,r}$, $\B^{\dagger}$, $\B^{\dagger,r}_{\rig}$, $\B^{\dagger}_{\rig}$, $\wtil{\A}^{\dagger,r}_L$, $\wtil{\A}^{\dagger}_L$, $\wtil{\B}^{\dagger,r}_L$, $\wtil{\B}^{\dagger}_L$, $\wtil{\B}^{\dagger,r}_{\rig,L}$, $\wtil{\B}^{\dagger}_{\rig,L}$, $\A^{\dagger,r}_L$, $\A^{\dagger}_L$, $\B^{\dagger,r}_L$, $\B^{\dagger}_L$, $\B^{\dagger,r}_{\rig,L}$, $\B^{\dagger}_{\rig,L}$, $\D^{\dagger,r}(\,\cdot\,)$, $\D^{\dagger}(\,\cdot\,)$, $\D^{\dagger,r}_{\rig}(\,\cdot\,)$, $\D^{\dagger}_{\rig}(\,\cdot\,)$.
\item[\S~\ref{subsec:convergent}]: $R\langle\und{X}\rangle$, $\oo((S))^{\dagger,r}_0$, $|\cdot|_r$, $\oo[[S]]\langle\und{X}\rangle$, $\oo((S))_0^{\dagger,r}\langle\und{X}\rangle$, $\oo((S))^{\dagger,r}\langle\und{X}\rangle$, $\deg(\mathfrak{p})$, $\kappa(\mathfrak{p})$, $\kappa(p)$, $\pi_{\mathfrak{p}}$.
\item[\S~\ref{subsec:grobnerregular}]: $\succeq$, $\succ$, $\succeq_{\lex}$, $\und{v}_R$, $\und{\deg}_R$, $\LT_R(\,\cdot\,)$, $|\cdot|_{\qt}$.
\item[\S~\ref{subsec:grobnerannulus}]: $A$, $I^{\dagger,r}$, $A^{\dagger,r}$, $|\cdot|_{r,\qt}$.
\item[\S~\ref{subsec:continuity}]: $\Idem(\,\cdot\,)$, $|\cdot|_{\mathfrak{p},\qt}$, $|\cdot|_{\mathfrak{p},\spe}$ $A_{\kappa(\mathfrak{p})}$.
\item[\S~\ref{sec:ram}]: $AS^r$, $AS^r_{\log}$.
\item[\S~\ref{subsec:horizontal}]: $K_n[[u_1,\dots,u_d]]^{\nabla}$.
\item[\S~\ref{subsec:const}]: $\iota_n$, $t_{n,w}$, $\N_{\dR,r}(\,\cdot\,)$, $\N_{\dR}(\,\cdot\,)$, $\wtil{\N}_{\rig}^{\dagger,r}(\,\cdot\,)$, $\wtil{\N}^{\dagger}_{\rig}(\,\cdot\,)$.
\item[\S~\ref{subsec:Lie}]: $\nabla_j$.
\item[\S~\ref{subsec:diffaction}]: $\wtil{\nabla}_j$, $\nabla'_j$.
\item[\S~\ref{subsec:pure}]: $\rep_{a,h}(G_K)$, $D_{[s]}$, $\Mod^s_{\wtil{\B}^{\dagger}_{\rig}}(\varphi,G_K)$, $\Mod^s_{\B^{\dagger}_{\rig,K}}(\varphi,\Gamma_K)$, $\Mod^s_{\wtil{\B}^{\dagger}}(\varphi,G_K)$, $\Mod^s_{\B^{\dagger}_{K}}(\varphi,\Gamma_K)$, $\Mod^s_{\wtil{\B}^{\nabla+}_{\rig}}(\varphi,G_K)$, $\wtil{\D}^{\nabla+}_{\rig}(\,\cdot\,)$, $\wtil{\D}^{+}_{\rig}(\,\cdot\,)$, $\D^{\dagger}_{\rig}(\,\cdot\,)$, $\wtil{\D}^{\dagger}(\,\cdot\,)$, $\D^{\dagger}(\,\cdot\,)$.
\item[\S~\ref{subsec:swan}]: $\wtil{\N}^{\nabla+}_{\rig}(\,\cdot\,)$, $\mathcal{V}_i$, $\sw(\,\cdot\,)$.
\end{itemize}

\subsection*{Acknowledgement}
The author would like to thank Atsushi Shiho and Takeshi Tsuji for encouragement. The author thank Liang Xiao for the proof of Lemma~\ref{lem:Xiao}. The author also thanks Adriano Marmora for helpful discussion. The author thanks the referee for helpful advices. The author is supported by Research Fellowships of Japan Society for the Promotion of Science for Young Scientists.



\end{document}